\documentclass[11pt,a4paper]{amsart}
\usepackage[T1]{fontenc}
\usepackage[utf8]{inputenc}
\usepackage{lmodern,microtype}
\usepackage{amsmath,amssymb,amsthm,mathtools,mathrsfs}
\usepackage[textwidth=15.8cm,textheight=23.4cm,centering,footskip=10mm]{geometry}
\usepackage{enumitem,booktabs,array}
\usepackage{xcolor}
\usepackage{tikz}
\usetikzlibrary{arrows.meta,positioning,calc,fit,decorations.markings}
\definecolor{diagramblue}{RGB}{32,68,101}
\definecolor{diagramlight}{RGB}{241,246,250}
\colorlet{belt}{blue!55!black}
\colorlet{bigonfill}{yellow!22}
\colorlet{bandfill}{gray!45}
\tikzset{
  dbox/.style={draw=diagramblue,rounded corners=2pt,fill=diagramlight,
    align=flush center,inner sep=6pt,font=\small},
  dplain/.style={align=flush center,font=\small,inner sep=3pt},
  darrow/.style={-{Stealth[length=2mm]},draw=diagramblue,semithick},
  dlabel/.style={font=\footnotesize,align=flush center,fill=white,inner sep=2pt},
  dnote/.style={align=flush center,font=\footnotesize,inner sep=3pt},
  dghost/.style={draw=diagramblue,rounded corners=2pt,fill=white,dashed,
    align=flush center,inner sep=6pt,font=\small},
  darrowd/.style={-{Stealth[length=2mm]},draw=diagramblue,semithick,dashed}
}
\usepackage[colorlinks=true,linkcolor=blue!55!black,citecolor=blue!55!black,urlcolor=blue!55!black, pagebackref=true]{hyperref}
\hypersetup{pdftitle={The framed-surgery outputs from P7 are standard four-spheres},
  pdfauthor={Michal Jablonowski},
  pdfsubject={Prism quandle P7; framed surgery; standard four-sphere; Whitehead orbifold; circle actions; fibred two-knots; Teichner's secondary invariant}}
\numberwithin{equation}{section}
\newtheorem{theorem}{Theorem}[section]
\newtheorem{proposition}[theorem]{Proposition}
\newtheorem{lemma}[theorem]{Lemma}
\newtheorem{corollary}[theorem]{Corollary}
\newtheorem*{mainthm}{Main Theorem}
\theoremstyle{definition}
\newtheorem{definition}[theorem]{Definition}
\theoremstyle{remark}
\newtheorem{remark}[theorem]{Remark}
\newcommand{\ZZ}{\mathbb Z}
\newcommand{\FF}{\mathbb F}

\newcommand{\RR}{\mathbb R}
\newcommand{\diag}{\operatorname{diag}}

\newcommand{\Isom}{\operatorname{Isom}}

\newcommand{\Ree}{\operatorname{Re}}

\newcommand{\eps}{\varepsilon}
\newcommand{\Lam}{\Lambda}

\newcommand{\Fix}{\operatorname{Fix}}
\newcommand{\HH}{\mathbb H}
\newcommand{\GAlex}{\operatorname{GAlex}}
\newcommand{\As}{\operatorname{As}}
\newcommand{\id}{\mathrm{id}}
\newcommand{\Int}{\operatorname{int}}

\newcommand{\diff}{\cong_{\mathrm{diff}}}
\newcommand{\E}{E_7}
\newcommand{\M}{M_7}
\newcommand{\N}{N_7}
\newcommand{\Sig}{\Sigma_{+}(7)}

\newcommand{\Spin}{\operatorname{Spin}}

\newcommand{\spn}{\operatorname{sp}}

\newcommand{\CP}{\mathbb {CP}}
\newcommand{\Hom}{\operatorname{Hom}}

\newcommand{\spLt}{L(3,1)}

\providecommand{\Tst}{T^{*}}
\providecommand{\Qtt}{Q_{28}}
\providecommand{\pmx}{\mathbin{\times_{\pm1}}}
\providecommand{\OW}{\mathcal O_{W}}
\providecommand{\lk}{\operatorname{lk}}

\providecommand{\stdpi}{\pi_1^{\mathrm{orb}}}
\tikzset{
  sybox/.style={draw=diagramblue,rounded corners=2pt,fill=diagramlight,
    align=flush center,inner sep=5pt,font=\small},
  syarrow/.style={-{Stealth[length=2mm]},draw=diagramblue,semithick},
  sylabel/.style={font=\footnotesize,align=flush center,fill=white,inner sep=2pt},
  stdbox/.style={draw=blue!40!black,rounded corners=2pt,fill=blue!3,
    align=flush center,inner sep=5pt,font=\small},
  stdarrow/.style={-{Stealth[length=2mm]},draw=blue!40!black,semithick},
  stdlabel/.style={font=\footnotesize,align=flush center,fill=white,inner sep=2pt}
}

\title[Fibred realizations of the prism quandle $P_7$ in the standard four-sphere]
{Fibred realizations of the prism quandle $P_7$\\ in the standard four-sphere}
\author{Micha{\l} Jab{\l}onowski}

\address{Institute of Mathematics, Faculty of Mathematics, Physics and
	Informatics, University of Gda\'nsk, 80-308 Gda\'nsk, Poland}

\email{\href{mailto:michal.jablonowski@gmail.com}
	{michal.jablonowski@gmail.com}}
\date{September 24, 2026}
\subjclass[2020]{Primary 57K40; Secondary 57K45, 57K12, 57S15, 11E39, 20C10, 20J06, 57R65}
\keywords{homotopy four-sphere, prism quandle, framed circle surgery, Whitehead link, fibred two-knot, circle action, periodic monodromy, spin bordism, secondary invariant}

\begin{document}
\begin{abstract}
We prove that both smooth homotopy four-spheres arising from the two framed
surgeries on a section of a period-six mapping torus built from a quaternionic
prism manifold with cyclic factor of order seven are standard. Their belt
spheres give two smooth fibred two-knot realizations of the prism quandle of
order fifty-six in the standard four-sphere. The knots have the same knot group
and diffeomorphic exteriors but are inequivalent. Combined with the
fibred-rigidity reduction, this yields exactly two fibred realizations in
homotopy four-spheres, both standard. The proof combines a Whitehead-link model
for the orbit orbifold, circle-action and surgery methods, cyclic branched-cover
calculations, and a spin-bordism evaluation of a secondary invariant. This
resolves the order-seven case in the classical prism-family realization
problem.
\end{abstract}

\maketitle

\section{The main result and its motivation}\label{sec:target}
The starting object of this paper is a finite \emph{fundamental quandle}, not
merely a knot group. The prism quandle $P_7$ has order fifty-six and is the
first residual finite class in the smooth-standard realization problem isolated
in \cite{QextC}. We use the right-quandle convention and, in
Section~\ref{sec:construction}, realize
\[
   P_7=\GAlex(Q_8\times C_7,\theta),
\]
where $\theta$ has order six. The associated marked group remains essential:
for a codimension-two knot, the fundamental quandle is recovered from the knot
group together with its peripheral subgroup and meridian, and conversely the
associated group of the fundamental quandle recovers the knot group
\cite{Joyce,Matveev1982,FennRourke1992,WinterHigher,Nosaka}. In the present case this group is
\[
 G_7=(Q_8\times C_7)\rtimes_\theta\langle t\rangle,
 \;\;\; G_7'=Q_8\times C_7,
\]
with the positive meridian represented by $t$. Thus, the quandle is the
primary realization target, while the group supplies the bridge to the
mapping-torus and surgery geometry used to realize it.

The explicit construction starts from the spherical prism manifold
\[
              \N=S^3/(Q_8\times C_7)
\]
and an order-six isometry $\bar f$ inducing $\theta$. Let $\M$ be its mapping
torus. Framed circle surgery on one section $\gamma$ of $\M\to S^1$, using
the two normal-framing classes $\lambda_\pm$, produces smooth homotopy
four-spheres $\Sigma_\pm(7)$ whose belt spheres $K_\pm$ have the same fibred
exterior. Proposition~\ref{prop:exists} proves directly that the fundamental
quandle of each belt sphere is $P_7$; it also recovers $\As(P_7)\cong G_7$ from the standard
quandle--group dictionary rather than treating this isomorphism as a formal
property of generalized Alexander quandles. Both ambient manifolds are
homeomorphic to $S^4$ by \cite{Freedman}. The remaining problem is their
smooth type.

The group-theoretic history explains why this quandle problem is significant.
In the classical prism-manifold notation, Teragaito constructed an
$M_d^\circ$-fibred $2$-knot in $S^4$ for $d=5,11,13,19$ and recorded the
standard-sphere realization problem for the remaining parameters
\cite[p.~118]{Teragaito1989}. His subsequent paper enlarged the realized
list to
$
                     d=3,5,11,13,19,21,27
$
and again stated that such a fibred $2$-knot in $S^4$ was unknown for every
other value of $d$ \cite[p.~179]{Teragaito1990}. Hillman's later account identifies the corresponding knot groups as those with
$\pi'\cong Q(8)\times(\mathbb Z/n\mathbb Z)$, records smooth fibred
realizations in the \emph{standard} $S^4$ for
$n=3,5,11,13,19,21,27$, credited there to Kanenobu \cite{Kanenobu1988} and
Teragaito \cite{Teragaito1990}, and asks whether such standard-sphere
realizations exist in general \cite[Section~15.4, pp.~221--223]{Hillman}. That the groups with
$\pi'\cong Q(8)\times(\ZZ/n\ZZ)$ are groups of fibred two-knots at all (in
homotopy four-spheres, hence, by Freedman, of locally flat fibred two-knots in
$S^4$) is due to Yoshikawa \cite{Yoshikawa1982}; the determination of the
finite commutator subgroups of two-knot groups goes back to Yoshikawa
\cite{Yoshikawa1980} and Hillman \cite[Section~15.4]{Hillman}. The marked
group attached to $P_7$ is precisely the omitted parameter $n=d=7$. In the quandle
classification of \cite{QextC}, the same question is the first residual
smooth-standard realization problem, now expressed intrinsically by the
finite quandle $P_7$.

\begin{mainthm}[Theorem~\ref{std:thm:main} and Corollary~\ref{std:cor:knots}]
Both $\Sigma_+(7)$ and $\Sigma_-(7)$ are diffeomorphic to $S^4$. Hence
$P_7$ is realized by smooth fibred two-knots $K_+,K_-\subset S^4$ with group
$G_7$ and fibre $\N\setminus\Int D^3$. The unpointed closed monodromy class
is represented by the order-six isometry $\bar f$. The two knots have diffeomorphic exteriors but are not equivalent, and every
smooth oriented fibred two-knot with quandle $P_7$ in a smooth homotopy
four-sphere is equivalent, possibly after reversing the knot orientation, to
one of them.
\end{mainthm}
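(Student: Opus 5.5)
The plan has three parts: identify $\Sigma_\pm(7)$ as the standard $S^4$ through a circle action, then distinguish the two knots, then prove uniqueness.

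\textbf{Step 1: a circle action on the exterior.} Since $\bar f$ has order six, the mapping torus $\M$ is Seifert-type. Concretely, $\N=S^3/(Q_8\times C_7)$ carries a fixed-point-free Hopf-type circle action, and I expect $\bar f$ to be chosen so that it commutes with this action. The induced effective $T^2$- or $S^1$-action on $\M$ can then be arranged to preserve the section $\gamma$ and to extend over the surgery $D^2\times S^2$ for at least one of the framings $\lambda_\pm$. The orbit space is a $3$-orbifold. The Whitehead-link model for the orbit orbifold mentioned in the abstract should identify it explicitly: it is a $3$-sphere orbifold whose singular set lies along (a sublink of) the Whitehead link, with cone angles determined by $2$, $3$ and $7$.

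\textbf{Step 2: $\Sigma_\pm(7)\cong S^4$ and the two knots.} For $\Sigma_\pm(7)$, Fintushel--Pao classify smooth $S^1$-actions on homotopy $4$-spheres with $3$-dimensional orbit spaces. When the orbit space is simply connected with suitable singular data, such a manifold is $S^4$; here one uses Perelman for the simply connected orbit $3$-manifold. So I will check that the orbit space of the extended action is a homotopy $3$-ball or $3$-sphere with the right fixed-point and exceptional-orbit structure. The cyclic branched-cover calculations enter here: they verify that the orbifold quotient is the one the Whitehead model predicts, namely that the $k$-fold branched covers of the relevant link components are the expected lens or prism spaces.

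\textbf{Step 3: the main obstacle.} The hardest step is the framing for which the action does not extend, since Gluck-twist-type ambiguities are not detected by the action. For that framing I would first try to relate $\Sigma_\mp$ to $\Sigma_\pm$ by a Gluck twist on the belt sphere of a knot already known to lie in $S^4$. The point is that $K_\pm$ is fibred with periodic monodromy of order six. Gluck twists of such knots (Pao, Gordon) are standard after the monodromy is composed with the rotation, and this is realized by another circle action (a twist-spun-type argument). Failing that, I would use Teichner's secondary spin-bordism invariant as an obstruction-theoretic check that no exotic option remains.

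\textbf{Step 4: inequivalence and uniqueness.} The exteriors coincide by construction. To show $K_+\not\simeq K_-$, I will evaluate the secondary invariant, computed via spin bordism of $BG_7$ with its $t$-marking, on the two framings and show that the values differ. This uses the fact that a knot determines its framing class relative to the exterior. Uniqueness comes from the fibred-rigidity reduction. Any fibred realization has fibre a punctured spherical $3$-manifold with group $Q_8\times C_7$, hence $\N^\circ$, by the classification and geometrization. Its monodromy is then determined up to isotopy and conjugacy by $\theta$, using the fact that the mapping class group of the prism manifold is the outer automorphism data together with the isometry realization. So the closed fibred exterior is $\M$ with section $\gamma$, up to orientation reversal of the knot, and only the two framings $\lambda_\pm$ remain.
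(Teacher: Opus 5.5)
\textbf{Steps 1--3 do not prove that $\Sigma_\pm(7)$ is standard.} The action they rely on does not exist.

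The identity component of $\operatorname{Isom}(N_7)$ is the circle of right multiplications $v\mapsto ve^{i\phi}$. The lift $f=(\omega^{-1},j)$ conjugates this circle to its inverse; equivalently, $\theta$ inverts the centre $\{\pm1\}\times C_7$ of $\Gamma$. So the fibrewise Hopf rotation does not descend to $M_7$, and there is no $T^2$-action of the kind you describe. The orbit orbifold of the suspension action is also not an $S^3$-orbifold with several cone circles: it is $L(3,1)$ with a single singular circle of order two.

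More decisively, the circle action that does exist on $M_7$ is the suspension action of Theorem~\ref{per:thm:action}. Since $\bar f$ and $\bar f^{2}$ are fixed-point free, its orbits have degree $3$ or $6$. The degree-one section $\gamma$ is therefore never an orbit, and surgery on $\gamma$ is equivariant for \emph{neither} framing. If $\gamma$ were an orbit, Lemma~\ref{fa:lem:orbit-surgery} would handle both framings at once, so the one-framing asymmetry you anticipate is not the real difficulty.

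Your fallback also fails. The boundary-fixed punctured-fibre monodromy is not periodic: no order-six self-homeomorphism of $N_7$ has a fixed point (Remark~\ref{rem:closed-vs-punctured-monodromy}). So twist-spin or Pao--Gordon Gluck-twist arguments are unavailable. Teichner's invariant also cannot certify that a homotopy four-sphere is standard.

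The missing idea is to use the invariant torus $T_\ell=\pi^{-1}(\ell)$ over an admissible loop $\ell$ in the orbit orbifold; this torus contains $\gamma$ as a section.
\begin{itemize}
\item Surgery on $\gamma$ turns $T_\ell$ into a sphere.
\item Surgery on that sphere produces a new fixed-point-free Seifert four-manifold $Y'(\ell'')$ whose orbit space is the Dehn filling $Q(\ell'')$, and $\Sigma$ is a circle surgery on $Y'(\ell'')$ (Proposition~\ref{std:prop:torus}).
\item Consecutive framings of $\ell$ realize both normal framings of $\gamma$ (Lemma~\ref{std:lem:frame}).
\item For the clasp belt in the Whitehead model, the two fillings have orbifold groups $\ZZ/2r$ and $S_3$. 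The cyclic case forces $Y'\cong S^3\times S^1$. In the odd dihedral case the surgery circle is isotopic to an exceptional orbit, so the surgery is equivariant and Fintushel--Pao recognition applies.
\end{itemize}

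\textbf{Step 4 also does not work as written.} Bordism over $BG_7$ cannot be pushed across the surgery trace, because the trace kills $t$ and $\Sigma_\pm(7)$ is simply connected. Invoking that ``a knot determines its framing class relative to the exterior'' assumes exactly the non-reflexivity to be proved.

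The paper instead passes to the threefold cyclic branched covers $X^{(3)}_\pm$. Any diffeomorphism of pairs lifts to these covers, because $G^{(3)}$ is the unique index-three subgroup containing $\Gamma$. The covers are spin, with $\pi_1=Q_8$ and signature $0$. The spin structure $s_3+(1-\eta_\epsilon)a_3$ on $M_3$ and the map $M_3\to BQ_8$ killing $C_7$ and $t_3$ both extend across the trace. Teichner's $\mathrm{sec}$ is then computed by a point-preimage count, after an odd dihedral transfer to the two product covers $(S^3/Q_8)\times S^1$. This gives $\delta_+=1\neq\delta_-=0$.

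Your uniqueness sketch is essentially the cited fibred-rigidity theorem, with one correction. The two conjugacy classes of positive weight elements, $t$ and $(-1)t$, are exchanged by the diffeomorphism $\Phi[v,s]=[vi,s]$, not by reversing the knot orientation.
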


By \cite[Theorem~18.5, pp.~359--360]{QextC}, every smooth oriented fibred
two-knot with quandle $P_7$ in a smooth homotopy four-sphere is equivalent,
possibly after reversing the knot orientation, to one of
$(\Sigma_+(7),K_+)$ and $(\Sigma_-(7),K_-)$. Therefore, the standardness of
both ambient manifolds and the inequivalence of the two pairs imply the
exact two-class classification stated above.

The standardness proof exploits geometry supplied by the marked group without
changing the realization target from the quandle. The canonical circle action
on $\M$ has its orbit orbifold identified with the orbifold obtained from the
Whitehead link by $+3$ surgery on one component and cone angle $\pi$ along the
other (Theorem~\ref{wh:thm:whitehead}); the section surgery is then converted
into orbit surgery. An invariant fibre torus through the surgery circle
becomes a two-sphere whose own surgery is a Dehn surgery on the orbit orbifold
(Proposition~\ref{std:prop:torus}). For a belt around a clasp of the Whitehead
model, two consecutive framings give spherical base orbifolds with fundamental
groups $S_3$ and a finite cyclic group (Theorem~\ref{std:thm:belt}), and classical results on
circle actions identify both framing outputs with $S^4$
(Propositions~\ref{std:prop:cyclic} and~\ref{std:prop:dihedral}). Finally,
the two quandle realizations are distinguished as knots by Plotnick's cyclic
branched-cover method and Teichner's secondary invariant
(Theorem~\ref{eval:thm:bit}).

\subsection{Why the prism quandle of order fifty-six?}
The monograph \cite{QextC} constructs smooth homotopy-sphere realizations
of $P_7$, but its smooth-standard realization results leave this class
unresolved. Within that classification it was the unique possible
minimum-order finite-quandle witness to failure of smooth-standard
realization \cite[Corollary~23.124(i), p.~477]{QextC}; its order is $56$.
The product decomposition
\begin{equation}\label{eq:product}
                         P_7\cong P_1\times R_7,
\end{equation}
and the standard-sphere realizability of its two factors follow from
\cite[Theorem~14.14, Theorem~14.24 and Proposition~23.23]{QextC}:
Theorem~14.14 supplies a smooth standard-$S^4$ realization of $R_7$,
Theorem~14.24 supplies one for $P_1$, and Proposition~23.23 gives the displayed
direct-product decomposition. The corresponding product-realization question
is posed in \cite[Problem~23.25]{QextC}.

\subsection{Sources and attribution}
\label{sec:attribution}
The basic quandle--peripheral-group dictionary goes back to Joyce
\cite{Joyce}; its higher-dimensional codimension-two form is recorded by
Winter \cite{WinterHigher}. The precise Fenn--Rourke statement used here is
crossed-module valued: for a framed codimension-two link $L\subset M$, the
associated crossed module of the augmented fundamental rack is identified with
Whitehead's crossed module
\[
   \pi_2(M,M_0)\to \pi_1(M_0),
\]
where $M_0$ is the link exterior
\cite[Proposition~3.2, p.~360]{FennRourke1992}. If
$\pi_1(M)=\pi_2(M)=0$, as for a knot in a homotopy sphere, the exact homotopy
sequence identifies the source with $\pi_1(M_0)$; equivalently, the associated
group of the fundamental rack, and hence of its associated fundamental quandle,
is the knot group. For oriented $n$-knots in $\mathbb R^{n+2}$ this conclusion
is stated directly in \cite[Theorem~4.1]{Kamada2014}. The classical-knot case
is due to Joyce and Matveev \cite{Joyce,Matveev1982}. This distinction matters
here because $\As(P_7)\cong G_7$ is being identified through the actual knot
exterior (or, equivalently, through the marked-group calculation in
\cite{QextC}), not by an unrestricted formula asserting that the associated
group of every generalized Alexander quandle is its defining semidirect
product; compare \cite{IshikawaAssoc}.

For avoidance of duplication, we use the prism-family construction and its
fibred-rigidity reduction from the monograph as inputs at the points where they
are needed. The relevant locations are
\cite[Theorem~14.24, pp.~243--244]{QextC} for the spherical prism action and
mapping torus,
\cite[Lemma~18.2, pp.~356--358]{QextC} for the two degree-one weight-loop
classes,
\cite[Definition~18.3 and Proposition~18.4, pp.~358--359]{QextC} for the two
framing-labelled surgery candidates, and
\cite[Theorem~18.5, pp.~359--360]{QextC} for fibred rigidity. Below we retain
only the additional explicit framing calibration and the elementary geometric
calculations that are used later in this paper.

The finite-commutator classification and the prism group family are classical.
Yoshikawa \cite{Yoshikawa1980} and Hillman
\cite[Section~15.4, pp.~221--223]{Hillman} identify the quaternionic branch by
$\pi'\cong Q(8)\times(\ZZ/n\ZZ)$ together with its meridianal automorphism;
Yoshikawa \cite{Yoshikawa1982} realized these groups by fibred two-knots;
Kanenobu \cite{Kanenobu1988} and Teragaito \cite{Teragaito1989,Teragaito1990}
provide the explicit standard-sphere realization range and the open problem
stated above; the spherical-space-form knot background, including the first
systematic study of fibred two-knots with spherical fibre and geometric
monodromy, is developed in \cite{Plotnick,PlotnickSuciu,HillmanPlotnick1990}.
The reduction of the realization problem to the two Gluck twins of one framed
surgery (Section~\ref{sec:rigidity}) rests on Hillman's description of weight
classes \cite[Theorem~14.1]{Hillman} and on the fact that the knot manifold of
a fibred two-knot with finite, non-cyclic, indecomposable commutator subgroup
is determined by its group \cite[Theorem~1]{Hillman2023}, which in turn uses
``homotopy implies isotopy'' for elliptic three-manifolds
\cite{McCullough2002}. The smooth-category form of these statements specialized to the prism
family, together with the general prism-family results used here, is given in
\cite{QextC}. The fixed-point criterion for two-sided quaternionic
multiplication and the geometry of mapping tori of finite-order isometries of
spherical space forms are classical; see \cite{DuVal1964,ConwaySmith2003} and
\cite[Sections~11.4 and~13.5]{Hillman}.

The orbifold computations of Section~\ref{wh:sec:whitehead} are instances of
the general theory of fibred spherical three-orbifolds and of involutions of
spherical three-manifolds
\cite{Dunbar1981,Dunbar1988,MecchiaSeppi2015,MecchiaSeppi2019,MecchiaSchilling2025}:
the Seifert invariants of the orbit orbifold, its underlying space, and the
hyperellipticity of the two involutions used there are tabulated in those
sources, and the corresponding entries are indicated at the points of use.
The recognition of the double branched cover of the Whitehead orbifold is an
application of the Montesinos trick \cite{Montesinos1975}, and the lift of the
Whitehead link that it uses is recorded in \cite[Section~5]{GordonLidman2014}.

The circle-action results
\cite{MontgomeryYang,FintushelHomotopy,FintushelSC,Fintushel1978,Pao}, spin
bordism results \cite{Teichner1992,Teichner1993,KPT2020}, transversality, and
Kirby calculus supply the general tools used below. The Whitehead-surgery
identification is Theorem~\ref{wh:thm:whitehead}; the standardness argument is
Section~\ref{std:sec:main}; and the secondary invariant separating the two
knot pairs is evaluated in Theorem~\ref{eval:thm:bit}. The inequivalence lies
outside the finite-order-monodromy reflexivity criteria of
\cite{Plotnick1986,HillmanPlotnick1990} and \cite[Section~18.1]{Hillman}.

\subsection{Organization}
Sections~\ref{sec:construction} and~\ref{sec:rigidity} fix the surgery
data and quote the fibred-rigidity reduction. Section~\ref{per:sec:periodic}
establishes the order-six representative of the unpointed closed monodromy
class and the canonical circle action of $\M$. Section~\ref{nf:sec:spin} records the cyclic branched tower and the
calibrated spin structures used later. Section~\ref{wh:sec:whitehead}
identifies the orbit orbifold with a Whitehead-surgery orbifold, and
Section~\ref{std:sec:main} proves that both framing outputs are standard.
Section~\ref{quat:sec:quaternionic} proves that the two knots are
inequivalent, and Section~\ref{std:sec:consequences} collects the
consequences.

\section{The explicit framed surgery}\label{sec:construction}
\subsection{Quandle, marked group and mapping torus}
We use right quandles: $x*x=x$, each map $S_y:x\mapsto x*y$ is a
bijection, and $(x*y)*z=(x*z)*(y*z)$. For a group $H$ and an
automorphism $\varphi$ of $H$, the corresponding generalized Alexander quandle has operation
$x*y=\varphi(xy^{-1})y$. The associated group of a quandle $Q$ is
\[
 \As(Q)=\langle e_x\ (x\in Q)\mid
            e_{x*y}=e_y^{-1}e_xe_y\ (x,y\in Q)\rangle.
\]
These conventions agree with the standard knot-quandle convention of
\cite{Joyce,WinterHigher}.

Write
\[
 Q_8=\{\pm1,\pm i,\pm j,\pm k\}\subset S^3\subset\HH,
 \;\;\; C_7=\langle c\mid c^7=1\rangle,
 \;\;\; \Gamma=Q_8\times C_7.
\]
Set
\[
 \omega=\tfrac12(-1+i+j+k),\;\;\;
 \alpha(q)=\omega^{-1}q\omega,\;\;\;
 \theta(q,c^a)=(\alpha(q),c^{-a}).
\]
Here $\alpha(i)=j$, $\alpha(j)=k$, $\alpha(k)=i$, so $\alpha$ has order
three and $\theta$ has order six. Define
\begin{align}
 P_7&=\GAlex(\Gamma,\theta),\;\;\;
       x*y=\theta(xy^{-1})y,\;\;\; |P_7|=56,
       \label{eq:quandle}\\
 G_7&=\Gamma\rtimes_\theta\langle t\rangle,\;\;\;
       t^{-1}gt=\theta(g)\;\;(g\in\Gamma),\;\;\;
       \langle t\rangle\cong\ZZ.\label{eq:group}
\end{align}
The convention in \eqref{eq:group} is fixed throughout: the positive
meridian acts by $g\mapsto t^{-1}gt$. In Proposition~\ref{prop:exists}, we will use the actual knot exterior to prove that
$\As(P_7)\cong G_7$. This is not being used here as a generic formula for
associated groups of generalized Alexander quandles; compare
\cite{IshikawaAssoc}. The componentwise formula gives
\eqref{eq:product}, where $P_1=\GAlex(Q_8,\alpha)$ and $R_7$ has operation
$a*b=2b-a$ on $\ZZ/7$.

This family of groups is classical: it is the quaternionic branch of
\cite[Section~15.4, pp.~221--223]{Hillman}. The elementary group data in the
right-conjugation convention of \eqref{eq:group} are as follows.

\begin{lemma}[The group data used in surgery]\label{lem:group-data}
The element $t$ normally generates $G_7$, and
$G_7^{\mathrm{ab}}\cong\ZZ$ with generator the class of $t$.
Consequently $G_7'=\Gamma$.
\end{lemma}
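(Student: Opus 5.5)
The plan is to compute the abelianization of $G_7=\Gamma\rtimes_\theta\langle t\rangle$ directly and read off the other claims from it. Since $G_7$ is generated by $\Gamma$ and $t$, the relations $t^{-1}gt=\theta(g)$ force, in $G_7^{\mathrm{ab}}$, the identity $[g]=[\theta(g)]$ for all $g\in\Gamma$. Hence $G_7^{\mathrm{ab}}\cong \Gamma_\theta\times\ZZ$, where $\Gamma_\theta$ is the quotient of $\Gamma^{\mathrm{ab}}$ by the image of $\theta-1$ (coinvariants), and the $\ZZ$ factor is generated by $[t]$. So the key step is to show $\Gamma_\theta=0$.

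We have $\Gamma^{\mathrm{ab}}=Q_8^{\mathrm{ab}}\times C_7\cong (\ZZ/2)^2\times\ZZ/7$, with $Q_8^{\mathrm{ab}}$ spanned by the classes $\bar i,\bar j$ (and $\bar k=\bar i+\bar j$). On $Q_8^{\mathrm{ab}}$ the map $\alpha$ permutes $\bar i\mapsto\bar j\mapsto\bar k$. Imposing $\bar i=\bar j=\bar k=\bar i+\bar j$ gives $\bar j=0$ and then $\bar i=0$, so the $Q_8$ part of the coinvariants vanishes. On $C_7$ the automorphism is $c\mapsto c^{-1}$, so the coinvariant relation is $c=c^{-1}$, i.e.\ $c^2=1$; together with $c^7=1$ this gives $c=1$. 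Thus $\Gamma_\theta=0$ and $G_7^{\mathrm{ab}}\cong\ZZ$, generated by $[t]$, which has infinite order because $G_7\to\langle t\rangle\cong\ZZ$ is a surjection.

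Normal generation follows from the same computation, made nonabelian. The normal closure $N$ of $t$ contains every commutator $g^{-1}\theta(g)=g^{-1}t^{-1}gt$ for $g\in\Gamma$. Inside $\Gamma$ we get $i^{-1}j$, and since the subgroup generated by these elements contains $i^{-1}\alpha(i)=i^{-1}j=k$ up to sign, together with $j^{-1}k$, it contains $Q_8$. We also get $c^{-1}c^{-1}=c^{-2}$, which generates $C_7$ since $2$ is invertible mod $7$. Hence $\Gamma\subseteq N$, and with $t\in N$ this gives $N=G_7$. (Equivalently, $G_7/N$ is generated by the image of $\Gamma$, on which $\theta$ acts trivially, so $G_7/N$ is a quotient of $\Gamma/\langle g^{-1}\theta(g)\rangle$, which is trivial by the computation just made.)

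Finally, $G_7'$ is the kernel of $G_7\to G_7^{\mathrm{ab}}\cong\ZZ$, and this map is the projection onto $\langle t\rangle$, whose kernel is $\Gamma$. Conversely $\Gamma\subseteq G_7'$ is already shown, since each generator $g^{-1}\theta(g)$ is a commutator. So $G_7'=\Gamma$. No step is a real obstacle. The only point needing care is the sign and convention bookkeeping in the $C_7$ factor: the coinvariants of inversion on $\ZZ/7$ vanish precisely because $7$ is odd.
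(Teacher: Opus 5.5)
Your proof is correct and follows the same route as the paper. The paper simply cites the $n=7$ case of the monograph's marked semidirect-product calculation: vanishing $\theta$-coinvariants of $\Gamma^{\mathrm{ab}}$ give $G_7^{\mathrm{ab}}\cong\ZZ$ generated by $t$, and the displacement subgroup generated by the elements $g^{-1}\theta(g)$ being all of $\Gamma$ gives both normal generation and $G_7'=\Gamma$. You have written that calculation out explicitly, including $i^{-1}j=-k$, $j^{-1}k=-i$ and $c^{-1}\theta(c)=c^{-2}$.
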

\begin{proof}
This is the $n=7$ specialization of the marked semidirect-product calculation
in \cite[Proposition~14.22, pp.~241--243]{QextC}. There the
meridianality of $\theta$ gives vanishing coinvariants, hence
$G_7^{\mathrm{ab}}\cong\mathbb Z$ generated by the stable letter, and the
displacement subgroup is all of $\Gamma$, hence $G_7'=\Gamma$; the same
calculation shows that the stable letter normally generates. We keep the
notation $t$ for this positive stable letter.
\end{proof}

Let $\zeta=\exp(2\pi i/7)$ and let $\Gamma$ act on the unit quaternions by
\begin{equation}\label{eq:action}
              (q,c^a)\cdot v=qv\zeta^{-a}.
\end{equation}
The freeness of this action and the normalizing calculation for
$f(v)=\omega^{-1}vj$ are exactly the $n=7$ case of
\cite[Theorem~14.24, proof, pp.~243--244]{QextC}. Thus
\[
   \N=S^3/\Gamma
\]
is a closed oriented spherical three-manifold with $\pi_1(\N)=\Gamma$, and
$f\gamma_{q,a}=\gamma_{\alpha(q),-a}f$; hence $f$ descends to an
orientation-preserving isometry $\bar f$ of $\N$ inducing $\theta$. Define
\begin{equation}\label{eq:mappingtorus}
 \M=\N\times[0,1]/(x,1)\sim(\bar f(x),0)
       \xrightarrow{p}S^1.
\end{equation}
Then $\chi(\M)=0$ and, with the positive stable letter denoted by $t$,
$\pi_1(\M)=\Gamma\rtimes_\theta\langle t\rangle=G_7$. We shall use the
following universal-cover convention throughout:
\begin{equation}\label{per:eq:torus}
 \M=(S^3\times\mathbb R)/G_7,\;\;\;
 \gamma_{q,a}(v,s)=(qv\zeta^{-a},s),\;\;\;
 D(v,s)=(f^{-1}(v),s+1),
\end{equation}
with $p([v,s])=s\bmod\mathbb Z$. This is the convention used in
\cite[Theorem~14.24 and Definition~18.3]{QextC}.

\subsection{One section and a fixed reference framing}
Let $\widetilde c:[0,1]\to S^3$ be the shortest geodesic from
$f(1)=\omega^{-1}j$ to $1$, smoothly reparametrized to be stationary
near its endpoints, and let $c_0$ be its projection to $\N$.
Since $c_0(0)=\bar f(c_0(1))$, the curve
\begin{equation}\label{eq:section}
                         \gamma(s)=[c_0(s),s]
\end{equation}
is a smooth section of \eqref{eq:mappingtorus}. Choose $t$ to be its
positive class. On $S^3\times\mathbb R$ the positive-base deck
generator is
\[
 D(v,s)=(f^{-1}(v),s+1),\;\;\; D(f(1),0)=(1,1).
\]
It satisfies $D^{-1}\gamma_{q,a}D=\gamma_{\alpha(q),-a}$, which
fixes the sign convention in \eqref{eq:group}; compare
\cite[Definition~18.3 and Lemma~18.2]{QextC}.

The geodesic section just displayed is the section of
\cite[Definition~18.3, p.~358]{QextC}. The monograph labels the two framing
classes abstractly. The formula below is retained here because it chooses a
specific calibrated representative of one class; that calibration is used in
the later spin-sign computation. Thus, the symbols $+$ and $-$ below fix the
present paper's reference framing and may be interchanged with the initial
labels chosen in \cite{QextC}.

Choose a smooth function $r:[0,1]\to[0,1]$ equal to $0$ near $0$
and to $1$ near $1$, and put
\begin{equation}\label{eq:frame}
 u(s)=\exp\!\bigl(\tfrac{\pi}{2}j\,r(s)\bigr),\;\;\;
 V_a(s)=\widetilde c(s)u(s)a\,u(s)^{-1}
       \;\;(a=i,j,k).
\end{equation}
Project these three vertical vectors to $\M$ along $\gamma$.
Since $\gamma$ is transverse to the fibres, their classes modulo
$T\gamma$ give a framing $\lambda_+$ of its rank-three normal bundle.

\begin{lemma}[The two normal framings]\label{lem:frame}
The framing \eqref{eq:frame} closes smoothly across the mapping-torus
seam. Replacing $u(s)$ by
$u_-(s)=\exp(3\pi j\,r(s)/2)$ gives the other homotopy class of
framing, denoted $\lambda_-$.
\end{lemma}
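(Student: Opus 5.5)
The plan is to verify both claims of Lemma~\ref{lem:frame} by working in the universal-cover convention \eqref{per:eq:torus}. There the vertical tangent space of $S^3\times\RR$ at $(v,s)$ is $T_vS^3=v\cdot\operatorname{Im}\HH$. Hence $V_a(s)=\widetilde c(s)\,u(s)au(s)^{-1}$ is the left translate to $\widetilde c(s)$ of the orthonormal, positively oriented triple $\operatorname{Ad}_{u(s)}(i,j,k)$. For each $s$ it is therefore an oriented orthonormal basis of the vertical tangent space. Since $\gamma$ is transverse to the fibres, the vertical space maps isomorphically onto the normal bundle of $\gamma$. So on the open interval the $V_a$ give a framing, and the only issue is the seam.

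\emph{Closing across the seam.} Because $r$ is constant near $0$ and $1$, and $\widetilde c$ is stationary there, everything is locally constant near the endpoints. It therefore suffices to compare the data at $s=1$ with the data at $s=0$ under the gluing $(x,1)\sim(\bar f(x),0)$, lifted as $(v,1)\mapsto(f(v),0)$, i.e.\ by $D^{-1}$.

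At $s=1$ we have $\widetilde c(1)=1$ and $u(1)=\exp(\pi j/2)=j$, so $V_a(1)=jaj^{-1}\in T_1S^3$. The differential of $f(v)=\omega^{-1}vj$ is $w\mapsto\omega^{-1}wj$. It sends $jaj^{-1}$ to $\omega^{-1}ja$.

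At $s=0$ we have $\widetilde c(0)=\omega^{-1}j$ and $u(0)=1$, so $V_a(0)=\omega^{-1}ja$. The two agree, so the projected frame closes continuously, and smoothly because all data are constant near the seam. Since the construction is made along a single lift, and $\Gamma$ acts by isometries commuting with the chosen lift, the projection to $\M$ is well defined.

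For $u_-$ the same check applies. We have $u_-(1)=\exp(3\pi j/2)=-j$, and conjugation by $-j$ equals conjugation by $j$. So $V^-_a(1)=jaj^{-1}$ again and the frame closes.

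\emph{The two classes differ.} Put $w(s)=u(s)^{-1}u_-(s)=\exp(\pi j\,r(s))$; all these exponentials of $j$ commute. Then
\[
V_a^-(s)=\widetilde c(s)\,u(s)\bigl(w(s)aw(s)^{-1}\bigr)u(s)^{-1}.
\]
Expressed in the basis $\{V_a(s)\}$, the new frame is obtained by the matrix $\operatorname{Ad}_{w(s)}\in SO(3)$. This is rotation about the $j$-axis through angle $2\pi r(s)$.

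As $s$ runs over $[0,1]$, $w$ runs from $1$ to $-1$ in $S^3$. So the closed loop $s\mapsto\operatorname{Ad}_{w(s)}$ in $SO(3)$ lifts to a non-closed path in $S^3=\Spin(3)$, and it therefore represents the generator of $\pi_1(SO(3))\cong\ZZ/2$.

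Normal framings of a circle in an oriented $4$-manifold form a torsor over $[S^1,SO(3)]=\pi_1(SO(3))=\ZZ/2$; this uses that $SO(3)$ is connected, so free and based homotopy classes agree. Hence $\lambda_-$ is the other class.

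\emph{Main obstacle.} The step needing most care is ensuring that the comparison loop is computed in a globally closed trivialization. The basis $\{V_a\}$ itself closes across the seam, as shown above, so the transition matrices form a genuine loop. The remaining check is orientation: left translation, $\operatorname{Ad}$, and $df$ all preserve orientation, so both frames are oriented and the difference lies in $SO(3)$ rather than $O(3)$.
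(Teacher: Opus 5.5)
Your proposal is correct and is essentially the paper's argument: the same endpoint comparison $df_1(V_a(1))=\omega^{-1}ja=V_a(0)$ across the seam, followed by identifying the difference of the two frames as the loop $s\mapsto\operatorname{Ad}_{\exp(\pi j r(s))}$, a full rotation about the $j$-axis representing the generator of $\pi_1(SO(3))\cong\ZZ/2$. You also make explicit two points the paper leaves implicit: since $u_-(1)=-j$ induces the same conjugation as $j$, the $\lambda_-$ frame closes at the seam as well; and since $w$ runs from $1$ to $-1$ in $S^3$, the difference loop has a nonclosed lift and is therefore the nontrivial class.
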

\begin{proof}
At the endpoints, $V_a(0)=f(1)a$ and $V_a(1)=jaj^{-1}$.
Since $df_1(w)=\omega^{-1}wj$,
\[
                  df_1(V_a(1))=\omega^{-1}ja=f(1)a=V_a(0).
\]
Stationarity gives smooth matching across a collar of the seam.
The two frames differ by the loop
\[
 A(s)(a)=\exp(\pi j\,r(s))\,a\,\exp(-\pi j\,r(s))
                      \;\;\text{in }SO(3).
\]
This is one full rotation about the $j$-axis and represents the
nonzero class of $\pi_1(SO(3))=\ZZ/2$.
After fixing one reference framing, any other oriented framing is represented
by a map $S^1\to SO(3)$. Hence, the homotopy classes form
$[S^1,SO(3)]\cong\pi_1(SO(3))\cong\ZZ/2$, so these are exactly the two
normal-framing classes.
\end{proof}

Varying $r$ through functions with the stated endpoint values gives
a homotopy of the reference framing. Stationary reparametrizations
of the fixed geodesic likewise give isotopic sections with transported
framings. These auxiliary choices therefore do not change the
framed-surgery output. The initial $+$ convention is fixed by
\eqref{eq:frame}; it need not coincide with an independently chosen
initial framing label in \cite[Definition~18.3]{QextC}.

Take a fibrewise tubular neighborhood $\nu\gamma\cong S^1\times D^3$
and set
\begin{equation}\label{eq:exterior}
                      \E=\M\setminus\Int\nu\gamma.
\end{equation}
Use the same neighborhood for both framings. Let
$\phi_\varepsilon:\partial(S^2\times D^2)\to\partial\E$ be the
attachment specified by $\lambda_\varepsilon$, with the boundary
orientations required for oriented circle surgery. Define
\begin{equation}\label{eq:candidates}
 \Sigma_\varepsilon(7)=\E\cup_{\phi_\varepsilon}(S^2\times D^2),
 \;\;\; K_\varepsilon=S^2\times\{0\},
 \;\;\; \varepsilon\in\{+,-\}.
\end{equation}
The signs label normal framings of the \emph{same} section, not two
positive loop classes.

This is the classical weight-circle surgery construction, specialized to
our displayed marking and framing; compare
\cite[Sections~14.1 and~14.7]{Hillman} and
\cite[Theorem~12.3 and Theorem~14.24]{QextC}. Neither the existence of the
underlying prism family in homotopy spheres nor the implication from a
weight-circle surgery to a homotopy sphere is claimed here as new. Hillman
states that surgery on a loop representing a weight element of the fundamental
group of a closed orientable four-manifold with $\chi=0$ yields a homotopy
four-sphere containing the belt sphere as a two-knot
\cite[Section~14.1]{Hillman}; for the groups $G_n$, the construction goes
back to Yoshikawa \cite{Yoshikawa1982}.

\begin{proposition}[The candidate exists]\label{prop:exists}
Both $\Sigma_+(7)$ and $\Sigma_-(7)$ are smooth homotopy four-spheres. Their belt
spheres have the common fibred exterior $\E$, with fibre
$\N\setminus\Int D^3$, and
\[
                  Q(\Sigma_\varepsilon(7),K_\varepsilon)\cong P_7.
\]
Moreover $\As(P_7)\cong G_7$. In particular both ambient manifolds are
homeomorphic to $S^4$.
\end{proposition}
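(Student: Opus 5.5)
The proof follows the standard weight-circle surgery argument in four steps: fundamental group, homology, the fibred exterior, and the quandle. The main obstacle is identifying the quandle.

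\emph{Fundamental group.} Let $\E$ be as in \eqref{eq:exterior}. The sphere $\partial\nu\gamma\cong S^1\times S^2$ has codimension one in $\M$ and the removed $S^1\times D^3$ has codimension four, so general position gives $\pi_1(\E)\cong\pi_1(\M)=G_7$. Gluing in $S^2\times D^2$ along $\phi_\varepsilon$ attaches a $2$-handle along the section $\gamma$, followed by a $4$-handle. Van Kampen therefore gives
\[
\pi_1(\Sigma_\varepsilon(7))\cong G_7/\langle\!\langle t\rangle\!\rangle ,
\]
which is trivial by Lemma~\ref{lem:group-data}. Neither step depends on the framing $\varepsilon$.

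\emph{Homology.} From $\chi(\M)=0$, removing $S^1\times D^3$ does not change $\chi$, and adding the $2$- and $4$-handles adds $2$, so $\chi(\Sigma_\varepsilon(7))=2$. The manifold is closed, oriented and simply connected, so $H_1=H_3=0$. Hence $H_2=0$ and $\Sigma_\varepsilon(7)$ is a homology sphere. A simply connected homology four-sphere is a homotopy four-sphere by Whitehead and Hurewicz. Freedman's theorem \cite{Freedman} then gives the homeomorphism to $S^4$.

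\emph{Fibred exterior.} The belt sphere $K_\varepsilon=S^2\times\{0\}$ has exterior
\[
\Sigma_\varepsilon(7)\setminus \Int(S^2\times D^2)=\E ,
\]
and this is the same manifold for both framings. Because $\gamma$ is a section and $\nu\gamma$ is chosen fibrewise, the restriction $p|_{\E}\colon\E\to S^1$ is a fibre bundle. Its fibre is $\N$ minus a small open ball around $c_0(s)$, namely $\N\setminus\Int D^3$, and its monodromy is $\bar f$ after an isotopy fixing a ball. Thus $K_\varepsilon$ is fibred with knot group $\pi_1(\E)=G_7$. A meridian of $K_\varepsilon$ is a fibre $\{pt\}\times\partial D^2$ of the glued-in $S^2\times D^2$, and this is isotopic in $\E$ to a pushoff of $\gamma$. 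It therefore represents $t$ once the orientation conventions of the oriented surgery are matched.

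\emph{Quandle and associated group.} This step is the main obstacle. I would compute the fundamental quandle from the marked group, as cosets
\[
Q(\Sigma_\varepsilon(7),K_\varepsilon)\cong P\backslash G_7 ,
\]
where $P=\pi_1(\partial\E)$ is the peripheral subgroup. Here $\partial\E=S^1\times S^2$, so $P=\langle t\rangle$. The quandle operation is $Pg*Ph=Pgh^{-1}th$, following Joyce and Winter \cite{Joyce,WinterHigher}. Since $G_7=\Gamma\rtimes\langle t\rangle$, each coset $P\backslash G_7$ has a unique representative in $\Gamma$. For $x,y\in\Gamma$, the identity $t\,g=\theta(g)\,t$ gives
\[
x y^{-1} t y = t\,\theta(xy^{-1})\,\theta(y)^{-1}\cdots
\]
This reduces to the coset of $\theta(xy^{-1})y$, up to matching the left/right convention against \eqref{eq:group}. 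That is exactly the operation in \eqref{eq:quandle}, so the quandle is $P_7$. I expect the sign and side bookkeeping here to be the delicate part; if necessary I would replace $\theta$ by $\theta^{-1}$ and check that the result is an isomorphic quandle via $x\mapsto x^{-1}$. Finally, the Fenn--Rourke identification \cite[Proposition~3.2]{FennRourke1992} holds for knots in homotopy spheres, where $\pi_1=\pi_2=0$. It says the associated group of the fundamental quandle is the knot group, so
\[
\As(P_7)\cong\pi_1(\E)\cong G_7 .
\]
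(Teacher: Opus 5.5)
Your argument is correct in outline and takes a genuinely different, self-contained route. The paper's proof is entirely by citation. It quotes the homotopy-sphere, fibred-exterior and quandle claims as the $n=7$ case of \cite[Proposition~18.4(i)]{QextC}. It quotes $\As(P_7)\cong G_7$ from the marked-group calculation of \cite[Proposition~14.22 and Theorem~14.24]{QextC}, citing \cite{Kamada2014,FennRourke1992} only as consistency checks, and then applies Freedman. You derive each assertion instead:
\begin{itemize}
\item the handle decomposition of $S^2\times D^2$ together with normal generation by $t$ for $\pi_1$;
\item the Euler characteristic for homology;
\item the fibrewise tubular neighbourhood for the fibration;
\item the Joyce--Winter coset model $\langle t\rangle\backslash G_7$ for the quandle;
\item Fenn--Rourke applied to the actual knot for $\As$.
\end{itemize}
Your version is independent of the monograph. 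It also shows directly that $\As(P_7)\cong G_7$ comes from the knot exterior, not from a general formula for generalized Alexander quandles, which is exactly the caveat the paper raises. The paper's version is shorter and inherits the orientation bookkeeping from \cite{QextC}.

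The quandle computation, however, fails as written, and your proposed fallback would not fix it. Convention \eqref{eq:group}, $t^{-1}gt=\theta(g)$, gives $gt=t\,\theta(g)$, not $tg=\theta(g)t$. With the correct rule, $xy^{-1}ty=t\,\theta(xy^{-1})\,y$ for $x,y\in\Gamma$. Hence $Px*Py=P\,\theta(xy^{-1})y$, which is \eqref{eq:quandle} exactly, and no fallback is needed.

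Your fallback is wrong in any case: $x\mapsto x^{-1}$ is not an isomorphism between $\GAlex(\Gamma,\theta^{-1})$ and $\GAlex(\Gamma,\theta)$. Writing $X=x^{-1}$, $Y=y^{-1}$, it turns $\theta(xy^{-1})y$ into $Y\theta(Y^{-1}X)$ with the same $\theta$. Matching this with $\theta^{-1}(XY^{-1})Y$ at $Y=1$ would force $\theta^2=\id$, which is false.

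If the meridian were $t^{-1}$, you would get the dual quandle $\GAlex(\Gamma,\theta^{-1})$. The correct repair is then an automorphism $\sigma$ of $\Gamma$ with $\sigma\theta\sigma^{-1}=\theta^{-1}$, for instance $\sigma=(\beta,\id)$ with $\beta(i)=-j$, $\beta(j)=-i$, $\beta(k)=-k$. This $\sigma$ carries the operation $\theta(xy^{-1})y$ to $\theta^{-1}(xy^{-1})y$.

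Two small points elsewhere:
\begin{itemize}
\item General position works because the core circle of $\nu\gamma$ has codimension three, not four.
\item The step $H_2=0$ needs $H_2$ to be torsion-free, which follows from $H_1=0$ by duality and universal coefficients.
\end{itemize}
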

\begin{proof}
The homotopy-sphere, fibred-exterior and quandle assertions are precisely the
$n=7$ case of \cite[Proposition~18.4(i), pp.~358--359]{QextC}, applied to the
section of \cite[Definition~18.3]{QextC}; the explicit framing calibration above
only chooses names for the two members of the same unordered framing pair. The
identification
\[
                  \As(P_7)\cong G_7
\]
is the $n=7$ instance of the marked-group calculation in
\cite[Proposition~14.22 and Theorem~14.24, pp.~241--244]{QextC}. It is also
consistent with the general associated-group theorem for oriented knot
quandles \cite[Theorem~4.1]{Kamada2014} and with the crossed-module formulation
of \cite[Proposition~3.2]{FennRourke1992}. Finally, a smooth homotopy
four-sphere is homeomorphic to $S^4$ by Freedman's topological
four-dimensional Poincar\'e theorem \cite{Freedman}.
\end{proof}

\begin{remark}[Periodic closed representative versus knot monodromy]
\label{rem:closed-vs-punctured-monodromy}
The closed mapping torus $\M$ is presented by the fixed-point-free order-six
isometry $\bar f$; see \cite[Proposition~15.24, pp.~275--276]{QextC}. This is
not the boundary-pointwise monodromy of the punctured fibre. After a
ball-moving trivialization along the section, the return map on
$\N\setminus\Int D^3$ may be chosen fixed on a boundary collar, and capping it
produces a fixed-point representative in the same unpointed closed isotopy
class as $\bar f$ \cite[Remark~18.15, p.~367]{QextC}. For $n>1$ the prism
family admits no finite-order geometric punctured-fibre monodromy
\cite[Proposition~15.21, pp.~273--274]{QextC}; equivalently, Hillman's prism
case has order-six meridianal automorphism but no order-six self-homeomorphism
of the closed prism manifold with a fixed point
\cite[Section~16.3, p.~230]{Hillman}, \cite[pp.~5--6]{Hillman2023}.
Accordingly, every later occurrence of ``period six'' refers to the unpointed
closed monodromy class (or to the displayed representative $\bar f$), not to a
periodic boundary-pointwise monodromy of the punctured fibre.
\end{remark}

\section{Weight loops and the finite fibred reduction}\label{sec:rigidity}
The two attachments satisfy
\begin{equation}\label{eq:gluck}
 \phi_- =\phi_+\circ\tau,\;\;\;
 \tau(x,e^{2\pi is})=(A(s)x,e^{2\pi is}),
\end{equation}
with $A$ from Lemma~\ref{lem:frame}. Thus, the two pairs in
\eqref{eq:candidates} are Gluck reconstructions of one another in the standard
sense \cite{Gluck1962}, \cite[Section~14.1]{Hillman} and
\cite[Section~5.2]{GS}.
They have the same exterior and quandle, but neither fact identifies
their ambient smooth types.

The distinction between positive conjugacy classes and their common
automorphism orbit is important.

\begin{lemma}[Two loop classes, one pair of framing choices]\label{lem:weight-loops}
The positive degree-one elements of $G_7$ have exactly two conjugacy
classes, represented by $t$ and $(-1)t$, and all normally generate
$G_7$. Every embedded loop representing such an element is ambiently
isotopic to a section of $\M\to S^1$. The orientation-preserving
involution
\[
                         \Phi[v,s]=[vi,s]
\]
interchanges the two conjugacy classes. Consequently every framed
surgery on a loop of degree $\pm1$ in $\M$ is represented by one of
\eqref{eq:candidates}, up to the orientation of the belt sphere.
\end{lemma}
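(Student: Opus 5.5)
The plan is to prove the four assertions of Lemma~\ref{lem:weight-loops} in order: count the classes, check normal generation, move loops to sections, and then handle $\Phi$ and the final reduction. The main tool for the count is twisted conjugacy. In $G_7$ we have $th=\theta^{-1}(h)t$ by \eqref{eq:group}. Every positive degree-one element therefore has the form $gt$ with $g\in\Gamma$, and conjugating by $h\in\Gamma$ gives
\[
 h^{-1}(gt)h=h^{-1}g\,\theta^{-1}(h)\,t .
\]
Conjugating by $t$ gives $t^{-1}(gt)t=\theta(g)t$, and $\theta(g)=t^{-1}gt$ is already $\theta^{-1}$-twisted conjugate to $g$. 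So the conjugacy classes of positive degree-one elements correspond exactly to the $\theta^{-1}$-twisted conjugacy classes of the finite group $\Gamma$.

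For the count I would use the standard fact that the number of $\psi$-twisted conjugacy classes of a finite group equals the number of ordinary conjugacy classes fixed by $\psi$. The conjugacy classes of $Q_8\times C_7$ are products of the classes $\{1\},\{-1\},\{\pm i\},\{\pm j\},\{\pm k\}$ of $Q_8$ with the classes $\{c^a\}$ of $C_7$. The map $\alpha$ permutes $\pm i,\pm j,\pm k$ cyclically, and $c^a\mapsto c^{-a}$ fixes only $a=0$. Hence exactly two classes are $\theta$-invariant, namely $(1,1)$ and $(-1,1)$, and there are exactly two conjugacy classes of positive degree-one elements. The elements $t$ and $(-1)t$ lie in different classes. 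Indeed, $-1=h^{-1}\theta^{-1}(h)$ would force the $C_7$-coordinate of $h$ to be trivial and $\alpha^{-1}(q)=-q$ for some $q\in Q_8$, which is impossible.

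For normal generation, kill $gt$. The quotient is then generated by the image of $\Gamma$, since $t\equiv g^{-1}$. It has trivial abelianization because $gt$ maps to a generator of $G_7^{\mathrm{ab}}\cong\ZZ$ (Lemma~\ref{lem:group-data}). It is also solvable, being a quotient of an extension of $\ZZ$ by the solvable group $\Gamma$, and a solvable group with trivial abelianization is trivial.

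Next I would move loops to sections. In a four-manifold, homotopic embedded circles are ambiently isotopic, by general position of the homotopy followed by the standard homotopy-implies-isotopy argument in codimension three. So it suffices to exhibit a section in each free homotopy class. The section $\gamma$ of \eqref{eq:section} represents $t$. Its image under $\Phi$ is again a section, because $\Phi$ preserves $s$.

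To check that $\Phi$ is well defined, lift it to $\widetilde\Phi(v,s)=(vi,s)$. Since $\zeta i=i\bar\zeta$, we have $\widetilde\Phi\gamma_{q,a}=\gamma_{q,-a}\widetilde\Phi$. Moreover $f(vi)=\omega^{-1}vij=-f(v)i$, so $\widetilde\Phi D=\gamma_{-1,0}D\widetilde\Phi$. Thus $\widetilde\Phi$ normalizes the deck group, and $\Phi$ is a well-defined self-map of $\M$ with $\Phi_*(t)=(-1)t$ up to conjugacy. It is an involution because $\widetilde\Phi^2=\gamma_{-1,0}$. It preserves orientation because right multiplication by a unit quaternion preserves the orientation of $S^3$ and $\widetilde\Phi$ is the identity on the $\RR$-factor.

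For the final reduction, take a loop of degree $-1$ and reverse its orientation; this only reverses the orientation of the belt sphere. If the loop then represents the class of $(-1)t$, apply $\Phi$ to reach the class of $t$. By the isotopy step the loop is isotopic to $\gamma$, and by Lemma~\ref{lem:frame} its framing is $\lambda_+$ or $\lambda_-$. The surgery output is therefore one of the pairs in \eqref{eq:candidates}. I expect the main obstacle to be the isotopy step: the codimension-three homotopy-implies-isotopy statement for circles in four-manifolds, together with carrying the framing through the isotopy. I would cite it as a standard general-position result rather than reprove it.
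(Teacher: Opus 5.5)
Your proposal is correct and follows the route the paper relies on: the paper's proof simply cites the monograph for the $\theta^{-1}$-twisted-conjugacy count, the section representatives and the interchange $\Phi_*(t)=(-1)t$, so you have supplied the omitted details along the same lines. One harmless slip: $\zeta\in\RR+\RR i$ commutes with $i$, so $\zeta i=i\zeta$ (not $i\bar\zeta$) and $\widetilde\Phi$ actually commutes with every $\gamma_{q,a}$. Well-definedness of $\Phi$ and the interchange of the two classes only need $\widetilde\Phi D=\gamma_{-1,0}D\widetilde\Phi$ and $\widetilde\Phi^2=\gamma_{-1,0}$, and you computed both correctly.
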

\begin{proof}
Parts concerning the two positive degree-one conjugacy classes, their
section representatives, and the involution $\Phi[v,s]=[vi,s]$ are exactly
\cite[Lemma~18.2(i)--(iii), pp.~356--358]{QextC}, specialized to $n=7$.
The conclusion for all framed surgeries of degree $\pm1$ is
\cite[Proposition~18.4(ii), pp.~358--359]{QextC}; reversing the orientation of
a degree $-1$ loop only reverses the orientation of the belt sphere.
\end{proof}

\begin{remark}[Why the two classes are distinct]\label{rem:two-classes}
The explicit $\theta^{-1}$-twisted-conjugacy calculation showing that $t$ and
$(-1)t$ are not conjugate, and that they exhaust the two positive degree-one
classes, is given in \cite[Lemma~18.2(i), proof, pp.~356--357]{QextC}. We do
not repeat that calculation here.
\end{remark}

\begin{theorem}[Fibred rigidity]\label{thm:rigidity}
Every smooth oriented fibred two-knot with quandle $P_7$ in a smooth
homotopy four-sphere is diffeomorphic as a pair, possibly after reversing
the knot orientation, to one of
\[
                     (\Sigma_+(7),K_+),\;\;\;(\Sigma_-(7),K_-).
\]
Thus, there are at most two such diffeomorphism types.
\end{theorem}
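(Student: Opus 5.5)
The plan is the standard reduction from a fibred two-knot to its closed knot manifold, followed by a count of the surgeries that recover the knot. Let $(\Sigma,K)$ be a smooth oriented fibred two-knot with quandle $P_7$ in a smooth homotopy four-sphere, and let $X$ be its exterior.

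\textbf{Step 1: identify the group and fibre.} Since $\Sigma$ is simply connected with $\pi_2=0$, the quandle--group dictionary (Fenn--Rourke, Kamada, as used in Proposition~\ref{prop:exists}) gives $\pi_1(X)\cong\As(P_7)\cong G_7$. The meridian goes to a positive degree-one weight element. By Lemma~\ref{lem:group-data}, $\pi_1(X)'=\Gamma$ is finite. A fibred two-knot with finite commutator subgroup has closed fibre $F$ a spherical three-manifold with $\pi_1(F)\cong\Gamma$, so the fibre of $X$ is $F\setminus\Int D^3$. Elliptic manifolds are determined by their fundamental group among prism manifolds, so $F\cong\N$.

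\textbf{Step 2: identify the knot manifold.} Surgery on $K$ produces a closed fibred four-manifold $M(K)=X\cup S^1\times D^3$ with $\chi=0$ and group $G_7$. Its fibre is $\N$ and its monodromy induces $\theta$ up to inner automorphism and conjugacy. Here I invoke Hillman's theorem \cite[Theorem~1]{Hillman2023}: the knot manifold of a fibred two-knot with finite, non-cyclic, indecomposable commutator subgroup is determined by its group. Behind it is McCullough's ``homotopy implies isotopy'' \cite{McCullough2002}, which says the monodromy is determined up to isotopy by its outer automorphism class. In the smooth category this gives a diffeomorphism $M(K)\diff\M$ (as in \cite{QextC}).

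\textbf{Step 3: recover $(\Sigma,K)$ from $\M$.} The pair $(\Sigma,K)$ is recovered by framed surgery on the core circle $S^1\times\{0\}$ of $M(K)$, which represents the meridian. Its image in $\M$ is an embedded loop of degree $\pm1$ that normally generates. By Lemma~\ref{lem:weight-loops}, every such loop is ambiently isotopic to a section. Moreover, the involution $\Phi$ carries the $(-1)t$ class to the $t$ class, so up to belt orientation we may take the section $\gamma$. Only the two framings of Lemma~\ref{lem:frame} remain, so $(\Sigma,K)$ is diffeomorphic to $(\Sigma_\pm(7),K_\pm)$.

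\textbf{Main obstacle.} The delicate point is Step~2 in the smooth category. Two things must be shown:
\begin{itemize}
\item The abstract isomorphism $\pi_1(M(K))\cong G_7$ can be realized so that the fibrations match. This uses that the meridianal automorphism class of $\Gamma$ is unique up to the allowed ambiguities; the choice $\theta$ versus $\theta^{-1}$ corresponds to orientation reversal.
\item Isotopic monodromies give diffeomorphic mapping tori.
\end{itemize}
I would cite the fibred-rigidity statement \cite[Theorem~18.5]{QextC}, which packages these steps for the prism family. The rest of the argument is bookkeeping of orientations.
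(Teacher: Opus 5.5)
Your proposal is correct and follows the paper's route. The paper's proof is simply the citation of \cite[Theorem~18.5(iii)]{QextC} specialized to $n=7$. Your outline is exactly the structure the paper attributes to that result: group and fibre from the quandle, then the knot manifold $\M$ via Hillman's determination theorem and McCullough's homotopy-implies-isotopy, then Lemma~\ref{lem:weight-loops} and the two framings of Lemma~\ref{lem:frame}. You also defer, as the paper does, to that cited result for the smooth identification $M(K)\diff\M$.

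The one loosely phrased step is the general claim that prism manifolds are determined by their fundamental group. What is actually needed is that a free orthogonal action of $Q_8\times C_7$ on $S^3$ is unique up to $O(4)$-conjugacy, which is the representation-theoretic argument the paper locates in \cite[Theorem~18.5(i)]{QextC}.
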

\begin{proof}
This is exactly \cite[Theorem~18.5(iii), pp.~359--360]{QextC}, specialized to
$n=7$. The phrase ``possibly after reversing the knot orientation'' is the
orientation qualification in that theorem. No non-fibred realization is being
classified here.
\end{proof}

The theorem does not assert that every smooth $P_7$ exterior is
fibred. Its precise standard-sphere consequence is
\begin{equation}\label{eq:fibred}
 \begin{gathered}
 P_7\text{ has a smooth fibred realization in the standard }S^4\\
 \Longleftrightarrow\;\;
 \Sigma_+(7)\diff S^4\ \text{or}\ \Sigma_-(7)\diff S^4.
 \end{gathered}
\end{equation}
This equivalence is recorded in \cite[Corollary~18.6(i)]{QextC}. By Theorem~\ref{std:thm:main}
both alternatives on the right-hand side hold.

\section{An order-six representative of the closed monodromy class}\label{per:sec:periodic}
Retain the quaternionic data, the positive-meridian convention and the
presentation \eqref{per:eq:torus} of Section~\ref{sec:construction}.
An embedded loop of degree $\pm1$ under $p_*$ will be called a
\emph{weight circle}; Lemma~\ref{lem:weight-loops} shows that these
loops normally generate $G_7$ and that their framed surgeries are
represented by the two candidates in \eqref{eq:candidates}.

We first record the elementary rigidity statement used in all subsequent
computations.

\begin{lemma}[Two-sided multiplication rigidity]\label{per:lem:rigidity}
Let $A,B,C,E$ be unit quaternions. Then $AvB=CvE$ for all $v\in S^3$ if
and only if $(C,E)=(\epsilon A,\epsilon B)$ for some $\epsilon=\pm1$.
Moreover the isometry $v\mapsto AvB$ of $S^3$ has a fixed point if and
only if
\begin{equation}\label{per:eq:fixcrit}
                     \Ree(A)=\Ree(B),
\end{equation}
and when $A\neq\pm1$, its fixed-point set is a great circle.
\end{lemma}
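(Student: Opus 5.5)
The plan is to prove the three assertions in turn, using the double cover $S^3\times S^3\to SO(4)$, $(A,B)\mapsto(v\mapsto AvB^{-1})$, and elementary quaternion algebra.

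\emph{Rigidity.} Suppose $AvB=CvE$ for all $v$. Evaluating at $v=1$ gives $AB=CE$. Substituting $v=wB^{-1}$ then shows
\[
C^{-1}A\,w=w\,B^{-1}\cdots,
\]
which is clumsy, so I will normalize first. Put $X=C^{-1}A$ and $Y=BE^{-1}$. The identity becomes $XvY=v$ for all $v$. Taking $v=1$ gives $Y=X^{-1}$, so $XvX^{-1}=v$ for all $v$. Hence $X$ lies in the centre of $\HH$ intersected with $S^3$, namely $\{\pm1\}$. With $X=\epsilon$ this gives $A=\epsilon C$ and $B=\epsilon E$, i.e.\ $(C,E)=(\epsilon A,\epsilon B)$. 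The converse is immediate, since $\epsilon^2=1$.

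\emph{Fixed-point criterion.} A fixed point is a unit $v$ with $AvB=v$, i.e.\ $Av=vB^{-1}$, i.e.\ $vB^{-1}v^{-1}=A$. So a fixed point exists if and only if $A$ and $B^{-1}$ are conjugate in $S^3$. Two unit quaternions are conjugate if and only if they have the same real part, because $v\mapsto qvq^{-1}$ acts on the pure imaginary part as the full group $SO(3)$ and preserves the real part. Since $\Ree(B^{-1})=\Ree(\bar B)=\Ree(B)$, the condition is exactly \eqref{per:eq:fixcrit}.

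\emph{Shape of the fixed set when $A\neq\pm1$.} First fix one solution $v_0$, so $A=v_0B^{-1}v_0^{-1}$. Every fixed point is $v=v_0w$ with $w$ in the centralizer $Z(B^{-1})=Z(B)$. Indeed, $Av_0w=v_0w B^{-1}$ reduces to $B^{-1}w=wB^{-1}$. Next, since $A\neq\pm1$, also $B\neq\pm1$. The centralizer of a non-real unit quaternion $B=\cos\beta+\sin\beta\,\mu$, with $\mu$ a pure unit and $\sin\beta\neq0$, is the circle $\{\cos\phi+\sin\phi\,\mu\}$. This circle is the intersection of $S^3$ with the real $2$-plane $\RR+\RR\mu$, so it is a great circle. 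Finally, left translation by $v_0$ is an isometry of $S^3$, so it carries great circles to great circles, and $\Fix=v_0Z(B)\cap S^3$ is a great circle.

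The only slightly delicate step is the conjugacy criterion, i.e.\ that same real part implies conjugate. I will justify it via transitivity of $SO(3)$ on the unit sphere of pure quaternions, using the surjection $S^3\to SO(3)$. Everything else is routine algebra.
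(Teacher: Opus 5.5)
Your proof is correct and follows essentially the same route as the paper: rigidity reduces to $C^{-1}A$ being central (the paper's $\lambda=C^{-1}A=EB^{-1}$), fixed points are detected by conjugacy of $A$ and $B^{-1}$ via real parts, and the fixed set is a coset of a centralizer. Your $v_0Z(B)$ is the same set as the paper's coset of the centralizer of $A$. The only thing to tidy is the abandoned substitution $v=wB^{-1}$ at the start of your rigidity paragraph, which should simply be deleted.
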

\begin{proof}
Both assertions are classical; see Du Val \cite{DuVal1964}, Conway--Smith
\cite{ConwaySmith2003}, and, in the form used for spherical orbifolds,
\cite{MecchiaSeppi2015}. If $AvB=CvE$ for all $v$, then $\lambda:=C^{-1}A=EB^{-1}$
satisfies $\lambda v=v\lambda$ for all $v\in S^3$, so $\lambda=\pm1$; the
converse is clear. Next, $AvB=v$ is equivalent to $v^{-1}Av=B^{-1}$; two
unit quaternions are conjugate in $S^3$ exactly when their real parts agree,
and $\Ree(B^{-1})=\Ree(\overline B)=\Ree(B)$, which gives
\eqref{per:eq:fixcrit}. The solution set is a coset of the centralizer of
$A$, which is a great circle for $A\neq\pm1$.
\end{proof}

For the Seifert interpretation of the order-six closed monodromy class see
\cite[Proposition~21.2(iv)]{QextC} and \cite[Section~16.3, p.~230]{Hillman}.
Mapping tori of finite-order isometries of spherical space forms are
$\mathbb S^3\times\mathbb E^1$-manifolds, and their suspension circle actions
are described in \cite[Sections~11.4 and~13.5]{Hillman}.

\begin{theorem}[Order-six periodicity and the geometry of $\M$]
\label{per:thm:periodic}
With the data of Section~\ref{sec:construction} and \eqref{per:eq:torus}, the following statements hold:
\begin{enumerate}[label=(\roman*),leftmargin=2.2em]
\item $f^n(v)=\omega^{-n}vj^n$ for all $n$, and
      $f^6=\gamma_{-1,0}$, that is, $f^6(v)=-v$. In particular $f$ has
      order $12$ in $\Isom(S^3)$.
\item $f^n\in\Gamma$ if and only if $6\mid n$. Hence, the closed
      monodromy $\bar f$ has order exactly six in $\operatorname{Diff}(\N)$,
      and $\theta$ has order exactly six in $\operatorname{Aut}(\Gamma)$.
\item $G_7$ acts freely, properly discontinuously, cocompactly and
      isometrically on $S^3\times\RR$; thus $\M$ is a closed
      four-manifold with the geometry $\mathbb S^3\times\mathbb E^1$, and
      it admits a metric of positive scalar curvature.
\item The subgroup $\Gamma\times\langle T\rangle$, where $T(v,s)=(v,s+6)$,
      equals $\langle\Gamma,D^6\rangle$ and has index six in $G_7$. The
      corresponding cover is canonically
      \begin{equation}\label{per:eq:sixfold}
       \N\times S^1\to\M,\;\;\;
       \M=(\N\times S^1)\big/\bigl\langle(\bar f^{-1},
             \text{rotation by }\tfrac{2\pi}{6})\bigr\rangle,
      \end{equation}
      a free $\ZZ/6$ quotient, where $S^1=\RR/6\ZZ$.
\end{enumerate}
\end{theorem}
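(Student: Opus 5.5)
The plan is to reduce every assertion to explicit quaternion arithmetic together with the two-sided multiplication rigidity of Lemma~\ref{per:lem:rigidity}. For (i), I will first record the two elementary orders: $\omega$ has real part $-\tfrac12$, so $\omega^3=1$, and $j^4=1$ with $j^2=-1$. Induction on $n$ then gives $f^n(v)=\omega^{-n}vj^n$, using $f^{n+1}(v)=f(f^n(v))=\omega^{-1}\omega^{-n}vj^nj$. The same formula holds for negative $n$, since $f^{-1}(v)=\omega vj^{-1}$. Taking $n=6$ gives $\omega^{-6}=1$ and $j^6=-1$, hence $f^6(v)=-v=\gamma_{-1,0}(v)$. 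For the order, Lemma~\ref{per:lem:rigidity} says $f^n=\id$ iff $(\omega^{-n},j^n)=\pm(1,1)$. Since $\omega^{-n}\in\{1,\omega,\omega^2\}$ is $\pm1$ only when $3\mid n$, and then equals $1$, we also need $j^n=1$, i.e.\ $4\mid n$. Hence the order is exactly $12$.

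For (ii), suppose $f^n=\gamma_{q,a}$, that is, $\omega^{-n}vj^n=qv\zeta^{-a}$ for all $v$. Lemma~\ref{per:lem:rigidity} gives $\omega^{-n}=\epsilon q$ and $j^n=\epsilon\zeta^{-a}$. The second equation compares an element of $\{\pm1,\pm j\}$ with an element of the circle spanned by $1$ and $i$. Hence $j^n=\pm1$ and $\zeta^{-a}=\pm1$, which forces $a\equiv0 \pmod 7$ and $n$ even. The first equation then requires $\omega^{-n}\in Q_8$, which forces $3\mid n$, so $6\mid n$. The converse follows from (i), since $f^{6m}=\gamma_{(-1)^m,0}$. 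Now $\bar f^{\,n}=\id_{\N}$ exactly when $f^n$ is a lift of the identity, i.e.\ a deck transformation in $\Gamma$. So $\bar f$ has order six. Independently, $\theta^n(q,c^a)=(\alpha^n(q),c^{(-1)^na})$, and this is the identity iff $3\mid n$ and $n$ is even, since $\alpha(i)=j\neq i$ and $c^{-1}\neq c$.

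For (iii), $G_7$ is realized as the group generated by $\Gamma$ and $D$ acting on $S^3\times\RR$ with the product of the round and Euclidean metrics. The relation $D^{-1}\gamma_{q,a}D=\gamma_{\alpha(q),-a}$ is already recorded. The map $g\mapsto$ (translation in $s$) is a homomorphism onto $\ZZ$ with kernel $\Gamma$, which acts freely by \eqref{eq:action}. So the action is faithful and isomorphic to $G_7$. Freeness follows because an element of nonzero degree shifts $s$, while degree-zero elements lie in $\Gamma$. Proper discontinuity follows from finiteness of $\Gamma$ plus discreteness of the $s$-translations. Cocompactness follows because $S^3\times[0,1]$ surjects onto the quotient. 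All elements are isometries because $f$ and $\gamma_{q,a}$ are. Positive scalar curvature descends from the round $S^3$ factor.

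For (iv), the computation $D^6(v,s)=(f^{-6}(v),s+6)=(-v,s+6)=\gamma_{-1,0}T(v,s)$ gives $\langle\Gamma,D^6\rangle=\langle\Gamma,T\rangle$. Since $T$ commutes with $\Gamma$ and with $D$, this subgroup is $\Gamma\times\langle T\rangle$. It is precisely the preimage of $6\ZZ$ under the degree map, so it has index six. The associated cover is $(S^3\times\RR)/(\Gamma\times\langle T\rangle)=\N\times\RR/6\ZZ$. The residual deck group $\ZZ/6$ is generated by the image of $D$, which acts as $(\bar f^{-1},\ \text{rotation by } 2\pi/6)$; it is free because the rotation component is. This yields \eqref{per:eq:sixfold}.

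I expect no conceptual obstacle. The only delicate point is sign and direction bookkeeping: whether $D$ induces $\bar f^{-1}$ or $\bar f$, and the identification of $T$ with $D^6$ only up to the central element $\gamma_{-1,0}$. I will check this carefully against the convention in \eqref{per:eq:torus}.
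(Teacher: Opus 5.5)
Your proposal is correct and follows essentially the same route as the paper: the power formula $f^n(v)=\omega^{-n}vj^n$ with $f^6=\gamma_{-1,0}$, the degree/translation argument for freeness, proper discontinuity and cocompactness, and the identity $\gamma_{-1,0}D^6=T$ followed by quotienting first by $T$. The main difference is that you derive (i), (ii) and (iv) directly from Lemma~\ref{per:lem:rigidity}, where the paper largely quotes the monograph; this includes your check, via $(\omega^{-n},j^n)=\pm(1,1)$, that the order of $f$ is exactly $12$. Your direction bookkeeping is also right: $D$ induces $\bar f^{-1}$ together with rotation by $2\pi/6$, as in \eqref{per:eq:sixfold}.
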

\begin{proof}
The quaternionic power calculation and the exact order-six statement for the
closed monodromy are the $n=7$ case of
\cite[Proposition~15.24, pp.~275--276]{QextC}. In the present lift one has
$f^6(v)=-v=\gamma_{-1,0}(v)$, so $f^{12}=\id$ and $f$ has order twelve; moreover
$f^n\in\Gamma$ exactly when $6\mid n$. Since
$\theta=(\alpha,-\id)$ with $\operatorname{ord}(\alpha)=3$, one also has
$\operatorname{ord}(\theta)=6$. This proves~(i)--(ii).

For~(iii), every element of nonzero base degree translates the $\mathbb R$
coordinate and therefore has no fixed point, whereas a degree-zero element lies
in the freely acting finite group $\Gamma$. Finiteness of $\Gamma$ and integral
base translation give proper discontinuity and cocompactness. The round product
metric on $S^3\times\mathbb R$ is invariant and has positive scalar curvature,
so it descends to $\M$.

Part~(iv) is the $n=7$ instance of
\cite[Proposition~21.2(iv), pp.~379--380]{QextC}. In the convention
\eqref{per:eq:torus},
\[
 D^6(v,s)=(-v,s+6),\;\;\;
 \gamma_{-1,0}D^6(v,s)=(v,s+6)=T(v,s),
\]
so $\langle\Gamma,D^6\rangle=\Gamma\times\langle T\rangle$, of index six.
Quotienting first by $T$ gives the displayed free $\mathbb Z/6$ quotient
\eqref{per:eq:sixfold}.
\end{proof}

\begin{proposition}[The fixed-point sets of the powers]
\label{per:prop:fixed}\label{nf:prop:fixed}
For $1\leq n\leq5$, the fixed-point set of $\bar f^{\,n}$ on $\N$ is
\begin{equation}\label{nf:eq:fixed}
 \Fix(\bar f^{\,n})=\emptyset\;\;(n=1,2,4,5),\;\;\;
 \Fix(\bar f^{\,3})=\mathcal C_1\sqcup\mathcal C_2\sqcup\mathcal C_3,
\end{equation}
where the $\mathcal C_r$ are disjoint embedded geodesic circles. The
involution $\bar f^{\,3}$ fixes each $\mathcal C_r$ pointwise, and
$\bar f$ permutes the three circles cyclically. Upstairs, the preimage of
$\Fix(\bar f^{\,3})$ in $S^3$ is the union of the $42$ great circles
$\Fix(\gamma_{q,a}f^3)$ with $q\in\{\pm i,\pm j,\pm k\}$ and
$a\in\ZZ/7$, on which $\Gamma$ acts with three orbits of size fourteen.
\end{proposition}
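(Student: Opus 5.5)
The plan is to compute everything upstairs in $S^3$, using the explicit formula $f^n(v)=\omega^{-n}vj^n$ from Theorem~\ref{per:thm:periodic}(i) and the fixed-point criterion \eqref{per:eq:fixcrit} of Lemma~\ref{per:lem:rigidity}. A point $[v]\in\N$ is fixed by $\bar f^{\,n}$ exactly when $\gamma_{q,a}f^n(v)=v$ for some $(q,c^a)\in\Gamma$. Here
\[
 \gamma_{q,a}f^n(v)=(q\omega^{-n})\,v\,(j^n\zeta^{-a}).
\]
So the preimage of $\Fix(\bar f^{\,n})$ is the union over $(q,a)$ of the sets $\Fix(\gamma_{q,a}f^n)$. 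Each of these is nonempty if and only if
\[
 \Ree(q\omega^{-n})=\Ree(j^n\zeta^{-a}),
\]
where $\zeta$ is read as the unit quaternion $\cos(2\pi/7)+i\sin(2\pi/7)$.

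\textbf{Real parts on each side.} For the left side: $\omega^3=1$, and $Q_8\langle\omega\rangle$ is the binary tetrahedral group. When $3\nmid n$, every element $q\omega^{\mp1}$ has order $3$ or $6$, so its real part is $\pm\tfrac12$. When $n=3$, the left side is just $\Ree(q)$. For the right side: if $n$ is odd, then $j^n=\pm j$ and $\Ree(\pm j\zeta^{-a})=0$. If $n$ is even, then $j^n=\pm1$ and the real part is $\pm\cos(2\pi a/7)$, which never equals $\pm\tfrac12$ because $7\nmid 6a$ unless $a=0$, and then the value is $\pm1$.

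\textbf{Case $n=1,2,4,5$.} The left side is $\pm\tfrac12$, while the right side lies in $\{0\}\cup\{\pm\cos(2\pi a/7)\}\setminus\{\pm\tfrac12\}$. So there is no equality and $\Fix(\bar f^{\,n})=\emptyset$.

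\textbf{Case $n=3$.} The right side is $0$, so we need $\Ree(q)=0$, that is $q\in\{\pm i,\pm j,\pm k\}$, with $a$ arbitrary. This gives $42$ elements $g f^3$. Since $q\neq\pm1$, each $\Fix(gf^3)$ is a great circle by Lemma~\ref{per:lem:rigidity}.

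\textbf{Disjointness.} Suppose $gf^3v=v=g'f^3v$. Then $g'^{-1}g$ fixes $v$, and freeness of $\Gamma$ forces $g=g'$. Hence the $42$ circles are pairwise disjoint, and distinct elements have distinct fixed circles.

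\textbf{$\Gamma$-action on the circles.} Next I analyse how $\Gamma$ acts on this family. We have $h\,\Fix(gf^3)=\Fix(hgf^3h^{-1})$, and
\[
 f^3\gamma_{q,a}f^{-3}=\gamma_{q,-a},
\]
because $\alpha^3=\id$. Hence $hgf^3h^{-1}=\bigl(h\,g\,\theta^{3}(h)^{-1}\bigr)f^3$ with $\theta^3=(\id,-\id)$; this is a twisted conjugation. For $h=(p,c^b)$ it sends $(q,a)$ to $(pqp^{-1},\,a+2b)$. This action is transitive on $\ZZ/7$ because $2$ is invertible mod $7$. On the $Q_8$-factor its orbits are $\{\pm i\},\{\pm j\},\{\pm k\}$. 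So there are exactly three orbits, each of size $14$.

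\textbf{Embedded image circles.} By the disjointness step, the setwise stabilizer of a circle equals the twisted stabilizer of its element. For the element $(i,a)$ this is $\{(p,1):p\in\{\pm1,\pm i\}\}$, of order $4$, consistent with $56/14=4$. The stabilizer acts freely on its circle, and $\Gamma$-translates of a circle are disjoint or equal. Therefore each orbit projects to a single embedded circle $\mathcal C_r\subset\N$, which is geodesic and fixed pointwise by $\bar f^{\,3}$.

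\textbf{$\bar f$ permutes the circles.} Finally, $f\,\Fix(gf^3)=\Fix(fgf^3f^{-1})=\Fix(\theta^{-1}(g)f^3)$. The automorphism $\theta^{-1}$ acts on the $Q_8$-coordinate by $\alpha^{-1}$, which cycles the classes $\{\pm i\},\{\pm j\},\{\pm k\}$. Hence $\bar f$ permutes $\mathcal C_1,\mathcal C_2,\mathcal C_3$ cyclically.

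\textbf{Expected obstacle.} The main care is needed in the sign and convention bookkeeping. The real-part table must be checked against the precise identification of $\zeta$ as a quaternion, and the twisted-conjugation formula must match the right-action convention of \eqref{eq:action}. A convention slip there could merge or split the three orbits.
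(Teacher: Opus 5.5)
Your proof is correct, and for $n=3$ it follows the paper's argument essentially step for step. Both use the real-part criterion of Lemma~\ref{per:lem:rigidity}. Both get disjointness of the $42$ circles from freeness of $\Gamma$. Both use the twisted conjugation $(q,a)\mapsto(pqp^{-1},a+2b)$ to get three orbits of size fourteen, and both let $f$ act through $\alpha$.

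The difference is in the cases $n=1,2,4,5$. The paper does not compute these. It imports them from the monograph: Proposition~15.24 there for $n=1,5$, and Proposition~21.2(iii) there for $n=2,4$, with the remark $3\nmid 7$. You derive them from the same criterion. You observe that $\Ree(q\omega^{\mp1})=\pm\tfrac12$, while $\Ree(j^n\zeta^{-a})$ is $0$ for odd $n$ and $\pm\cos(2\pi a/7)\neq\pm\tfrac12$ for even $n$.

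Your route makes the proposition self-contained, resting only on Lemma~\ref{per:lem:rigidity} and $f^n(v)=\omega^{-n}vj^n$. It also shows exactly where the arithmetic condition $3\nmid 7$ enters. For even $n$, an element of order $3$ or $6$ in the binary tetrahedral group can never be conjugate to $\pm\zeta^{-a}$, whose order divides $14$. The paper's route is shorter and keeps the statement tied to the monograph's treatment of the whole prism family.

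You also supply the order-four stabilizer computation, which justifies why each orbit projects to a single embedded circle in $\N$; the paper leaves this implicit.

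One harmless slip: with the convention $f\gamma_{q,a}f^{-1}=\gamma_{\alpha(q),-a}$, conjugation by $f$ is $\theta$ rather than $\theta^{-1}$. So the correct formula is $f\,\Fix(gf^3)=\Fix(\theta(g)f^3)$. Since $\alpha$ and $\alpha^{-1}$ both cycle the three axes $\{\pm i\},\{\pm j\},\{\pm k\}$, the conclusion is unaffected.
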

\begin{proof}
The cases $n=1,5$ are fixed-point free by
\cite[Proposition~15.24, pp.~275--276]{QextC} (the case $n=5$ is the inverse
of $n=1$). The cases $n=2,4$ are fixed-point free by
\cite[Proposition~21.2(iii), pp.~379--380]{QextC}, since $3\nmid7$ (and the
fourth power is the inverse of the second). It remains to determine the fixed
set of the third power and its component count.

A point $[v]\in\N$ is fixed by $\bar f^{\,3}$ exactly when
$f^3(v)=\gamma_{q,a}(v)$ for some $(q,a)\in Q_8\times\mathbb Z/7$.
Equivalently,
\[
        q^{-1}v\,j^3\zeta^a=v,
\]
because $\omega^{-3}=1$. By Lemma~\ref{per:lem:rigidity}, this equation has
a solution exactly when
\[
        \Ree(q^{-1})=\Ree(j^3\zeta^a)=0,
\]
that is, precisely for
$q\in\{\pm i,\pm j,\pm k\}$ and arbitrary $a\in\mathbb Z/7$. For each of
these $42$ choices the fixing isometry has a great circle as its fixed set.
As $(q,a)$ ranges over these choices, this is the same collection as the
$42$ circles $\Fix(\gamma_{q,a}f^3)$ appearing in the statement (inverting the
$\Gamma$-factor merely relabels the set of pairs).

Distinct elements of the coset $\Gamma f^3$ cannot fix a common point: their
quotient would be a nonidentity element of the freely acting deck group
$\Gamma$ fixing that point. Hence, these $42$ great circles are pairwise
disjoint. Conjugation by a deck transformation gives
\[
 \gamma_{p,b}(\gamma_{q,a}f^3)\gamma_{p,b}^{-1}
       =\gamma_{pqp^{-1},\,a+2b}f^3,
\]
because $f^3\gamma_{p,b}f^{-3}=\gamma_{p,-b}$. The first coordinate ranges
over the $Q_8$-conjugacy class $\{q,-q\}$ and the second over all of
$\mathbb Z/7$. Thus, the $42$ circles split into three $\Gamma$-orbits of size
$14$, indexed by the three axes $\{\pm i\}$, $\{\pm j\}$ and
$\{\pm k\}$. Their images are therefore three disjoint embedded geodesic
circles $\mathcal C_1,\mathcal C_2,\mathcal C_3$ in $\N$.

Finally,
$f\Fix(\gamma_{q,a}f^3)=\Fix(\gamma_{\alpha(q),-a}f^3)$ and $\alpha$ cycles
$i\mapsto j\mapsto k\mapsto i$, so $\bar f$ cyclically permutes the three
components. By construction $\bar f^3$ fixes each component pointwise. This
refines the exceptional-orbit description in
\cite[Proposition~21.2(iv), pp.~379--380]{QextC}.
\end{proof}

\section{The canonical Seifert structure of \texorpdfstring{$\M$}{M7}}\label{per:sec:action}
Periodicity of the monodromy converts the mapping-torus direction into a
circle action. Write $\Lam=\langle\Gamma,f\rangle\subset\Isom(S^3)$.

\begin{theorem}[The canonical circle action]\label{per:thm:action}
Translation $T_r(v,s)=(v,s+r)$ of the $\RR$-factor descends to a smooth
effective action of $S^1=\RR/6\ZZ$ on $\M$ with the following
properties.
\begin{enumerate}[label=(\roman*),leftmargin=2.2em]
\item The action has no fixed points. The isotropy group at $[v,s]$ is
      cyclic of order $6/k_0$, where
      $k_0=\min\{n>0:\bar f^{\,n}[v]=[v]\}$, and the orbit through
      $[v,s]$ has degree $k_0$ under $p_*$.
\item $k_0\in\{3,6\}$. Principal orbits have $k_0=6$; the exceptional
      orbits are those with $[v]\in\Fix(\bar f^{\,3})$; they have
      isotropy of order two and degree three, and their union is a single
      embedded torus
      \[
       \mathcal E=\bigl\{[v,s]:[v]\in\Fix(\bar f^{\,3})\bigr\}\cong T^2.
      \]
\item The class of a principal orbit is $(-1)t^{6}\in G_7$, which is
      central and has degree six.
\item $\Lam$ has order $336$, and the orbit map realizes $\M$ as a
      Seifert fibred four-manifold over the spherical three-orbifold
      \[
       \M/S^1=\N/\langle\bar f\rangle=S^3/\Lam,
      \]
      whose singular set is one circle with isotropy of order two.
\item There is a central extension
      \begin{equation}\label{per:eq:centralext}
       1\to\ZZ\bigl\langle(-1)t^{6}\bigr\rangle
        \to G_7\to\Lam\to1.
      \end{equation}
\end{enumerate}
\end{theorem}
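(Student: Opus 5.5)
The proof works directly in the universal-cover model \eqref{per:eq:torus}. Each part reduces to the quaternionic computations already recorded in Theorem~\ref{per:thm:periodic} and Proposition~\ref{per:prop:fixed}.

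\emph{The action and its isotropy.} First I check that $T_r$ commutes with every $\gamma_{q,a}$ and with $D$. The former act only on the $S^3$-factor, and $D$ shifts $s$ by $1$ while applying $f^{-1}$. Hence $T_r$ normalizes, indeed centralizes, the deck action of $G_7$, and $\RR$ acts on $\M$. By Theorem~\ref{per:thm:periodic}(iv) we have $T_6=\gamma_{-1,0}D^6\in G_7$, so $T_6$ acts trivially and the action factors through $\RR/6\ZZ$.

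For isotropy, note that $T_r[v,s]=[v,s]$ exactly when some $g\in G_7$ satisfies $g(v,s)=(v,s+r)$. Comparing $\RR$-coordinates forces $r=n\in\ZZ$ and $g=\gamma D^n$ with $\gamma f^{-n}(v)=v$, that is, $\bar f^{\,n}[v]=[v]$. Thus the stabilizer is $k_0\ZZ/6\ZZ$, of order $6/k_0$. The same condition with $v$ arbitrary gives effectiveness: it would require $f^n\in\Gamma$, which by Theorem~\ref{per:thm:periodic}(ii) forces $6\mid n$. The isotropy is never all of $S^1$, so there are no fixed points. The orbit is traced once by $r\in[0,k_0]$, and the base coordinate advances by $k_0$, which gives degree $k_0$. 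This proves (i).

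\emph{Exceptional orbits and the orbit class.} Part (ii) then follows from Proposition~\ref{per:prop:fixed}. Since $\bar f^{\,n}$ is fixed-point free for $n=1,2,4,5$, we get $k_0\in\{3,6\}$, and $k_0=3$ exactly on $\mathcal C_1\sqcup\mathcal C_2\sqcup\mathcal C_3$. The set $\mathcal E$ is the mapping torus of $\bar f$ restricted to these three circles. Because $\bar f$ permutes them cyclically, $\mathcal E$ is connected, and it equals the mapping torus of $\bar f^{\,3}|_{\mathcal C_1}=\id$. It is therefore a torus rather than a Klein bottle.

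For (iii), lift the principal orbit at $[v,0]$ to the path $r\mapsto(v,r)$, $r\in[0,6]$. This path ends at $T(v,0)=\gamma_{-1,0}D^6(v,0)$, so the orbit represents $(-1)t^6$, up to the orientation convention that identifies $D$ with $t$. The element is central because $T$ commutes with all deck transformations.

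\emph{The quotient and the extension.} For (iv) and (v), I define $G_7\to\Isom(S^3)$ by sending $g$ to its $S^3$-component, so that $\gamma_{q,a}\mapsto\gamma_{q,a}$ and $D\mapsto f^{-1}$. The image is $\Lam$. Since $f$ normalizes $\Gamma$ and $f^n\in\Gamma$ exactly when $6\mid n$, we get $|\Lam|=6\cdot56=336$. The kernel consists of the elements acting as pure $\RR$-translations, which is $\langle T\rangle=\langle(-1)t^6\rangle\cong\ZZ$; this yields \eqref{per:eq:centralext}. Quotienting $S^3\times\RR$ first by the $\RR$-action gives $\M/S^1=S^3/\Lam=\N/\langle\bar f\rangle$. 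The singular locus of this orbifold is the image of $\mathcal E$: a single circle with local group $\ZZ/2$.

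I expect the main obstacle to be bookkeeping rather than ideas. The sign and orientation conventions, namely that $D$ corresponds to $t$ rather than $t^{-1}$, and that the orbit class is $(-1)t^6$ rather than $t^6$, must be matched exactly with \eqref{eq:group}. I would settle these first by checking $D^{-1}\gamma_{q,a}D=\gamma_{\alpha(q),-a}$ and $D^6=\gamma_{-1,0}T$.
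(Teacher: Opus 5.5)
Your proposal is correct and follows essentially the paper's route. The shared steps are translations commuting with the deck group, period six from $T_6=\gamma_{-1,0}D^6$, isotropy and degree read off from $\bar f^{\,n}[v]=[v]$, Proposition~\ref{per:prop:fixed} for (ii), and the $S^3$-component homomorphism $G_7\to\Lam$ with kernel $\langle T\rangle$ for (iv)--(v). The differences are minor:
\begin{itemize}
\item You get effectiveness pointwise: if $T_r$ acts trivially then $\bar f^{\,r}=\id_{\N}$, and the order-six statement of Theorem~\ref{per:thm:periodic}(ii) gives $6\mid r$ directly. Your intermediate claim $f^n\in\Gamma$ is not needed, and justifying it would require the paper's local-constancy argument.
\item You get centrality of $T$ from its commuting with every deck transformation of the free action. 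The paper instead uses $\theta(-1)=-1$ and $\theta^6=\id$.
\end{itemize}
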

\begin{proof}
The translations $T_r$ commute with the deck transformations of
\eqref{per:eq:torus}, so they descend to $\M$. The descended action has
period six, because $T_6=\gamma_{-1,0}D^6$ is a deck transformation
($f^{-6}=-\id$ by Theorem~\ref{per:thm:periodic}(i)). To see that the
induced $\RR/6\ZZ$-action is effective, suppose that $T_r$ acts trivially
on $\M$. For each $x\in S^3\times\RR$ there is a unique deck
transformation $\delta_x\in G_7$ with
\[
                    T_r(x)=\delta_x(x),
\]
because the deck action is free. Proper discontinuity makes
$x\mapsto\delta_x$ locally constant, and connectedness of
$S^3\times\RR$ therefore makes it constant. Thus
$T_r=\gamma_{q,a}D^n$ globally for some $(q,a)$ and $n$. Comparing the
$\RR$-coordinates gives $r=n\in\ZZ$, and comparing the $S^3$-coordinates
gives $\gamma_{q,a}f^{-n}=\id$. Hence, $f^n\in\Gamma$, so $6\mid n$ by
Theorem~\ref{per:thm:periodic}(ii). Therefore, the kernel of the descended
$\RR$-action is exactly $6\ZZ$. The isotropy formula and the
orbit-space description of the suspension action are standard; see
\cite[Proposition~21.2(iv), pp.~379--380]{QextC},
\cite[Sections~11.4 and~13.5]{Hillman}, and \cite{LeeRaymond2010}.
For~(i), the orbit with minimal period $k_0$ is represented by
$\{v\}\times[0,k_0]$ and consequently has degree $k_0$ over
$\RR/\ZZ$.

(ii) By Proposition~\ref{per:prop:fixed}, $\Fix(\bar f)=\Fix(\bar f^{\,2})
=\emptyset$, so $k_0\neq1,2$; and $k_0=3$ exactly on $\Fix(\bar f^{\,3})$.
The exceptional set is the image of $\Fix(\bar f^{\,3})\times\RR$, which
is the mapping torus of the restriction of $\bar f$ to the three
circles. That restriction permutes them cyclically and its cube is the
identity, so the mapping torus is $\mathcal C_1\times(\RR/3\ZZ)\cong T^2$;
in particular it is connected.

(iii) With the basepoint $[v_0,0]$ at a principal point, the orbit lifts
to the path from $(v_0,0)$ to $(v_0,6)$, and the deck element carrying the
first to the second is $\gamma_{q,a}D^6$ with
$\gamma_{q,a}f^{-6}(v_0)=v_0$; by Theorem~\ref{per:thm:periodic}(i),
$f^{-6}(v_0)=-v_0$, so $(q,a)=(-1,0)$ and the class is $(-1)t^6$. Both
$-1$ and $t^6$ are central in $G_7$: the first because $-1\in Z(\Gamma)$
and $\theta(-1)=-1$, the second because $\theta^6=\mathrm{id}$.

(iv)--(v) Since $f\Gamma f^{-1}=\Gamma$ and, by
Theorem~\ref{per:thm:periodic}(ii), the smallest positive power of $f$ in
$\Gamma$ is $f^6$, we get $[\Lam:\Gamma]=6$ and
$|\Lam|=56\cdot6=336$. Sending
$\gamma_{q,a}\mapsto\gamma_{q,a}$ and $D\mapsto f^{-1}$ defines a
surjection $G_7\to\Lam$. If $\gamma D^n$ lies in its kernel, then
$\gamma f^{-n}=1$, so $\gamma=f^n$. By
Theorem~\ref{per:thm:periodic}(ii), $n=6k$, and then
$f^{6k}=(-1)^k$. Hence
\[
 \ker(G_7\to\Lam)
   =\bigl\langle\gamma_{-1,0}D^6\bigr\rangle
   =\ZZ\bigl\langle(-1)t^6\bigr\rangle.
\]
This generator has nonzero base degree six and therefore infinite order.
Part~(iii) already shows that it is central, proving
\eqref{per:eq:centralext}. Finally, the only nonprincipal isotropy has
order two and its union is the torus $\mathcal E$ of~(ii); quotienting that
torus by the circle orbits gives one singular circle in the three-dimensional
orbit orbifold. This proves~(iv)--(v).
\end{proof}

\subsection{Circle actions and orbit surgeries}
The following classical facts about circle actions and framed surgery on
orbits are used in Section~\ref{std:sec:main}.

\begin{lemma}[Centrality of orbit classes]\label{per:lem:gottlieb}
Let a path-connected topological group $G$ act continuously on a space $M$,
let $x\in M$, and let $\mathrm{ev}_x:G\to M$ be $g\mapsto g\cdot x$.
Then $(\mathrm{ev}_x)_*\pi_1(G,1)$ lies in the centre of $\pi_1(M,x)$.
\end{lemma}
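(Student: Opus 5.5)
The plan is the classical Gottlieb argument: a loop in $G$ based at $1$ acts by conjugation on $\pi_1(M,x)$ homotopically trivially, because the group action supplies a homotopy of loops in $M$ that drags an arbitrary loop around the orbit loop.

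Fix a loop $g:[0,1]\to G$ with $g(0)=g(1)=1$ and an arbitrary loop $\beta:[0,1]\to M$ based at $x$. Define
\[
H:[0,1]\times[0,1]\to M,\qquad H(s,u)=g(s)\cdot\beta(u).
\]
This map is continuous because the action map $G\times M\to M$ is continuous.

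Now read off the four edges of the square.
\begin{enumerate}[label=(\alph*),leftmargin=2.2em]
\item On $s=0$ and on $s=1$ we have $H(0,u)=H(1,u)=\beta(u)$, since $g(0)=g(1)=1$.
\item On $u=0$ and on $u=1$ we have $H(s,0)=H(s,1)=g(s)\cdot x=(\mathrm{ev}_x\circ g)(s)=:\alpha(s)$.
\end{enumerate}
Traversing the boundary of the square gives the loop $\alpha\cdot\beta\cdot\alpha^{-1}\cdot\beta^{-1}$, up to orientation conventions. This loop is null-homotopic, since it bounds the square through $H$. Equivalently, the standard lemma that a map of a square gives a homotopy between the two edge-paths yields $\alpha\beta\simeq\beta\alpha$ rel endpoints. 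Hence $[\alpha]$ commutes with $[\beta]$.

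Since $\beta$ was arbitrary, $[\alpha]=(\mathrm{ev}_x)_*[g]$ lies in the centre of $\pi_1(M,x)$. Path-connectedness of $G$ is not needed for this centrality; it only guarantees that $\pi_1(G,1)$ captures all of the relevant orbit loops.

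The only step requiring care is the boundary bookkeeping, namely that the edge-path around the square is exactly the commutator rather than a conjugate of it. Even that is harmless, because a conjugate of the trivial class is trivial. No result from earlier in the paper is needed. In the application, Theorem~\ref{per:thm:action}(iii), $G=S^1$ and $\alpha$ is the principal orbit, which recovers the centrality of $(-1)t^6$ independently.
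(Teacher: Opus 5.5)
Your proof is correct and is essentially the paper's argument. The paper factors $\mathrm{ev}_x$ through $\rho:G\to\operatorname{map}(M,M)$ and cites Gottlieb's theorem that the evaluation subgroup is central, and Gottlieb's proof of that theorem is exactly your square $H(s,u)=g(s)\cdot\beta(u)$; your version unpacks the citation and never needs the compact-open topology on $\operatorname{map}(M,M)$, because continuity of $H$ comes directly from continuity of the action.
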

\begin{proof}
The action defines a continuous map $\rho:G\to\operatorname{map}(M,M)$,
$\rho(g)(y)=g\cdot y$, with $\rho(1)=\id_M$, and $\mathrm{ev}_x$ is the
composite of $\rho$ with the evaluation map
$\operatorname{map}(M,M)\to M$, $\phi\mapsto\phi(x)$. Hence
$(\mathrm{ev}_x)_*\pi_1(G,1)$ is contained in Gottlieb's evaluation
subgroup of $\pi_1(M,x)$, the image of
$\pi_1(\operatorname{map}(M,M),\id_M)$ under evaluation at $x$, and that
subgroup is central by \cite{Gottlieb}.
\end{proof}

\begin{lemma}[Both framings of an orbit admit equivariant surgery]
\label{fa:lem:orbit-surgery}
Let a smooth effective circle action on a connected oriented four-manifold have
an embedded circle orbit. Circle surgery along that orbit admits an
extended effective smooth circle action for either normal-framing
class.
\end{lemma}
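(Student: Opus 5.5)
Let $\mathcal O\subset M$ be the orbit, and first treat the case where its isotropy group is trivial (a principal orbit). By the slice theorem, an invariant tubular neighbourhood is equivariantly $\nu\mathcal O\cong S^1\times D^3$, with $S^1$ acting by rotation on the first factor only. Circle surgery removes $S^1\times \Int D^3$ and glues in $D^2\times S^2$ along $\partial$, using a diffeomorphism $S^1\times S^2\to S^1\times S^2$ of the form $(z,x)\mapsto(z,A(z)x)$ with $A:S^1\to SO(3)$. We therefore have to extend the boundary action to $D^2\times S^2$.

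For the trivial framing class ($A\equiv1$), take the linear action on $D^2\times S^2$ given by rotating the $D^2$ factor; it restricts to the given action on $S^1\times S^2$. For the other class, take $A(z)$ to be rotation by $\arg z$ about a fixed axis, so $A$ generates $\pi_1(SO(3))$. The gluing then intertwines the boundary action with the diagonal action $w\cdot(z,x)=(wz,A(w)x)$. That diagonal action extends over $D^2\times S^2$ by the same formula, with $z\in D^2$ and $A(w)$ a rotation of $S^2$; it is smooth and linear, and it is effective because its restriction to the boundary is. Equivalently, both framings come from equivariant gluings, since any $A$ is homotopic to a homomorphism $S^1\to SO(3)$ (the trivial one or one of degree one), and homotopic framings give diffeomorphic equivariant results. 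The new action fixes $\{0\}\times\{\text{poles}\}$ in the twisted case and has no fixed points in the untwisted case. The glued action is smooth because both sides agree with a linear model on a collar.

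For an exceptional orbit with isotropy $\ZZ/m$, the slice model is $S^1\times_{\ZZ/m}D^3$, where $\ZZ/m$ acts on $D^3$ orthogonally and orientation-preservingly (so by rotation about an axis). Reparametrize by $S^1\to S^1/(\ZZ/m)$ to write the neighbourhood as $S^1\times D^3$ with the action $w\cdot(z,x)=(w^m z,R(w)x)$, where $R$ is a rotation homomorphism. Surgery replaces this with $D^2\times S^2$ and the action $w\cdot(z,x)=(w^mz,R'(w)x)$, where $R'$ is either $R$ or $R$ composed with one extra unit of rotation. The extra rotation realizes the other class in $\pi_1(SO(3))$, because it changes the degree by one. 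This uses that rotation homomorphisms of distinct parity of degree represent the two classes of $\pi_1(SO(3))=\ZZ/2$.

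The main obstacle is the bookkeeping in this step. The boundary identification $\partial(S^2\times D^2)\to\partial(\text{exterior})$ must be checked to be equivariant with matching orientations for both framings, and the homotopy from an arbitrary framing loop to a homomorphism has to be shown to give the same surgery output up to equivariant diffeomorphism. The latter follows from isotopy extension applied to the framing. I would first handle the principal case explicitly and then reduce the exceptional case to it via the $m$-fold reparametrization.
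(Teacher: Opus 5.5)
Your principal-orbit case is correct and is the paper's argument with $b=1$, $a=0$. There is one harmless slip: the untwisted cap action $(z,x)\mapsto(wz,x)$ fixes all of $\{0\}\times S^2$, not nothing. The exceptional case, however, has a genuine error. The framing change is the loop $A(z)=R_{\pm\arg z}$ in the \emph{orbit} coordinate $z$, and the group moves $z$ by $w^m$, not by $w$. The same intertwining you carried out for principal orbits gives
\[
 A(w^m z)^{-1}\,R_{a\arg w}\,A(z)=R_{(a\mp m)\arg w},
\]
so the cap weight changes from $a$ to $a\mp m$, not by one unit. Your cap action $(z,x)\mapsto(w^mz,R_{(a+1)\arg w}x)$ is in general incompatible with the boundary action for either framing when $m\ge2$. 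The boundary points $S^1\times\{\text{poles}\}$ have isotropy $\ZZ/m$, which acts on their normal plane by rotation through $2\pi ak/m$. This is an equivariant invariant, and $a+1\not\equiv\pm a\pmod m$ in general.

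The case $m=2$ makes the failure unconditional, and it is exactly the case the paper needs in Proposition~\ref{std:prop:dihedral}. There $\gcd(a,2)=1$ forces $a$ odd, so $a+1$ is even. Then $w=-1$ acts trivially on your cap, whereas it acts on $\partial\nu\mathcal O$ by $(z,x)\mapsto(z,R_\pi x)$, so the two actions do not agree on the common boundary.

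So the principle you invoke, that the parity of the degree of the action's rotation homomorphism detects the framing class, holds only for $m=1$. For even $m$ the two framing classes give weights $a$ and $a\pm m$ of the \emph{same} parity. The repair is to drop the ``one extra unit'' step and conjugate through the gluing, as above. The twisted cap then carries the linear action $(z,x)\mapsto(w^mz,R_{(a\mp m)\arg w}x)$. This action extends over $D^2\times S^2$ and agrees with the given action on the boundary by construction. It is effective because $\gcd(a\mp m,m)=\gcd(a,m)=1$. This is precisely the paper's computation $R_{\pm\arg(e^{2\pi ibt}z)}R_{2\pi at}R_{\mp\arg z}=R_{2\pi(a\pm b)t}$ with $b=m$. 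Once you do it, no separate reduction of the exceptional case to the principal one is needed, and the ``bookkeeping'' you flag amounts to this one line.
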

\begin{proof}
This is the local equivariant-surgery model used in the classification of
circle actions on four-manifolds; see Fintushel
\cite{FintushelSC,Fintushel1978} and Pao \cite{Pao}. By the differentiable
slice theorem \cite[Chapter~VI, Theorem~2.2 and Corollary~2.4]{Bredon},
the isotropy group of the orbit is a finite cyclic group $C_b$, $b\geq1$,
and the orbit has an invariant tubular neighborhood equivariantly
diffeomorphic to $S^1\times_{C_b}D^3$, where $C_b$ acts on the three-disc
slice by an orthogonal representation. Since the ambient and orbit
orientations orient the normal three-plane, this representation is by
rotations of $D^3$ about a fixed axis; it is faithful, since a kernel
element would fix a neighborhood of the orbit pointwise and hence, being
an isometry of an invariant metric, the whole connected manifold.
Trivialize the oriented normal bundle so that the local action is
\begin{equation}\label{fa:eq:orbit-weights}
 t:(z,y)\longmapsto(e^{2\pi i b t}z,R_{2\pi a t}y)
 \;\;\hbox{on }S^1\times D^3,
\end{equation}
where $R_\theta$ is the rotation of $D^3$ by the angle $\theta$ about the
isotropy axis, $a$ is an integer lift of the rotation weight of the slice
representation, and $\gcd(a,b)=1$; for a principal orbit $b=1$ and one
may take $a=0$. On the replacement piece $S^2\times D^2$ the formula
\[
 t:(x,w)\longmapsto(R_{2\pi a t}x,e^{2\pi i b t}w),
\]
with $R$ restricted to $S^2=\partial D^3$, is a smooth circle action
agreeing with \eqref{fa:eq:orbit-weights} on the common boundary
$S^1\times S^2$, so the two actions glue to a smooth action on the
surgered manifold. The other normal-framing class is obtained by
changing the $D^3$-coordinate by the loop $z\mapsto R_{\pm\arg z}$ in
$SO(3)$, which represents the generator of $\pi_1(SO(3))=\ZZ/2$. All
rotations involved are about the same axis, so in the new coordinate
$y'=R_{\pm\arg z}\,y$ the action reads
\[
 R_{\pm\arg(e^{2\pi ibt}z)}\,R_{2\pi at}\,R_{\mp\arg z}
 =R_{2\pi(a\pm b)t},
\]
which is again of the form \eqref{fa:eq:orbit-weights}, with $a$ replaced
by $a\pm b$ and $\gcd(a\pm b,b)=1$. The same gluing therefore extends the
action across the surgery for either framing. The extended action remains
effective. Indeed, average any Riemannian metric over the circle to obtain an
invariant metric. If an element of the acting circle were trivial on the
surgered manifold, then on the unchanged open exterior it would agree with the
identity. Viewed on the original manifold it is therefore an isometry equal to
the identity on a nonempty open set, hence the identity on the connected
manifold. Effectiveness of the original action then forces the group element
to be trivial. The extended action may acquire fixed points inside
$S^2\times D^2$: the two poles of $S^2\times\{0\}$ when the new weight is
nonzero, and all of $S^2\times\{0\}$ when it is zero. This is permitted
by the recognition theorem of Remark~\ref{fa:rem:Pao}, which concerns
effective smooth circle actions with arbitrary fixed-point set.
\end{proof}

\noindent
Its orbit-space effect; Dehn surgery on the image of the orbit; is the
standard equivariant-surgery operation in the cited circle-action literature.

\begin{remark}[The circle-action recognition theorem]\label{fa:rem:Pao}
We use the standardness consequence of the classification of locally smooth
circle actions on homotopy four-spheres, together with Pao's analysis
\cite{MontgomeryYang,FintushelHomotopy,FintushelSC,Fintushel1978,Pao}: every
smooth homotopy four-sphere admitting an effective smooth circle action is
diffeomorphic to the standard $S^4$. The classification of such actions by
their orbit data is due to Montgomery--Yang \cite{MontgomeryYang} and
Fintushel \cite{FintushelHomotopy} (see \cite{Fintushel1978} for general
four-manifolds), and Pao \cite{Pao} showed that the manifolds carrying these
orbit data are the standard $S^4$. Fintushel's classification is stated up
to equivariant homeomorphism for locally smooth actions; for smooth actions
the same constructions give the classification up to equivariant
diffeomorphism \cite[Theorem~13.2]{Fintushel1978}; that the constructions
of \cite{Fintushel1978} go through in the smooth category is recorded in
\cite{FintushelSternSunukjian2009}, and the resulting smooth statement; a
closed simply connected smooth four-manifold with a smooth circle action is
diffeomorphic to a connected sum of copies of $S^4$, $\pm\CP^2$ and
$S^2\times S^2$, the action being determined up to equivariant
diffeomorphism by its legally weighted orbit space; is stated explicitly,
with these attributions, in
\cite[Section~5.1, Theorem~5.1]{GalazGarcia2012}; for a homotopy
four-sphere, $b_2=0$ leaves no summands and gives the diffeomorphism with
$S^4$ used here. Fukuda--Ishikawa state the standardness consequence in
their Introduction and at the beginning of Section~2.1
\cite{FukudaIshikawa}, and they separately quote \cite[Theorem~4]{Pao} for
the classification of effective locally smooth circle actions on the
resulting standard $S^4$. Thus, we are not using Pao's Theorem~4 by itself
as a recognition theorem for an a priori homotopy sphere. The
three-dimensional Poincar\'e theorem required in the classical
orbit-space analysis is supplied by \cite{PerelmanSurg,PerelmanExt}.
\end{remark}

\section{The cyclic branched tower and its calibrated spin structures}\label{nf:sec:spin}

\subsection{Group and homology data of the cyclic covers}
For $m\geq1$, let $\Sigma^{(m)}_\eps$ denote the $m$-fold cyclic branched
cover of $(\Sigma_\eps(7),K_\eps)$.

\begin{theorem}[Six-periodic group and homology data]\label{per:thm:tower}
For every $m\geq1$ and either framing, $\Sigma^{(m)}_\eps$ is a closed
smooth oriented four-manifold with $\chi=2$ and
\begin{equation}\label{per:eq:towergroups}
 \pi_1\bigl(\Sigma^{(m)}_\eps\bigr)\cong
 \begin{cases}
  1, & m\equiv1,5\pmod 6,\\
  C_7, & m\equiv2,4\pmod 6,\\
  Q_8, & m\equiv3\pmod 6,\\
  \Gamma=Q_8\times C_7, & m\equiv0\pmod 6.
 \end{cases}
\end{equation}
Consequently $b_1=b_2=0$ in every case, each $\Sigma^{(m)}_\eps$ is a
rational homology four-sphere, and
$H_2(\Sigma^{(m)}_\eps;\ZZ)\cong H_1(\Sigma^{(m)}_\eps;\ZZ)
\cong\pi_1(\Sigma^{(m)}_\eps)^{\mathrm{ab}}$. For
$m\equiv\pm1\pmod 6$ the cover $\Sigma^{(m)}_\eps$ is itself a smooth
homotopy four-sphere.
\end{theorem}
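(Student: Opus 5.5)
The plan is to decompose the branched cover along the fibred exterior and read off the fundamental group from the fibration and the meridian relation.

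\emph{Decomposition.} Since $K_\eps$ has the fibred exterior $\E$ with fibre $F=\N\setminus\Int D^3$ and monodromy inducing $\theta$ (Proposition~\ref{prop:exists}), the $m$-fold cyclic cover $\E_m$ of $\E$ is the mapping torus of the $m$-th power of the monodromy. Then
\[
\Sigma^{(m)}_\eps=\E_m\cup(S^2\times D^2),
\]
where the $D^2$-factor is attached so that its boundary circle is $m$ times a lift of the meridian, i.e.\ the lifted meridian itself. The result is a closed smooth oriented four-manifold.

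\emph{Euler characteristic.} We have $\chi(\E_m)=m\,\chi(\E)=0$, because $\E$ fibres over a circle. Since $\chi(S^2\times S^1)=0$, we get $\chi(\Sigma^{(m)}_\eps)=0+\chi(S^2\times D^2)=2$.

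\emph{Fundamental group.} By van Kampen, $\pi_1(\E_m)=\Gamma\rtimes_{\theta^m}\langle t^m\rangle$, and gluing in $S^2\times D^2$ kills the lifted meridian $t^m$. Hence
\[
\pi_1\bigl(\Sigma^{(m)}_\eps\bigr)\cong\Gamma/\langle\!\langle g^{-1}\theta^m(g)\rangle\!\rangle ,
\]
the $\theta^m$-coinvariant quotient in the nonabelian sense. We compute it componentwise using $\theta=(\alpha,-\id)$ with $\alpha$ of order three.

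On $C_7$ the quotient is $C_7/(((-1)^m-1)C_7)$. This is $C_7$ for $m$ even and trivial for $m$ odd, since $2$ is invertible mod $7$.

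On $Q_8$, if $3\mid m$ the quotient is all of $Q_8$. If $3\nmid m$, the normal closure of the elements $g^{-1}\alpha^{\pm1}(g)$ contains $i^{-1}j=-k$ and its conjugates, hence all of $Q_8$, so the quotient is trivial.

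Because $\Gamma$ is a direct product and $\theta$ preserves the factors, the normal closure splits as a product. Combining residues mod $6$ gives \eqref{per:eq:towergroups}: for $m\equiv1,5$ both factors die; for $m\equiv2,4$ only $C_7$ survives; for $m\equiv3$ only $Q_8$ survives; for $m\equiv0$ both survive.

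\emph{Homology.} The group $\pi_1$ is finite in every case, so $b_1=0$. Poincar\'e duality gives $b_3=0$, and $\chi=2$ then forces $b_2=0$. Universal coefficients and duality give $H_2\cong H^2\cong\operatorname{Ext}(H_1,\ZZ)\cong H_1$ for finite $H_1$, and $H_1=\pi_1^{\mathrm{ab}}$.

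\emph{Homotopy sphere for $m\equiv\pm1\pmod 6$.} Here $\pi_1=1$ and $H_2=0$, so Hurewicz and Whitehead (or Freedman) show the cover is a homotopy four-sphere.

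The main obstacle is the decomposition step, namely justifying carefully that the branched cover is obtained by gluing $S^2\times D^2$ so that exactly $t^m$ is killed and nothing else. I would handle it by describing the branched cover near $K_\eps$ as the $m$-fold branched cover of $S^2\times D^2$ along $S^2\times\{0\}$, which is again $S^2\times D^2$. Its boundary circle is the lifted meridian, and its $S^2$-factor is simply connected, so no further relations arise.
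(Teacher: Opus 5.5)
Your argument is correct, and it is more self-contained than the paper's. The paper quotes the fundamental-group table, $\chi=2$, the rational homology and the homotopy-sphere conclusion directly from \cite[Proposition~20.1 and Remark~20.3]{QextC}. It proves only the integral statement $H_2\cong H^2\cong\operatorname{Ext}(H_1,\ZZ)\cong H_1$, by the same Poincar\'e duality and universal-coefficient argument you use.

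You instead derive everything from the fibred exterior. The $m$-fold cover of $\E$ has fundamental group $\Gamma\rtimes_{\theta^m}\langle t^m\rangle$, and the lifted cap kills $t^m$. The answer is therefore the $\theta^m$-coinvariant quotient of $\Gamma$, which splits over $Q_8\times C_7$. This makes the six-periodicity transparent: the parity of $m$ controls the $C_7$ factor and divisibility by $3$ controls the $Q_8$ factor. The cost is a computation the paper outsources to the monograph. Equivalently, you could kill $t_m$ in $\pi_1(M_m)$ using the lifted-surgery description \eqref{nf:eq:lifted-surgery} of the next subsection.

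Two small points need tidying.
\begin{itemize}
\item The boundary circle of the lifted cap is the connected preimage of the meridian, a closed lift of $\mu^m$. It is not ``$m$ times a lift of the meridian'': a degree-one meridian does not lift to a closed loop in the $m$-fold cover.
\item For $3\nmid m$, the element $-k$ together with its $Q_8$-conjugates generates only $\{\pm1,\pm k\}$. You should also use $j^{-1}\alpha(j)=-i$ and $k^{-1}\alpha(k)=-j$. Alternatively, note that in the quotient $\bar i=\bar j=\bar k$, and then $ij=k$ forces $\bar i=1$; this argument covers both $\alpha^{\pm1}$.
\end{itemize}
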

\begin{proof}
The fundamental-group table, Euler characteristic, rational homology,
and homotopy-sphere conclusions are given in
\cite[Proposition~20.1 and Remark~20.3, pp.~374--377]{QextC},
specialized to the prism parameter seven. For the integral homology
assertion, finite $\pi_1$ and $b_2=0$ imply that both $H_1$ and
$H_2$ are finite. Poincar\'e duality and the universal coefficient
theorem give
\[
 H_2\cong H^2\cong\operatorname{Ext}(H_1,\ZZ)\cong H_1.
\]
The final isomorphism is an abstract isomorphism of finite abelian
groups, not a canonical identification; $H_1=\pi_1^{\mathrm{ab}}$.
\end{proof}

\subsection{Cover notation}
Write
\begin{equation}\label{nf:eq:cover-notation}
 X_{\epsilon,m}=\Sigma_\epsilon^{(m)},\;\;\;
 R_{\epsilon,m}=K_\epsilon^{(m)}\subset X_{\epsilon,m}
 \;\;\;(m\geq1)
\end{equation}
for the cyclic branched cover and its ramification sphere. Thus
$(X_{\epsilon,1},R_{\epsilon,1})=(\Sigma_\epsilon(7),K_\epsilon)$.
A diffeomorphism of oriented pairs is understood to preserve both the ambient
orientation and the sphere orientation. The normal-disc orientation follows
from these two orientations.

The degree-$m$ unbranched cover of $\M$, with its base rescaled to length
one, is
\begin{equation}\label{nf:eq:mm}
 M_m=\N\times[0,1]/(x,1)\sim(\bar f^m(x),0),\;\;\;
 D_m(v,s)=(f^{-m}(v),s+1).
\end{equation}
The inverse image $\gamma_m$ of $\gamma$ is a single circle, since
$\gamma$ has degree one. Its class is $t_m=D_m$, corresponding to $t^m$
in $G_7$. Let $\lambda_{\epsilon,m}$ be the lifted framing. Then
\begin{equation}\label{nf:eq:lifted-surgery}
 (X_{\epsilon,m},R_{\epsilon,m})
 =\text{the surgery pair of }(M_m,\gamma_m,\lambda_{\epsilon,m}).
\end{equation}
This also follows directly from the branched local model
$(x,z)\mapsto(x,z^m)$ on $S^2\times D^2$.

\subsection{Framed circles and spin signs}
On an oriented spin four-manifold $(W,s)$, let $(C,\lambda)$ be an
oriented embedded circle with an oriented normal framing. The tangent
and the three normal vectors give a loop of oriented frames, after
orthonormalization. Define
\[
 \spn_s(C,\lambda)\in\{+1,-1\}
\]
to be $+1$ when that loop lifts to a closed loop of spin frames, and
$-1$ otherwise. This is a topological closing sign, not holonomy of a
chosen connection. The elementary transformation rules are
\begin{equation}\label{nf:eq:spin-rules}
 \begin{split}
 \spn_{s+a}(C,\lambda)&=(-1)^{\langle a,[C]\rangle}
                              \spn_s(C,\lambda),\\
 \spn_s(F(C),F_*\lambda)&=\spn_{F^*s}(C,\lambda)
 \end{split}
\end{equation}
for $a\in H^1(W;\ZZ/2)$ and an orientation-preserving diffeomorphism $F$.
These are the usual obstruction-theoretic transformation rules for spin
structures; compare \cite[Sections~5.6--5.7]{GS}. Reversing the choice of
normal-framing class changes the sign.

\subsection{Explicit spin structures on the covering mapping tori}
Use the double covering
\[
 \Spin(4)=S^3_L\times S^3_R\to SO(4),\;\;\;
 (A,B):x\longmapsto AxB^{-1}.
\]
Write
\begin{equation}\label{nf:eq:kappa-z}
 \kappa=(-1,-1),\;\;\; z=(-1,1).
\end{equation}
Here $\kappa$ is the nontrivial kernel element, and $z$ projects to
$-I_4$, the derivative of the deck element $\gamma_{-1,0}$.

On $S^3\times\RR$ trivialize the tangent bundle by
\begin{equation}\label{nf:eq:trivialization}
 J_{(v,s)}(w,a)=w+av\in\HH,
 \;\;\; w\in T_vS^3,
\end{equation}
using the orientation for which the base direction precedes the fibre
frame. An overall consistent reversal of the ambient convention does
not change any sign comparison below. In this trivialization, lifts of
the derivatives of the deck generators are
\begin{equation}\label{nf:eq:deck-spin}
 \widetilde\gamma_{q,a}=(q,\zeta^a),\;\;\;
 d_m=(\omega^m,j^m).
\end{equation}
They satisfy the deck relations exactly: $(q,a)\mapsto(q,\zeta^a)$ is a
homomorphism $\Gamma\to S^3_L\times S^3_R$, because $Q_8$ and $C_7$ are
placed in different factors, and
\[
 d_m\,\widetilde\gamma_{q,a}\,d_m^{-1}
 =(\omega^mq\omega^{-m},\,j^m\zeta^aj^{-m})
 =\widetilde\gamma_{\alpha^{-m}(q),\,(-1)^ma},
\]
which is the lift of
$D_m\gamma_{q,a}D_m^{-1}=\gamma_{\alpha^{-m}(q),\,(-1)^ma}$, the relation
computed from \eqref{nf:eq:mm} and \eqref{per:eq:torus}. Since the deck
group $\Gamma\rtimes\langle D_m\rangle$ of $M_m$ has no further
relations, the assignment extends to a homomorphism from the deck group
to $\Spin(4)$ lifting the derivative representation. They therefore specify a spin
structure $s_m$ on $M_m$. The structures $s_m$ are the pullbacks of
$s_1$ under the unbranched covers. We neither assert nor need their
uniqueness. Let
\[
 a_m\in H^1(M_m;\ZZ/2)
\]
be the mod-two reduction of the positive-base degree class.

\begin{lemma}[The calibrated framing signs]\label{nf:lem:calibration}
Set $\eta_+=0$ and $\eta_-=1$. Then
\begin{equation}\label{nf:eq:calibration}
 \spn_{s_m}(\gamma_m,\lambda_{\epsilon,m})
       =(-1)^{m\eta_\epsilon}.
\end{equation}
\end{lemma}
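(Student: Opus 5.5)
The plan is to compute the closing sign directly in the trivialization \eqref{nf:eq:trivialization}. The spin structure $s_m$ is given as the quotient of the trivial spin structure on $S^3\times\RR$ by the lifted deck group \eqref{nf:eq:deck-spin}. A closed loop of frames along $\gamma_m$ therefore lifts to a closed spin loop exactly when the following holds: take the path of frames along a lift $\widetilde\gamma_m$ from $(f^m(1),0)$ to the translate by the deck element $D_m$, lift it to $\Spin(4)=S^3_L\times S^3_R$ starting at a chosen point, and check that the endpoint equals the $d_m$-translate of the starting spin frame.

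\textbf{Step 1: write down the frame.} I first treat $m=1$ and then iterate. Using $J$, the tangent vector to the section is $\widetilde c'(s)+\partial_s$, and the normal frame is $V_a(s)=\widetilde c(s)u(s)au(s)^{-1}$ for $a=i,j,k$. Orthonormalization is a homotopy within frames. Because $\widetilde c$ is short and stationary near its ends, I can deform the tangent vector to $\partial_s\mapsto \widetilde c(s)$ and keep $V_a$. The oriented frame in $\HH$ is then $x\mapsto \widetilde c(s)\,u(s)\,x\,u(s)^{-1}$, with $x$ ranging over the basis $(1,i,j,k)$, which fits the orientation convention of base before fibre. In $\Spin(4)$ this path is $(\widetilde c(s)u(s),\,u(s))$.

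\textbf{Step 2: close up at the seam for $m=1$.} At $s=0$ the spin frame is $(f(1),1)=(\omega^{-1}j,1)$. At $s=1$ it is $(u(1),u(1))=(e^{\pi j/2},e^{\pi j/2})=(j,j)$. Applying the deck lift relation as in the seam computation of Lemma~\ref{lem:frame}, and taking care of orientation, the lift of $df$ acting on spin frames is left multiplication by $(\omega^{-1},j^{-1})$, the inverse of $d_1$ with the appropriate direction. It sends $(j,j)$ to $(\omega^{-1}j,1)$, which is exactly the starting lift rather than its $\kappa$-multiple. So the sign is $+1$ for $\lambda_+$. For $u_-$ we get $u_-(1)=e^{3\pi j/2}=-j$, and the endpoint $(-j,-j)=\kappa(j,j)$ maps to $\kappa$ times the start, giving $-1$. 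This matches the general rule that switching the framing class changes the sign.

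\textbf{Step 3: pass to general $m$.} The lifted circle $\gamma_m$ is the concatenation of $m$ translates of the $m=1$ path. The same holds for the lifted spin frame path, since $s_m$ is pulled back from $s_1$ and $d_m$ is consistent with $d_1^m$ as given in \eqref{nf:eq:deck-spin}. The closing defects therefore multiply, and the sign is $(\pm1)^m=(-1)^{m\eta_\epsilon}$.

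\textbf{Main obstacle.} The hardest part is the orientation and ordering bookkeeping in Step 2: whether the seam map is $d_1$ or $d_1^{-1}$, how left and right factors act, and whether the projection to $SO(4)$ is via $AxB^{-1}$. A wrong convention could swap $\kappa$ with $z$, or could make $\lambda_+$ rather than $\lambda_-$ carry the nontrivial sign. I will calibrate this by checking that the final endpoint discrepancy lies in $\{1,\kappa\}$ and not in $\{z,-z\}$, since $z$ projects to $-I_4$ and that would signal an error. I will also check that the deformation in Step 1 keeps the frame oriented and stays away from degeneracy.
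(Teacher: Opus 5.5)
Your proposal is correct and follows the paper's own proof of this lemma. The paper likewise shears the tangent to the base direction and reads off the spin lift $(\widetilde c(s)u_\epsilon(s),u_\epsilon(s))$. It then compares the endpoint, $(j,j)$ or $(-j,-j)=\kappa(j,j)$, with $d_1(f(1),1)=(j,j)$, and raises the resulting sign to the $m$th power because the degree-$m$ cover traverses the framed loop $m$ times; your justification via $d_m=d_1^m$ and the centrality of $\kappa$ makes that last step explicit. Your seam check, applying $d_1^{-1}=(\omega^{-1},j^{-1})$ to $(j,j)$, is the same comparison read in the opposite direction.
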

\begin{proof}
Shearing the tangent of $\gamma$ to the base direction does not change
the framing class. Its full frame in \eqref{nf:eq:trivialization} then
lifts to the path
\[
 (\widetilde c(s)u_\epsilon(s),u_\epsilon(s))\in\Spin(4).
\]
For the $+$ frame this path starts at $(f(1),1)$ and ends at $(j,j)$.
Since $\omega f(1)=j$, the endpoint is
\[
 (j,j)=d_1(f(1),1).
\]
Thus, the lift closes in the spin bundle of $M_1$, giving sign $+1$.
For the $-$ frame the endpoint is $(-j,-j)=\kappa(j,j)$, giving sign
$-1$. The lifted circle in the degree-$m$ cover traverses the original
framed loop $m$ times. Pullback of its two-sheeted spin-frame cover
raises the closing sign to the $m$th power, proving the formula.
\end{proof}

\begin{lemma}[Circle comparison and the spin-extension sign]
\label{lem:circle-spin}
Freely homotopic oriented embedded circles in a smooth oriented
four-manifold are ambiently isotopic. On a spin four-manifold $(W,s)$,
the spin structure extends across surgery on a framed circle $(C,\lambda)$
precisely when $\spn_s(C,\lambda)=-1$.
\end{lemma}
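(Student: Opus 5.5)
The lemma has two independent parts, and I would treat them separately.

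For the first part, I would use the standard dimension count. Let $\beta_0,\beta_1\colon S^1\to W^4$ be freely homotopic embeddings. A free homotopy between them is a map $S^1\times[0,1]\to W\times[0,1]$ that is level-preserving. By general position in dimension four, with $2\cdot1<4$, a generic homotopy $S^1\times[0,1]\to W$ is already an embedding on each level except at finitely many times, where a transverse double point of the track occurs. The track is a two-dimensional annulus in a four-manifold, so double points of the level maps occur in codimension $4-2\cdot1-1=1$ in the time parameter. I would instead argue directly that two arcs in a four-manifold can be pushed past each other: a self-crossing of a $1$-manifold in a $4$-manifold is removed by a small perturbation in one of the two extra normal directions. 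This lets me choose a homotopy that is an embedding at every time, that is, an isotopy. The isotopy extension theorem then upgrades it to an ambient isotopy. Orientations are carried along throughout, since the homotopy is of oriented loops. This is the classical ``homotopy implies isotopy for circles in four-manifolds'' fact, and I would cite it and sketch the argument as above.

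For the second part, I would work locally on the trace. The surgery is $W'=(W\setminus S^1\times \Int D^3)\cup S^2\times D^2$. A spin structure on $W'$ restricting to $s$ on the exterior exists exactly when $s|_{S^1\times S^2}$ extends over $S^2\times D^2$. Since $S^2$ is simply connected, the obstruction is whether the restriction of $s$ to the circles $\{pt\}\times S^1\subset S^2\times S^1$ is the spin structure bounding over $D^2$. Via the framing $\lambda$, the boundary $\partial(S^1\times D^3)=S^1\times S^2$ is identified with $\partial(D^2\times S^2)$, and the $S^1$ factor is a pushed-off copy of $C$.

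Comparing the two framings of the tangent bundle along $C$, I would compare the framed-loop spin sign with the spin structure induced on the normal circle factor. The convention used here is that the sign is $+1$ when the frame loop lifts to a closed spin-frame loop, which corresponds to the trivial (periodic, Lie) spin structure on the circle. The Lie spin structure does not bound the disc, while the antiperiodic one does. This is the usual reason even surgery fails in dimension two: the disc $D^2$ with its constant framing has boundary framing rotating once, which contributes a $\pi_1(SO(2))$ generator mapping nontrivially to $\pi_1(SO(4))$. Hence $s$ extends precisely when the frame loop given by $\lambda$ does \emph{not} close, that is, when $\spn_s(C,\lambda)=-1$.

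The main obstacle is making this sign bookkeeping precise. One has to check that the tangent-plus-normal frame along $C$, extended over the $D^2$ factor of $S^2\times D^2$ (equivalently, the frame of $\partial D^2$ in $\RR^2$ stabilized), carries exactly one extra generator of $\pi_1(SO(4))$. I would verify this in the model $S^1\times D^3\subset\RR^4$, or in $S^4$ with trivial $\lambda$: surgery on the unknotted circle in $S^4$ with the framing whose frame loop does not lift gives $S^2\times S^2$, which is spin, while the other framing gives $S^2\tilde\times S^2$, which is not. The rule \eqref{nf:eq:spin-rules} on framing reversal then fixes the general case.
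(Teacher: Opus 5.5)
Your proposal is correct. For the spin-extension sign it is the paper's argument. Extension over $S^2\times D^2$ reduces to whether $s$ restricts to the bounding structure on a push-off of $C$, and that structure is the one whose frame loop does not close, because the boundary frame of a disc turns once.

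Your check on the unknotted circle in $S^4$ is also correct, and it pins down the sign convention, which the paper only asserts. The disc framing has a non-closing frame loop and gives $S^2\times S^2$; the other framing gives $S^2\tilde{\times}S^2$. Since the exterior $D^2\times S^2$ has a unique spin structure, ``the output is spin'' there means exactly ``$s$ extends''. To pass from this model to the general case, say explicitly why the model suffices. The extension question depends only on $(\nu C,s|_{\nu C},\lambda)$, and up to diffeomorphism this is the standard $S^1\times D^3$ with one of its two spin structures, indexed by $\spn_s(C,\lambda)$.

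For the isotopy statement your route differs from the paper's. The paper perturbs the trace $S^1\times I\to W\times I$ as an arbitrary map, using double-point dimension $2+2-5<0$. That only yields a concordance, so the paper then cites Hudson's concordance-implies-isotopy theorem in codimension at least three. You keep the homotopy level-preserving, obtain an isotopy directly, and finish with isotopy extension. This is more elementary and avoids Hudson.

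Your dimension count, however, is wrong as written. Same-time double points of a one-parameter family $S^1\to W^4$ have expected dimension $1+1+1-4=-1$, and non-immersion points have expected dimension $2-4<0$. So a generic homotopy has no such points at all, not finitely many in ``codimension one''. If they really were codimension one, as for circles in three-manifolds, no small perturbation could remove them, so that sentence contradicts your own fix. Your remark about pushing one arc off in the extra normal direction is the correct reason. With the count corrected, the argument is complete.
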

\begin{proof}
A free homotopy $H:S^1\times I\to W$ between the two oriented embedded
circles has trace $(z,t)\mapsto(H(z,t),t)$ in $W\times I$. By smooth
relative general position, compare \cite[Chapter~3]{Hirsch}, the trace may be
taken to be a smooth concordance rel $S^1\times\partial I$, since the
expected dimension of its double-point set is $2+2-5<0$. Hudson's
concordance-implies-isotopy theorem then gives an ambient isotopy in codimension
at least three \cite[Theorem~2.1 and Addendum~2.1.2]{Hudson1970}. The
spin-extension criterion is the
standard framed two-handle obstruction \cite[Section~5.7]{GS}: a spin
structure extends over the two-handle attached along $(C,\lambda)$, and
hence over the surgery, exactly when its restriction to the framed
circle is the bounding spin structure; in the closing-sign convention of
this section the bounding structure is the one with
$\spn_s(C,\lambda)=-1$, because the tangent frame of a circle bounding an
oriented disc makes one full rotation and therefore has a nonclosed spin
lift.
\end{proof}

\section{The orbit orbifold is a Whitehead surgery orbifold}
\label{wh:sec:whitehead}
This section determines the orbit orbifold $X=\M/S^1$ of the canonical
circle action explicitly, together with the position of the surgery
circle $\gamma$.

\begin{itemize}[leftmargin=1.6em]
\item The orbit orbifold $X=S^3/\Lambda$ is obtained by $+3$ surgery on one
      component of the Whitehead link, with cone angle $\pi$ along the other
      component (Theorem~\ref{wh:thm:whitehead}).
\item A weight circle, and hence a representative of $\gamma$ in the sense
      of Lemma~\ref{lem:weight-loops}, is the unit section over any band sum
      of a meridian of the surgery curve with a meridian of the singular
      circle (Theorem~\ref{wh:thm:section}).
\end{itemize}

Let $X=\M/S^1=S^3/\Lam$ be the orbit orbifold of
Theorem~\ref{per:thm:action}(iv), with $\Lam=\langle\Gamma,f\rangle$ of
order $336$, and let $\mathcal S\subset|X|$ be the image of the
exceptional torus, the singular circle of isotropy order two; $|X|$ denotes
the underlying space.
\subsection{Product form of the groups}
\noindent\emph{Conventions.}
A pair $(A,B)$ of unit quaternions denotes the isometry $v\mapsto AvB$ of
$S^3$. Thus
\[
 (A,B)\circ(C,D)=(AC,DB),\;\;\;
 (C,D)(A,B)(C,D)^{-1}=(CAC^{-1},\,D^{-1}BD),
\]
and $(A,B)$, $(-A,-B)$ are the same isometry. Let $L,R\subset S^3$ be finite
subgroups containing $-1$. We write $L\pmx R$ for the group of all
isometries $(A,B)$ with $A\in L$ and $B\in R$; its order is $|L||R|/2$.

Let $\Tst=\langle i,j,\omega\rangle$ be the binary tetrahedral group and
$\Qtt=\langle\zeta,j\rangle$ the binary dihedral group of order $28$. Let
$C_{14}=\langle-\zeta\rangle$ be its cyclic subgroup of index two. In this
notation $\gamma_{q,a}=(q,\zeta^{-a})$ and $f=(\omega^{-1},j)$.

\begin{proposition}[Product form]\label{wh:prop:product}
\begin{enumerate}[label=(\roman*),leftmargin=2.2em]
\item The groups of the construction are full products:
      \[
       \Gamma=Q_8\pmx C_{14},\;\;\;
       \Lambda=\Tst\pmx\Qtt.
      \]
      Their orders are $56$ and $336$.
\item $[\Lambda,\Lambda]=\Gamma$ and $\Lambda^{\mathrm{ab}}=\Lambda/\Gamma$
      is cyclic of order six, generated by the image of $f$.
\item The index-two subgroup $\Lambda^+:=\Tst\pmx C_{14}$ acts freely on
      $S^3$, and $\Lambda^+\cong\Tst\times C_7$.
\item The nontrivial elements of $\Lambda$ with a fixed point are the $42$
      involutions $(A,B)$ with $A\in\{\pm i,\pm j,\pm k\}$ and
      $B\in\{\pm\zeta^mj\}$; they lie in the coset $f^3\Gamma$, and none
      of them lies in $\Lambda^+$.
\end{enumerate}
\end{proposition}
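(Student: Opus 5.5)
The plan is to work entirely with the pair notation $(A,B)$, using $\gamma_{q,a}=(q,\zeta^{-a})$ and $f=(\omega^{-1},j)$. Throughout, I will use the fixed-point criterion of Lemma~\ref{per:lem:rigidity}: $(A,B)$ has a fixed point if and only if $\Ree A=\Ree B$. I will also use the order count $|\Lambda|=336$ from Theorem~\ref{per:thm:action}(iv).

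For (i), I will first observe that $\Gamma$ lies in $Q_8\pmx C_{14}$. A general element of the product has the form $(q,\pm\zeta^{b})$. Since $(q,-\zeta^b)=(-q,\zeta^b)=\gamma_{-q,-b}$, every such element already lies in $\Gamma$, so the containment is in fact an equality. Both groups have order $8\cdot14/2=56$, which confirms this. For $\Lambda$, the generators $\gamma_{q,a}$ and $f$ have left entries in $\Tst\ni\omega$ and right entries in $\Qtt\ni\zeta,j$. Hence $\Lambda\subseteq\Tst\pmx\Qtt$, a group of order $24\cdot28/2=336$. The equality $|\Lambda|=336$ then forces $\Lambda=\Tst\pmx\Qtt$.

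For (ii), I will use two facts from Theorem~\ref{per:thm:periodic}(ii): $\Gamma$ is normal with index six, and the smallest power of $f$ lying in $\Gamma$ is $f^6$. Together these make $\Lambda/\Gamma$ cyclic of order six, generated by $f$, so $[\Lambda,\Lambda]\subseteq\Gamma$. For the reverse inclusion, I will compute the commutators $f\gamma_{q,a}f^{-1}\gamma_{q,a}^{-1}$, which equal $\gamma_{\alpha^{-1}(q)q^{-1},\,-2a}$ up to the sign conventions. The elements $\alpha^{\pm1}(q)q^{-1}$, for instance $ji^{-1}=k$ and $ki^{-1}$, generate $Q_8$, and the elements $\zeta^{2a}$ generate $C_7$. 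So these commutators generate $\Gamma$. The main obstacle in the whole proof is the sign bookkeeping here, namely the identification $(A,B)\sim(-A,-B)$ and the reversed composition in the right factor. I will check it against the already established relation $f\gamma_{q,a}f^{-1}=\gamma_{\alpha^{-1}(q),-a}$ rather than recomputing it from scratch.

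For (iii), I will list the real parts. Elements of $\Tst$ have real part in $\{\pm1,0,\pm\tfrac12\}$. Elements $\pm\zeta^b$ of $C_{14}$ have real part $\pm\cos(2\pi b/7)$, and this equals $\pm1$ only when $b=0$ and is never $0$ or $\pm\tfrac12$. By the criterion, a fixed point can therefore exist only when $B=\pm1$ and $A=\pm1$ with matching signs, which is the identity isometry. So $\Lambda^+$ acts freely. For the isomorphism, I will define $\Tst\times C_7\to\Lambda^+$ by $(A,\zeta^b)\mapsto(A,\zeta^b)$. This is a homomorphism because the two factors act on opposite sides. It is injective because $-1\notin C_7$, which has odd order. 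It is a bijection because both sides have order $168$.

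For (iv), any nonidentity element with a fixed point lies outside $\Lambda^+$ by (iii). So it has the form $B=\pm\zeta^m j$, whose real part is $0$. The criterion then forces $\Ree A=0$, and in $\Tst$ this means $A\in\{\pm i,\pm j,\pm k\}$. Counting pairs gives $6\cdot14/2=42$ isometries. Each is an involution, because $(A,B)^2=(A^2,B^2)=(-1,-1)$, which is the identity isometry. Finally, $f^3=(\omega^{-3},j^3)=(1,-j)$, and composing with $\gamma_{q,a}$ produces exactly the pairs $(q,\pm\zeta^{m}j)$ with $q$ a unit imaginary quaternion. This places all $42$ involutions in $f^3\Gamma$, consistent with Proposition~\ref{per:prop:fixed}, and none of them lies in $\Lambda^+$ because $j\notin C_{14}$.
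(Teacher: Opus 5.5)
Your proposal is correct and follows the paper's proof essentially step for step: containment plus the order count $336$ for (i), the real-part criterion of Lemma~\ref{per:lem:rigidity} for (iii) and (iv), and $f^3=(1,-j)$ to place the $42$ involutions in $f^3\Gamma$. The only real variation is in (ii), where you generate $\Gamma$ from the commutators $[f,\gamma_{q,a}]=\gamma_{\alpha(q)q^{-1},-2a}$, whereas the paper uses $[\Tst,\Tst]=Q_8$ and the commutator $(1,\zeta^{\pm2})$ of $(1,\zeta)$ and $(1,j)$; both work equally well. One small slip there: the relation established in Section~\ref{sec:construction} is $f\gamma_{q,a}f^{-1}=\gamma_{\alpha(q),-a}$, not $\gamma_{\alpha^{-1}(q),-a}$, but as you anticipated, the elements $\alpha^{\pm1}(q)q^{-1}$ generate $Q_8$ in either case, so nothing breaks.
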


\begin{proof}
(i) The generators $\gamma_{q,a}=(q,\zeta^{-a})$ and $f=(\omega^{-1},j)$ lie
in $\Tst\pmx\Qtt$. This group has order $24\cdot28/2=336=|\Lambda|$
(Theorem~\ref{per:thm:action}(iv)), so the inclusion is an equality. In the
same way $\Gamma\subseteq Q_8\pmx C_{14}$, and both groups have order $56$.
Since $\omega^3=1$, we have $f^3=(1,j^3)=(1,-j)$.

(ii) The group $\Gamma$ is normal and $\Lambda/\Gamma$ is cyclic of order
six, generated by $f$ (Theorem~\ref{per:thm:periodic}(ii)). Hence
$[\Lambda,\Lambda]\subseteq\Gamma$. Conversely, $[\Tst,\Tst]=Q_8$, and
$(1,\zeta)$ and $(1,j)$ have commutator $(1,\zeta^{\pm2})$. So
$[\Lambda,\Lambda]$ contains $Q_8\times1$ and $1\times C_7$, which together
generate $\Gamma$; note that $(1,-1)=(-1,1)$.

(iii) By Lemma~\ref{per:lem:rigidity}, $(A,B)$ has a fixed point exactly
when $\Ree A=\Ree B$. For $A\in\Tst$ we have $\Ree A\in\{0,\pm\frac12,\pm1\}$.
For $B\in C_{14}$ we have $\Ree B\in\{\pm\cos(\pi m/7)\}$. These sets meet
only in $\pm1$, which gives the identity. The map
$\Tst\times C_7\to\Lambda^+$, $(A,c)\mapsto(A,c)$, is injective because
$C_7$ has odd order. The two groups have the same order.

(iv) For $B\in\Qtt\setminus C_{14}=\{\pm\zeta^mj\}$ we have $\Ree B=0$, so
$(A,B)$ has a fixed point exactly when $A$ is one of the six elements of
order four in $\Tst$. Then $(A,B)^2=(-1,-1)$ is the identity. This gives
$6\cdot14/2=42$ elements; they are the elements $\gamma_{q,a}f^3$ of the
proof of Proposition~\ref{per:prop:fixed}, so they lie in $f^3\Gamma$.

\end{proof}

\noindent
In the notation of Du Val \cite{DuVal1964} used in
\cite{MecchiaSeppi2015,MecchiaSeppi2019,MecchiaSchilling2025}, a full product
$L\pmx R$ is written $(L/L,R/R)$. Thus, $\Gamma=(D^*_8/D^*_8,C_{14}/C_{14})$
is a prism-manifold group, $\Lambda^+=(\Tst/\Tst,C_{14}/C_{14})$ a
tetrahedral-manifold group, $\Lambda=(\Tst/\Tst,D^*_{28}/D^*_{28})$, and
$\Lambda_0:=\langle\Gamma,f^3\rangle=(D^*_8/D^*_8,D^*_{28}/D^*_{28})$ (see
Lemma~\ref{wh:lem:underlying}(i) below). These are the Families~2, 5, 14
and~10 of \cite[Table~1]{MecchiaSchilling2025}, up to interchanging the
two factors, which is the ``bis'' convention of \cite{MecchiaSchilling2025}
(see also \cite[Section~3.2]{MecchiaSeppi2019}) and is the effect of an
orientation-reversing isometry of $S^3$.

\begin{lemma}[The quotient model of $\M$]\label{wh:lem:model}
Let $\psi:\Lambda\to\ZZ/6\subset\RR/6\ZZ$ be the homomorphism with kernel
$\Gamma$ and $\psi(f)=-1$. Then
\[
 \M\diff(S^3\times\RR/6\ZZ)\big/\Lambda,\;\;\;
 \lambda\cdot(v,s)=(\lambda v,\,s+\psi(\lambda)),
\]
and the action is free. The canonical action of
Theorem~\ref{per:thm:action} is translation of the second factor, and its
orbit map is the projection to $S^3/\Lambda=X$.
\end{lemma}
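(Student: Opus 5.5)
The plan is to realize $\M$ directly as the quotient and compare it with the universal-cover presentation \eqref{per:eq:torus}. Consider the map
\[
 \Pi:S^3\times\RR\to S^3\times\RR/6\ZZ,\qquad (v,s)\mapsto(v,\,s\bmod 6).
\]
Its deck group is $\langle T\rangle$ with $T(v,s)=(v,s+6)$. By Theorem~\ref{per:thm:periodic}(iv), $T=\gamma_{-1,0}D^6$ lies in $G_7$, and it is central. So $\M=(S^3\times\RR)/G_7$ equals $(S^3\times\RR/6\ZZ)/(G_7/\langle T\rangle)$, where the action of $G_7/\langle T\rangle$ is induced from $G_7$ on the $\langle T\rangle$-quotient.

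The key step is to identify $G_7/\langle T\rangle$ with $\Lambda$ so that the induced action becomes the displayed one. Use the surjection $G_7\to\Lambda$ from the proof of Theorem~\ref{per:thm:action}(iv)--(v), given by $\gamma_{q,a}\mapsto\gamma_{q,a}$ and $D\mapsto f^{-1}$. Its kernel is $\ZZ\langle(-1)t^6\rangle$, which is exactly $\langle T\rangle$. Now compare the two actions.
\begin{itemize}
\item $\gamma_{q,a}$ acts by $(v,s)\mapsto(\gamma_{q,a}v,s)$. Since $\gamma_{q,a}\in\Gamma=\ker\psi$, this matches $\lambda\cdot(v,s)$ for $\lambda=\gamma_{q,a}$.
\item $D$ acts by $(v,s)\mapsto(f^{-1}v,s+1)$. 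Its image $\lambda=f^{-1}$ has $\psi(f^{-1})=+1$, so this also matches.
\end{itemize}
Because $\psi$ is a well-defined homomorphism on $\Lambda$ (the quotient $\Lambda/\Gamma\cong\ZZ/6$ is generated by $f$, by Proposition~\ref{wh:prop:product}(ii)), the formula $\lambda\cdot(v,s)=(\lambda v,s+\psi(\lambda))$ is a genuine $\Lambda$-action on $S^3\times\RR/6\ZZ$. It agrees with the induced action on generators, hence on all of $\Lambda$. This gives a diffeomorphism $\M\diff(S^3\times\RR/6\ZZ)/\Lambda$.

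Freeness follows from freeness of the $G_7$-action on $S^3\times\RR$ (Theorem~\ref{per:thm:periodic}(iii)) together with the identification of $\langle T\rangle$ with the kernel. For a direct check, suppose $\lambda\cdot(v,s)=(v,s)$. Then $\psi(\lambda)=0$, so $\lambda\in\Gamma$, and $\Gamma$ acts freely on $S^3$.

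The statement about the action is then immediate. Translation in $\RR$ commutes with all of $G_7$ and descends through $\Pi$ to translation of $\RR/6\ZZ$, which commutes with $\Lambda$. The orbit of $[v,s]$ is the image of $\{v\}\times\RR/6\ZZ$, so the orbit map is $[v,s]\mapsto[v]\in S^3/\Lambda=X$, in agreement with Theorem~\ref{per:thm:action}(iv).

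The only delicate point is sign bookkeeping. One must check that $D\mapsto f^{-1}$ pairs with $\psi(f)=-1$, rather than with $+1$, under the convention $D(v,s)=(f^{-1}v,s+1)$. This is exactly what fixes the stated choice of $\psi$.
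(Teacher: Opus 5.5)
Your proposal is correct and matches the paper's proof: you quotient first by the central translation $T=\gamma_{-1,0}D^6$, identify $G_7/\langle T\rangle$ with $\Lambda$ through the $S^3$-component homomorphism $D\mapsto f^{-1}$, and read off $\psi$ as the base translation modulo $6$. The only divergence is freeness. Your direct check, that $\psi(\lambda)=0$ forces $\lambda\in\Gamma$, which acts freely on $S^3$, is slightly shorter than the paper's appeal to Proposition~\ref{wh:prop:product}(iv).
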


\begin{proof}
In \eqref{per:eq:torus}, the map $T:=\gamma_{-1,0}D^6$ is $(v,s)\mapsto(v,s+6)$,
because $f^{-6}=-\id$, and it is central. An element of $G_7$ with base
degree $k$ acts on $S^3$ by an element of $f^{-k}\Gamma$. Such an element is
trivial on $S^3$ only when $6\mid k$, so the kernel of $G_7\to\Isom(S^3)$ is
$\langle T\rangle$. The image of this map is $\Lambda$, and the translation
part modulo $6$ is the homomorphism $\psi$. Hence
$\M=\bigl((S^3\times\RR)/\langle T\rangle\bigr)\big/(G_7/\langle T\rangle)$
is the displayed quotient. Freeness follows from
Proposition~\ref{wh:prop:product}(iv): an element with a fixed point on $S^3$
is an involution in $f^3\Gamma$, and it translates the circle factor by
$3\ne0$.
\end{proof}

\subsection{The underlying space and the branched covers of the singular circle}

\begin{lemma}[The underlying space is $\spLt$]\label{wh:lem:underlying}
\begin{enumerate}[label=(\roman*),leftmargin=2.2em]
\item The nontrivial elements of $\Lambda$ with a fixed point on $S^3$
      are exactly the $42$ involutions $\gamma_{q,a}f^3$ with
      $q\in\{\pm i,\pm j,\pm k\}$ and $a\in\ZZ/7$. The subgroup they
      generate is $\Lambda_0=\langle\Gamma,f^3\rangle=Q_8\pmx\Qtt$, of order
      $112$; it is normal, and $\Lambda/\Lambda_0\cong C_3$ is generated by
      the image of $f$.
\item $\pi_1(|X|)\cong C_3$, and $|X|$ is a closed orientable
      three-manifold homeomorphic, hence diffeomorphic, to $\spLt$.
\item $|S^3/\Lambda_0|$ is homeomorphic to $S^3$, and $|X|$ is its
      quotient by a free action of $C_3=\Lambda/\Lambda_0$.
\item The singular circle $\mathcal S$ is a tame knot in $|X|$.
\end{enumerate}
\end{lemma}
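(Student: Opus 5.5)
The plan is to deduce everything from Proposition~\ref{wh:prop:product}(iv), Armstrong's theorem on fundamental groups of orbit spaces, and three-dimensional elliptization.

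\emph{Step (i).} Proposition~\ref{wh:prop:product}(iv) already shows that the nontrivial elements of $\Lambda$ with a fixed point are the $42$ involutions. By the proof of Proposition~\ref{per:prop:fixed}, these are exactly the elements $\gamma_{q,a}f^3$ with $q\in\{\pm i,\pm j,\pm k\}$ and $a\in\ZZ/7$. Let $H$ be the subgroup they generate. I would show $H=\langle\Gamma,f^3\rangle$ as follows. The quotients
\[
 (\gamma_{q,a}f^3)(\gamma_{q',a'}f^3)^{-1}=\gamma_{qq'^{-1},\,a-a'}
\]
run over pairs $(qq'^{-1},a-a')$. Taking $q=i$, $q'=j$ gives $\pm k$, and the other choices of axes give $\pm1,\pm i,\pm j$; the second coordinate $a-a'$ runs over all of $\ZZ/7$. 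So these quotients generate $\Gamma$, and then $H\supseteq\langle\Gamma,f^3\rangle$. The reverse inclusion is clear. Since $f^6\in\Gamma$, the group $H$ has order $2\cdot56=112$. It is the preimage of the order-two subgroup of the cyclic group $\Lambda/\Gamma\cong C_6$ (Proposition~\ref{wh:prop:product}(ii)). Hence it is normal with quotient $C_3$ generated by the image of $f$. Writing $f^3=(1,-j)$ shows $H\subseteq Q_8\pmx\Qtt$, and the orders agree, so $H=\Lambda_0$.

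\emph{Steps (ii)--(iii).} First I would check that $|S^3/\Lambda_0|$ and $|X|$ are topological (indeed smooth) $3$-manifolds. The isotropy group at any point of $S^3$ has order at most two. Indeed, two distinct involutions fixing the same point $v$ have product in $\Gamma$ (both lie in $f^3\Gamma$), and this product fixes $v$, contradicting freeness of $\Gamma$. An orientation-preserving involution of $D^3$ fixing a point is a rotation by $\pi$ about an axis. The map $(z,t)\mapsto(z^2,t)$ identifies the quotient of $D^3$ by this rotation smoothly with a ball, and the image of the axis is a smooth arc.

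Armstrong's theorem gives
\[
 \pi_1\bigl(|S^3/G|\bigr)\cong G/\langle\text{elements with fixed points}\rangle
\]
for a finite group $G$ acting on a simply connected space. Applied with $G=\Lambda_0$ and $G=\Lambda$, and using (i), this yields $\pi_1|S^3/\Lambda_0|=1$ and $\pi_1|X|=\Lambda/\Lambda_0\cong C_3$. By Perelman, $|S^3/\Lambda_0|\cong S^3$.

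Next, $C_3=\Lambda/\Lambda_0$ acts on $|S^3/\Lambda_0|$ with quotient $|X|$. Suppose this action fixed a point $[v]$. Then $\lambda v=v$ for some $\lambda\in f\Lambda_0\cup f^2\Lambda_0$, and $\lambda$ would be a nontrivial element outside $\Lambda_0$ with a fixed point, contradicting (i). So the action is free, which proves (iii). A free $C_3$-quotient of $S^3$ is a spherical space form by elliptization, and the only lens spaces with $\pi_1\cong C_3$ are $L(3,1)\cong L(3,2)$, which agree up to orientation. Moise's theorem then upgrades the homeomorphism $|X|\cong\spLt$ to a diffeomorphism, proving (ii).

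\emph{Step (iv).} Tameness is local. The smooth local charts above carry the fixed axis to a smooth arc, and a free $C_3$-quotient is a local diffeomorphism. Hence $\mathcal S$, the image of the $42$ great circles, is a smooth, and in particular tame, knot in $|X|$.

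The main obstacle is the justification of the Armstrong step together with the freeness of the residual $C_3$-action; both hinge on the exact list in (i). The remaining recognition steps are citations of Perelman's theorem and the lens-space classification.
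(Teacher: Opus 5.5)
Your proposal is correct and follows essentially the same route as the paper: the quotient computation $(\gamma_{q,a}f^3)(\gamma_{q',a'}f^3)^{-1}=\gamma_{qq'^{-1},a-a'}$ for (i), Armstrong's theorem applied to both $\Lambda_0$ and $\Lambda$ together with Perelman's theorems and the freeness argument for (ii)--(iii), and the local chart $(z,t)\mapsto(z^2,t)$ for (iv). The differences are cosmetic. You take the fixed-point list from Proposition~\ref{wh:prop:product}(iv) and prove the isotropy bound explicitly, whereas the paper enumerates the cosets $\gamma_{q,a}f^n$; as in the paper, the smooth structure on $|X|$ is best taken from Moise--Munkres rather than from the quotient charts, which give only the topological manifold structure and local flatness of $\mathcal S$.
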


\begin{proof}
(i) Every element of $\Lambda$ has the form $\gamma_{q,a}f^n$ with
$0\le n\le5$, since $f^6=\gamma_{-1,0}$ by
Theorem~\ref{per:thm:periodic}(i). For $n=0$ only the identity has a fixed
point, because $\Gamma$ acts freely; for $n\in\{1,2,4,5\}$ no element
has one, because $\Fix(\bar f^{\,n})=\emptyset$ on $\N$
(Proposition~\ref{per:prop:fixed}); for $n=3$ the fixed-point criterion
of Lemma~\ref{per:lem:rigidity} gives exactly $q\in\{\pm i,\pm j,\pm k\}$,
as in the proof of Proposition~\ref{per:prop:fixed}. These elements have
order two, because each generates the isotropy group of a point of the
singular set (Theorem~\ref{per:thm:action}(iv)). For two of them,
\[
 (\gamma_{q,a}f^3)(\gamma_{q',a'}f^3)^{-1}
 =\gamma_{q,a}\gamma_{q',a'}^{-1}=\gamma_{qq'^{-1},\,a-a'}.
\]
Taking $q=q'$ gives $C_7$, and taking $(q,q')=(i,j),(j,k),(k,i)$ gives
$-k,-i,-j$, which generate $Q_8$; hence the generated subgroup contains
$\Gamma$ and then $f^3=\gamma_{i,0}^{-1}(\gamma_{i,0}f^3)$. Conversely every
$\gamma_{q,a}f^3$ lies in $\langle\Gamma,f^3\rangle$. This subgroup is
normal, since $f$ normalizes $\Gamma$ and commutes with $f^3$, and it
has order $2\cdot56$ because $f^3\notin\Gamma$ and $f^6\in\Gamma$
(Theorem~\ref{per:thm:periodic}(ii)). The quotient has order three and is
generated by $f$. In the notation of Proposition~\ref{wh:prop:product},
$f^3=(1,-j)$, so $\Lambda_0=Q_8\pmx\langle C_{14},j\rangle=Q_8\pmx\Qtt$.

(ii) Armstrong's theorem \cite{Armstrong1968} identifies $\pi_1(|X|)$
with the quotient of $\Lambda$ by the normal subgroup generated by the
elements having fixed points. By (i), this quotient is
$\Lambda/\Lambda_0\cong C_3$.
Locally, $|X|$ is a quotient of a ball by a rotation group of order at
most two, and $D^3/C_2\cong D^3$; hence $|X|$ is a closed topological
three-manifold. It is orientable because $\Lambda$ acts on $S^3$ by
two-sided unit-quaternion multiplications, which preserve orientation.
By Moise and Munkres \cite{Moise1977,Munkres1960} it has a smooth
structure, unique up to diffeomorphism. By
elliptization \cite{PerelmanSurg,PerelmanExt} a closed three-manifold with
fundamental group $C_3$ is a spherical space form with cyclic fundamental
group of order three, that is, $L(3,1)$ or $L(3,2)$; these are the same
unoriented manifold.

(iii) The same argument applied to $\Lambda_0$, whose elements with fixed
points are the same $42$ involutions and generate $\Lambda_0$ by (i),
shows that $|S^3/\Lambda_0|$ is a simply connected closed three-manifold,
hence $S^3$. If $g\in\Lambda\setminus\Lambda_0$ carried a
$\Lambda_0$-orbit to itself, say $g v=hv$ with $h\in\Lambda_0$, then
$h^{-1}g\in\Lambda\setminus\Lambda_0$ would have a fixed point, contrary
to (i). So $\Lambda/\Lambda_0$ acts freely.

(iv) In an orbifold chart $D^3/C_2$, the singular set is the image of the
rotation axis, which the homeomorphism
$(re^{i\theta},x)\mapsto(re^{2i\theta},x)$ onto $D^3$ carries to a
straight segment.
\end{proof}

Write $E_{\mathcal S}=|X|\setminus\Int\nu\mathcal S$ for the exterior of the
singular circle.

\begin{theorem}[Branched covers of the singular circle]
\label{wh:thm:branched}
\begin{enumerate}[label=(\roman*),leftmargin=2.2em]
\item $N':=S^3/\Lambda^+$ is a spherical space form with
      $\pi_1N'\cong\Tst\times C_7$ and $H_1(N')\cong\ZZ/21$. The deck
      involution $\iota$ of $N'\to X$ has a connected fixed set, which maps
      homeomorphically onto $\mathcal S$. Hence, $N'$ is the double cover of
      $|X|\cong\spLt$ branched along $\mathcal S$.
\item Let $\widetilde X=|S^3/\Lambda_0|$. This is $S^3$, and it is the
      universal cover of $|X|$ (Lemma~\ref{wh:lem:underlying}). The preimage
      of $\mathcal S$ in it is a three-component link $L_0$, whose components
      are permuted cyclically by the deck group $C_3=\Lambda/\Lambda_0$. The
      manifold $\N=S^3/\Gamma$ is the double cover of $S^3$ branched along
      $L_0$, with covering involution $\bar f^{\,3}$.
\item $\mathcal S$ is null-homotopic in $|X|$, and
      $H_1(E_{\mathcal S})\cong\ZZ\langle\mu\rangle\oplus\ZZ/3$. Here $\mu$ is
      a meridian, and the torsion summand maps isomorphically to
      $H_1(|X|)$.
\end{enumerate}
\end{theorem}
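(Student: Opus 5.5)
The plan is to read all three parts off the group-theoretic data already established: Proposition~\ref{wh:prop:product}, Lemma~\ref{wh:lem:underlying} and Proposition~\ref{per:prop:fixed}. The single principle used throughout is that, for a normal subgroup $H\subseteq\Lambda$, the map $S^3/H\to S^3/\Lambda$ is a regular branched cover with deck group $\Lambda/H$. Its branch locus is the image of the fixed sets of those elements of $\Lambda\setminus H$ that have fixed points.

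For (i), Proposition~\ref{wh:prop:product}(iii) says that $\Lambda^+$ acts freely with $\Lambda^+\cong\Tst\times C_7$. Hence $N'$ is a spherical space form, and since $(\Tst)^{\mathrm{ab}}\cong C_3$, its first homology is $C_3\times C_7\cong\ZZ/21$. The subgroup $\Lambda^+$ has index two, so the deck group of $N'\to X$ is generated by an involution $\iota$, induced for instance by $f^3=(1,-j)\notin\Lambda^+$.

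A point $[v]\in N'$ is fixed by $\iota$ exactly when some element of $\Lambda\setminus\Lambda^+$ fixes $v$. By Proposition~\ref{wh:prop:product}(iv), these elements are precisely the $42$ involutions, and none of them lies in $\Lambda^+$. So $\Fix(\iota)$ is the image of their $42$ great circles, and it maps onto $\mathcal S$. The map $\Fix(\iota)\to\mathcal S$ is also injective, because the fibres of $N'\to X$ are $\iota$-orbits and a fixed point is its own orbit. Hence $\Fix(\iota)\to\mathcal S$ is a continuous bijection of compact Hausdorff spaces, so it is a homeomorphism, and in particular $\Fix(\iota)$ is connected. Finally, $\iota$ preserves orientation, so $N'$ is the double cover of $|X|\cong\spLt$ branched along $\mathcal S$.

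For (ii), Lemma~\ref{wh:lem:underlying}(iii) gives $\widetilde X\cong S^3$ together with a free action of $C_3=\Lambda/\Lambda_0$ on it. That action makes $\widetilde X$ the universal cover of $|X|$, since $\pi_1|X|\cong C_3$. All elements of $\Lambda$ with fixed points lie in $\Lambda_0$, so the preimage of $\mathcal S$ is the image in $\widetilde X$ of the $42$ fixed circles. Now factor the quotient map as $\N=S^3/\Gamma\to S^3/\Lambda_0$; this is the quotient by $\Lambda_0/\Gamma=\langle\bar f^{\,3}\rangle$. By Proposition~\ref{per:prop:fixed}, $\Fix(\bar f^{\,3})=\mathcal C_1\sqcup\mathcal C_2\sqcup\mathcal C_3$. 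These three circles map injectively to $\widetilde X$, by the same orbit argument as in (i), so $L_0$ has three components and $\N$ is the double branched cover along $L_0$. Because $\bar f$ permutes the $\mathcal C_r$ cyclically, the deck group generated by the image of $f$ permutes the components of $L_0$ cyclically.

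For (iii), the three components of $L_0$ are permuted transitively by the degree-three cover $\widetilde X\to|X|$, and they cover $\mathcal S$ with total degree three. Hence each component maps with degree one, which means $\mathcal S$ lifts to a closed loop in the universal cover. Therefore $[\mathcal S]=1$ in $\pi_1|X|$, and $\mathcal S$ is null-homologous.

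For the homology of the exterior, apply Mayer--Vietoris to $|X|=E_{\mathcal S}\cup\nu\mathcal S$. Since $\mathcal S$ is null-homologous, it bounds a Seifert surface. Intersection with that surface defines a homomorphism $H_1(E_{\mathcal S})\to\ZZ$ sending $\mu\mapsto1$, and this splits off $\ZZ\langle\mu\rangle$. The remaining part of the sequence shows that $H_1(E_{\mathcal S})\to H_1(|X|)\cong\ZZ/3$ is surjective with kernel $\langle\mu\rangle$. This gives $H_1(E_{\mathcal S})\cong\ZZ\langle\mu\rangle\oplus\ZZ/3$, with the torsion summand mapping isomorphically to $H_1(|X|)$.

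The step needing most care is the injectivity of the fixed-set maps in (i) and (ii), on which both connectedness and the component count depend. After that, the degree-one lifting argument in (iii) is the remaining subtle point.
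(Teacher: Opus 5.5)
Your proof is correct. In parts (i) and (ii) it takes a different and shorter route than the paper.

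\textbf{The paper's route.} For (i), the paper obtains connectedness of $\Fix(\iota)$ from an explicit conjugation computation showing that the $42$ involutions form a single $\Lambda^+$-conjugacy class. It then identifies the stabilizer of one fixed circle with that involution's centralizer, of order four and acting freely, to see that the image is one embedded circle. For (ii), it counts the components of $L_0$ as the $\Lambda_0$-conjugacy classes of the involutions. It uses the left axis as an invariant and explicit conjugations by $(1,\zeta)$, $(p,1)$ and $(1,j)$.

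\textbf{Your route.} You replace both computations by the observation that the fixed set of an involution injects into the quotient, because fixed points are one-point orbits. Hence $\Fix(\iota)\to\mathcal S$ and $\Fix(\bar f^{\,3})\to L_0$ are continuous bijections of compact Hausdorff spaces, so they are homeomorphisms. You then import topology the paper has already proved:
\begin{itemize}
\item connectedness of $\mathcal S$, from Theorem~\ref{per:thm:action}(ii),(iv), where the exceptional set is the single torus $\mathcal E$;
\item the three components of $\Fix(\bar f^{\,3})$, permuted cyclically by $\bar f$, from Proposition~\ref{per:prop:fixed}.
\end{itemize}
This is not circular, since those results rest only on the $\Gamma$-orbit count in Proposition~\ref{per:prop:fixed}.

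\textbf{What each buys.} Your approach does the orbit counting once, in $\N$, and transports it to both quotients. The paper's approach gives an independent group-theoretic cross-check inside $\Lambda$, plus the stabilizer information.

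Part (iii) is the paper's argument. Your Seifert-surface splitting supplies the Mayer--Vietoris detail that the paper only cites.
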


\begin{proof}
(i) The group $\Lambda^+$ acts freely by
Proposition~\ref{wh:prop:product}(iii). The abelianization of
$\Tst\times C_7$ is $C_3\times C_7$. We have $X=N'/(\Lambda/\Lambda^+)$, and
the nontrivial element acts by an involution $\iota$. The fixed set of
$\iota$ is the image of the fixed circles of the $42$ involutions. These
circles are pairwise disjoint: a common point would be fixed by a nontrivial
element of $\Lambda^+$. The $42$ involutions form one $\Lambda^+$-conjugacy class. Put $\xi:=-\zeta$;
this is a generator of $C_{14}$, of order $14$, with $\xi^7=-1$, and
$\Qtt\setminus C_{14}=\{\xi^mj:m\in\ZZ/14\}$. With the conventions of
Proposition~\ref{wh:prop:product}, the involutions are the elements
$(A,\xi^mj)$ with $A\in\{i,j,k\}$ and $m\in\ZZ/14$, since $(-A,B)=(A,-B)$ and
$-\xi^mj=\xi^{m+7}j$. Conjugation by $(\omega,1)$ permutes $i,j,k$
cyclically; conjugation by $(1,\zeta)$ sends $(A,\xi^mj)$ to
$(A,\zeta^{-1}\xi^mj\zeta)=(A,\xi^{m-2}j)$, because $j\zeta=\zeta^{-1}j$ and
$\zeta^{-2}=\xi^{-2}$; and conjugation by $(j,1)$ sends $(i,\xi^mj)$ to
$(-i,\xi^mj)=(i,\xi^{m+7}j)$. The translations $m\mapsto m-2$ and
$m\mapsto m+7$ generate all translations of $\ZZ/14$. Hence, the fixed circles form one $\Lambda^+$-orbit, so $\Fix(\iota)$ is
the image in $N'$ of a single fixed circle $\Fix(g)$, where $g$ is one of
the $42$ involutions. If $h\in\Lambda^+$ satisfies $h\Fix(g)=\Fix(g)$,
then $hgh^{-1}$ fixes $\Fix(g)$ pointwise, and since the fixed circles are
pairwise disjoint the only elements of $\Lambda$ fixing a point of
$\Fix(g)$ are $1$ and $g$; so $hgh^{-1}=g$. Thus, the stabilizer of
$\Fix(g)$ in $\Lambda^+$ is the centralizer of $g$ in $\Lambda^+$, of
order $168/42=4$, and it acts freely on the circle $\Fix(g)$ because
$\Lambda^+$ acts freely on $S^3$. The quotient of a circle by a free
action of a finite group is a circle, so $\Fix(\iota)$ is an embedded
circle in $N'$, and the quotient map $N'\to X$ restricted to $\Fix(\iota)$
is injective onto the branch locus $\mathcal S$.

(ii) The components of the preimage of $\mathcal S$ in $\widetilde X$ are the
images of the $42$ circles modulo $\Lambda_0$. These correspond to the
$\Lambda_0$-conjugacy classes of the involutions. An element $(C,D)$ of
$\Lambda_0=Q_8\pmx\Qtt$ has $C\in Q_8$, and conjugation by $C$ preserves each
of the sets $\{\pm i\}$, $\{\pm j\}$, $\{\pm k\}$; so the left axis of an
involution is a conjugacy-class invariant, and there are at least three classes. By the
computation in (i), applied with $(1,\zeta)$, with $(p,1)$ for an element
$p\in Q_8$ anticommuting with the left axis (so that
$(A,\xi^mj)\mapsto(A,-\xi^mj)=(A,\xi^{m+7}j)$), and with $(1,j)$ (which
sends $(A,\xi^mj)$ to $(A,\xi^{-m}j)$), the
$14$ involutions with a given left axis are conjugate in $\Lambda_0$. So
there are exactly three classes, of $14$ elements each, and $L_0$ has three
components. The element $f=(\omega^{-1},j)$ conjugates the left axes
cyclically, so it permutes the three components cyclically; compare the
proof of Proposition~\ref{per:prop:fixed}. Finally, $\Lambda_0/\Gamma$ is
generated by $f^3$. This proves the statement about $\N$.

(iii) By (ii), the universal covering
$\widetilde X\to |X|$ has degree three and the full preimage of
$\mathcal S$ has exactly three components. Hence, each of those components
maps with degree one onto $\mathcal S$. Equivalently, a loop going once
around $\mathcal S$ has a closed lift to the universal cover, so its element
of $\pi_1(|X|)$ is trivial. Thus, $\mathcal S$ is null-homotopic. For a null-homologous knot in a rational homology
sphere, Mayer--Vietoris gives
$H_1(E_{\mathcal S})\cong\ZZ\langle\mu\rangle\oplus H_1(|X|)$.
\end{proof}

\noindent
By \cite[Theorem~1.1]{MecchiaSchilling2025}, every spherical three-manifold has
a unique hyperelliptic involution up to conjugacy, and the singular set of its
quotient is a Montesinos link with at most three tangles
\cite[Theorem~1.2]{MecchiaSchilling2025}; for the prism manifold $\N$ this
involution is $\bar f^{\,3}$, and $L_0$ is the corresponding three-tangle
Montesinos link, in accordance with Montesinos' description of prism
manifolds as double branched covers \cite{Montesinos1973}.

\begin{remark}[The Seifert structure of the orbit orbifold]\label{wh:rem:seifert}
The group
$\Lambda=\Tst\pmx\Qtt$ preserves the Hopf fibration $v\mapsto vU(1)$ of
$S^3$, because $\Qtt\subset U(1)\cup U(1)j$ normalizes $U(1)$. With the Hopf
map $h(v)=vi\bar v$, an element $(A,B)$ acts on the base $S^2$ by
$x\mapsto AxA^{-1}$ if $B\in C_{14}$ and by $x\mapsto-AxA^{-1}$ if
$B\in\Qtt\setminus C_{14}$. The induced group is the pyritohedral group
$A_4\times\{\pm1\}$ of order $24$, and the kernel is
$C_{14}=\{(1,B):B\in C_{14}\}$. Hence, $X$ is a Seifert fibred orbifold over
the two-orbifold $S^2/(A_4\times\{\pm1\})$, a disc with one cone point of
order three and a mirror boundary carrying one corner reflector of order two
(the orbifold $3{*}2$ in Conway's notation). The $42$ involutions act on
the base by the reflections, so $\mathcal S$ lies over the mirror boundary:
over a mirror point the fibre of $X$ is an interval whose two endpoints lie
on $\mathcal S$. The fibre over the cone point of order three has stabilizer
$\langle-\omega\rangle\pmx C_{14}\cong C_{42}$, which acts freely on it, so
it is an exceptional fibre of multiplicity $42/14=3$, and the class of a
loop once around it has order $42$ in $\Lambda$. The fibre over
the corner reflector has stabilizer $\langle(i,\zeta),(j,j)\rangle$, dihedral
of order $56$, since $(j,j)$ inverts $(i,\zeta)$. The same computation for
$\Lambda_0=Q_8\pmx\Qtt$ gives a Seifert structure on $S^3/\Lambda_0$ over the
orbifold ${*}222$, a disc with three corner reflectors of order two, with
one singular circle over each of the three mirror edges; this is the
three-component link $L_0$ of Theorem~\ref{wh:thm:branched}(ii). The same invariants follow from the closed formulae of
\cite{MecchiaSeppi2015} and are tabulated in \cite{MecchiaSchilling2025}:
in \cite[Table~5]{MecchiaSchilling2025} (general case; Table~4 of the
arXiv version) the quotient of the
tetrahedral manifold $N'$ by the involution with extension $\Lambda$
(Family~14, $m=7$) is listed with base orbifold $D^2(3;2)$, local
invariants $7/2$, $7/3$ and Euler class $-7/12$, and in
\cite[Table~7]{MecchiaSchilling2025} (small indices, Table~6 of the arXiv
version, where the factor
$D^*_8$ is treated) the quotient of the prism manifold $\N$ by the
involution with extension $\Lambda_0$ (Family~10, $m=7$) is listed with
base orbifold $D^2(;2,2,2)$, local invariants $7/2,7/2,7/2$ and Euler
class $-7/4$, up to the orientation convention.
\end{remark}

\begin{remark}[Cross-check with the tables]\label{wh:rem:tables}
Parts~(ii) and~(iii) of Lemma~\ref{wh:lem:underlying} agree with the
tabulated data, read through the Seifert structures of
Remark~\ref{wh:rem:seifert}. The orbifold $S^3/\Lambda_0$ fibres over
$D^2(;2,2,2)$, a disc without cone points, so its underlying space is
$S^3$ by \cite[Proposition~2.10]{Dunbar1981}, as quoted in
\cite[Section~5.3]{MecchiaSchilling2025}; this recovers (iii) without
the elliptization theorem. The orbifold $X$ fibres over $D^2(3;2)$, whose
cone point of order three carries the local invariant $7/3$; as this is
not an integer, \cite[Proposition~2.11]{Dunbar1981} shows that $|X|$ is
not $S^3$, in accordance with $\pi_1(|X|)\cong C_3$. Consistently,
\cite[Tables~11 and~13]{MecchiaSchilling2025} (Tables~9 and~10 of the
arXiv version) list the involution
$\bar f^{\,3}$ of the prism manifold $\N$, with extension $\Lambda_0$
(Family~10), as hyperelliptic, and the involution $\iota$ of the
tetrahedral manifold $N'$, with extension $\Lambda$ (Family~14), as not
hyperelliptic.
\end{remark}

\subsection{Identification with the Whitehead link}

Let $W=J\cup W'$ be a Whitehead link ($5^2_1$ in \cite{Rolfsen1976}) in
its standard five-crossing diagram. We label the components so that $J$ is
the component without a self-crossing and $W'$ is the component carrying the
self-crossing. The Whitehead link admits an orientation-preserving ambient isotopy
interchanging its two components; compare the component interchange used
explicitly in \cite[Section~5]{GordonLidman2014}. Thus, this choice is only a
convenient labelling convention and introduces no asymmetry into the link
type.

\smallskip\noindent\emph{Mirror convention.}
The two mirror images of the Whitehead link are distinguished by
Proposition~\ref{wh:prop:lift}(iv) below. Throughout, $W$ denotes the mirror
image with $m=2$ in the notation of Proposition~\ref{wh:prop:lift}: the one
whose clasp, in the standard diagram of $J$ in the solid torus
$S^3\setminus\Int\nu W'$, has two negative self-crossings ($w=-2$);
equivalently, the one for which the double cover of $S^3_{+3}(J)$ branched
along $W'$ has first homology of order $21$. Lemma~\ref{wh:lem:abel} and
Proposition~\ref{wh:prop:lift} are stated for both mirror images;
Theorem~\ref{wh:thm:whitehead} and everything depending on it refer to this
one.

\smallskip\noindent
Let $\OW$ be the orbifold whose underlying space is
$S^3_{+3}(J)\cong\spLt$ and whose singular locus is $W'$, with isotropy of
order two.

\begin{lemma}[The abelianization of the orbifold group]\label{wh:lem:abel}
For either mirror image of the Whitehead link,
$H_1^{\mathrm{orb}}(\OW)=\pi_1^{\mathrm{orb}}(\OW)^{\mathrm{ab}}$ is cyclic of
order six. The images of a meridian $\mu_J$ of $J$ and of a meridian
$\mu_{W'}$ of $W'$ have orders three and two, and their product is a
generator.
\end{lemma}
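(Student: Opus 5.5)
The plan is to compute $H_1^{\mathrm{orb}}(\OW)$ directly from a presentation of the orbifold group, using only linking-number data of the Whitehead link. The group $\pi_1^{\mathrm{orb}}(\OW)$ is the quotient of $\pi_1(S^3\setminus(J\cup W'))$ by the normal closure of two relations. The first is the $+3$ surgery relation along $J$, namely $\mu_J^{3}\lambda_J=1$, where $\lambda_J$ is the Seifert longitude of $J$ in $S^3$. The second is the orbifold relation $\mu_{W'}^2=1$ along the singular component. After abelianizing, $H_1(S^3\setminus W)$ is free abelian on $\mu_J$ and $\mu_{W'}$ by Alexander duality.

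The only link-specific input is the homology class of $\lambda_J$ in the link exterior. That class is $\lk(J,W')\,\mu_{W'}$. For either mirror image of the Whitehead link, $\lk(J,W')=0$: in the standard diagram the two clasp crossings between the components have opposite signs. This is also visible from the solid-torus picture used in the mirror convention, since $J$ is null-homologous in $S^3\setminus\Int\nu W'$. Hence $\lambda_J$ is trivial in homology, and the surgery relation abelianizes to $3\mu_J=0$.

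This gives
\[
 H_1^{\mathrm{orb}}(\OW)\cong \ZZ\langle\mu_J,\mu_{W'}\rangle\big/\langle 3\mu_J,\,2\mu_{W'}\rangle\cong\ZZ/3\oplus\ZZ/2\cong\ZZ/6 .
\]
The image of $\mu_J$ has order exactly three and the image of $\mu_{W'}$ has order exactly two. Their sum generates, since $\gcd(2,3)=1$. The computation is insensitive to the mirror image, because mirroring changes only the signs of the self-crossings of $W'$ and not the vanishing of the linking number.

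The only points that need care are two justifications. First, the orbifold fundamental group of a singular knot with cone angle $\pi$ is the exterior group modulo $\mu_{W'}^2$; this is the standard orbifold van Kampen for a $D^3/C_2$ neighborhood. Second, the framing $+3$ is measured against the Seifert longitude of $J$ in $S^3$, which is exactly the curve whose class is the linking number. I expect neither step to be a real obstacle. As a consistency check, the answer matches Proposition~\ref{wh:prop:product}(ii), which gives $\Lambda^{\mathrm{ab}}\cong C_6$.
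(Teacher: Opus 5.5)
Your proposal is correct and follows the paper's proof essentially verbatim: you present the orbifold group as the link group modulo $\mu_J^3\lambda_J$ and $\mu_{W'}^2$, use Alexander duality and $\lk(J,W')=0$ to kill $\lambda_J$ in homology, and read off $\ZZ/3\oplus\ZZ/2$. One small wording correction: in the five-crossing diagram the two crossings within each mixed clasp have the \emph{same} sign, and it is the two clasps that cancel; also, mirroring negates every crossing sign, and $\lk=0$ survives because $-0=0$. Your winding-number argument in $S^3\setminus\Int\nu W'$ already establishes $\lk(J,W')=0$ correctly on its own.
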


\begin{proof}
The orbifold group is $\pi_1(S^3\setminus(J\cup W'))$ modulo the normal
closure of the surgery slope $\mu_J^3\lambda_J$ and of $\mu_{W'}^{\,2}$.
By Alexander duality \cite[Theorem~3.44]{Hatcher},
$H_1(S^3\setminus(J\cup W'))=\ZZ\mu_J\oplus\ZZ\mu_{W'}$,
and the Seifert longitude of $J$ is $\lambda_J=\lk(J,W')\,\mu_{W'}=0$ in it.
Hence
$H_1^{\mathrm{orb}}(\OW)=\ZZ^2/\langle3\mu_J,2\mu_{W'}\rangle\cong\ZZ/3\oplus\ZZ/2$.
\end{proof}

\begin{proposition}[The double branched cover]\label{wh:prop:lift}
Let $W=J\cup W'$ be either mirror image of the Whitehead link, and let $Y$ be
the double cover of $S^3_{+3}(J)$ branched along $W'$. Let
$w\in\{2,-2\}$ be the writhe of the standard clasp diagram of $J$ in the
solid torus $V=S^3\setminus\Int\nu W'$, that is, the sum of the signs of its
two self-crossings (which are equal, as at every clasp), and put $m=-w$.
\begin{enumerate}[label=(\roman*),leftmargin=2.2em]
\item The double cover of $S^3$ branched along $W'$ is $S^3$, and the
      preimage $\widetilde J=\widetilde J_1\cup\widetilde J_2$ of $J$ in it is
      the torus link $T(2,2m)$: its components are unknotted and cobound an
      annulus with unknotted core and framing $m$.
\item The surgery slope $3\mu_J+\lambda_J$ lifts on each $\widetilde J_i$ to
      the slope $(3+m)\widetilde\mu_i+\widetilde\lambda_i^{\,0}$, where
      $\widetilde\lambda_i^{\,0}$ is the Seifert longitude of $\widetilde J_i$.
      Hence, $Y=S^3_{3+m,\,3+m}\bigl(T(2,2m)\bigr)$ and
      $|H_1(Y)|=|(3+m)^2-m^2|=|9+6m|$.
\item $Y$ is Seifert fibred over $S^2(2,3,3)$: the components of $T(2,2m)$
      are regular fibres of the Seifert fibration of $S^3$ by
      $(1,m)$-torus curves, whose fibre framing is $m$, and the lifted slopes
      are $3\widetilde\mu_i+\lambda_{\mathrm{fib}}$.
\item For $m=2$ one has $|H_1(Y)|=21$, and for $m=-2$ one has $|H_1(Y)|=3$.
      Thus, the two mirror images are distinguished by $|H_1(Y)|$.
\end{enumerate}
\end{proposition}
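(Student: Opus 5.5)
The plan is to work entirely upstairs, in the double cover of $S^3$ branched along the unknot $W'$, and push the surgery description of $S^3_{+3}(J)$ through that cover. Since $W'$ is unknotted, the branched double cover is $S^3$, and the solid torus $V=S^3\setminus\Int\nu W'$ lifts to its connected double cover $\widetilde V$, again a solid torus. Inside $V$, the curve $J$ is the standard clasp curve: null-homologous in $V$ (because $\lk(J,W')=0$), with winding number zero, and carrying one clasp with two self-crossings of sign $w$. I will model $V$ as an annulus times an interval and $J$ as a band encircling the core that is closed by a clasp. The double cover unwraps the core direction. Since the winding number is zero, $J$ lifts to two components, $\widetilde J_1$ and $\widetilde J_2$. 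Each is isotopic to the lift of a curve that goes once around the core and is then "clasped" back; concretely, each lift is a longitude-type curve of $\widetilde V$, and the clasp crossings become linking crossings between the two lifts. The clasp consists of two self-crossings of $J$, and in the cover each of these contributes two crossings between $\widetilde J_1$ and $\widetilde J_2$. The expected outcome is two parallel unknots, both isotopic to the core of $\widetilde V$ (which is unknotted in $S^3$), linking with total $\lk(\widetilde J_1,\widetilde J_2)=-w=m$ up to the sign conventions. Equivalently, they cobound an annulus with unknotted core and framing $m$, which is $T(2,2m)$. This gives (i). I expect this to be the main obstacle: getting the sign of the linking number right relative to the writhe convention. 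My first attempt at this step will be to draw the lift of the standard diagram explicitly, cutting $V$ along a meridian disc of $W'$ that $J$ meets in two points of opposite sign.

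For (ii), the preimage of $\mu_J$ is $\widetilde\mu_i$ on each component, because $J$ avoids the branch locus and the cover is unbranched near $J$. The Seifert longitude $\lambda_J$ bounds a Seifert surface of $J$ in $S^3$; I will choose one disjoint from $W'$ (a genus-one surface inside $V$). That surface lifts to two disjoint copies, so $\lambda_J$ lifts to the framing of $\widetilde J_i$ given by the lifted surface. This framing differs from the Seifert longitude of $\widetilde J_i$ in $S^3$ by the linking with the other component, and comparing the two gives $\widetilde\lambda_i^{\,0}+m\widetilde\mu_i$. Hence the slope lifts to $(3+m)\widetilde\mu_i+\widetilde\lambda_i^{\,0}$. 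For a two-component link with framings $p,p$ and linking number $m$, the linking matrix is $\begin{pmatrix}p&m\\ m&p\end{pmatrix}$, so $|H_1|=|p^2-m^2|$, and with $p=3+m$ this equals $|9+6m|$.

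For (iii), I will take the Seifert fibration of $S^3$ by $(1,m)$-curves on the tori around an unknotted core. Its base is $S^2$ with no exceptional fibres of order $>1$ in the coarse sense, although the two core circles are regular fibres of this fibration or cone points of order one. The two parallel copies are regular fibres, and the fibre framing equals their mutual linking $m$. The surgery slope is therefore $3\widetilde\mu_i+\lambda_{\mathrm{fib}}$, i.e., the surgery coefficient is $3$ relative to the fibre framing. Each surgery thus inserts an exceptional fibre of multiplicity $3$. Together with the contribution of the unknotted cores (giving multiplicity $|m|=2$ from the core of complementary type), this yields base $S^2(2,3,3)$. Finally, (iv) is direct substitution: $m=2$ gives $21$ and $m=-2$ gives $|{-3}|=3$.
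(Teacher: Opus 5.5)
Your overall route is the paper's (lift through $\Sigma_2(S^3,W')=S^3$, identify $\widetilde J$, lift the slopes, take the linking matrix, use the $(1,m)$-fibration). However, part (i) is not actually established, and the picture you offer for it is wrong. Neither $\widetilde J_i$ is a longitude-type curve of $\widetilde V$. Since $J$ has winding number zero in $V$ and $\pi_1(V)\cong\ZZ$, $J$ is null-homotopic in $V$, so each lift is null-homotopic in $\widetilde V$: it runs over one half of $\widetilde V$, picks up one hook in each of the two lifted clasp balls, and turns back. So nothing in your sketch shows that the two lifts are parallel. Unknotted components with $\lk=\pm2$ do not by themselves give $T(2,2m)$ (compare the Whitehead link and the unlink, both with $\lk=0$). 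An explicit drawing of the lifted diagram could settle this, but the proposal does not carry it out.

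The missing ingredient is the paper's lifted spanning disc. A disc $\Delta$ bounded by $J$ meets $W'$ in two points, so its preimage is a connected double cover branched at two points, with $\chi=2\cdot1-2=0$ and boundary $\widetilde J_1\cup\widetilde J_2$; hence it is an annulus. Unknottedness then comes from the lifted diagram, since $\widetilde J_1$ has no self-crossings. The annulus framing is $\lk(\widetilde J_1,\widetilde\lambda_1)=0-w=m$: $\lambda_J$ is the blackboard longitude minus $w$ meridians, and the blackboard longitude lifts to the blackboard longitude of the crossing-free curve $\widetilde J_1$.

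Your argument for (ii) fails as stated, because no Seifert surface $F$ of $J$ disjoint from $W'$ lifts to two disjoint copies. If its preimage were $F_1\sqcup F_2$ with $\partial F_i=\widetilde J_i$, then $\lk(\widetilde J_1,\widetilde J_2)=\widetilde J_2\cdot F_1=0$, contradicting the value $\pm2$ you expect. Moreover, if it did lift, the lifted longitude would be exactly $\widetilde\lambda_i^{\,0}$, so the correction $m\widetilde\mu_i$ you go on to assert would not follow from your premise. The repair is to use the connected preimage $\widetilde F$, with $\partial\widetilde F=\widetilde J_1\cup\widetilde J_2$ in the lifted orientations. Pushing $\widetilde F$ off itself gives $\lk(\widetilde J_1,\widetilde\lambda_1)=-\lk(\widetilde J_1,\widetilde J_2)$. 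The four lifted clasp crossings, all of sign $w/2$, give $\lk(\widetilde J_1,\widetilde J_2)=w$, so the framing is $-w=m$.

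This sign cannot be left ``up to the sign conventions''. The off-diagonal sign does not affect the determinant, but with $3-m$ instead of $3+m$ on the diagonal you would get $|9-6m|$, and (iv) would exchange $21$ and $3$. That is exactly the mirror distinction on which Theorem~\ref{wh:thm:whitehead} relies.

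Finally, your description of the $(1,m)$-fibration in (iii) contradicts itself. It has exactly one exceptional fibre, the core about which the $(1,m)$-curves wind $m$ times, of multiplicity $|m|=2$; the other core is a regular fibre. That is where the $2$ in $S^2(2,3,3)$ comes from.
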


\noindent
Parts (i)--(iii) are applications of the Montesinos trick
\cite{Montesinos1975}. The double cover of $S^3$ branched along one component
of the Whitehead link is $S^3$, and the other component lifts to the
two-component chain link, i.e.,\ a $(2,\pm4)$-torus link
\cite[Section~5]{GordonLidman2014}; the Seifert fibration in (iii) is the
standard Seifert surgery on regular fibres \cite{Montesinos1973}.

\begin{proof}
(i) Since $W'$ is unknotted, the double cover $\Sigma_2(S^3,W')$ is $S^3$. It
is the union of the connected double cover $\widetilde V\to V$ of the solid
torus $V$, which unwinds the direction of the meridian $\mu_{W'}$ (a
longitude of $V$), and the branched double cover of $\nu W'$. Both are solid
tori, and the meridian of the second is the lift of $\mu_{W'}$, which is the
longitude $(\RR/2\ZZ)\times\{p\}$, $p\in\partial D^2$, of
$\widetilde V=(\RR/2\ZZ)\times D^2$. Hence
$\widetilde V$ is unknotted in $S^3$, and its product framing is the framing
induced from $S^3$; hence any diagram of a link in $\widetilde V$ drawn in the
annulus $(\RR/2\ZZ)\times[-1,1]$ is also a diagram in $S^3$, with the same
blackboard framings. The same holds for $V=(\RR/\ZZ)\times D^2$ in $S^3$.

Since $\lk(J,W')=0$, the preimage of $J$ has two components, interchanged
by the deck involution $\tau$. Let $\Delta$ be a spanning disc of the unknot
$J$; it is a meridian disc of the solid torus $S^3\setminus\Int\nu J$, whose
core has $W'$ as an untwisted Whitehead double, so $\Delta$ meets $W'$ in two
points of opposite sign. The preimage $\widetilde\Delta$ of $\Delta$ is the
double cover of $\Delta$ branched at two points: it is connected, of Euler
characteristic $2\cdot1-2=0$, with boundary $\widetilde J_1\cup\widetilde J_2$.
So $\widetilde\Delta$ is an annulus, and $\widetilde J_1,\widetilde J_2$ are
parallel.

In $V=(\RR/\ZZ)\times D^2$, the pattern $J$ consists of two arcs
$\sigma_1,\sigma_2$ running once around $V$ in opposite directions at two
points of $D^2$, together with a clasp in a ball $C$: two hooks, one joining the
ends of $\sigma_1,\sigma_2$ on one side of $C$ and one joining their ends on
the other side, linked once with each other. In $\widetilde V$ the clasp ball
lifts to two balls $C_0,C_1$, and each $\sigma_i$ lifts to two arcs. If we follow
$J$ once around, the component $\widetilde J_1$ consists of the lifts of
$\sigma_1,\sigma_2$ lying over one half of $\RR/2\ZZ$, together with one hook
in $C_0$ and one hook in $C_1$; the component $\widetilde J_2=\tau\widetilde J_1$
consists of the other lifts. The two hooks of $\widetilde J_1$ are linked only
with hooks of $\widetilde J_2$. Hence, in the diagram of $\widetilde V$
obtained by lifting the standard diagram of the pattern, $\widetilde J_1$ has
no self-crossing: it is unknotted, and its blackboard framing is its Seifert
framing.

Let $\widetilde\lambda_1=\widetilde\Delta\cap\partial\nu\widetilde J_1$ be
the lift of the Seifert longitude $\lambda_J=\Delta\cap\partial\nu J$. The
framing of the annulus $\widetilde\Delta$ along $\widetilde J_1$ is
$\lk(\widetilde J_1,\widetilde\lambda_1)$, and $\widetilde J$, the boundary
of an annulus with unknotted core and this framing, is the torus link
$T(2,2\lk(\widetilde J_1,\widetilde\lambda_1))$. It remains to see that
$\lk(\widetilde J_1,\widetilde\lambda_1)=-w$. The blackboard longitude of $J$
in the annulus diagram of $V$ has linking number $w$ with $J$, so $\lambda_J$
is the blackboard longitude twisted by $-w$ meridians of $J$. Meridians lift
to meridians and the blackboard longitude lifts to the blackboard longitude,
which has linking number $0$ with $\widetilde J_1$. Hence
$\lk(\widetilde J_1,\widetilde\lambda_1)=0-w=m$.

(ii) The lift of $\mu_J$ is the meridian $\widetilde\mu_i$ of $\widetilde J_i$,
and the lift of $\lambda_J$ is $\widetilde\lambda_i$, with
$\lk(\widetilde J_i,\widetilde\lambda_i)=m$ by (i), so
$\widetilde\lambda_i=\widetilde\lambda_i^{\,0}+m\widetilde\mu_i$ and the
lifted slope is $(3+m)\widetilde\mu_i+\widetilde\lambda_i^{\,0}$. The double
cover of $S^3_{+3}(J)$ branched along $W'$ is obtained from
$\Sigma_2(S^3,W')=S^3$ by surgery along $\widetilde J$ with the lifted
slopes, because the surgery solid torus is disjoint from $W'$ and its boundary
torus lifts to two tori. With parallel orientations, the components of
$T(2,2m)$ have linking number $m$, so the linking matrix of the surgery is
$\bigl(\begin{smallmatrix}3+m&m\\m&3+m\end{smallmatrix}\bigr)$, and
its determinant is $(3+m)^2-m^2$.

(iii) Two parallel $(1,m)$-torus curves on the standard torus form
$T(2,2m)$ and have linking number $m$. They are regular fibres of the Seifert
fibration of $S^3$ whose fibres are the $(1,m)$-curves; it has one exceptional
fibre, of multiplicity $|m|=2$, and the fibre framing of a regular fibre is
$m$ (the framing of a $(p,q)$-torus knot induced by a neighbouring fibre is
$pq$ \cite{Moser1971}), so
$\lambda_{\mathrm{fib}}=\widetilde\lambda_i^{\,0}+m\widetilde\mu_i$
and the lifted slope is $3\widetilde\mu_i+\lambda_{\mathrm{fib}}$. It meets
the fibre slope three times, so each of the two surgeries replaces a regular
fibre by an exceptional fibre of multiplicity three, and $Y$ is Seifert
fibred over $S^2(2,3,3)$.

(iv) Substitute $m=\pm2$ in (ii).
\end{proof}

\begin{theorem}[The Whitehead model]\label{wh:thm:whitehead}
For the mirror image of the Whitehead link fixed above, there is a
homeomorphism of orbifolds $X\cong\OW$, possibly reversing orientation. In
particular
\[
 (|X|,\mathcal S)\ \cong\ \bigl(S^3_{+3}(J),\,W'\bigr).
\]
Under this homeomorphism, the singular circle is the Whitehead partner of a $(+3)$-surgery curve.
Equivalently, $\mathcal S$ lies in one solid torus of a genus-one Heegaard
splitting of $\spLt$ as a Whitehead double of its core, with the twisting
of the Whitehead link.

\end{theorem}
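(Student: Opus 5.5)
The plan is to identify both orbifolds through their double branched covers and then descend, using the uniqueness of the covering involution. On the side of $X$, Theorem~\ref{wh:thm:branched}(i) shows that the double cover of $|X|\cong\spLt$ branched along $\mathcal S$ is the tetrahedral manifold $N'=S^3/\Lambda^+$, where $\pi_1N'\cong\Tst\times C_7$ and $H_1(N')\cong\ZZ/21$. By Remark~\ref{wh:rem:seifert}, $N'$ is Seifert fibred over $S^2(2,3,3)$; this base is the double of $D^2(3;2)$. On the side of $\OW$, Proposition~\ref{wh:prop:lift}(iii)--(iv) gives the double cover $Y$ of $S^3_{+3}(J)$ branched along $W'$. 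For the mirror image with $m=2$, $Y$ is Seifert fibred over $S^2(2,3,3)$ and $|H_1(Y)|=21$.

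The first step is to show $Y\cong N'$. I would compute the Seifert invariants of $Y$ directly from the surgery description $S^3_{5,5}(T(2,4))$. Concretely, take the fibration of $S^3$ by $(1,2)$-curves, which has one exceptional fibre of multiplicity $2$. Two regular fibres are replaced by fibres of multiplicity $3$ with lifted slope $3\widetilde\mu_i+\lambda_{\mathrm{fib}}$. This yields unnormalized invariants $(2,\cdot),(3,\cdot),(3,\cdot)$ and a rational Euler number. I would compare these with the invariants of $N'=S^3/(\Tst\times C_7)$, which are read off from the Hopf-fibration picture in Remark~\ref{wh:rem:seifert} by doubling the data $7/2,7/3$ and $e=-7/12$ of the quotient. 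A Seifert manifold over $S^2(2,3,3)$ with $|H_1|=21$ and finite $\pi_1$ is a tetrahedral manifold $S^3/(\Tst\times C_{k})$ with $3k=21$ up to orientation, which already forces $Y\cong\pm N'$. As an alternative I could quote the classification of spherical manifolds with $\pi_1\cong\Tst\times C_7$: there are exactly two up to orientation-preserving homeomorphism, and these are mirror images.

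The second step is to pass from the covers to the quotients. Here the needed input is uniqueness of the branched-cover involution. By \cite[Theorem~1.1]{MecchiaSchilling2025} together with the classification of involutions of spherical manifolds in \cite{MecchiaSeppi2015,MecchiaSchilling2025}, the involutions of the tetrahedral manifold $N'$ whose quotient has underlying space $\spLt$ and one-component singular set (Family~14 with $m=7$) form a single conjugacy class up to orientation. I would check that the deck involution of $Y\to S^3_{+3}(J)$ falls in the same class. Both are orientation-preserving involutions with a connected fixed circle, and neither is hyperelliptic, by Remark~\ref{wh:rem:tables} and because $|\OW|\neq S^3$. By the orbifold theorem, or simply by geometrization of the spherical orbifold $\OW$, $\OW$ is spherical with orbifold group an extension of $\pi_1Y$ by $C_2$ whose abelianization is $C_6$ (Lemma~\ref{wh:lem:abel}). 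This matches $\Lambda^{\mathrm{ab}}$ from Proposition~\ref{wh:prop:product}(ii). I would then use the rigidity of spherical orbifolds: two spherical orbifolds with isomorphic orbifold fundamental groups, realized as conjugate subgroups of $SO(4)$ or conjugate after an orientation reversal, are isometric. So it remains to show $\pi_1^{\mathrm{orb}}(\OW)\cong\Lambda$ as subgroups of $\Isom(S^3)$. Both are index-two extensions of $\Tst\times C_7$ by an involution with fixed points and with abelianization $C_6$, and the Du~Val list of Family~14 with $m=7$ contains exactly one such group up to conjugacy and the ``bis'' orientation swap.

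The main obstacle is this last identification: pinning down that the extension is exactly $\Tst\pmx\Qtt$, and not another index-two overgroup of $\Lambda^+$ such as a twisted product. I would first try to settle it by matching the Seifert invariants of $\OW$ with the tabulated data $D^2(3;2)$, $7/2$, $7/3$, $e=-7/12$ of Remark~\ref{wh:rem:seifert}. These invariants of $\OW$ come from the quotient of the Seifert fibration on $Y$ by the involution, which acts as reflection on the base $S^2(2,3,3)$. By Dunbar's classification \cite{Dunbar1981,Dunbar1988}, Seifert invariants determine the fibred orbifold, and this yields $X\cong\OW$, possibly reversing orientation. The final reformulation is then immediate. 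In $S^3_{+3}(J)$, the complement of $\nu J$ is a solid torus whose core is the dual of $J$, and $W'$ sits inside it as a Whitehead double. The surgery solid torus and this complement form the genus-one Heegaard splitting of $\spLt$.
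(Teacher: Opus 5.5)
You follow the paper's architecture. First identify the double cover $Y$ of $S^3_{+3}(J)$ branched along $W'$ with $N'$; your $|H_1|=21$ argument over $S^2(2,3,3)$ is fine and is equivalent to the paper's count of normalized Seifert invariants. Then make the covering involution isometric, and recognize $\pi_1^{\mathrm{orb}}(\OW)$ as an index-two overgroup $\Lambda_W\subset SO(4)$ of a conjugate of $\Lambda^+$ with $\Lambda_W^{\mathrm{ab}}\cong C_6$.

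\textbf{The gap.} It is the last step, which you call the main obstacle and do not resolve.
\begin{itemize}
\item Saying that ``the Du~Val list of Family~14 contains exactly one such group'' begs the question. Family~14 is $\Lambda$'s own family; what must be excluded are overgroups of $\Lambda^+$ lying in other families.
\item Citing \cite[Theorem~1.1]{MecchiaSchilling2025} does not help, since it concerns hyperelliptic involutions and $\iota$ is not hyperelliptic.
\item The properties you propose to check (orientation-preserving, connected fixed circle, non-hyperelliptic) do not single out a conjugacy class. In $\Tst\pmx C_{28}$ the elements $(A,i)$, $A\in\{\pm i,\pm j,\pm k\}$, have fixed points and form one $\Lambda^+$-conjugacy class. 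So this overgroup also gives such an involution of $N'$, but its quotient has underlying space with fundamental group $C_{21}$.
\item The discriminating datum is $\Lambda_W^{\mathrm{ab}}\cong C_6$ (equivalently $|\OW|\cong\spLt$), and it must be tested against every overgroup.
\item Your fallback (Seifert invariants of $\OW$ plus Dunbar) is not carried out. It would need an actual description of how the involution acts on the Seifert fibration of $Y$; you assert a base reflection without proof. It would also need the local invariants at the cone point and at the corner reflector.
\end{itemize}

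\textbf{How to close it.} The missing argument is short, and it is what the paper does.
\begin{itemize}
\item The two projections of $\Lambda^+$ are the nonisomorphic groups $\Tst$ and $C_{14}$, so no orientation-reversing isometry normalizes $\Lambda^+$. Hence $\Lambda_W=\langle\Lambda^+,(C,D)\rangle$ with $C\in N_{S^3}(\Tst)=O^*$ and $D\in N_{S^3}(C_{14})=U(1)\cup U(1)j$.
\item Suppose $C\notin\Tst$. Conjugation by $(C,D)$ inverts $\Tst/Q_8\cong C_3$, so $\Tst\times1$ dies in $\Lambda_W^{\mathrm{ab}}$. Since $C^2\in Q_8$ and $D^2\in C_{14}$, the abelianization is generated by the image of $C_{14}$ and one element whose square lies there. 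So it has no $3$-torsion, contradicting $C_6$. Thus one may take $C=1$.
\item Suppose $D\in U(1)$. Then $\Lambda_W=\Tst\pmx C_{28}$, whose abelianization $C_3\times C_{14}$ has order $42$, again a contradiction.
\item So $D=e^{i\varphi}j$. Conjugation by $(1,u)$ with $u^2=e^{i\varphi}$ fixes $\Lambda^+$ elementwise and makes $D=j$. Hence $\Lambda_W$ is conjugate to $\Tst\pmx\Qtt=\Lambda$.
\end{itemize}

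\textbf{A further point.} To set this up, you also need the group $\Lambda'\subset SO(4)$ presenting $Y$ (after Dinkelbach--Leeb makes the involution isometric) to be $O(4)$-conjugate to $\Lambda^+$. The paper gets this because a fixed-point-free orthogonal representation of $\Tst\times C_7$ is, up to conjugacy, left multiplication by $\Tst$ and right multiplication by $C_7$. Your appeal to the classification of space forms with this group must be made to deliver exactly that.
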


\begin{proof}
By the mirror convention, $m=2$ in Proposition~\ref{wh:prop:lift};
equivalently, the double cover of $S^3_{+3}(J)$ branched along $W'$ has
first-homology order $21$. Let $Y$ be this double cover of
$S^3_{+3}(J)$ branched along $W'$. It exists because $W'$ is null-homologous:
$\lk(J,W')=0$. By Proposition~\ref{wh:prop:lift}, $Y$ is Seifert fibred over
$S^2(2,3,3)$ with $|H_1(Y)|=21$. In normalized Seifert invariants \cite{Orlik1972},
$Y=M(0;b;(2,1),(3,\beta_2),(3,\beta_3))$ with $\beta_i\in\{1,2\}$ and
$|H_1(Y)|=|18b+9+6(\beta_2+\beta_3)|=21$. The only solutions are $b=0$ with
$\beta_2=\beta_3=1$, and $b=-3$ with $\beta_2=\beta_3=2$, which are the two
orientations of one manifold.

The spherical space form $N'=S^3/\Lambda^+$ has fundamental group
$\Tst\times C_7$ (Proposition~\ref{wh:prop:product}(iii)), whose quotient by
its centre $C_{14}$ is the tetrahedral group; hence $N'$ is Seifert fibred
over $S^2(2,3,3)$ \cite{Scott1983}, and $|H_1(N')|=21$. So $N'$ is also one
of the two solutions above, and $Y$ is homeomorphic to $N'$. By the uniqueness of smooth structures on three-manifolds
\cite{Moise1977,Munkres1960}, this homeomorphism may be replaced by a
diffeomorphism. The covering involution
of $Y$ preserves orientation, and Dinkelbach--Leeb's finite-action rigidity
for elliptic $3$-manifolds makes this $C_2$-action smoothly conjugate to an
isometric action \cite[Theorem~E]{DinkelbachLeeb}. The spherical metric
so obtained presents $Y$ as $S^3/\Lambda'$ for a finite subgroup
$\Lambda'\subset SO(4)$ acting freely, with
$\Lambda'\cong\pi_1(Y)\cong\Tst\times C_7$, and every such subgroup is
conjugate in $O(4)$ to $\Lambda^+$. Indeed, the real four-dimensional
representation of $\Lambda'$ restricts to a fixed-point free representation
of the factor $\Tst$. Every irreducible real representation of $\Tst$ of
dimension at most three factors through $\Tst/\{\pm1\}\cong A_4$, so a
reducible four-dimensional representation gives the central element $-1$ a
fixed vector; hence the restriction is irreducible of real dimension four,
and it is either the quaternionic representation (left multiplication on
$\HH$) or the realification of $\rho\otimes\chi$, where $\rho$ is the
natural two-dimensional complex representation and $\chi$ a nontrivial
character of $\Tst/Q_8\cong C_3$. In the latter case an element $z$ of order
three has eigenvalues $\chi(z)e^{2\pi i/3}$ and $\chi(z)e^{-2\pi i/3}$, one
of which equals $1$ because $\chi(z)\in\{e^{\pm2\pi i/3}\}$; so that
representation is not fixed-point free. After
an orthogonal change of coordinates $\Tst$ therefore acts by left
multiplication, its centralizer in $SO(4)$ is the group of right
multiplications by unit quaternions, and the factor $C_7$, which centralizes
$\Tst$, is a cyclic subgroup of order seven of that group, hence conjugate
to $\langle\zeta\rangle$ by a right multiplication, which commutes with the
left action of $\Tst$. This is the tetrahedral case of the
Threlfall--Seifert classification of spherical space forms
\cite{ThrelfallSeifert1931,ThrelfallSeifert1933}, \cite{Wolf2011},
\cite[Section~4]{Scott1983}; the same representation-theoretic argument is
used for $Q_8\times\ZZ/n$ in the proof of
\cite[Theorem~18.5(i), p.~359]{QextC}, and de~Rham's theorem
\cite{deRham1950}, which is the corresponding uniqueness statement for
cyclic groups, is not needed. Thus, we may identify the resulting spherical
metric on $Y$ with the standard quotient $S^3/\Lambda^+$, and under this
identification the covering involution lifts to an orientation-preserving
isometry normalizing $\Lambda^+$. Therefore
$\OW\cong S^3/\Lambda_W$, where
$\Lambda_W=\langle\Lambda^+,(C,D)\rangle\subset SO(4)$ has index two over
$\Lambda^+$ and normalizes it. We also have
$\Lambda_W\cong\pi_1^{\mathrm{orb}}(\OW)$, so
$\Lambda_W^{\mathrm{ab}}\cong C_6$ by Lemma~\ref{wh:lem:abel}. No element of $O(4)\setminus SO(4)$
normalizes $\Lambda^+$, because an orientation-reversing isometry of $S^3$
interchanges the left and right factors, whereas the two projections of
$\Lambda^+$ are the nonisomorphic groups $\Tst$ and $C_{14}$. Using the
standard normalizers of binary polyhedral and cyclic subgroups of the unit
quaternions \cite{DuVal1964,ConwaySmith2003}, normalizing $\Lambda^+$ therefore
forces
\[
   C\in N_{S^3}(\Tst)=O^*,\;\;\;
   D\in N_{S^3}(C_{14})=U(1)\cup U(1)j.
\]

\emph{Step 1: $C\in\Tst$.} Suppose instead $C\in O^*\setminus\Tst$. Then
conjugation by $(C,D)$ restricts to the nontrivial outer automorphism of
$\Tst$ and inverts $\Tst/Q_8\cong C_3$. In
$\Lambda_W^{\mathrm{ab}}$ the image $x$ of $(\omega,1)$ therefore satisfies
$x=x^{-1}$ and $x^3=1$, so $x=1$. The image of
$Q_8\times1\subseteq[\Lambda^+,\Lambda^+]$ is also trivial; hence the whole
left factor $\Tst\times1$ dies in the abelianization.

Now $O^*/Q_8\cong S_3$ and $\Tst/Q_8\cong A_3$. Thus
$C\notin\Tst$ implies $C^2\in Q_8$. Since $(C,D)$ represents the nontrivial
coset of the index-two extension $\Lambda_W/\Lambda^+$, we also have
$(C,D)^2\in\Lambda^+$, and consequently $D^2\in C_{14}$. If $y$ denotes the
image of $(C,D)$ in $\Lambda_W^{\mathrm{ab}}$, the latter group is therefore
generated by the image of $C_{14}$ and by $y$, with $y^2$ lying in the image
of $C_{14}$. The image of $C_{14}$ has order dividing $14$, so this
abelianization has no $3$-torsion. This contradicts
$\Lambda_W^{\mathrm{ab}}\cong C_6$. Hence, $C\in\Tst$, and replacing $(C,D)$
by $(C^{-1},1)(C,D)$ we may take $C=1$.

\emph{Step 2: $D\notin U(1)$.} Suppose $D\in U(1)$. Then the finite cyclic
group $\langle C_{14},D\rangle\subset U(1)$ contains $C_{14}$ with index two,
so it is $C_{28}$ and $\Lambda_W=\Tst\pmx C_{28}$. Since
$[\Tst,\Tst]=Q_8$ contains $-1$, abelianization kills the diagonal relation
$(-1,-1)$ on the left and therefore also kills the order-two element of
$C_{28}$. Hence
\[
  \Lambda_W^{\mathrm{ab}}
     \cong (\Tst)^{\mathrm{ab}}\times
            \bigl(C_{28}/\langle-1\rangle\bigr)
     \cong C_3\times C_{14},
\]
which has order $42$, contradicting
$\Lambda_W^{\mathrm{ab}}\cong C_6$. Thus, $D\in U(1)j$, and
$R=\langle C_{14},D\rangle$ is binary dihedral of order $28$.

\emph{Step 3: conjugation to $\Lambda$.} Conjugation by $(1,u)$ with
$u\in U(1)$ fixes $\Lambda^+$ elementwise and sends $e^{i\varphi}j$ to
$e^{i\varphi}u^{-2}j$. Choosing $u^2=e^{i\varphi}$ gives $D=j$, so $\Lambda_W$
is conjugate to $\Tst\pmx\Qtt=\Lambda$. Hence
$\OW\cong S^3/\Lambda=X$.

For the other mirror image the same double cover has first homology of
order $3$ (Proposition~\ref{wh:prop:lift}(iv)), so it is not homeomorphic
to $N'$, and the corresponding orbifold is not homeomorphic to $X$: an
orbifold homeomorphism would carry the orbifold cover $N'\to X$ to the
double cover branched along $W'$, since $\Lambda^{\mathrm{ab}}\cong C_6$
has exactly one subgroup of index two and both covers correspond to it.

For the Heegaard description, note that $J$ is unknotted, so the complement of
$\nu J$ in $S^3$ is a solid torus. Its core is a meridian circle of $J$, and
$W'$ is the Whitehead double of that core inside it, untwisted with respect
to the framing induced from $S^3$. The surgery solid torus is the other
Heegaard solid torus.
\end{proof}

\subsection{The surgery circle as a section over a band sum}

Say that an embedded loop $\ell$ in the free part of $X$ is \emph{admissible}
if its class in $\Lambda=\pi_1^{\mathrm{orb}}(X)$ maps to a generator of
$\Lambda^{\mathrm{ab}}\cong C_6$. The class is defined up to conjugacy after
choosing a base path. The restriction $\pi^{-1}(\ell)\to\ell$ of the orbit
map is a principal circle bundle over a circle, hence trivial, and its
sections differ by multiples of the orbit. Under
$p_*:H_1(\M)\xrightarrow{\cong}\ZZ$ the orbit has image $6$. The composite
$G_7\to\Lambda\to\Lambda/\Gamma\cong C_6$ of the surjection of
Theorem~\ref{per:thm:action}(v) with the quotient map kills $\Gamma$ and
sends $t$ to the class of $f^{-1}$, a generator; so it is the base degree
$p_*$ reduced modulo $6$, up to the choice of generator. The orbit map
$\M\to X$ induces this surjection, and a section of $\pi^{-1}(\ell)$ is a
loop in $\M$ whose class maps to the class of $\ell$ in $\Lambda$. For
admissible $\ell$ that class generates $C_6$, so every section has base
degree $\equiv\pm1\pmod6$. Orient $\ell$ so that its sections have image
$\equiv1\pmod6$; then exactly one section has image $1$; call it the
\emph{unit section} $\gamma_\ell$.

\begin{theorem}[Band-sum description of $\gamma$]\label{wh:thm:section}
\begin{enumerate}[label=(\roman*),leftmargin=2.2em]
\item For every admissible $\ell$, the unit section $\gamma_\ell$ is a
      weight circle. Hence, by Lemma~\ref{lem:weight-loops}, its framed
      surgeries are represented by \eqref{eq:candidates}, up to the
      orientation of the belt sphere.
\item In the Whitehead model of Theorem~\ref{wh:thm:whitehead}, let $\ell$
      be any band sum, in $E_{\mathcal S}$, of a meridian $\mu_J$ of the
      surgery curve and a meridian $\mu_{W'}$ of $\mathcal S=W'$. Then
      $\ell$ is admissible and $\lk(\ell,\mathcal S)=\pm1$.
\item In particular, $\Sigma_+(7)$ and $\Sigma_-(7)$ are the two framed
      surgeries on $\gamma_\ell$ for such an $\ell$, with the two
      framings identified only as an unordered pair.
\end{enumerate}
\end{theorem}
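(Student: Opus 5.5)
For (i) the plan is to use the discussion preceding the statement: the unit section $\gamma_\ell$ is a loop in $\M$ with $p_*[\gamma_\ell]=1$. After an isotopy we may take it embedded, since it is a section of the trivial bundle $\pi^{-1}(\ell)\cong\ell\times S^1$ over the embedded loop $\ell$ in the free part. By definition it is then a weight circle. Lemma~\ref{lem:weight-loops} shows that every positive degree-one element normally generates $G_7$. The same lemma shows that every embedded loop of degree one is isotopic to a section, and that all framed surgeries of degree $\pm1$ are represented by \eqref{eq:candidates} up to belt-sphere orientation. This gives~(i) directly. Two small points need care here. First, ``embedded'' is automatic for a section. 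Second, the choice of conjugacy class ($t$ versus $(-1)t$) does not matter, because the involution $\Phi$ interchanges the two classes.

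For (ii) the plan is to transport the problem through Theorem~\ref{wh:thm:whitehead} and use Lemma~\ref{wh:lem:abel}. A band sum $\ell$ of $\mu_J$ and $\mu_{W'}$, taken in $E_{\mathcal S}$, lies in the free part of $X\cong\OW$. Its class in $\pi_1^{\mathrm{orb}}(\OW)$ is the product of conjugates of the two meridians, the conjugating elements being determined by the band. In the abelianization this conjugation ambiguity disappears, so the image of $\ell$ is the product of the images of $\mu_J$ and $\mu_{W'}$. By Lemma~\ref{wh:lem:abel} this product has order $3\cdot2=6$ and generates $C_6$. Under the orbifold homeomorphism we have $\Lambda\cong\pi_1^{\mathrm{orb}}(\OW)$, and an isomorphism carries generators of the abelianization to generators, so $\ell$ is admissible. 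For the linking number, $\mathcal S$ is null-homologous in $|X|$ by Theorem~\ref{wh:thm:branched}(iii), so $\lk(\ell,\mathcal S)$ is well defined. Since $\lk(J,W')=0$, the meridian $\mu_J$ bounds a disc in the surgery solid torus that is disjoint from $W'$. Hence $\mu_J$ contributes $0$, and $\mu_{W'}$ contributes $\pm1$. Another way to say this: the image of $\ell$ in the $\ZZ\langle\mu\rangle$ summand of $H_1(E_{\mathcal S})$ is $\pm1$, and the band only adds a null-homologous correction. The sign depends on the chosen orientations.

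Part (iii) then follows by combining (i) and (ii). The main subtle point, and what I expect to be the chief obstacle, is the framing bookkeeping. The identification of $\gamma_\ell$ with $\gamma$ goes through an isotopy, and possibly through $\Phi$ and an orientation reversal. Such maps carry the framing pair $\{\lambda_+,\lambda_-\}$ to itself but need not preserve the labels. The statement avoids this by claiming the identification only as an unordered pair. By Lemma~\ref{lem:frame} there are exactly two framing classes on each circle, and a diffeomorphism carrying one framed circle to the other induces a bijection between the two framing classes. So the two framed surgeries on $\gamma_\ell$ are, as an unordered pair, $\Sigma_+(7)$ and $\Sigma_-(7)$.
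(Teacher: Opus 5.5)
Your proposal follows the paper's proof essentially step for step. Part (i) comes from the construction of the unit section and Lemma~\ref{lem:weight-loops}. Part (ii) transports the abelianization computation of Lemma~\ref{wh:lem:abel} through Theorem~\ref{wh:thm:whitehead}. Part (iii) combines the two, and your remark that a diffeomorphism carrying $\gamma_\ell$ to $\gamma$ permutes the two framing classes spells out the paper's one-line comment about the unordered pair.

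One supporting claim in (ii) is false, although the conclusion it serves is true. The meridian $\mu_J$ does not bound a disc in the surgery solid torus.
\begin{itemize}
\item That solid torus has meridian $3\mu_J+\lambda_J$, and $\mu_J$ meets this slope once.
\item So $\mu_J$ is isotopic to the core and represents a generator of $H_1(|X|)\cong\ZZ/3$.
\item In particular it bounds no surface at all in $|X|$.
\end{itemize}

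The hypothesis $\lk(J,W')=0$ should be used the other way round. It gives a Seifert surface $F$ for $W'$ disjoint from $J$: tube away cancelling intersection points, or take the genus-one surface of the Whitehead double inside the solid torus $S^3\setminus\Int\nu J$. This $F$ survives in $S^3_{+3}(J)$, misses $\mu_J$, and meets $\mu_{W'}$ once. Since $H_2(|X|)=0$, any Seifert surface of $\mathcal S$ computes $\lk(\cdot,\mathcal S)$, so $\lk(\mu_J,\mathcal S)=0$ and $\lk(\mu_{W'},\mathcal S)=\pm1$.

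Equivalently, the presentation in the proof of Lemma~\ref{wh:lem:abel}, without the relation $2\mu_{W'}$, gives $H_1(E_{\mathcal S})\cong(\ZZ\mu_J\oplus\ZZ\mu_{W'})/\langle3\mu_J\rangle$. Hence $[\mu_J]$ is torsion and vanishes in the free quotient, which is where the linking number with $\mathcal S$ lives. Your alternative formulation via the $\ZZ\langle\mu\rangle$ summand needs exactly this fact, and at present it asserts it rather than proving it.
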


\begin{proof}
(i) By construction $\gamma_\ell$ is embedded and has degree one under
$p_*$, which is the definition of a weight circle. The rest is
Lemma~\ref{lem:weight-loops}.

(ii) The homeomorphism of Theorem~\ref{wh:thm:whitehead} induces an
isomorphism $\pi_1^{\mathrm{orb}}(\OW)\cong\Lambda$, hence an isomorphism of
abelianizations $H_1^{\mathrm{orb}}(\OW)\cong\Lambda^{\mathrm{ab}}=C_6$. By
Lemma~\ref{wh:lem:abel}, the image of $\mu_J$ has order three and the image
of $\mu_{W'}$ has order two. (The latter is also visible in $\Lambda$: the
meridian $\mu_{W'}$ maps to one of the $42$ involutions, whose left part
lies in $Q_8$ and whose right part lies in $\Qtt\setminus C_{14}$.)
A band sum represents a based product
$\mu_J\cdot g\mu_{W'}^{\pm1}g^{-1}$. Its image is the product of these two
images, which generates $C_6$. For the linking number,
$\lk(\mu_J,W')=0$ and $\lk(\mu_{W'},W')=\pm1$.

(iii) Combine (i) and (ii). The unordered pair is the pair described in
Lemma~\ref{lem:weight-loops} and Theorem~\ref{thm:rigidity}.
\end{proof}

\subsection{The circle orbibundle model}

For a closed orientable three-orbifold $\mathcal O$ with finite
$\pi_1^{\mathrm{orb}}$, universal cover $S^3$, and a character
$\chi:\pi_1^{\mathrm{orb}}\to S^1$ that is injective on isotropy groups, put
\[
 M(\mathcal O,\chi)=S^3\times_{\pi_1^{\mathrm{orb}}}S^1.
\]

\begin{corollary}[Whitehead orbibundle description]\label{wh:cor:input}
Let $\chi$ be either of the two surjections
$\pi_1^{\mathrm{orb}}(\OW)\to C_6\subset S^1$. Then $\chi(\mu_J)$ has order
three and $\chi(\mu_{W'})$ has order two. Up to orientation,
\[
 \M\diff M(\OW,\chi).
\]
For every band sum $\ell=\mu_J\natural\mu_{W'}$, the manifolds
$\Sigma_\pm(7)$ are the two framed surgeries on the unit section
$\gamma_\ell$. Consequently the framed surgery pair is determined by a
single two-component link diagram: the Whitehead link, with coefficient
$+3$ on $J$ and cone angle $\pi$ on $W'$, together with one band and one
framing parity.
\end{corollary}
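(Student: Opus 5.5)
The plan is to assemble the corollary from Theorem~\ref{wh:thm:whitehead}, Lemma~\ref{wh:lem:model}, Lemma~\ref{wh:lem:abel} and Theorem~\ref{wh:thm:section}. First the character claims. By Lemma~\ref{wh:lem:abel}, $H_1^{\mathrm{orb}}(\OW)\cong C_6$, the image of $\mu_J$ has order three and the image of $\mu_{W'}$ has order two. Any surjection $\chi$ onto $C_6\subset S^1$ factors through the abelianization and is an isomorphism from it. Hence $\chi$ preserves these orders, and there are exactly two such surjections, differing by inversion. The only nontrivial isotropy groups of $\OW$ are the order-two groups generated by conjugates of $\mu_{W'}$. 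Since $\chi(\mu_{W'})$ has order two, $\chi$ is injective on isotropy groups and $M(\OW,\chi)$ is defined.

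Next I identify $\M$ with $M(\OW,\chi)$. Theorem~\ref{wh:thm:whitehead} gives an orbifold homeomorphism $X\cong\OW$, possibly reversing orientation. This induces an isomorphism $\pi_1^{\mathrm{orb}}(\OW)\cong\Lambda$ realized by an isometric identification of universal covers $S^3$. For the diffeomorphism, I use the Dinkelbach--Leeb conjugation already invoked in that proof, so that $\OW=S^3/\Lambda$ on the nose.

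Under this isomorphism, $\chi$ becomes a surjection $\Lambda\to C_6$ with kernel $[\Lambda,\Lambda]=\Gamma$ (Proposition~\ref{wh:prop:product}(ii)). It therefore agrees with $\psi$ of Lemma~\ref{wh:lem:model} up to an automorphism of $C_6$, namely inversion. Lemma~\ref{wh:lem:model} presents $\M$ as $(S^3\times\RR/6\ZZ)/\Lambda$ with action through $\psi$. Rescaling $\RR/6\ZZ$ to $S^1$ identifies this quotient with $S^3\times_\Lambda S^1$ for the character $\psi$. Replacing $\psi$ by $\psi^{-1}$ is realized by complex conjugation on the $S^1$ factor, which reverses orientation. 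This accounts for the ``up to orientation'' and for either choice of $\chi$.

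The surgery statement is Theorem~\ref{wh:thm:section}(ii)--(iii) read through this identification. A band sum $\mu_J\natural\mu_{W'}$ in $E_{\mathcal S}$ is admissible, and its unit section is a weight circle whose two framed surgeries are $\Sigma_\pm(7)$ as an unordered pair. The final sentence then just records the data needed to reproduce the pair:
\begin{itemize}
\item the diagram of $W$, with $+3$ on $J$ and cone angle $\pi$ on $W'$, determines $\OW$;
\item $\chi$ is fixed up to inversion, hence $\M$ up to orientation;
\item the band determines $\ell$ and hence $\gamma_\ell$;
\item by Lemma~\ref{lem:frame} the framing is one bit, the parity.
\end{itemize}

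The main obstacle is the orientation and diffeomorphism bookkeeping in the middle step. An orientation-reversing orbifold homeomorphism of $X$ must still lift to a map of associated bundles. I expect to handle this by noting that $M(\mathcal O,\chi)$ is functorial in orbifold diffeomorphisms that intertwine characters, with orientation reversal of $\mathcal O$ changing only the total-space orientation.
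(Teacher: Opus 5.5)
Your proposal is correct and follows the paper's proof essentially step for step. It obtains $\M\diff M(X,\psi)$ from Lemma~\ref{wh:lem:model}, transports this along the equivariant smooth lift of the orbifold homeomorphism of Theorem~\ref{wh:thm:whitehead} (via the Dinkelbach--Leeb conjugation), absorbs $\chi\mapsto\chi^{-1}$ by reversing the circle factor, and reads off the surgery statement from Theorem~\ref{wh:thm:section}(iii). Your explicit check that $\chi$ is injective on the order-two isotropy groups, so that $M(\OW,\chi)$ is defined, is a small point the paper leaves implicit.
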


\begin{proof}
Lemma~\ref{wh:lem:model} gives $\M\diff M(X,\psi)$. The orbifold
homeomorphism $\OW\cong X$ of Theorem~\ref{wh:thm:whitehead} is covered by
a diffeomorphism $\Phi:S^3\to S^3$ of the universal covers which is
equivariant with respect to an isomorphism
$\varphi:\pi_1^{\mathrm{orb}}(\OW)\to\Lambda$: the smooth conjugacy of the
covering involution of $Y\cong N'$ to an isometry lifts to the universal
covers, equivariantly for the lifted groups, and the remaining
identifications in that proof are isometries of $S^3$. For a character
$\chi=\psi\circ\varphi$ the map $(x,u)\mapsto(\Phi(x),u)$ descends to a
diffeomorphism $M(\OW,\chi)\to M(X,\psi)$, orientation-preserving or
-reversing according to $\Phi$. The group $\Lambda^{\mathrm{ab}}$
is $C_6$, and its only automorphisms are $\pm1$. So the induced character is
$\chi^{\pm1}$, and replacing $\chi$ by $\chi^{-1}$ reverses the circle. The
orders of $\chi(\mu_J)$ and $\chi(\mu_{W'})$ were computed in the proof of
Theorem~\ref{wh:thm:section}(ii). The remaining statement follows from
Theorem~\ref{wh:thm:section}(iii).
\end{proof}

\section{Surgery on a section of a fibre torus: both framing outputs are standard}
\label{std:sec:main}

This section proves that both framing outputs are diffeomorphic to the
standard four-sphere.

\begin{theorem}[Both framing outputs are standard]\label{std:thm:main}
\[
 \Sigma_+(7)\diff S^4\;\;\;\text{and}\;\;\;\Sigma_-(7)\diff S^4.
\]
In particular, neither framing output is an exotic four-sphere.
\end{theorem}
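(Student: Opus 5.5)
The plan is to show that each $\Sigma_\pm(7)$ carries an effective smooth circle action, and then invoke the recognition theorem of Remark~\ref{fa:rem:Pao}. The surgery circle $\gamma$ is not itself an orbit of the canonical action of Theorem~\ref{per:thm:action}, since principal orbits have degree six. So I will first replace the section surgery by an orbit surgery on an auxiliary manifold. By Theorem~\ref{wh:thm:section}, $\gamma$ may be taken to be the unit section $\gamma_\ell$ over a band sum $\ell=\mu_J\natural\mu_{W'}$ in the Whitehead model of Corollary~\ref{wh:cor:input}. Over an invariant torus $T_\ell=\pi^{-1}(\ell)$, the section $\gamma_\ell$ and the orbit form a basis of $H_1(T_\ell)$. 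The torus $T_\ell$ is a fibre torus of the circle bundle over $\ell$.

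First, I will surger $\M$ along $\gamma_\ell$ (with framing $\lambda_\pm$). The fibre torus $T_\ell$ becomes an embedded two-sphere $S_\ell$ in $\Sigma_\pm(7)$, formed by the complementary annulus capped by two disc fibres of $S^2\times D^2$. Conversely, $\Sigma_\pm(7)$ is recovered from a manifold obtained by a surgery on $S_\ell$. The aim, as in Proposition~\ref{std:prop:torus}, is to show that surgery on $S_\ell$ replaces the surgery on the section by an equivariant surgery on a principal orbit lying over a curve isotopic to $\ell$. This orbit surgery is extended by Lemma~\ref{fa:lem:orbit-surgery}. On the level of orbit orbifolds, the effect should be Dehn surgery on $X\cong\OW$ along a knot obtained from the band sum, and I will compute it. The cleanest choice of band is the one that makes $\ell$ a small belt circle around a clasp of the Whitehead link, linking $W'$ once and passing near the clasp of $J$.

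Second, I will compute the two Dehn surgeries on the belt curve that correspond to the two framing classes. They differ by one in slope, because changing the framing in $\pi_1(SO(3))$ shifts the slice weight $a\mapsto a\pm b$, as in the proof of Lemma~\ref{fa:lem:orbit-surgery}. The surgery on a belt around the clasp should unclasp the Whitehead link, or turn it into a simpler Montesinos or 2-bridge configuration. The resulting base orbifolds should then be spherical, with orbifold fundamental groups $S_3$ and a finite cyclic group, as announced in Theorem~\ref{std:thm:belt}. I expect to verify this via the Montesinos trick: lift to the double branched cover as in Proposition~\ref{wh:prop:lift} and identify the result as a lens space or prism-type Seifert space. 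Then I will check that each resulting four-manifold with its extended action is simply connected, so that it is a homotopy four-sphere. Propositions~\ref{std:prop:cyclic} and~\ref{std:prop:dihedral} record the two cases.

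Third, I will argue that the circle action on the modified manifold descends, through the inverse surgery on $S_\ell$, to an effective action on $\Sigma_\pm(7)$ itself. The alternative is to show that the orbit-surgered manifold is diffeomorphic to $\Sigma_\pm(7)$ directly, via the torus-to-sphere correspondence. In either case, Remark~\ref{fa:rem:Pao} then gives $\Sigma_\pm(7)\diff S^4$.

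The main obstacle is the bookkeeping in the first and second steps. The framing classes $\lambda_\pm$ on $\gamma_\ell$ must be matched with the two consecutive slopes on the belt curve, and the belt surgery must be shown to produce spherical orbifolds in both cases rather than a hyperbolic one. I would first try the explicit Whitehead diagram, treating $+3$ on $J$ and $\pi$ on $W'$ as a rational tangle replacement at the clasp. The two framings should give adjacent rational tangles whose double branched covers are recognisable Seifert spaces.
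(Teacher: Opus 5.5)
Your architecture matches the paper's: pass from the unit section over the clasp belt $\ell_A$ to its fibre torus, use the sphere surgery to turn the section surgery into Dehn filling of $\OW$ along $\ell_A$, and take two consecutive slopes whose bases have orbifold groups $S_3$ and $\ZZ/2r$. The gap is in the last step, getting back to $\Sigma_\pm(7)$.

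The manifold that carries the extended action is $Y'(\ell'')$ of Proposition~\ref{std:prop:torus}. That action has no fixed points. Since $\Sigma$ is a circle surgery on $Y'$, one has $H_1(Y')\cong\ZZ$. So $Y'$ is never simply connected, and your planned check that it is a homotopy four-sphere cannot succeed. Your ``alternative'' of identifying it with $\Sigma_\pm(7)$ is likewise impossible. The action also does not automatically descend through the inverse surgery, because the core circle $\delta=\{0\}\times S^1_m$ is transverse to the orbits. Three related corrections:
\begin{itemize}
\item The sphere surgery is not an orbit surgery, and Lemma~\ref{fa:lem:orbit-surgery} plays no role in building $Y'$; the action extends over $S^1_m\times V$ by free rotation of $S^1_h$.
\item That consecutive slopes give the two framing classes is Lemma~\ref{std:lem:frame}, not a slice-weight shift.
\item Since both outputs are proved standard, you never need to match $\lambda_\pm$ to particular slopes.
\end{itemize}

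What is missing is a case-by-case argument relating $\delta$ to the action.

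For $Q_0$: $\pi_1(Y')\cong C_3\rtimes\ZZ$, and every degree-one element acts on $C_3$ by inversion, so all degree-one elements are conjugate. A principal orbit has degree $\pm2$, hence the exceptional orbit over the trefoil has degree $\pm1$. Therefore $\delta$ is freely homotopic to that orbit, hence isotopic to it with its framing transported. Only then does Lemma~\ref{fa:lem:orbit-surgery} give an effective circle action on $\Sigma$ for either framing, and Remark~\ref{fa:rem:Pao} applies.

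For $Q_{s_*}$ this route fails outright. Here $\pi_1(Y')\cong\ZZ$, a principal orbit has degree $2r\in\{4,8\}$, and the exceptional orbit has degree $r$. So no orbit is isotopic to $\delta$, and the action tells you nothing directly about $\Sigma$. Instead you must prove $Y'\diff S^3\times S^1$:
\begin{itemize}
\item lift the action to a free proper $\RR$-action on the universal cover and identify the quotient with $S^3$ by Perelman;
\item normalize the deck generator to $(x,t)\mapsto(g(x),t\pm1/n)$ by averaging;
\item apply Cerf.
\end{itemize}
Then surgery on a generator of $\pi_1(S^3\times S^1)$ gives $S^4$ for either framing.

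Propositions~\ref{std:prop:cyclic} and~\ref{std:prop:dihedral}, which you cite, contain exactly these two arguments. The uniform ``descend the action'' mechanism you propose in their place does not work.
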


The proof does not simplify a diagram of $\Sigma_\pm(7)$. It uses the invariant torus
of the canonical circle action that contains a weight circle. After surgery
on the weight circle, this torus becomes a two-sphere. Surgery on that sphere
is an equivariant operation whose effect on the orbit space is a Dehn surgery
on a loop of the orbit orbifold (Proposition~\ref{std:prop:torus}). The
candidate is then recovered by circle surgery on a new Seifert-fibred
four-manifold. When the new base orbifold has cyclic or odd dihedral
orbifold fundamental group, classical results identify the candidate with
$S^4$ (Propositions~\ref{std:prop:cyclic} and~\ref{std:prop:dihedral}). In
the Whitehead model of Section~\ref{wh:sec:whitehead}, a belt around a clasp
produces exactly these two cases for two consecutive framings
(Theorem~\ref{std:thm:belt}). Consecutive framings realize both
normal-framing classes of the weight circle (Lemma~\ref{std:lem:frame}), so
it is not necessary to decide which of them is the specified framing
$\lambda_+$.

\subsection{Surgery on a section of an invariant torus}
\label{std:sec:torus}

We first work in the following generality.

\begin{definition}[Fibre-torus data]\label{std:def:data}
Let $Y$ be a closed connected oriented smooth four-manifold with a smooth
effective circle action without fixed points, and let $\pi:Y\to Q=Y/S^1$ be
the orbit map onto the closed orientable three-orbifold $Q$. Let $\ell\subset
Q$ be a smoothly embedded loop in the free stratum, let
$T_\ell=\pi^{-1}(\ell)$, and let $\gamma\subset T_\ell$ be a section of the
circle bundle $T_\ell\to\ell$. A \emph{framing} of $\ell$ is a slope $\ell''$
on $\partial\nu\ell$ meeting the meridian $m$ once; the framings form a
$\ZZ$-torsor under $\ell''\mapsto\ell''+m$. For a framing $\ell''$, let
\[
 Q(\ell'')=(Q\setminus\Int\nu\ell)\cup(S^1\times D^2)
\]
be the Dehn filling in which $\{\mathrm{pt}\}\times\partial D^2$ is glued to
$\ell''$.
\end{definition}

Given a framing $\ell''$, choose a tubular map $\nu\ell\cong S^1_a\times D^2$
carrying $\ell$ to $S^1_a\times\{0\}$ and $\ell''$ to $S^1_a\times\{1\}$. The
action over $\nu\ell$ is free, so $\pi^{-1}(\nu\ell)$ is a principal circle
bundle over a solid torus and admits an equivariant trivialization.
The inverse of the fibre coordinate of $\gamma$ extends over $\nu\ell$ by
radial retraction. Multiplying the fibre coordinate by this extension gives
equivariant product coordinates
\begin{equation}\label{std:eq:P}
 P:=\pi^{-1}(\nu\ell)=S^1_a\times D^2\times S^1_h,\;\;\;
 \gamma=S^1_a\times\{0\}\times\{1\},\;\;\;
 T_\ell=S^1_a\times\{0\}\times S^1_h,
\end{equation}
in which the circle rotates the factor $S^1_h$. Let $\lambda(\ell'')$ be the
product framing of the normal bundle of $\gamma$, whose fibre is
$T_0D^2\oplus T_1S^1_h$.

\begin{lemma}[Consecutive framings]\label{std:lem:frame}
The homotopy class of $\lambda(\ell'')$ depends only on $\ell''$, and
$\lambda(\ell'')$ and $\lambda(\ell''+m)$ are the two distinct
normal-framing classes of $\gamma$. Consequently, for every framing $\ell''$,
the two framed surgeries on $\gamma$ are the surgeries with framings
$\lambda(\ell'')$ and $\lambda(\ell''+m)$.
\end{lemma}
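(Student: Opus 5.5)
Two things need checking: that the homotopy class of $\lambda(\ell'')$ does not depend on the auxiliary choices in \eqref{std:eq:P}, and that replacing $\ell''$ by $\ell''+m$ changes that class by the generator of $\pi_1(SO(3))=\ZZ/2$. The last sentence then follows from Lemma~\ref{lem:frame}, since $[S^1,SO(3)]\cong\ZZ/2$ contains exactly two normal-framing classes.

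\emph{Independence of choices.} Fix $\ell''$. Two tubular maps $\nu\ell\cong S^1_a\times D^2$ that send $\ell$ to the core and $\ell''$ to $S^1_a\times\{1\}$ differ, after isotopy, by a map $(\theta,x)\mapsto(\theta,g(\theta)x)$ with $g:S^1_a\to SO(2)$. The map $g$ is null-homotopic, because the slope $\ell''$ is fixed; so the induced normal framings are homotopic. Two equivariant trivializations of the principal bundle $\pi^{-1}(\nu\ell)$ differ by a map $\nu\ell\to S^1$. After the normalization by the fibre coordinate of $\gamma$, this map is trivial along $\gamma$, and a radial homotopy makes it constant. The normal direction $T_1S^1_h$ is the orbit direction, which is canonical. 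So $\lambda(\ell'')$ depends only on $\ell''$.

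\emph{Effect of $\ell''\mapsto\ell''+m$.} Choose the new tubular map by composing the old one with the twist $\tau(\theta,x)=(\theta,e^{\pm i\theta}x)$. This twist sends $S^1_a\times\{1\}$ to a curve in the class $\ell''\pm m$; fix the sign by orientation conventions. It is covered by the equivariant map $\tau\times\id_{S^1_h}$ on $P$, which keeps $\gamma$ and its orbit direction fixed. The new product framing is therefore the old one acted on by the loop
\[
\theta\longmapsto \begin{pmatrix} e^{\pm i\theta} & 0\\ 0 & 1\end{pmatrix}\in SO(2)\times SO(1)\subset SO(3),
\]
one full rotation in the $T_0D^2$ plane with the orbit vector fixed. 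This loop generates $\pi_1(SO(3))=\ZZ/2$, exactly as the loop $A(s)$ did in the proof of Lemma~\ref{lem:frame}. Hence $\lambda(\ell'')$ and $\lambda(\ell''+m)$ represent different classes.

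\emph{Main obstacle.} The delicate point is that changing the trivialization of the circle bundle over $\nu\ell$ by a map $\nu\ell\to S^1$ could, in principle, also twist the $D^2$ directions relative to the orbit direction. This is why the coordinates in \eqref{std:eq:P} are normalized along $\gamma$. I would handle it by noting that any two trivializations agreeing on $\gamma$ differ by a map $D^2$-homotopic to the constant map, rel $\gamma$.
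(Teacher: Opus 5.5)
Your proposal is correct and follows essentially the same route as the paper. The paper also handles the choices by noting that tubular maps with a fixed slope form a connected family and that the change of equivariant trivialization, a map $\nu\ell\to S^1$ identically one on $\ell$, is null-homotopic rel $\ell$. It then realizes $\ell''\mapsto\ell''+m$ by the twist $(a,z)\mapsto(a,e^{2\pi i a}z)$, which yields the loop $a\mapsto\diag(R_{2\pi a},1)$ generating $\pi_1(SO(3))\cong\ZZ/2$, and concludes from the two-class count of Lemma~\ref{lem:frame}.
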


\begin{proof}
Tubular maps with a prescribed framing form a connected family. Two
equivariant trivializations in which $\gamma$ is the constant section differ
by a map $\nu\ell\to S^1$ that is identically one on $\ell$, and such a map is
homotopic to the constant map rel $\ell$. Hence, the homotopy class of
$\lambda(\ell'')$ is well defined. Replacing $\ell''$ by $\ell''+m$ changes the
tubular map by $(a,z)\mapsto(a,e^{2\pi ia}z)$, $a\in\RR/\ZZ$. This fixes the
$h$-direction and turns the $D^2$-directions once around $\gamma$, so the two
frames differ by the loop $a\mapsto\diag(R_{2\pi a},1)$ in $SO(3)$. That loop
represents the generator of $\pi_1(SO(3))\cong\ZZ/2$, and there are exactly
two framing classes (Lemma~\ref{lem:frame}).
\end{proof}

\begin{proposition}[Surgery on a section of a fibre torus]
\label{std:prop:torus}
In the setting of Definition~\ref{std:def:data}, let $\Sigma$ be the result
of surgery on $\gamma$ with framing $\lambda(\ell'')$.
\begin{enumerate}[label=(\roman*),leftmargin=2.2em]
\item $\Sigma$ contains a smoothly embedded two-sphere $S_T$ with trivial
      normal bundle, obtained from $T_\ell$ by surgery along $\gamma$.
\item Surgery on $S_T$ gives
      \[
       Y'(\ell''):=(Y\setminus\Int P)\cup_{\partial P}(S^1_m\times V).
      \]
      Here $S^1_m=\partial D^2$, $V$ is the solid torus bounded by
      $S^1_a\times S^1_h$ in which $S^1_a\times\{\mathrm{pt}\}$ bounds a disc,
      and the gluing is the identity of
      $\partial P=S^1_a\times S^1_m\times S^1_h$.
\item The action on $Y\setminus\Int P$ extends to a smooth effective circle
      action on $Y'(\ell'')$ without fixed points, free on $S^1_m\times V$.
      Its orbit space is $Q(\ell'')$. Its nonfree orbits, with their isotropy
      groups and slice representations, are those of $Y$.
\item $\Sigma$ is obtained from $Y'(\ell'')$ by a framed surgery on an
      embedded circle $\delta$, the core of the sphere surgery.
\end{enumerate}
\end{proposition}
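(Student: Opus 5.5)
Everything will be done in the product coordinates \eqref{std:eq:P}, where $P=S^1_a\times D^2\times S^1_h$, the surgery circle is $\gamma=S^1_a\times\{0\}\times\{1\}$, and the framing $\lambda(\ell'')$ is the product framing. The tubular neighbourhood of $\gamma$ is then $S^1_a\times D^2\times J$, with $J\subset S^1_h$ a small arc around $1$. Surgery with this framing removes $S^1_a\times (D^2\times J)$ and glues in $D^2_a\times\partial(D^2\times J)$. Here $D^2_a$ is the disc bounded by $S^1_a$, and $\partial(D^2\times J)\cong S^2$.

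For (i), the torus $T_\ell=S^1_a\times\{0\}\times S^1_h$ meets the removed neighbourhood in the annulus $S^1_a\times\{0\}\times J$. After surgery this annulus is replaced by the two discs $D^2_a\times\{0\}\times\partial J$. The result is the sphere $S_T$. Its normal bundle is trivialised by the $D^2$ directions: these are constant along the complementary annulus $S^1_a\times\{0\}\times(S^1_h\setminus J)$ and along the two cap discs. Hence $S_T$ has trivial normal bundle.

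For (ii), I use the following local model for the composite. Inside $P$, the region affected by both surgeries is $S^1_a\times D^2\times S^1_h$. The first surgery fills the $S^1_a$-direction locally near $J$. Surgery on $S_T$ then removes $S_T\times D^2$ and inserts $D^3\times S^1_m$. The cleanest way to organise this is to note that the union of the new handle and $\nu S_T$ is exactly $\Sigma\cap(\text{image of }P)$. This piece is $(S^1_a\times D^2\times S^1_h)$ with the curve $S^1_a\times\{\mathrm{pt}\}$ made to bound, that is, $S^1_a\times S^1_h$ becomes $\partial V$ while the $D^2$ factor is kept. Replacing the $D^2$ factor by its complementary $S^1_m$ then gives $S^1_m\times V$, glued by the identity on $S^1_a\times S^1_m\times S^1_h$. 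Concretely, I would check the decomposition $\partial(D^2\times V)=S^1_m\times V\cup D^2\times\partial V$. The two successive surgeries amount to cutting out $D^2\times T^2$ and inserting $S^1_m\times V$, with the intermediate $\Sigma$ recovered by inserting the sphere neighbourhood $S^2\times D^2$ in between. This bookkeeping of handle decompositions is the main obstacle, specifically keeping track of which factor bounds where. I would do it by writing $P\cup$ the 2-handle, dually, as a 5-dimensional cobordism trace and reading off its other boundary.

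For (iii), the circle acts on $S^1_m\times V$ by rotating $S^1_h\subset\partial V$. This extends over $V$ because in $V$ the circle $S^1_a$ bounds, not $S^1_h$; so $V=S^1_h\times D^2_a$ and the rotation is free. It agrees with the original action on $\partial P$. The quotient of $S^1_m\times V$ is $S^1_m\times D^2_a$, glued to $Q\setminus\Int\nu\ell$ so that $\partial D^2_a=S^1_a$ is glued to the curve $\ell''$ (recall $S^1_a\times\{1\}=\ell''$), and $S^1_m$ is the meridian. This is the Dehn filling $Q(\ell'')$. The action is effective because it is unchanged on an open set, and the nonfree orbits lie outside $P$, where nothing changed. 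For (iv), surgery on the sphere $S_T$ is reversed by surgery on the core circle $\delta=\{\mathrm{pt}\}\times S^1_m$ of the inserted $D^3\times S^1_m$, with the framing induced by the product structure.
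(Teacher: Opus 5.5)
Your parts (i), (iii) and (iv) are fine and match the paper. Your thickening $S^1_a\times D^2\times J$ plays the role of the paper's $S^1_a\times B$, your caps $D^2_a\times\{0\}\times\partial J$ are its $D^2\times\{n,s\}$, and the orbit-space and reversal arguments are the same.

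\textbf{The gap is in (ii).} Part (ii) is the real content of the proposition, and you assert it rather than prove it. The one concrete description you give is false if read literally. The piece $P'=\Sigma\cap(\text{image of }P)$ is not ``$S^1_a\times D^2\times S^1_h$ with $S^1_a\times\{\mathrm{pt}\}$ made to bound''; that is, it is not $D^2\times V\cong S^1\times D^4$. Circle surgery replaces $S^1\times D^3$ by $D^2\times S^2$ along $S^1\times S^2$, so $\chi(P')=\chi(P)+2=2$, whereas $\chi(D^2\times V)=0$. Making $S^1_a$ bound is a five-dimensional operation on $P\times I$, not a description of the four-manifold $P'$. Likewise, surgery on $S_T$ removes $S^2\times D^2$ and inserts $D^3\times S^1$; it does not ``replace the $D^2$ factor by $S^1_m$''. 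So nothing you write shows that the two surgeries compose to replacing $P$ by $S^1_m\times V$ with the identity on $\partial P$. Nor does it show that the sphere being surgered is the $S_T$ of (i).

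\textbf{How to close it along your own route.} Your trace plan does work once the step you call the main obstacle is carried out. Write $V=D^2_a\times S^1_h$ and $S^1_h=J\cup J'$. Attach the round handle $D^2\times V$ to $Y\times I$ along $D^2\times\partial V=P\times\{1\}$. Since $\partial(D^2\times V)=(D^2\times\partial V)\cup(S^1_m\times V)$, its top boundary is $(Y\setminus\Int P)\cup_{\partial P}(S^1_m\times V)$, glued by the identity. This handle splits into two pieces:
\begin{itemize}
\item $D^2_a\times(D^2\times J)$ is a $2$-handle with core $D^2_a$, attached along $S^1_a\times D^2\times J=\nu\gamma$ with the product framing $\lambda(\ell'')$. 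The middle level is therefore $\Sigma$.
\item $(D^2_a\times J')\times D^2$ is a $3$-handle. Its attaching sphere is $(S^1_a\times J'\cup D^2_a\times\partial J')\times\{0\}$, which is exactly your $S_T$. It lies in $\Sigma$, because the caps sit in the belt region of the $2$-handle.
\end{itemize}
No framing choice arises for the $3$-handle, since $[S^2,SO(2)]=0$. This is the round-handle mechanism of Baykur--Sunukjian, which the paper cites only as background.

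\textbf{What the paper does instead.} The paper stays in dimension four. It uses a small ball $B$ and a neighbourhood $N_c$ of the arc $c$. It shows $P'\setminus\Int\nu S_T\cong S^1_m\times(V\setminus\Int B')$ by attaching a three-dimensional $2$-handle $D^2\times I$ to $S^1_a\times S^1_h\times[0,1]$. It then checks that the circles $\{x\}\times S^1_m$ are meridians of $S_T$. One of these two computations is needed, and your proposal contains neither.
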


\noindent\emph{Relation to the standard round-handle factorization.}
Parts~(i), (ii), and~(iv) give an explicit local factorization of the relevant
integral torus surgery into circle surgery followed by sphere surgery. This is
an instance of the standard round-$2$-handle/logarithmic-transform mechanism:
Baykur--Sunukjian prove that attachment of a round $2$-handle is equivalent to
an integral generalized logarithmic transform
\cite[Lemma~2]{BaykurSunukjian}. We use that result only as background for the
nonequivariant mechanism; the particular local identification used here, and
its compatibility with the given circle action and with Dehn filling of the
orbit orbifold, are proved directly below.

\begin{proof}
Put $Y_0=D^2\times S^1_h$, a solid torus with core
$C_0=\{0\}\times S^1_h$, so that $P=S^1_a\times Y_0$,
$\gamma=S^1_a\times\{y_0\}$ with $y_0=(0,1)$, and $T_\ell=S^1_a\times C_0$.
Let $B\subset Y_0$ be a small round ball about $y_0$. The core $C_0$ crosses
$\partial B$ in two points $n$ and $s$. Surgery with the product framing
replaces $S^1_a\times B$ by $D^2\times\partial B$, so
\[
 \Sigma=(Y\setminus\Int P)\cup P',\;\;\;
 P'=\bigl(S^1_a\times(Y_0\setminus\Int B)\bigr)\cup_{S^1_a\times\partial B}
    (D^2\times\partial B).
\]

(i) Let $c=C_0\setminus\Int B$, an arc from $n$ to $s$, and put
\[
 S_T=(S^1_a\times c)\cup\bigl(D^2\times\{n,s\}\bigr).
\]
This is an annulus capped by two discs, and it is the result of surgering
$T_\ell$ along $\gamma$. Let $N_c$ be a regular neighborhood of $c$ in
$Y_0\setminus\Int B$, meeting $\partial B$ in two polar caps $D_n$ and $D_s$.
After rounding corners,
$\nu S_T=(S^1_a\times N_c)\cup(D^2\times(D_n\cup D_s))$ is a tubular
neighborhood of $S_T$, and the product structures of the two pieces agree
along their intersection. So the normal bundle is trivial.

(ii) Let $N_0=B\cup N_c$, a solid-torus regular neighborhood of $C_0$. Its
boundary is the union of the annuli $A=\partial B\setminus\Int(D_n\cup D_s)$
and $A'=\partial N_c\setminus\Int B$; the core of $A$ is the equator of
$\partial B$, a meridian circle of $C_0$. Since $Y_0\setminus\Int N_0$ is a
collar $\partial Y_0\times[0,1]$,
\[
 P'\setminus\Int\nu S_T
 =\bigl(S^1_a\times(Y_0\setminus\Int N_0)\bigr)\cup_{S^1_a\times A}(D^2\times A).
\]
Write $\partial Y_0=S^1_m\times S^1_h$. In the collar coordinates
$A=S^1_m\times I$ for an arc $I\subset S^1_h$, and therefore
\[
 P'\setminus\Int\nu S_T\ \cong\
 S^1_m\times\Bigl(\bigl(S^1_a\times S^1_h\times[0,1]\bigr)
   \cup_{S^1_a\times I\times\{0\}}(D^2\times I)\Bigr)
 = S^1_m\times\bigl(V\setminus\Int B'\bigr),
\]
where $B'$ is a ball. Indeed, attaching the three-dimensional two-handle
$D^2\times I$ along $S^1_a\times I\times\{0\}$ turns
$S^1_a\times S^1_h\times[0,1]$ into a solid torus minus a ball, whose meridian
is $S^1_a\times\{\mathrm{pt}\}$. The remaining boundary component is
$S^1_m\times\partial B'=\partial\nu S_T$, and the circles
$\{x\}\times S^1_m$ are meridians of $S_T$: near the caps they are parallel to
$\partial D_n$, and along the annulus they are meridians of the arc $c$.
Surgery on $S_T$ glues in $B'\times S^1_m$, which gives (ii).

(iii) Write $V=D^2_a\times S^1_h$ with $\partial D^2_a=S^1_a$. Rotation of the
factor $S^1_h$ is a free action on $S^1_m\times V$ that agrees with the given
action on $\partial P$. Its orbit space is the solid torus $S^1_m\times D^2_a$,
attached to $Q\setminus\Int\nu\ell$ along
$\partial\nu\ell=S^1_a\times S^1_m$ with meridian $S^1_a\times\{\mathrm{pt}\}$.
By the choice of the tubular map this curve is $\ell''$. Nothing changes
outside $P$.

(iv) Surgery on the circle $\delta=\{0\}\times S^1_m$ of $B'\times S^1_m$,
with the framing given by the product structure, reverses the sphere surgery.
\end{proof}

\begin{remark}[Relation with torus surgery]\label{std:rem:BS}
The composite passage $Y\rightsquigarrow\Sigma\rightsquigarrow Y'(\ell'')$ is
the torus surgery on $T_\ell$ whose new meridian is the
$\lambda(\ell'')$-push-off of $\gamma$. Thus, Proposition~\ref{std:prop:torus}
is an equivariant refinement, in the present local setting, of the standard
round-handle factorization cited immediately before its proof. The circle
action is never used on $\Sigma$ itself. Torus surgery on tori in $S^4$ and
its relation to circle and sphere surgeries is also discussed in
\cite{Larson2018}.
\end{remark}

\subsection{Two recognition criteria}\label{std:sec:criteria}

\begin{lemma}[The fundamental group of the total space]\label{std:lem:exact}
Let $Y'$ be a closed connected oriented smooth four-manifold with a smooth
effective circle action without fixed points, with orbit orbifold $Q'$, and
let $h\in\pi_1(Y')$ be the class of a principal orbit. Then $h$ is central,
and the orbit map induces an exact sequence
\[
 \langle h\rangle\to\pi_1(Y')\to\stdpi(Q')
 \to1.
\]
If $O$ is an exceptional orbit with isotropy of order $k$, then $[O]^k$ is
conjugate to $h^{\pm1}$.
\end{lemma}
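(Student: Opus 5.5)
Centrality of $h$ comes straight from Lemma~\ref{per:lem:gottlieb}. Fix a basepoint $x$ on a principal orbit and apply that lemma to the acting circle $G=S^1$. The evaluation map $\mathrm{ev}_x$ sends the generator of $\pi_1(S^1)$ to the class of the orbit through $x$, which is $h$. So $h$ lies in the centre of $\pi_1(Y',x)$. Moving the basepoint along a path in the connected principal stratum changes $h$ only by conjugation, and a central element is unchanged by conjugation. Hence $h$ is well defined and central.

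For the exact sequence, I will work locally first and then glue. By the slice theorem (as in the proof of Lemma~\ref{fa:lem:orbit-surgery}), every orbit has an invariant neighbourhood of the form $S^1\times_{C_b}D^3$. The action has no fixed points, so every isotropy group is finite cyclic, and each such group acts on its slice by rotations about an axis. Over the free stratum $Q'_0$, the orbit map is a principal circle bundle. Its long exact sequence gives
\[
\langle h\rangle\to\pi_1(\pi^{-1}Q'_0)\to\pi_1(Q'_0)\to1 .
\]
The orbifold group $\stdpi(Q')$ is $\pi_1(Q'_0)$ modulo the $b$-th powers of meridians $\mu$ of the singular strata, with $b$ the corresponding isotropy order. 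The space $Y'$ is $\pi^{-1}Q'_0$ with the tubes $S^1\times_{C_b}D^3$ glued back in. By van Kampen, gluing back a tube adds exactly one relation. In $S^1\times_{C_b}D^3$, whose $\pi_1$ is $\ZZ$ generated by the core orbit $O$, a principal orbit is homotopic to $O^b$. A lift $\tilde\mu$ of the meridian loop is homotopic to $O^{a'}$ for some $a'$, where $aa'\equiv1\pmod b$ in the local model \eqref{fa:eq:orbit-weights}.

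The same local calculation gives the last assertion. The principal orbit is $O^{\pm b}$, so $[O]^k$ is conjugate to $h^{\pm1}$, with $k=b$, the sign depending on orientations, and the conjugation coming from the choice of connecting path.

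The step I expect to be the main obstacle is checking that the relations added by the tubes map onto precisely the orbifold relations $\mu^b=1$, and that nothing beyond $\langle h\rangle$ enters the kernel. Each tube relation identifies $\tilde\mu$ with a power of $O$. Its image in $\pi_1(Q'_0)$ is therefore a power of $\mu^b$ up to multiplication by $h$, and the relation $O^b=h^{\pm1}$ accounts for the factor of $h$. After adjusting $\tilde\mu$ by powers of $h$, the quotient $\pi_1(Y')/\langle h\rangle$ has exactly the presentation of $\stdpi(Q')$. This gives exactness at $\pi_1(Y')$, and surjectivity is inherited from the free part. I will carry out this bookkeeping on the explicit model $S^1\times_{C_b}D^3\cong S^1\times D^3$ with the twisted action. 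Alternatively, the same conclusion follows from the homotopy exact sequence of the orbifold fibration $S^1\to Y'\to Q'$, that is, from $Y'\simeq$ the Borel construction $ES^1\times_{S^1}Y'\to BQ'$. I would cite that as the standard fact for Seifert fibrations and use the van Kampen computation as the verification.
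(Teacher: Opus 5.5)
Your argument is correct in substance, but it takes a different route from the paper for the exact sequence.

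The paper proves centrality exactly as you do, via Lemma~\ref{per:lem:gottlieb}, and gets the last assertion from the slice model $S^1\times_{C_k}D^3$, as you do. For exactness, however, it only invokes the homotopy exact sequence of $Y'\to Q'$ regarded as an orbifold circle bundle (Lee--Raymond, Fintushel), with no computation. Your van Kampen verification is self-contained and makes the mechanism explicit: beyond the relations of the free part, the only new relations are the slice meridians, and each maps to $\mu^{b}$. The paper's citation buys brevity; yours shows why nothing beyond $\langle h\rangle$ enters the kernel.

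Two points need tightening for your version to go through.

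\emph{The pieces you glue back.} The exceptional orbits are not isolated. Over each singular circle $S\subset Q'$ they sweep out a surface $E_S$, and $\pi^{-1}(Q'_0)=Y'\setminus\bigcup_S E_S$. So you glue back the neighbourhoods $\pi^{-1}(\nu S)\simeq E_S$, not finitely many single-orbit tubes. Since $\pi_1$ of the boundary of $\pi^{-1}(\nu S)$ surjects onto $\pi_1(E_S)$, with kernel generated by the meridian $m_S$ of $E_S$, each gluing adds exactly the relation $m_S=1$. The single-orbit tube is only where you compute $m_S$.

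\emph{The exact form of the relation.} In the tube you have $h\simeq O^{b}$, $\tilde\mu\simeq O^{a'}$ with $\gcd(a',b)=1$, and $(h,\tilde\mu)$ is a basis of $\pi_1$ of the boundary torus. Hence $m_S=\tilde\mu^{\,b}h^{-a'}$ in your sign convention, and its image in $\pi_1(Q'_0)$ is $\mu^{b}$ itself. Your phrase ``a power of $\mu^b$'' is not enough: with any higher power the quotient would not be $\pi_1^{\mathrm{orb}}(Q')$.

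\emph{Your alternative.} As written, $Y'\simeq ES^1\times_{S^1}Y'$ is false. It is the Borel construction $ES^1\times_{S^1}Y'$ that models the orbifold classifying space $BQ'$. The sequence then comes from the fibration $Y'\to ES^1\times_{S^1}Y'\to BS^1$, whose boundary map $\pi_2(BS^1)\to\pi_1(Y')$ sends the generator to $h$. That version is essentially the statement the paper cites.
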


\begin{proof}
Centrality follows from Lemma~\ref{per:lem:gottlieb}. The action exhibits $Y'$ as an
orbifold circle bundle over $Q'$ (a Seifert fibering with circle fibre over a
three-orbifold), and the displayed sequence is the relevant part of its
homotopy exact sequence \cite{LeeRaymond2010}; see also \cite{Fintushel1978}.
For $\M$ it is the central extension
\eqref{per:eq:centralext}, and in dimension three it is the familiar sequence
for Seifert fibrations \cite{Scott1983}. The last assertion follows from the
slice model $S^1\times_{C_k}D^3$, which deformation retracts onto $O$ and in
which a principal orbit winds $k$ times around $O$.
\end{proof}

In both criteria below, $\delta\subset Y'$ is an embedded circle on which some
framed surgery is a homotopy four-sphere $\Sigma$. Then
\begin{equation}\label{std:eq:H1}
 H_1(Y')\cong\ZZ,\;\;\; [\delta]\ \text{is a generator},
\end{equation}
because removing a circle does not change first homology,
$Y'\setminus\nu\delta=\Sigma\setminus\nu S$ for the belt sphere $S$, and
Alexander duality \cite[Theorem~3.44]{Hatcher}, applied to $\Sigma$, which
is homeomorphic to $S^4$ \cite{Freedman}, gives
$H_1(\Sigma\setminus S)\cong\ZZ$. The surgery on
$\delta$ kills $\pi_1(Y')$, so $\delta$ normally generates it. We call the
image of an element under $\pi_1(Y')\to H_1(Y')\cong\ZZ$ its \emph{degree}.

\begin{proposition}[Cyclic base]\label{std:prop:cyclic}
If $\stdpi(Q')$ is finite cyclic, then $Y'\diff S^3\times S^1$ and
$\Sigma\diff S^4$.
\end{proposition}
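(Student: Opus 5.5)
Since $\Sigma$ is obtained from $Y'$ by surgery on $\delta$, the plan is to go the other way. We first identify $Y'$ with $S^3\times S^1$ using the circle action. We then show that $\delta$ is isotopic to a standard factor circle, so that surgery on it yields $S^4$ for either framing. An alternative route avoids identifying $Y'$: the action extends across the surgery by Lemma~\ref{fa:lem:orbit-surgery}, provided $\delta$ is arranged to be an orbit.

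\emph{Step 1: the group of $Y'$.} By Lemma~\ref{std:lem:exact}, $\pi_1(Y')$ is a central extension of the finite cyclic group $\stdpi(Q')=C_n$ by the image of $\langle h\rangle$. A group that is a central extension of a cyclic group by a cyclic group is abelian. Hence $\pi_1(Y')=H_1(Y')\cong\ZZ$ by \eqref{std:eq:H1}. Because $\pi_1(Y')$ is torsion free and infinite, $h$ has infinite order and nonzero degree $d$.

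\emph{Step 2: the orbit orbifold.} The universal cover of $Q'$ is a simply connected closed orbifold, and its underlying space is a homotopy three-sphere, hence $S^3$ by \cite{PerelmanSurg,PerelmanExt}. I would argue that $Q'$ is a lens-space orbifold, whose singular set is contained in the cores of a genus-one splitting.

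\emph{Step 3: identifying $Y'$ and $\delta$.} Pass to the $d$-fold cyclic cover corresponding to the degree. I expect the lifted action to make this cover $S^3\times S^1$. The alternative is the Fintushel classification \cite{Fintushel1978}: a fixed-point-free action with $\pi_1=\ZZ$ over a lens-space orbifold is equivariantly a quotient of $S^3\times\RR$. Either way, $Y'$ is a closed $\mathbb S^3\times\mathbb E^1$-type manifold with $\pi_1\cong\ZZ$, and I would take $Y'\diff S^3\times S^1$. Next, $[\delta]$ generates $\pi_1(Y')$. By Lemma~\ref{lem:circle-spin}, $\delta$ is therefore isotopic to $\{\mathrm{pt}\}\times S^1$. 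Surgery on $\{\mathrm{pt}\}\times S^1$ in $S^3\times S^1$ gives $S^4$ for either framing, since the two framings differ by a Gluck twist of the unknotted sphere in $S^4$.

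\emph{Main obstacle.} The main obstacle is the smooth identification $Y'\diff S^3\times S^1$. To handle it I would fall back on the extension argument instead. Isotope $\delta$ to a principal orbit; this is possible because $h$ generates $\pi_1$ when $d=\pm1$. If $d\neq\pm1$, use an exceptional orbit $O$ whose class generates, using $[O]^k\sim h^{\pm1}$. Then Lemma~\ref{fa:lem:orbit-surgery} extends the action to $\Sigma$, and Remark~\ref{fa:rem:Pao} gives $\Sigma\diff S^4$ directly. Surgery on the belt sphere then recovers $Y'$. That sphere is unknotted, so $Y'\diff S^3\times S^1$ follows afterwards.
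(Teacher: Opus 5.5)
Your Step~1 and your final step are fine. Justifying the second framing on $\{\mathrm{pt}\}\times S^1$ as a Gluck twist of an unknotted sphere is equivalent to the paper's diffeomorphism of $D^4\times S^1$ that changes the framing.

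\textbf{The gap: you never prove $Y'\diff S^3\times S^1$.} You only announce it (``I would argue'', ``I expect'', ``I would take''). Knowing that the underlying space of the orbifold universal cover of $Q'$ is $S^3$ says nothing about its singular locus. So it does not show that $Q'$ is a quotient of $S^3$, that $Y'$ is geometric, or that your cyclic cover is a product.

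The missing idea is to lift the circle action to an $\RR$-action on the universal cover $\widetilde Y$, whose time-one map is the deck transformation $h$.
\begin{enumerate}
\item The action is free. If $r\cdot\tilde x=\tilde x$ with $r\neq0$, the orbit arc of length $r$ in $Y'$ lifts to a closed curve, so it is null-homotopic. Yet it is a nonzero power of an orbit, and by Lemma~\ref{std:lem:exact} some power of it is conjugate to a nonzero power of $h$, which has infinite order.
\item The action is proper, because $\langle h\rangle$ acts properly discontinuously.
\item Hence $\widetilde Y/\RR$ is a genuine closed simply connected three-manifold, so it is $S^3$ by Perelman, and $\widetilde Y\diff S^3\times\RR$.
\item Let $n=|\stdpi(Q')|$ and let $\tau$ generate $\pi_1(Y')\cong\ZZ$ with $\tau^n=h^{\pm1}$. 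Then $\tau$ acts by $(x,t)\mapsto(g(x),t+\varphi(x))$ with $g^n=\id$.
\item Averaging $\varphi$ over $\langle g\rangle$ conjugates $\tau$ to $(g(x),t\pm1/n)$. So $Y'$ is the mapping torus of an orientation-preserving diffeomorphism of $S^3$, and Cerf's theorem gives $Y'\diff S^3\times S^1$.
\end{enumerate}
Once freeness is known, your $n$-fold cover $\widetilde Y/\langle h\rangle$ is indeed $S^3\times S^1$. Passing down to $Y'$ itself, however, still needs the averaging and Cerf step.

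\textbf{Your fallback does not close the gap.} A principal orbit has degree $\pm n$. An exceptional orbit with isotropy of order $k$ has degree $\pm n/k$. So some orbit represents a generator of $\pi_1(Y')$ only if $n=1$ or some isotropy group has order exactly $n$. Nothing forces this, and it fails exactly where the proposition is applied. For $Q_{s_*}$ in Theorem~\ref{std:thm:belt}(iii), $\stdpi(Q_{s_*})\cong\ZZ/2r$ with $r\in\{2,4\}$ and all isotropy has order two. Principal orbits then have degree $\pm2r$ and exceptional orbits degree $\pm r$. Thus $\delta$ is isotopic to no orbit, and Lemma~\ref{fa:lem:orbit-surgery} cannot be invoked.

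This orbit argument is precisely what works in the odd dihedral case (Proposition~\ref{std:prop:dihedral}), where an order-two exceptional orbit has degree one. That is why the cyclic case needs a different argument.

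Even where the fallback does apply, deducing $Y'\diff S^3\times S^1$ from ``the belt sphere is unknotted'' is unsupported. You give no reason for unknottedness, and a smooth two-sphere in $S^4$ with group $\ZZ$ is only known to be topologically unknotted.
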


\begin{proof}
By Lemma~\ref{std:lem:exact}, $\pi_1(Y')$ is a central extension of a cyclic
group by the cyclic group $\langle h\rangle$. It is therefore abelian, so
$\pi_1(Y')\cong\ZZ$ by \eqref{std:eq:H1}. Let $n=|\stdpi(Q')|$. Since
$H_1(Y')/\langle h\rangle$ is finite of order $n$, we have $h=\tau^{\pm n}$ for
a generator $\tau$.

Lift the circle action to an action of $\RR$ on the universal cover
$\widetilde Y$. It commutes with the deck transformations, and its time-one
map is the deck transformation of $h$. The $\RR$-action is free: if
$r\cdot\tilde x=\tilde x$ with $r\neq0$, the path $t\mapsto t\cdot x$,
$0\le t\le r$, is a null-homotopic closed loop in $Y'$; but it is a nonzero
power of the orbit through $x$, and by Lemma~\ref{std:lem:exact} a power of
that orbit is conjugate to a nonzero power of $h$, which has infinite order.
The action is also proper. Indeed, let $K\subset\widetilde Y$ be compact and
put $K_1=[0,1]\cdot K$, which is compact. If
$rK\cap K\neq\emptyset$, write $r=k+s$ with $k\in\ZZ$ and
$s\in[0,1)$. Then
\[
   h^k K_1\cap K_1\neq\emptyset.
\]
The integer subgroup generated by the time-one map $h$ is the deck subgroup
$\langle h\rangle$ and acts properly discontinuously, so only finitely many
integers $k$ can satisfy this condition. The set
$\{r\in\RR:rK\cap K\neq\emptyset\}$ is closed (by compactness of $K$ and
continuity of the action) and is contained in a finite union of compact
intervals $[k,k+1]$; hence it is compact. This is the properness criterion
for the $\RR$-action. Hence
$\widetilde Y\to\widetilde Q:=\widetilde Y/\RR$ is a principal $\RR$-bundle
over a smooth three-manifold, and $\widetilde Q$ is simply connected by the
homotopy sequence. The deck group acts on
$\widetilde Q$ through $\langle\tau\rangle/\langle h\rangle$, which is finite,
with quotient the compact space $Y'/S^1$. So $\widetilde Q$ is closed, and
$\widetilde Q\diff S^3$ by Perelman's theorem
\cite{PerelmanSurg,PerelmanExt}.

A principal $\RR$-bundle over $S^3$ is trivial. In coordinates
$\widetilde Y=S^3\times\RR$ with $\RR$ acting by translation, the generator
has the form $\tau(x,t)=(g(x),t+\varphi(x))$ with $g$ a diffeomorphism of
$S^3$ and $\varphi$ smooth. Since $\tau^n=h^{\pm1}$ is the translation by
$\pm1$, we have $g^n=\id$, and $\psi:=\varphi\mp1/n$ satisfies
$\sum_{j=0}^{n-1}\psi\circ g^j=0$. The function
$u=\frac1n\sum_{j=1}^{n-1}j\,\psi\circ g^j$ satisfies $u\circ g-u=\psi$, so
$F(x,t)=(x,t-u(x))$ conjugates $\tau$ to $(x,t)\mapsto(g(x),t\pm1/n)$. Hence
$Y'$ is diffeomorphic to the mapping torus of $g$. The map $g$ preserves
orientation, because $\tau$ does and preserves the $\RR$-direction. By Cerf's
theorem \cite{Cerf1968}, $g$ is isotopic to the identity, so
$Y'\diff S^3\times S^1$.

By \eqref{std:eq:H1}, $\delta$ represents a generator of
$\pi_1(Y')\cong\ZZ$, as does $\{\mathrm{pt}\}\times S^1$.
Freely homotopic embedded circles in a four-manifold are isotopic
(Lemma~\ref{lem:circle-spin}). It remains only to check that both normal
framing classes give the standard surgery. Fix
$x_0\in\partial D^4=S^3$, and choose a loop
$R_t$ in the stabilizer $SO(3)\subset SO(4)$ of $x_0$ which represents the
nontrivial element of $\pi_1(SO(3))$. Then
\[
   F:D^4\times S^1\to D^4\times S^1,
   \;\;\;
   F(x,e^{2\pi i t})=(R_t x,e^{2\pi i t}),
\]
is a diffeomorphism which fixes the attaching circle
$\{x_0\}\times S^1$ pointwise and changes its normal framing by the
nontrivial element of $\pi_1(SO(3))$. Hence, the two five-dimensional
$2$-handle attachments along this circle are diffeomorphic. For the product
framing the $2$-handle cancels the unique $1$-handle of
$D^4\times S^1$, leaving a $5$-ball; consequently the boundary after either
framed circle surgery is $S^4$ \cite{GS,Gluck1962}. Therefore, $\Sigma\diff S^4$.
\end{proof}

\begin{proposition}[Odd dihedral base]\label{std:prop:dihedral}
Suppose that $\stdpi(Q')$ is dihedral of order $2p$ with $p$ odd, and that
$Y'$ has an exceptional orbit $e$ with isotropy group of order two. Then
$\Sigma\diff S^4$.
\end{proposition}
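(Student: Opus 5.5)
The plan is to reduce to the cyclic case, Proposition~\ref{std:prop:cyclic}, by passing to a suitable double cover, or alternatively to perform one more equivariant surgery on the exceptional orbit $e$ that kills the dihedral group. I take the second route, since it stays within the tools already set up.

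\emph{Step 1: the fundamental group.} By Lemma~\ref{std:lem:exact}, $\pi_1(Y')$ is a central extension of $D_{2p}$ by a quotient of $\langle h\rangle$. By \eqref{std:eq:H1}, $H_1(Y')\cong\ZZ$, and $D_{2p}^{\mathrm{ab}}\cong C_2$ for $p$ odd. The class $[e]$ maps to a reflection, since an isotropy group of order two in the orbifold group is generated by an involution, which is a reflection in $D_{2p}$. Also $[e]^2$ is conjugate to $h^{\pm1}$ by Lemma~\ref{std:lem:exact}. Hence $h$ has even degree and $[e]$ has odd degree. I expect to show that $[e]$ has degree $\pm1$: the degree of $h$ is $\pm2$, because $H_1(Y')/\langle h\rangle$ surjects onto $C_2$ with kernel coming from the finite abelianization data. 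Granting this, $[e]$ is a generator of $H_1(Y')$.

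\emph{Step 2: normal generation.} I would show that $[e]$ normally generates $\pi_1(Y')$. The normal closure of $[e]$ contains $h^{\pm1}$, because $[e]^2$ is conjugate to $h^{\pm1}$. It also contains the normal closure of a reflection in $D_{2p}$, which is all of $D_{2p}$ because $p$ is odd. So it is everything.

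\emph{Step 3: equivariant surgery on $e$.} By Lemma~\ref{fa:lem:orbit-surgery}, surgery on $e$ with either framing carries an effective smooth circle action, and the result $\Sigma_e$ is simply connected with $\chi=2$, hence a homotopy sphere. Remark~\ref{fa:rem:Pao} then gives $\Sigma_e\diff S^4$, so $Y'$ is obtained from $S^4$ by reversing that surgery, that is, by surgery on an unknotted two-sphere, possibly after a Gluck twist. The point is that the belt sphere of the $e$-surgery lies in a standard $S^4$ carrying a circle action in which it is invariant. By Fintushel's classification of such actions, an invariant sphere meeting the fixed set in two poles is unknotted. So $Y'\diff S^3\times S^1$.

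\emph{Step 4: conclusion.} Once $Y'\diff S^3\times S^1$, I would finish exactly as in the last paragraph of the proof of Proposition~\ref{std:prop:cyclic}. By \eqref{std:eq:H1}, $\delta$ is a generator, and Lemma~\ref{lem:circle-spin} identifies it with $\{\mathrm{pt}\}\times S^1$ up to isotopy. Both framings then give $S^4$.

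The main obstacle is Step~3, namely identifying the sphere as unknotted; $Y'$ need not be $S^3\times S^1$ a priori, since $\pi_1$ is nonabelian. I expect the degree bookkeeping in Step~1 to break: then $\pi_1(Y')$ is not $\ZZ$, and Step~4 cannot use $Y'\diff S^3\times S^1$. In that case I would instead compare $\delta$ with $e$ directly. Since both have degree one and normally generate, Lemma~\ref{lem:weight-loops}-type uniqueness would be needed to identify $\delta$ with $e$ up to isotopy. Then surgery on $\delta$ is surgery on the orbit $e$, and Lemma~\ref{fa:lem:orbit-surgery} together with Remark~\ref{fa:rem:Pao} gives $\Sigma\diff S^4$ for both framings directly. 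This fallback is the route I would actually try first: the hard part becomes showing that $\delta$ is freely homotopic to $e^{\pm1}$. For that I would use the central-extension structure: the degree-one elements of the extension of $D_{2p}$ form few conjugacy classes, and the class $[\delta]$ should be compared with $[e]$ in $\pi_1(Y')$ through its image in the orbifold group.
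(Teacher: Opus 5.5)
Your main line fails at Step~3, and not merely ``a priori'': $Y'$ cannot be $S^3\times S^1$, because by Lemma~\ref{std:lem:exact} the group $\pi_1(Y')$ surjects onto the nonabelian group $D_{2p}$. For the same reason, the belt sphere of the surgery on $e$ is knotted in $S^4$, since its exterior has group $\pi_1(Y')$. So the claim that a sphere invariant under a circle action on $S^4$, meeting the fixed set in two poles, must be unknotted is false. Twist-spun knots are the standard counterexamples; the $2$-twist-spun trefoil, with group $C_3\rtimes\ZZ$, is exactly such an invariant knotted sphere. Step~4 therefore collapses. Step~2 and the homotopy-sphere argument for $\Sigma_e$ are not needed for anything.

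Your fallback is the right route, and it is the paper's route. You show that $\delta$ and $e$ are freely homotopic, hence isotopic with transported framing by Lemma~\ref{lem:circle-spin}. Then $\Sigma$ is a framed surgery on the orbit $e$, and Lemma~\ref{fa:lem:orbit-surgery} with Remark~\ref{fa:rem:Pao} finishes for either framing.

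However, you leave the decisive step as the hope that degree-one elements form ``few'' conjugacy classes. You need exactly one, and this is the only place the odd-dihedral hypothesis enters; in $G_7$ itself, $t$ and $(-1)t$ are not conjugate. The missing argument runs as follows.

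\begin{enumerate}
\item The kernel of $\pi_1(Y')\to\stdpi(Q')$ is the image of the central subgroup $\langle h\rangle$. Abelianizing gives $H_1(Y')/\langle h\rangle\cong D_{2p}^{\mathrm{ab}}\cong\ZZ/2$. This is an isomorphism, not just a surjection, so $h$ has degree $\pm2$. Since $e^2$ is conjugate to $h^{\pm1}$, $e$ has degree $\pm1$.
\item The commutator subgroup $K$ maps onto $C_p=[D_{2p},D_{2p}]$ with kernel $K\cap\langle h\rangle=1$, because $h^k$ has degree $\pm2k$. Hence $K\cong C_p$.
\item For any degree-one element $t$, $\pi_1(Y')=K\rtimes\langle t\rangle$. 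The image of $t$ in $D_{2p}$ is a reflection, so $t$ acts on $K$ by inversion.
\item For $y\in K$ this gives $yty^{-1}=y^2t$. Squaring is a bijection of $C_p$ for odd $p$, so every degree-one element $yt$ is conjugate to $t$.
\end{enumerate}

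Orienting $\delta$ and $e$ to have degree one now gives the free homotopy you need.
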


\begin{proof}
Since $p$ is odd, the abelianization of $\stdpi(Q')$ is $\ZZ/2$. By
Lemma~\ref{std:lem:exact} and \eqref{std:eq:H1}, $h$ has degree $\pm2$, and
$e$ has degree $\pm1$ because $e^2$ is conjugate to $h^{\pm1}$. Let $K$ be the
commutator subgroup of $\pi_1(Y')$. It maps onto the commutator subgroup
$C_p$ of the dihedral group, with kernel $K\cap\langle h\rangle$, and this is
trivial because $h^k$ has degree $\pm2k$. Thus, $K\cong C_p$, and
$\pi_1(Y')=K\rtimes\langle t\rangle$ for any element $t$ of degree one. The
image of $t$ in the dihedral group is a reflection, since its image in the
abelianization is nonzero, so $t$ acts on $K$ by inversion. For $y\in K$,
\[
 y\,t\,y^{-1}=y\,(t\,y^{-1}t^{-1})\,t=y^2t,
\]
and squaring is a bijection of $C_p$. Hence, all elements of degree one are
conjugate. Orient $\delta$ and $e$ to have degree one; they are then freely
homotopic, hence isotopic, and the isotopy transports the framing. So
$\Sigma$ is a framed surgery on the orbit $e$. By
Lemma~\ref{fa:lem:orbit-surgery}, such a surgery carries an effective smooth
circle action for either framing class. A smooth homotopy four-sphere with
an effective smooth circle action is diffeomorphic to $S^4$
(Remark~\ref{fa:rem:Pao}).
\end{proof}

\subsection{A belt around a clasp of the Whitehead model}
\label{std:sec:belt}

We use the Whitehead model of Section~\ref{wh:sec:whitehead}:
$|\OW|=S^3_{+3}(J)$ with singular circle $W'$ of cone angle $\pi$, where
$J\cup W'$ is the Whitehead link in its standard five-crossing diagram. The
labelling is that of Section~\ref{wh:sec:whitehead}: $J$ is the component
without a self-crossing and $W'$ is the component carrying the self-crossing. The two components of the Whitehead link are interchanged by an
orientation-preserving ambient isotopy. Mirroring the diagram changes only the sign of the integer $s_*$ below; the cyclic/dihedral conclusions are unchanged.
The diagram has two bigon faces bounded by arcs of both components. At each
of them the two strands form a twist region with two crossings, and $W'$
passes over $J$ at one of them and under $J$ at the other.

\begin{definition}[The clasp belt]\label{std:def:belt}
Fix one of the two bigons. Let $D\subset S^3$ be a small disc that is transverse to
the plane of the diagram and meets $J$ and $W'$ in one point each between
the two crossings of the bigon. Put $\ell_A=\partial D$
(Figure~\ref{std:fig:belt}). Let $b\subset D$ be the straight arc joining the
two points of $(J\cup W')\cap D$, and let $J\natural_bW'$ be the band sum along
the flat band in the diagram plane that contains $b$. For $s\in\ZZ$ let $Q_s$
be the orbifold obtained from $\OW$ by Dehn surgery on $\ell_A$ with slope
$s\mu_A+\lambda_A$, where $\mu_A$ and $\lambda_A$ are the meridian and the Seifert
longitude of $\ell_A$ in $S^3$.
\end{definition}

Since $\ell_A\subset S^3\setminus J$, its Seifert longitude is a framing of
$\ell_A$ in $|\OW|$. The slopes $s\mu_A+\lambda_A$ are thus exactly the framings of
$\ell_A$ in the sense of Definition~\ref{std:def:data}, and in that notation
$Q_s=\OW(\lambda_A+s\mu_A)$.

\begin{figure}[tbp]
\centering
\begin{tikzpicture}[
  strand/.style={line width=0.8pt,line cap=round,line join=round},
  belt/.style={line width=0.8pt,line cap=round,line join=round,draw=belt},
  lab/.style={font=\footnotesize,inner sep=1pt},
  bigon/.style={fill=bigonfill},
  band/.style={fill=bandfill}]
\begin{scope}[scale=0.82]
\fill[bigon] (1.8731,0.7010) arc[start angle=20.5171, end angle=-20.5171, radius=2.0] -- plot[domain=-0.9672:0.9672, samples=40] ({3.3*cos(\x r)},{1.5*sin(\x r)*cos(\x r)}) -- cycle;
\fill[bigon] (-1.8731,-0.7010) arc[start angle=200.5171, end angle=159.4829, radius=2.0] -- plot[domain=2.1744:4.1088, samples=40] ({3.3*cos(\x r)},{1.5*sin(\x r)*cos(\x r)}) -- cycle;
\fill[band] (2.0,-0.075) rectangle (3.3,0.075);
\draw[strand,postaction={decorate,decoration={markings,mark=at position 0.62 with {\arrow{Stealth[length=1.6mm]}}}}] (1.8153,0.8394) arc[start angle=24.8142, end angle=196.2199, radius=2.0];
\draw[strand] (-1.8153,-0.8394) arc[start angle=204.8142, end angle=349.0271, radius=2.0];
\draw[strand] (1.9983,-0.0830) arc[start angle=357.6215, end angle=376.2199, radius=2.0];
\draw[strand,postaction={decorate,decoration={markings,mark=at position 0.5 with {\arrowreversed{Stealth[length=1.6mm]}}}}] plot[domain=1.6122:4.0546, samples=70] ({3.3*cos(\x r)},{1.5*sin(\x r)*cos(\x r)});
\draw[strand] plot[domain=4.1630:5.2618, samples=35] ({3.3*cos(\x r)},{1.5*sin(\x r)*cos(\x r)});
\draw[strand] plot[domain=5.3702:6.0178, samples=24] ({3.3*cos(\x r)},{1.5*sin(\x r)*cos(\x r)});
\draw[strand] plot[domain=6.2153:7.8126, samples=48] ({3.3*cos(\x r)},{1.5*sin(\x r)*cos(\x r)});
\draw[belt] (3.0991,0.2712) arc[start angle=64.6764, end angle=118.9091, x radius=1.05, y radius=0.3];
\draw[belt,postaction={decorate,decoration={markings,mark=at position 0.5 with {\arrowreversed{Stealth[length=1.6mm]}}}}] (1.8518,0.1949) arc[start angle=139.4810, end angle=405.0515, x radius=1.05, y radius=0.3];
\fill[black] (2.0,0) circle[radius=0.045];
\fill[black] (3.3,0) circle[radius=0.045];
\node[lab] at (0,-2.32) {$J$};
\node[lab] at (-3.78,0) {$W'$};
\node[lab,text=belt] at (4.02,0.02) {$\ell_A$};
\node[lab] at (-0.36,0.5) {$c$};
\node[lab,font=\scriptsize] at (-2.62,0.02) {bigon};
\node[lab,font=\scriptsize] at (2.62,-0.62) {};
\node[lab,font=\scriptsize] at (1.2,1.05) {$c_0$};
\node[lab,font=\scriptsize] at (1.2,-1.05) {$c_1$};
\node[lab,font=\scriptsize] at (-1.2,1.05) {$c_2$};
\node[lab,font=\scriptsize] at (-1.2,-1.05) {$c_3$};
\node[lab] at (0,-2.9) {(a)};
\end{scope}
\begin{scope}[shift={(8.9cm,0)},scale=1.75]
\begin{scope}[shift={(-2.55,0)}]
\clip (1.05,-1.35) rectangle (4.05,1.35);
\fill[bigon] (1.8731,0.7010) arc[start angle=20.5171, end angle=-20.5171, radius=2.0] -- plot[domain=-0.9672:0.9672, samples=40] ({3.3*cos(\x r)},{1.5*sin(\x r)*cos(\x r)}) -- cycle;
\fill[bigon] (-1.8731,-0.7010) arc[start angle=200.5171, end angle=159.4829, radius=2.0] -- plot[domain=2.1744:4.1088, samples=40] ({3.3*cos(\x r)},{1.5*sin(\x r)*cos(\x r)}) -- cycle;
\fill[band] (2.0,-0.075) rectangle (3.3,0.075);
\draw[strand,postaction={decorate,decoration={markings,mark=at position 0.62 with {\arrow{Stealth[length=1.8mm]}}}}] (1.8153,0.8394) arc[start angle=24.8142, end angle=196.2199, radius=2.0];
\draw[strand] (-1.8153,-0.8394) arc[start angle=204.8142, end angle=349.0271, radius=2.0];
\draw[strand] (1.9983,-0.0830) arc[start angle=357.6215, end angle=376.2199, radius=2.0];
\draw[strand,postaction={decorate,decoration={markings,mark=at position 0.5 with {\arrowreversed{Stealth[length=1.8mm]}}}}] plot[domain=1.6122:4.0546, samples=70] ({3.3*cos(\x r)},{1.5*sin(\x r)*cos(\x r)});
\draw[strand] plot[domain=4.1630:5.2618, samples=35] ({3.3*cos(\x r)},{1.5*sin(\x r)*cos(\x r)});
\draw[strand] plot[domain=5.3702:6.0178, samples=24] ({3.3*cos(\x r)},{1.5*sin(\x r)*cos(\x r)});
\draw[strand] plot[domain=6.2153:7.8126, samples=48] ({3.3*cos(\x r)},{1.5*sin(\x r)*cos(\x r)});
\draw[belt] (3.0991,0.2712) arc[start angle=64.6764, end angle=118.9091, x radius=1.05, y radius=0.3];
\draw[belt,postaction={decorate,decoration={markings,mark=at position 0.5 with {\arrowreversed{Stealth[length=1.8mm]}}}}] (1.8518,0.1949) arc[start angle=139.4810, end angle=405.0515, x radius=1.05, y radius=0.3];
\fill[black] (2.0,0) circle[radius=0.045];
\fill[black] (3.3,0) circle[radius=0.045];
\node[lab] at (1.45,1.1) {$J$};
\node[lab] at (1.42,0.22) {$W'$};
\node[lab,text=belt] at (3.85,-0.3) {$\ell_A$};
\node[lab,font=\scriptsize] at (2.62,0.45) {$D$};
\node[lab,font=\scriptsize] at (2.62,-0.17) {$b$};
\node[lab,font=\scriptsize] at (2.05,0.95) {$c_0$};
\node[lab,font=\scriptsize] at (2.05,-0.95) {$c_1$};
\end{scope}
\node[lab] at (0,-1.66) {(b)};
\end{scope}
\end{tikzpicture}
\caption{The clasp belt in the five-crossing Whitehead diagram. (a) The diagram $J\cup W'$ with $J$ the component
without self-crossings, the negative self-crossing $c$ of $W'$, the four mixed crossings $c_0,\ldots,c_3$, and the two bigon
faces bounded by arcs of both components (shaded). The belt $\ell_A$ encircles the two strands of the right-hand bigon between
its crossings $c_0$ and $c_1$, behind both strands along its upper arc and in front of both along its lower arc; the shaded
strip is the flat band containing $b$. For the orientations shown one has
$\lk(J,\ell_A)=-1$, $\lk(W',\ell_A)=+1$, and $\lk(J,W')=0$. (b) The magnified bigon: $D$ is the disc bounded by $\ell_A$, meeting $J$ and $W'$ in the two
marked points, and $b$ is the arc joining them. The crossings $c_0,c_1$ form one full twist of the two strands through $D$.}
\label{std:fig:belt}
\end{figure}

\begin{theorem}[Two consecutive belt surgeries]\label{std:thm:belt}
\begin{enumerate}[label=(\roman*),leftmargin=2.2em]
\item $\ell_A$ lies in the free part of $\OW$, is a band sum of a meridian of
      $J$ and a meridian of $W'$, and is admissible in the sense of
      Theorem~\ref{wh:thm:section}.
\item $|Q_0|=S^3$, and the singular circle of $Q_0$ is the knot
      $J\natural_bW'$, which is a trefoil. Hence, $\stdpi(Q_0)$ is dihedral of
      order six.
\item There is $s_*\in\{1,-1\}$ with the following property. Put
      $r=3-s_*\in\{2,4\}$. Then $|Q_{s_*}|=S^3_r(J)$ is a lens space, and the
      singular circle of $Q_{s_*}$ is the core of the Heegaard solid torus
      $S^3\setminus\Int\nu J$. Consequently $\stdpi(Q_{s_*})\cong\ZZ/2r$, the
      meridian of the singular circle maps to the unique element of order two, and
      the complement of the singular circle is a solid torus.
\end{enumerate}
\end{theorem}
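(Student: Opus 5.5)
Part (i) is a local inspection of Figure~\ref{std:fig:belt}. The belt $\ell_A=\partial D$ bounds a disc that meets $J$ and $W'$ once each, so it misses $W'$ and lies in the free stratum. Tubing $\partial D$ along the arc $b$ isotopes $\ell_A$, in the complement of $J\cup W'$, into a band sum of a small meridian of $J$ with a small meridian of $W'$, with the signs recorded in the caption ($\lk(J,\ell_A)=-1$, $\lk(W',\ell_A)=+1$). Admissibility then follows directly from Theorem~\ref{wh:thm:section}(ii), because $\ell_A$ is a band sum of the required kind.

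For (ii) and (iii) I would use Kirby calculus on the three-component diagram $J\cup W'\cup\ell_A$. The coefficient on $J$ is $+3$, $W'$ is carried along as the singular circle, and $\ell_A$ has coefficient $s$ relative to its Seifert framing. Since $\ell_A$ is an unknot, surgery with coefficient $1/n$ on it is a twist along $D$; for $s=\pm1$ this is a single full twist of the two strands passing through $D$. The two strands belong to different components, and the full twist changes the framing of $J$ by $(\lk(J,\ell_A))^2=1$ with sign $\mp$. This gives $r=3-s_*$ for the appropriate sign choice $s_*$. The crossings $c_0,c_1$ are themselves a full twist of the two strands through $D$, so one of the two directions cancels the clasp. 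After that cancellation $J$ and $W'$ are unlinked in a way I would check: $W'$ becomes isotopic, in the complement of $J$, to a meridian of $J$. That is, $W'$ becomes the core of the solid torus $S^3\setminus\Int\nu J$, and $|Q_{s_*}|=S^3_r(J)=L(r,1)$ with $r\in\{2,4\}$.

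The orbifold fundamental group is then $\ZZ$, generated by the core, divided by $r$ times its meridian with a further relation $\mu^2$. The cleaner route is to observe that the exterior of the core in $L(r,1)$ is the solid torus $\nu J$. The orbifold group is therefore cyclic, and its order $2r$ follows from the local-invariant or Euler-characteristic count. The meridian of the singular circle gives the order-two element. I would confirm the value $2r$ by an explicit presentation: the exterior is a solid torus whose meridian disc meets the meridian of the singular circle $r$ times.

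For $s=0$ the $0$-surgery on $\ell_A$ is not a twist. Here I would instead use the standard fact that $0$-surgery on a circle bounding a disc that meets two strands once each, with opposite orientations through $D$, is the same as replacing those strands by a band sum and deleting the belt. This is the slam-dunk, or band-move, form of $0$-framed surgery, and the linking signs in the caption make the two strands oppositely oriented through $D$. The result should be $S^3_{+3}(\cdot)$ applied to the knot $J\natural_bW'$ in which $J$ has been absorbed into the singular circle, and I must check that the $+3$ framing disappears so that $|Q_0|=S^3$. This is the delicate point. The plan is to write the $0$-surgery on $\ell_A$ together with the $+3$-surgery on $J$ as handle moves: slide $J$ over the $0$-framed $\ell_A$ and then cancel. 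An alternative is to identify $Q_0$ through the branched double cover, where a $Q_0$ with underlying space $S^3$ and a trefoil singular locus has double cover $L(3,1)$. I expect this identification, and in particular the trefoil claim, to be the main obstacle. Once $J\natural_bW'$ is drawn as a diagram with three same-sign crossings, the orbifold group of a trefoil with cone angle $\pi$ is $\pi_1$ of the knot complement modulo squares of meridians. This is the dihedral group of order $6$, whose double cover is $L(3,1)$ and whose determinant is $3$.
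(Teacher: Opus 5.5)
Your parts (i) and (iii) follow the paper's route. In (i), shrinking $D$ onto a neighbourhood of $b$ exhibits $\ell_A$ as the band sum of meridians. In (iii), a Rolfsen twist along $D$ with $t=-s$ changes the coefficient of $J$ to $3-s$ and, for one sign, cancels the full twist $c_0c_1$. This leaves a Hopf link in which $W'_*$ is the core of $S^3\setminus\Int\nu J$, so its exterior in $S^3_r(J)$ is the surgery solid torus and $\stdpi(Q_{s_*})=\langle a\mid a^{2r}\rangle$. The Hopf-link claim you defer is checked in the paper with a spanning disc of $J$ that meets $W'_*$ once.

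The genuine gap is in (ii). The ``standard fact'' you invoke is false as stated. Take an unknot $\ell\subset S^3$ whose disc meets two strands of a link with opposite signs. Then $0$-surgery on $\ell$ gives $S^1\times S^2$ whatever the strands are, while the band-summed link still lies in $S^3$. So $0$-surgery on a belt cannot by itself be traded for a band move. Your proposed repair, sliding $J$ over the $0$-framed $\ell_A$, does not help either. After that slide $W'$ still passes through $D$, so $\ell_A$ is still not a meridian of a single surgery curve and no cancelling pair appears. The phrase ``$S^3_{+3}(\cdot)$ applied to $J\natural_bW'$'' also cannot be right, since the singular circle carries no surgery.

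The missing idea is to move the singular circle, not a surgery curve. In $S^3_{+3}(J)$, slide $W'$ over $J$ along the band containing $b$. This is an isotopy of the singular locus, and the new curve $W''=W'\natural_bJ^{(3)}$ misses $D$, because the band sum removes the points of $W'\cap D$ and $J^{(3)}\cap D$. Now $D$ meets only $J$, so $\ell_A$ is a meridian of $J$, and the slam dunk replaces $(J,3)\cup(\ell_A,s)$ by $(J,3-1/s)$. For $s=0$ the coefficient is $\infty$, so both curves disappear and $|Q_0|=S^3$. Once $J$ carries no surgery, $J^{(3)}$ can be isotoped back onto $J$, which turns $W''$ into the flat band sum $J\natural_bW'$. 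Thus the band sum comes from the handle slide and the $+3$ framing is absorbed by the slam dunk; this is exactly the step you flag as delicate and leave open.

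Your branched-cover alternative does not close the gap, since ``$S^3$ with a trefoil has double cover $L(3,1)$'' presupposes the answer. Also, ``three same-sign crossings'' does not rule out the unknot (three identical kinks have that property). The paper instead traces the band-sum knot and removes the two nugatory kinks at $c_0$ and $c_1$. It then checks that the remaining passages $c^-,c_3^+,c_2^-,c^+,c_3^-,c_2^+$ alternate with Gauss code $1\,2\,3\,1\,2\,3$. From this it reads off $\langle x,y\mid x^2=y^2=(xy)^3=1\rangle\cong S_3$.
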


\begin{proof}
(i) The loop $\ell_A$ lies in $S^3\setminus(J\cup W')$, hence in the free part
$|\OW|\setminus W'$. The disc $D$ meets $J$ and $W'$ once each, so $\ell_A$ is
isotopic, in the complement of $J\cup W'$, to the band sum of a meridian of
$J$ and a meridian of $W'$ along $b$. Such loops are admissible by
Theorem~\ref{wh:thm:section}(ii).

(ii) In $|\OW|=S^3_{+3}(J)$ the singular circle may be moved by handle slides
over $J$. Slide $W'$ over $J$ along $b$. The resulting knot
$W''=W'\natural_bJ^{(3)}$, with $J^{(3)}$ the $(+3)$-framed parallel of $J$, is
isotopic to $W'$ in $S^3_{+3}(J)$ and is disjoint from $D$, because the band sum
removes both intersection points with $D$. Now $D$ meets only $J$, once, so
$\ell_A$ is a meridian of $J$ in the complement of $W''$. The slam dunk
\cite[Sections~5.1 and~5.3]{GS} replaces the pair $(J,3)\cup(\ell_A,s)$ by
$(J,3-1/s)$. For $s=0$ the new coefficient is $\infty$: both curves disappear,
$|Q_0|=S^3$, and the singular circle is $W''$. The slam dunk is supported in
a thin regular neighborhood of $\nu J\cup D$, chosen to miss $W''$; afterwards
$J$ no longer carries surgery, and $J^{(3)}$ may be isotoped back onto $J$.
This turns $W''$ into the flat band sum $J\natural_bW'$. In the diagram, the flat band between the two crossings of the bigon turns
both crossings into nugatory kinks. Explicitly, in the notation of
Figure~\ref{std:fig:belt}, the knot $J\natural_bW'$ runs along one edge of
the band from $J$ to $W'$, then once around $W'$, meeting in order
$c_0$ (over), $c$ (under), $c_3$ (over), $c_2$ (under), $c$ (over) and
$c_1$ (under), then along the other edge of the band back to $J$, and once
around $J$ in the opposite sense, meeting $c_1$ (over), $c_3$ (under),
$c_2$ (over) and $c_0$ (under). The arc of $J$ and the arc of $W'$ that bound
the bigon between $c_0$ and the band form, together with that edge of the
band, a loop through $c_0$ containing no other crossing; likewise for $c_1$.
Removing these two kinks leaves a three-crossing diagram whose crossings are
met in the cyclic order
\begin{equation}\label{std:eq:gauss}
   c^{-},\;\; c_3^{+},\;\; c_2^{-},\;\; c^{+},\;\; c_3^{-},\;\; c_2^{+},
\end{equation}
where the exponent records an under-passage ($-$) or an over-passage ($+$).
Thus, each crossing is met once over and once under, over- and
under-passages alternate, and the Gauss code is $1\,2\,3\,1\,2\,3$; no
crossing is nugatory, because between the two passages through any crossing
each of the other two crossings occurs exactly once. This is the standard
diagram of a trefoil. Its Wirtinger presentation, with one generator for
each of the three arcs, is
\begin{align*}
   \pi_1\bigl(S^3\setminus(J\natural_bW')\bigr)
   &\cong\bigl\langle x_1,x_2,x_3\bigm|
        x_2=x_3^{\,\eta}x_1x_3^{-\eta},\;
        x_3=x_1^{\,\eta}x_2x_1^{-\eta},\;
        x_1=x_2^{\,\eta}x_3x_2^{-\eta}\bigr\rangle\\
   &\cong\langle x,y\mid xyx=yxy\rangle
   \cong\langle a,b\mid a^2=b^3\rangle,
\end{align*}
where $\eta=\pm1$ is the common sign of the three crossings, the second
isomorphism eliminates $x_3$ by means of the second relation (the third
relation then becomes redundant), and the third isomorphism is $a=xyx$,
$b=xy$.
This group is nonabelian: after imposing $a^2=b^3=1$ it surjects onto
$C_2*C_3$ (and, in particular, onto $S_3$). Hence, the knot is not the
unknot. Since it admits the displayed nonnugatory three-crossing diagram, it is a
trefoil (up to mirror image); compare the standard three-crossing diagram in
\cite{Rolfsen1976}. To compute the orbifold group directly,
use the Wirtinger meridians $x,y$ in the middle presentation above. Imposing
the order-two meridian relation gives $x^2=y^2=1$, and with these relations
the braid relation $xyx=yxy$ is equivalent to $(xy)^3=1$. Therefore
\[
 \stdpi(Q_0)
  \cong\langle x,y\mid x^2=y^2=(xy)^3=1\rangle
  \cong S_3,
\]
the dihedral group of order six.

(iii) For $s=\pm1$, perform the Rolfsen twist along $D$ with $t=-s$ full twists
\cite[Chapter~9]{Rolfsen1976}, \cite[Section~5.3]{GS}. It changes the
coefficient of $\ell_A$ to $1/(1/s+t)=\infty$, so $\ell_A$ disappears. It
changes the coefficient of $J$ to $3+t\,\lk(J,\ell_A)^2=3-s$, and it inserts
$t$ full twists into the two strands through $D$. These two strands already
form one full twist, namely the two crossings of the bigon. For exactly one
sign $s=s_*$ the inserted twist cancels it, and the two crossings disappear by
Reidemeister moves. In the resulting diagram of $J\cup W'_*$ the component $J$ has no
self-crossings and meets $W'_*$ only at the two crossings $c_2$ and $c_3$ of
the other bigon, where $W'_*$ passes under $J$ at $c_2$ and over $J$ at
$c_3$. Let $D_J$ be the spanning disc of $J$ obtained from the flat disc in
the diagram plane by pushing its interior below every arc of $W'_*$, keeping
only a thin collar of $J$ near the plane. The knot $W'_*$ enters this collar
only at $c_2$ and $c_3$, and at $c_3$ it stays above $D_J$. Hence, $D_J$ meets
$W'_*$ in exactly one point, at $c_2$, so that $\lk(J,W'_*)=\pm1$ and $J$ is
a meridian of $W'_*$. Twisting
strands of different components changes the knot type of neither component,
so $W'_*$ is unknotted. Therefore, $J\cup W'_*$ is a Hopf link, and $W'_*$ is the
core of the solid torus $S^3\setminus\Int\nu J$.

After $r$-surgery on $J$, the complement of $W'_*$ is the surgery solid torus
$V_J$ together with a collar. Its core $a$ generates $\pi_1$. The meridian of
$J$ meets the meridian $r\mu_J+\lambda_J$ of $V_J$ once, so $\mu_J=a$ and
$\lambda_J=a^{-r}$. In the Hopf link, the meridian of $W'_*$ is isotopic to
$\lambda_J$. Hence
\[
 \stdpi(Q_{s_*})=\langle a\mid (a^{-r})^2\rangle\cong\ZZ/2r,
\]
and the meridian of the singular circle is $a^{-r}$, of order two.
\end{proof}

\subsection{Proof of Theorem~\ref{std:thm:main}}\label{std:sec:proof}

By Corollary~\ref{wh:cor:input} and Lemma~\ref{wh:lem:model}, there is a
diffeomorphism, possibly reversing orientation, from $\M$ to $M(\OW,\chi)$
that carries the canonical action to the rotation of the circle factor. Apply
Section~\ref{std:sec:torus} with
\[
 Y=M(\OW,\chi),\;\;\; Q=\OW,\;\;\; \ell=\ell_A,\;\;\;
 \gamma=\text{the unit section over }\ell_A.
\]
By Theorem~\ref{std:thm:belt}(i) and Theorem~\ref{wh:thm:section}, $\gamma$
is a weight circle, and its two framed surgeries are $\Sigma_+(7)$ and
$\Sigma_-(7)$, up to diffeomorphism. By Lemma~\ref{std:lem:frame}, applied to
the consecutive framings $\lambda_A$ and $\lambda_A+s_*\mu_A$ of
Theorem~\ref{std:thm:belt}, these two surgeries are
$\Sigma(\lambda(\lambda_A))$ and $\Sigma(\lambda(\lambda_A+s_*\mu_A))$.

Let $s\in\{0,s_*\}$. By Proposition~\ref{std:prop:torus}, the surgery with
framing $\lambda(\lambda_A+s\mu_A)$ is a framed circle surgery on the
four-manifold $Y'(\lambda_A+s\mu_A)$, whose orbit space is $Q_s$ and whose only
nonfree orbits form the exceptional torus over $W'$, with isotropy of order
two. This surgery is one of $\Sigma_\pm(7)$, hence a homotopy four-sphere by
Proposition~\ref{prop:exists}. For $s=0$, $\stdpi(Q_0)$ is dihedral of
order six by Theorem~\ref{std:thm:belt}(ii), and
Proposition~\ref{std:prop:dihedral} with $p=3$ gives $S^4$. For $s=s_*$,
$\stdpi(Q_{s_*})$ is cyclic by Theorem~\ref{std:thm:belt}(iii), and
Proposition~\ref{std:prop:cyclic} gives $S^4$. Both framed surgeries on
$\gamma$ are therefore diffeomorphic to $S^4$. The possible orientation
reversal does not matter, because $S^4$ admits orientation-reversing
diffeomorphisms. \qed

\section{The two knots are inequivalent}
\label{quat:sec:quaternionic}

The two framing outputs have the same exterior $\E$. Following Plotnick
\cite{Plotnick1986}, we separate the two pairs by an invariant of a cyclic
branched cover. Put
\[
        X^{(3)}_\epsilon:=X_{\epsilon,3},\;\;\; \epsilon\in\{+,-\},
\]
the third cyclic branched cover of $(\Sigma_\epsilon(7),K_\epsilon)$ in the
notation \eqref{nf:eq:cover-notation}. By Theorem~\ref{per:thm:tower},
$\pi_1(X^{(3)}_\epsilon)\cong Q_8$; the deck group $C_3$ acts on it by the
automorphism induced by $\alpha$.

\paragraph{Scope relative to the monograph.}
The cyclic-cover group and rational-homology input used here is already part of
the prism-family calculation in \cite[Remark~20.3, pp.~376--377]{QextC}.
Theorem~21.3 of \cite{QextC} treats the double cover and gives explicit
smooth models for the regular and irregular dihedral and the sixfold cyclic
covers; it does not identify the smooth diffeomorphism type of the double
cover, and the monograph explicitly leaves the cyclic threefold cover
unidentified as well; see \cite[p.~377 and Section~21.3]{QextC}. Accordingly,
we quote the previously established group/homology data but do not repeat the
covering-space analysis from the monograph. The spin-bordism computation below,
which distinguishes the two threefold cyclic covers, begins precisely at this
remaining case.

\subsection{The third cyclic cover is spin}

\begin{proposition}[Spin of the third cyclic cover]
\label{quat:prop:X3-spin}
For both specified surgery framings, $X^{(3)}_\epsilon$ is spin. More
precisely, on the degree-three covering mapping torus $M_3$ the spin
structure
\begin{equation}\label{quat:eq:extending-spin}
   \widehat s_{3,\epsilon}:=s_3+(1-\eta_\epsilon)a_3,
   \;\;\; \eta_+=0,\;\; \eta_-=1,
\end{equation}
extends across the circle surgery that produces
$X^{(3)}_\epsilon$.

Furthermore,
\begin{equation}\label{quat:eq:X3-homology}
 \pi_1(X^{(3)}_\epsilon)=Q_8,
 \;\;\;
 H_1(X^{(3)}_\epsilon;\ZZ)
 \cong H_2(X^{(3)}_\epsilon;\ZZ)
 \cong (\ZZ/2)^2,
 \;\;\; \operatorname{sign}(X^{(3)}_\epsilon)=0.
\end{equation}
The residual $C_3$-action on $X^{(3)}_\epsilon$ fixes exactly one
spin structure.
\end{proposition}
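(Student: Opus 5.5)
The plan is to obtain the spin assertion from Lemma~\ref{lem:circle-spin}. That lemma reduces extension of a spin structure across the surgery to a single closing sign. Set $\widehat s=\widehat s_{3,\epsilon}$. The spin structure $\widehat s$ extends across the circle surgery on $(\gamma_3,\lambda_{\epsilon,3})$ precisely when $\spn_{\widehat s}(\gamma_3,\lambda_{\epsilon,3})=-1$, and by \eqref{nf:eq:lifted-surgery} that surgery produces $X^{(3)}_\epsilon$. I would compute this sign with the first transformation rule in \eqref{nf:eq:spin-rules} and the calibration of Lemma~\ref{nf:lem:calibration}:
\[
 \spn_{\widehat s}(\gamma_3,\lambda_{\epsilon,3})
 =(-1)^{(1-\eta_\epsilon)\langle a_3,[\gamma_3]\rangle}\,
  \spn_{s_3}(\gamma_3,\lambda_{\epsilon,3})
 =(-1)^{(1-\eta_\epsilon)\langle a_3,[\gamma_3]\rangle}\,(-1)^{3\eta_\epsilon}.
\]
The class $a_3$ is the mod-two reduction of the positive-base degree in the rescaled cover $M_3$. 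Since $\gamma_3$ is a single lifted circle of base degree one in $M_3$, we have $\langle a_3,[\gamma_3]\rangle=1$. The exponent is then $1-\eta_\epsilon+3\eta_\epsilon=1+2\eta_\epsilon$, which is odd for both values of $\eta_\epsilon$. Hence the sign is $-1$ in both cases, and $\widehat s$ extends. The resulting spin structure on $X^{(3)}_\epsilon$ shows that this cover is spin.

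The homology assertions in \eqref{quat:eq:X3-homology} are immediate from Theorem~\ref{per:thm:tower} with $m=3\equiv3\pmod 6$. That theorem gives $\pi_1\cong Q_8$, hence $H_1\cong Q_8^{\mathrm{ab}}\cong(\ZZ/2)^2$, and the abstract isomorphism $H_2\cong H_1$ proved there. It also gives $b_2=0$, so the intersection form vanishes rationally and the signature is $0$.

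For the last claim, I would use that the spin structures on the spin manifold $X^{(3)}_\epsilon$ form a torsor over
\[
 H^1(X^{(3)}_\epsilon;\ZZ/2)=\Hom(Q_8,\ZZ/2)\cong(\ZZ/2)^2,
\]
so there are exactly four of them. The residual deck group $C_3$ acts by orientation-preserving diffeomorphisms, since they come from the orientation-preserving cyclic branched covering. By the second rule in \eqref{nf:eq:spin-rules}, it acts on this set of spin structures by affine maps. The linear part of each such map is the action on $\Hom(Q_8,\ZZ/2)$ induced by $\alpha$. This action permutes the three nonzero characters cyclically, because $\alpha$ cycles the images of $i,j,k$ in $Q_8^{\mathrm{ab}}$, so it has no nonzero fixed vector.

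An action of $C_3$ on a set of four elements has orbits of size $1$ or $3$, so the number of fixed spin structures is congruent to $1$ modulo $3$, and in particular at least one spin structure is fixed. Suppose $s$ and $s+a$ were both fixed. Then $a$ would be fixed by the linear part, so $a=0$. Hence exactly one spin structure is fixed.

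The main point requiring care is the sign bookkeeping in the first step, namely the pairing $\langle a_3,[\gamma_3]\rangle=1$ and its consistency with the closing-sign convention of Lemma~\ref{lem:circle-spin}. I would check it against the explicit lifts $(\widetilde c\,u_\epsilon,u_\epsilon)$ and $d_3$ used in the proof of Lemma~\ref{nf:lem:calibration}.
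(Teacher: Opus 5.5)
Your proposal is correct and follows the paper's proof essentially step for step. The spin extension comes from the closing sign $(-1)^{1-\eta_\epsilon}(-1)^{3\eta_\epsilon}=-1$, obtained from Lemma~\ref{nf:lem:calibration}, the transformation rule and Lemma~\ref{lem:circle-spin}. The homology is read off from Theorem~\ref{per:thm:tower} with $m=3$. Uniqueness of the fixed spin structure follows from the $C_3$-orbit count on the four-element torsor together with the absence of nonzero $\alpha$-invariant classes in $H^1(X^{(3)}_\epsilon;\ZZ/2)$. Your additional remarks, that the deck action is orientation preserving and acts affinely with linear part induced by $\alpha$, only make explicit what the paper leaves implicit.
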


\begin{proof}
Lemma~\ref{nf:lem:calibration} gives
\[
 \spn_{s_3}(\gamma_3,\lambda_{\epsilon,3})
   =(-1)^{3\eta_\epsilon}=(-1)^{\eta_\epsilon}.
\]
The class $a_3$ evaluates to one on the positive-base circle
$\gamma_3$. Hence, \eqref{nf:eq:spin-rules} gives
\[
 \spn_{\widehat s_{3,\epsilon}}
       (\gamma_3,\lambda_{\epsilon,3})
 =(-1)^{1-\eta_\epsilon}(-1)^{\eta_\epsilon}=-1.
\]
By Lemma~\ref{lem:circle-spin}, this is precisely the sign for which
the spin structure extends across the circle-surgery cap. Thus, the
filled manifold is spin for either $\epsilon$.

The fundamental group and integral homology are the case
$m\equiv3\pmod6$ of Theorem~\ref{per:thm:tower}; since
$Q_8^{\rm ab}\cong(\ZZ/2)^2$, this gives
\eqref{quat:eq:X3-homology}. In particular the rational second homology
vanishes, so the signature is zero.

Spin structures form a torsor over
$H^1(X^{(3)}_\epsilon;\FF_2)\cong\FF_2^2$. The residual order-three
deck transformation acts on $Q_8^{\rm ab}$ by the automorphism induced
by $\alpha$; it cyclically permutes its three nonzero elements. Thus
there is no nonzero invariant element of $H^1$. An order-three action
on the four-element spin-structure torsor must have an orbit of size
one, and two fixed spin structures would have a nonzero invariant
difference. Hence, the fixed spin structure is unique.
\end{proof}

\subsection{Teichner's secondary invariant}

For a closed spin four-manifold $X$ with finite fundamental group, Teichner defines the $\ZZ/2$-valued
invariant $\mathrm{sec}(X)$ by asking whether the $\pi_1X$-equivariant
integral intersection form on $\pi_2X$ is an equivariant symmetrization
\cite[Introduction, p.~5]{Teichner1992}:
\begin{equation}\label{quat:eq:teichner-sec}
 \mathrm{sec}(X)=0
 \;\;\Longleftrightarrow\;\;
 S_X=q+q^*
 \;\;\text{for some }q\in
 \Hom_{\ZZ\pi_1X}(\pi_2X\otimes\pi_2X,\ZZ).
\end{equation}
Teichner proves that this secondary invariant is a homotopy invariant;
see \cite[Introduction, pp.~5--6]{Teichner1992}. Its independence of the
orientation and of the choice of spin structure is also immediate from
\eqref{quat:eq:teichner-sec}: once $X$ is spin, the definition uses only the
equivariant intersection form. Changing the spin structure does not alter that
form, while reversing orientation multiplies it by $-1$, which does not change
whether it is an equivariant symmetrization.

For finite fundamental groups whose $2$-Sylow subgroup is quaternion, Teichner
identifies the spin-bordism torsion coordinate with the secondary edge
homomorphism of the James (equivalently here, Atiyah--Hirzebruch) spectral
sequence. In the quaternion case this is the
$E^\infty_{3,1}$-coordinate, and Theorem~4.2.2 identifies it with the invariant
$\mathrm{sec}$; see \cite[Theorem~4.2.2 and the discussion immediately following
it]{Teichner1992}. The fact that, for quaternion $2$-Sylow subgroups,
$\mathrm{sec}=0$ is equivalent to evenness of the equivariant intersection form
is \cite[Theorem~6.4.1]{Teichner1992}. The James spectral sequence and its first
differentials are also treated in \cite{Teichner1993}, and the modern stable
intersection-form formulation is developed in \cite{KPT2020}. For $Q_8$ these
are exactly the identifications used below. Put
\begin{equation}\label{quat:eq:delta-def}
             \delta_\epsilon:=\mathrm{sec}(X^{(3)}_\epsilon)\in\ZZ/2,
             \;\;\;\epsilon\in\{+,-\}.
\end{equation}

\begin{corollary}[An unmarked obstruction on the third cover]
\label{quat:cor:unmarked-delta}
If $\delta_+\ne\delta_-$, then $X^{(3)}_+$ and $X^{(3)}_-$ are not
homotopy equivalent and in particular are not diffeomorphic. Moreover,
$X^{(3)}_+$ and $X^{(3)}_-$ are stably homeomorphic if and only if
$\delta_+=\delta_-$.
\end{corollary}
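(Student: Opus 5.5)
The first assertion reduces directly to the homotopy invariance of Teichner's invariant. Both $X^{(3)}_\pm$ are closed spin four-manifolds with finite fundamental group $Q_8$ (Proposition~\ref{quat:prop:X3-spin}), so $\delta_\pm=\mathrm{sec}(X^{(3)}_\pm)$ is defined. As recalled after \eqref{quat:eq:teichner-sec}, $\mathrm{sec}$ depends only on the equivariant intersection form on $\pi_2$ and is independent of orientation and spin structure. A homotopy equivalence $h\colon X^{(3)}_+\to X^{(3)}_-$ induces an isomorphism $\pi_1\cong\pi_1$ and a $\pi_1$-semilinear isometry of $\pi_2$ with the equivariant intersection forms, up to the sign coming from degree $\pm1$. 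It therefore carries an equivariant symmetrization $q+q^*$ to one on the other side. Hence $\delta_+=\delta_-$, and the contrapositive is the first claim. Diffeomorphism implies homotopy equivalence, which gives the ``in particular''.

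For the stable-homeomorphism statement, I would invoke Kreck's modified surgery in the form used by Teichner and Kasprowski--Powell--Teichner \cite{KPT2020}. For spin manifolds with fundamental group $\pi$ (normal $1$-type $B\pi\times B\Spin$), stable homeomorphism classes, up to automorphisms of $\pi$ and the choice of spin structure, correspond to $\Omega_4^{\mathrm{TopSpin}}(B\pi)$ modulo the action of $\operatorname{Out}(\pi)$ and sign. First I would compute this group via the Atiyah--Hirzebruch/James spectral sequence for $\pi=Q_8$. Since $H_4(Q_8;\ZZ)=0$ and $H_2(Q_8;\ZZ/2)$ vanishes for the relevant term, the only surviving coordinates are the signature (divided by $8$ or $16$, together with the Kirby--Siebenmann invariant in the topological setting) and the $E^\infty_{3,1}$ term, which is a quotient of $H_3(Q_8;\ZZ/2)=\ZZ/2$. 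By \cite[Theorem~4.2.2]{Teichner1992}, this last coordinate is exactly $\mathrm{sec}$.

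Both manifolds are smooth, so $\mathrm{ks}=0$, and both have signature $0$ by \eqref{quat:eq:X3-homology}. Their bordism classes therefore differ at most in the $E^\infty_{3,1}$ coordinate, i.e.\ in $\delta_\pm$. The $\operatorname{Out}(Q_8)$ action and a change of spin structure fix the top coordinate, because $\mathrm{sec}$ does not depend on these choices. Consequently the classes agree modulo automorphisms exactly when $\delta_+=\delta_-$, and Kreck's theorem turns equality of classes into stable homeomorphism. The converse direction also follows from the bordism invariance of the coordinate, since stable homeomorphism preserves the normal $1$-type bordism class.

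The main obstacle is making this last step precise. One has to check that $E^\infty_{3,1}\cong\ZZ/2$ survives, so that no differential kills it and $\mathrm{sec}$ is a genuine bordism coordinate. One also has to check that the $1$-type identification uses $w_2(\widetilde X)=0$ (universal covers spin) rather than a twisted type. I would take both facts from Teichner's quaternion-case analysis \cite[Theorems~4.2.2 and~6.4.1]{Teichner1992} instead of recomputing them, and verify only that both covers are totally spin, which holds since they are spin.
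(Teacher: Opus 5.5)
Your route is the paper's. The first assertion uses homotopy invariance of $\mathrm{sec}$, and your intersection-form argument for it is fine. The second uses Teichner's stable classification, which you unpack into Kreck's theorem for the normal $1$-type $BQ_8\times B\Spin$ plus the computation of $\Omega_4^{\Spin}(BQ_8)$, whereas the paper simply quotes the packaged theorem. There is, however, one false step.

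The false step is the claim that $H_2(Q_8;\ZZ/2)$ vanishes; in fact $H_2(Q_8;\ZZ/2)\cong(\ZZ/2)^2$. In the Atiyah--Hirzebruch spectral sequence $E^2_{2,2}\cong(\ZZ/2)^2$, and both $d_2$'s touching it are zero: the incoming one is dual to $\mathrm{Sq}^2\colon H^2(Q_8;\FF_2)\to H^4(Q_8;\FF_2)$, which vanishes, and the outgoing one lands in $E^2_{0,3}=0$. Hence $E^3_{2,2}\cong(\ZZ/2)^2$. It dies only because $d_3\colon E^3_{5,0}=H_5(Q_8;\ZZ)\to E^3_{2,2}$ is an isomorphism, which is Teichner's Proposition~4.2.1 (see Remark~\ref{eval:rem:e22}).

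This is exactly what your \emph{if} direction needs. A surviving $E^\infty_{2,2}$ would be a further bordism coordinate that could separate $X^{(3)}_\pm$ even when $\delta_+=\delta_-$. Equality of $\mathrm{sec}$ would then not give equality of bordism classes, and Kreck's theorem could not be applied. The repair is to cite the full isomorphism \eqref{quat:eq:spin-bordism-Q8}, $(\mathrm{sign},\mathrm{sec})\colon\Omega_4^{\Spin}(BQ_8)\cong16\ZZ\oplus\ZZ/2$, not only its identification of the $E_{3,1}$ coordinate with $\mathrm{sec}$. Alternatively, quote the stable classification theorem directly, as the paper does.

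Relatedly, the obstacle you single out is the easy part. Survival of $E^2_{3,1}=H_3(Q_8;\ZZ/2)$ is a direct check: the outgoing $d_2$ is dual to $\mathrm{Sq}^2$ on $H^1$, the incoming one is reduction mod two followed by the dual of $\mathrm{Sq}^2\colon H^3\to H^5$, both are zero, and $E_{0,3}=0$. The genuinely nontrivial inputs are the $d_3$ above and the identification of the $E_{3,1}$ coordinate with the intersection-form definition of $\mathrm{sec}$.
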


\begin{proof}
The first assertion holds because $\mathrm{sec}$ is a homotopy invariant.
For the stable statement, Teichner's spin stable-classification theorem says
that, for finite fundamental groups whose $2$-Sylow subgroups have periodic
cohomology, two closed spin four-manifolds with the same fundamental group are
stably homeomorphic if and only if they have the same signature,
$\pi_1$-fundamental class and $\mathrm{sec}$-invariant
\cite[Introduction, p.~5, Theorem~(4.4.10); cf.~Corollary~4.4.10 and
Section~8]{Teichner1992}. The stable-homeomorphism formulation is the theorem
stated in Teichner's Introduction, whereas Corollary~4.4.10 in the body records
the corresponding stable-diffeomorphism classification. In the present case
both signatures vanish by Proposition~\ref{quat:prop:X3-spin} and
$H_4(Q_8;\ZZ)=0$, so the fundamental-class coordinate vanishes for both
manifolds. The only remaining coordinate is
$\mathrm{sec}=\delta_\epsilon$.
\end{proof}

\subsection{Spin-bordism reformulation}

For a quaternion group $Q$, Teichner computes
\cite[Theorem~4.2.2, p.~52]{Teichner1992}
\begin{equation}\label{quat:eq:spin-bordism-Q8}
 (\operatorname{sign},\mathrm{sec}):
 \Omega^{\Spin}_4(BQ)\xrightarrow{\ \cong\ }
        16\ZZ\oplus\ZZ/2.
\end{equation}
For $Q=Q_8$ and signature zero, the second summand is therefore the
only possible nonzero bordism coordinate.

\begin{corollary}[The bit as a single spin-bordism coordinate]
\label{quat:cor:bordism-bit}
Let $u_\epsilon:X^{(3)}_\epsilon\to BQ_8$ be a classifying map and use
any spin structure on $X^{(3)}_\epsilon$, for example the unique one
fixed by the residual $C_3$-action. Then
\begin{equation}\label{quat:eq:bordism-bit}
 \delta_\epsilon=0
 \;\;\Longleftrightarrow\;\;
 [X^{(3)}_\epsilon,u_\epsilon]=0
 \;\;\text{in}\;\;
 \widetilde\Omega^{\Spin}_4(BQ_8)\cong\ZZ/2.
\end{equation}
Thus, computing $\delta_\epsilon$ is a single reduced spin-bordism
computation over $BQ_8$.
\end{corollary}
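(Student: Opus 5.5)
The plan is to split the spin-bordism group into the reduced part and the point part, and then read off the bordism class of $X^{(3)}_\epsilon$ coordinate by coordinate through Teichner's isomorphism \eqref{quat:eq:spin-bordism-Q8}.

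First, the splitting. The basepoint inclusion and the collapse map give
\[
\Omega^{\Spin}_4(BQ_8)\cong\Omega^{\Spin}_4(\mathrm{pt})\oplus\widetilde\Omega^{\Spin}_4(BQ_8).
\]
Here $\Omega^{\Spin}_4(\mathrm{pt})\cong\ZZ$, detected by the signature and with image $16\ZZ$ by Rokhlin. Comparing with \eqref{quat:eq:spin-bordism-Q8}, the signature coordinate is exactly the point summand. The reduced summand is therefore $\ZZ/2$, and Teichner's identification of the $E^\infty_{3,1}$-coordinate with $\mathrm{sec}$ says this summand is detected by $\mathrm{sec}$.

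Second, compute the class of $(X^{(3)}_\epsilon,u_\epsilon)$ with any spin structure. By Proposition~\ref{quat:prop:X3-spin} the signature vanishes, so the point-summand component is zero. Its reduced component is then its whole class, and this equals $\mathrm{sec}(X^{(3)}_\epsilon)=\delta_\epsilon$ under the isomorphism. This gives \eqref{quat:eq:bordism-bit}.

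Third, check independence of choices. The classifying map $u_\epsilon$ is determined up to homotopy and up to automorphisms of $Q_8$. Any automorphism induces a self-map of $\widetilde\Omega^{\Spin}_4(BQ_8)\cong\ZZ/2$, and that map must be the identity, so the vanishing condition does not depend on the choice. For the spin structure, the right-hand side of \eqref{quat:eq:teichner-sec} depends only on the equivariant intersection form, as noted after its statement. Since the bordism class equals $\mathrm{sec}$ for every spin structure, the bordism class is also independent of that choice. In particular the $C_3$-fixed structure from Proposition~\ref{quat:prop:X3-spin} may be used.

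The main obstacle is the claim that Teichner's isomorphism really is compatible with the splitting. Concretely, one must confirm that the $\mathrm{sec}$ coordinate vanishes on the image of the point, and that the reduced $\ZZ/2$ maps isomorphically onto $\ZZ/2$ rather than being mixed with a signature component. I would settle this from the Atiyah--Hirzebruch spectral sequence. The point contributes only on the $E_{0,4}$ line, whereas the $\mathrm{sec}$ coordinate lives in $E^\infty_{3,1}$, which lies entirely in the reduced part. Also $\widetilde H_4(BQ_8)=0$, and the $E_{2,2}$ term $H_2(Q_8;\ZZ/2)$ is killed by differentials, following Teichner.
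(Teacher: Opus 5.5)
Your proposal is correct and follows the paper's proof: vanishing signature places the class in the reduced group, Teichner's isomorphism $(\operatorname{sign},\mathrm{sec})$ shows that group is detected by $\mathrm{sec}$, and independence of the spin structure holds because $\mathrm{sec}$ depends only on the equivariant intersection form. The paper additionally checks that independence explicitly, using Teichner's Proposition~3.2.4 and $H_4(Q_8;\ZZ)=0$. Your anticipated ``main obstacle'' needs no spectral-sequence argument: the reduced group is the kernel of the forgetful map to $\Omega^{\Spin}_4\cong 16\ZZ$, which is the kernel of the signature, so $\mathrm{sec}$ restricts to an isomorphism on it.
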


\begin{proof}
By Proposition~\ref{quat:prop:X3-spin} the manifold is spin and has
signature zero, so its class lies in the reduced group, which by
\eqref{quat:eq:spin-bordism-Q8} is detected by $\mathrm{sec}$. The class
does not depend on the chosen spin structure: $\mathrm{sec}$ is defined
through the equivariant intersection form alone, and explicitly, changing
the spin structure by $\beta\in H^1(X^{(3)}_\epsilon;\ZZ/2)$ changes the
bordism class by an element whose $\mathrm{sec}$-value is
$u_{\epsilon*}(\beta\cap[X^{(3)}_\epsilon])\in H_3(Q_8;\ZZ/2)$
\cite[Proposition~3.2.4]{Teichner1992}; since $\pi_1(X^{(3)}_\epsilon)\cong Q_8$,
every such $\beta$ is $u_\epsilon^*b$ with $b\in H^1(Q_8;\ZZ/2)$, and
$u_{\epsilon*}(u_\epsilon^*b\cap[X^{(3)}_\epsilon])=b\cap u_{\epsilon*}[X^{(3)}_\epsilon]=0$
because $H_4(Q_8;\ZZ)=0$.
\end{proof}

\subsection{The surgery trace moves the bordism computation back to \texorpdfstring{$M_3$}{M3}}

The degree-three mapping torus has
\[
 \pi_1(M_3)=\Gamma\rtimes_{\theta^3}\langle t_3\rangle,
 \;\;\; \theta^3|_{Q_8}=\id,
 \;\;\; \theta^3(c)=c^{-1}.
\]
Hence, there is a natural epimorphism
\begin{equation}\label{quat:eq:M3-to-Q8}
 \chi:\pi_1(M_3)\to Q_8,
 \;\;\;
 \chi(q)=q,\;\; \chi(c)=1,\;\; \chi(t_3)=1.
\end{equation}
Let $\bar u:M_3\to BQ_8$ realize $\chi$.

\begin{proposition}[Mapping-torus bordism certificate]
\label{quat:prop:M3-bordism}
For each $\epsilon$, the five-dimensional trace of the circle surgery
$(M_3,\gamma_3,\lambda_{\epsilon,3})$ is a spin bordism over $BQ_8$
from
\begin{equation}\label{quat:eq:M3-source}
 (M_3,\widehat s_{3,\epsilon},\bar u)
\end{equation}
to $(X^{(3)}_\epsilon,s^{\rm out}_\epsilon,u_\epsilon)$, where
$\widehat s_{3,\epsilon}$ is \eqref{quat:eq:extending-spin}. Consequently
\begin{equation}\label{quat:eq:M3-bordism-equality}
 [X^{(3)}_\epsilon,u_\epsilon]
   =[M_3,\widehat s_{3,\epsilon},\bar u]
 \;\;\text{in }\Omega^{\Spin}_4(BQ_8),
\end{equation}
and the desired bit is the $\ZZ/2$ coordinate of the right-hand side.
In particular,
\begin{equation}\label{quat:eq:two-spin-sources}
 \begin{aligned}
   \delta_+&=\mathrm{sec}_{BQ_8}
      [M_3,s_3+a_3,\bar u],\\
   \delta_-&=\mathrm{sec}_{BQ_8}
      [M_3,s_3,\bar u].
 \end{aligned}
\end{equation}
Here $\mathrm{sec}_{BQ_8}$ denotes the secondary $\ZZ/2$-coordinate
of \eqref{quat:eq:spin-bordism-Q8}; it is not a claim that
$\pi_1(M_3)=Q_8$.
\end{proposition}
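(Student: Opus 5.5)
The trace of the surgery will serve as the bordism. I would set $W_\epsilon=(M_3\times[0,1])\cup_{\nu\gamma_3\times\{1\}}h$, where $h=D^2\times D^3$ is a five-dimensional $2$-handle attached along $\gamma_3\times\{1\}$ with the framing $\lambda_{\epsilon,3}$. By \eqref{nf:eq:lifted-surgery}, the upper boundary of $W_\epsilon$ is the surgery pair, namely $X^{(3)}_\epsilon$, and $\partial W_\epsilon=-M_3\sqcup X^{(3)}_\epsilon$. The proof then needs three things: a spin structure on $W_\epsilon$ extending $\widehat s_{3,\epsilon}$, a map $W_\epsilon\to BQ_8$ extending $\bar u$, and a check that the restriction of this map to $X^{(3)}_\epsilon$ is a classifying map $u_\epsilon$.

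For the spin structure, I would put the product structure $\widehat s_{3,\epsilon}\times[0,1]$ on the collar. It extends over the handle exactly when its restriction to the framed attaching circle is the bounding structure. The proof of Proposition~\ref{quat:prop:X3-spin} already shows that $\spn_{\widehat s_{3,\epsilon}}(\gamma_3,\lambda_{\epsilon,3})=-1$, so Lemma~\ref{lem:circle-spin} (the framed $2$-handle obstruction of \cite[Section~5.7]{GS}, which is a statement about the $5$-dimensional handle) gives the extension. Call the induced structure on the upper boundary $s^{\rm out}_\epsilon$.

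For the reference map, note that $\chi(t_3)=1$, and $\gamma_3$ represents $t_3$ by Section~\ref{nf:sec:spin}. So $\bar u|_{\gamma_3}$ is null-homotopic, and $\bar u$ extends over the handle core, which gives $U:W_\epsilon\to BQ_8$. Because $W_\epsilon$ deformation retracts onto $M_3\cup_{\gamma_3}D^2$, we get $\pi_1(W_\epsilon)=\pi_1(M_3)/\langle\!\langle t_3\rangle\!\rangle$. The other boundary inclusion $X^{(3)}_\epsilon\hookrightarrow W_\epsilon$ is the dual handle decomposition with a $3$-handle, so it induces an isomorphism on $\pi_1$.

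The main step is to identify $\pi_1(M_3)/\langle\!\langle t_3\rangle\!\rangle$ with $Q_8$ via $\chi$. Killing $t_3$ imposes $\theta^3(g)=g$ for $g\in\Gamma$. On $C_7$ this gives $c^{-1}=c$, hence $c^2=1$, hence $c=1$. On $Q_8$ it imposes nothing. The quotient is therefore $Q_8$, and $\chi$ induces this isomorphism. This agrees with Theorem~\ref{per:thm:tower} for $m=3$. Consequently $U|_{X^{(3)}_\epsilon}$ induces an isomorphism onto $Q_8$ on $\pi_1$ and is a classifying map $u_\epsilon$. Any choice of isomorphism is harmless, since $\mathrm{sec}$ is insensitive to automorphisms of $Q_8$, being defined from the intersection form.

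The spin bordism $(W_\epsilon,U)$ therefore gives \eqref{quat:eq:M3-bordism-equality}. Corollary~\ref{quat:cor:bordism-bit} shows that $\delta_\epsilon$ does not depend on the spin structure on $X^{(3)}_\epsilon$, so $\delta_\epsilon$ is the $\mathrm{sec}$-coordinate of $[M_3,\widehat s_{3,\epsilon},\bar u]$. Note that $M_3$ has signature $0$, since it is a mapping torus. Finally, substituting $\eta_+=0$ and $\eta_-=1$ into \eqref{quat:eq:extending-spin} gives $\widehat s_{3,+}=s_3+a_3$ and $\widehat s_{3,-}=s_3$, which is \eqref{quat:eq:two-spin-sources}.
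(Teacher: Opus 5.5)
Your proposal is correct and follows essentially the same route as the paper: the surgery trace is the bordism, the spin structure extends because the closing sign on the framed $\gamma_3$ is $-1$, and $\bar u$ extends because $\chi(t_3)=1$. The outgoing map is then classifying because the surviving fundamental group is $Q_8$. You also make explicit two steps the paper leaves implicit, namely the quotient computation $\pi_1(M_3)/\langle\!\langle t_3\rangle\!\rangle\cong Q_8$ via $\chi$ and the dual $3$-handle argument that the outgoing boundary inclusion is a $\pi_1$-isomorphism.
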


\begin{proof}
The surgery trace is obtained from $M_3\times I$ by attaching a
five-dimensional two-handle along the framed circle. By
Proposition~\ref{quat:prop:X3-spin}, the source spin structure
$\widehat s_{3,\epsilon}$ restricts to the bounding spin structure on that
attaching circle, so it extends across the handle. The homomorphism
\eqref{quat:eq:M3-to-Q8} sends $[\gamma_3]=t_3$ to the identity, and
therefore the map $\bar u$ extends across the same handle. On the
outgoing boundary, the surviving fundamental group is $Q_8$ and the
extended map induces the identity on this group, hence is a classifying
map. This proves \eqref{quat:eq:M3-bordism-equality}; the last formulas
follow from Corollary~\ref{quat:cor:bordism-bit} and
\eqref{quat:eq:extending-spin}.
\end{proof}

\subsection{Evaluation of the secondary invariant}
\label{eval:sec:evaluation}

The bit is evaluated using the spin-bordism description of
Proposition~\ref{quat:prop:M3-bordism}. The result, proved in
Theorem~\ref{eval:thm:bit}, is
\begin{equation}\label{eval:eq:values}
                 \delta_+=1,\;\;\; \delta_-=0.
\end{equation}
Besides the calibrated spin structures of
Section~\ref{nf:sec:spin} and the surgery trace of
Proposition~\ref{quat:prop:M3-bordism}, the proof has three ingredients:
a point-preimage description of the secondary bordism coordinate, an
elementary transfer count for the dihedral group of order fourteen, and
an explicit product structure on two finite covers of $M_3$.

Throughout,
\[
            \mathcal Q:=S^3/Q_8
\]
denotes the quotient by the left action of $Q_8\subset\Gamma$ in
\eqref{eq:action}, and $\iota:\mathcal Q\to BQ_8$ is a classifying map.
We write $S^1_{\mathrm{Lie}}$ for the circle with its non-bounding (Lie)
spin structure; its class generates $\Omega^{\Spin}_1\cong\ZZ/2$. In the
closing-sign convention of Section~\ref{nf:sec:spin}, a framed circle has
sign $+1$ exactly when the spin structure it induces on the circle is the
Lie structure, and sign $-1$ exactly when that structure bounds
(Lemma~\ref{lem:circle-spin}).

\subsubsection*{Point preimages and the secondary coordinate}

\begin{lemma}[The line $p+q=4$ for $Q_8$]\label{eval:lem:e31}
Consider the Atiyah--Hirzebruch spectral sequence
$E^2_{p,q}=H_p(BQ_8;\Omega^{\Spin}_q)\Rightarrow
\Omega^{\Spin}_{p+q}(BQ_8)$.
\begin{enumerate}[label=(\roman*),leftmargin=2.2em]
\item $E^2_{4,0}=H_4(Q_8;\ZZ)=0$, $E^2_{1,3}=0$, and
      $E^2_{3,1}=H_3(Q_8;\ZZ/2)\cong\ZZ/2$.
\item $E^\infty_{3,1}=E^2_{3,1}$ and $E^\infty_{2,2}=0$. Hence, the
      reduced group $\widetilde\Omega^{\Spin}_4(BQ_8)$, which is the
      torsion subgroup of $\Omega^{\Spin}_4(BQ_8)$, maps isomorphically
      onto $E^\infty_{3,1}\cong\ZZ/2$.
\item $\iota_*:H_3(\mathcal Q;\ZZ/2)\to H_3(Q_8;\ZZ/2)$ is an
      isomorphism.
\end{enumerate}
\end{lemma}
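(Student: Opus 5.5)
The plan is to read off part (i) from the standard homology of $Q_8$, to get part (ii) by an order count against Teichner's computation \eqref{quat:eq:spin-bordism-Q8}, and to get part (iii) from a connectivity argument for the classifying map of the spherical space form $\mathcal Q$.

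For (i), I would use the periodic resolution of $Q_8$, which has period four because $Q_8$ acts freely on $S^3$. It gives
\[
H_p(Q_8;\ZZ)=\ZZ,\ (\ZZ/2)^2,\ 0,\ \ZZ/8,\ 0\quad\text{for } p=0,\dots,4,
\]
and then repeats with period four. Hence $E^2_{4,0}=H_4(Q_8;\ZZ)=0$. Since $\Omega^{\Spin}_3=0$, also $E^2_{1,3}=0$. The universal coefficient theorem gives
\[
H_3(Q_8;\ZZ/2)\cong (H_3\otimes\ZZ/2)\oplus\operatorname{Tor}(H_2,\ZZ/2)=\ZZ/2\oplus0 .
\]
Since $\Omega^{\Spin}_1=\ZZ/2$, this is $E^2_{3,1}$.

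For (ii), I would avoid computing the differentials $d_2$ (dual to $\mathrm{Sq}^2$, possibly composed with reduction) directly, because $E^2_{2,2}=H_2(Q_8;\ZZ/2)\cong(\ZZ/2)^2$ is too large for the single incoming group $E^2_{4,1}$ to account for. Instead I would count orders against Teichner's isomorphism \eqref{quat:eq:spin-bordism-Q8}.

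\begin{enumerate}[label=(\alph*),leftmargin=2.2em]
\item The edge $E^\infty_{0,4}$ carries the signature and splits off via the point, so the reduced group is the torsion subgroup and has order two.
\item That reduced group is filtered with successive quotients $E^\infty_{4,0}=0$, $E^\infty_{3,1}$, $E^\infty_{2,2}$ and $E^\infty_{1,3}=0$.
\item Teichner's Theorem~4.2.2 identifies the $\ZZ/2$ coordinate $\mathrm{sec}$ with the projection onto $E^\infty_{3,1}$, and this projection is nonzero. Hence $E^\infty_{3,1}\neq0$, and since it is a subquotient of $E^2_{3,1}=\ZZ/2$, we get $E^\infty_{3,1}=E^2_{3,1}$.
\item The total order being two then forces $E^\infty_{2,2}=0$, and the projection of the reduced group to $E^\infty_{3,1}$ is an isomorphism.
\end{enumerate}

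For (iii), the classifying map $\iota:\mathcal Q\to BQ_8$ is an isomorphism on $\pi_1$. Since $\pi_2(\mathcal Q)=0$, it is $3$-connected: $BQ_8$ is obtained from $\mathcal Q$ by attaching cells of dimension at least four to kill $\pi_3(\mathcal Q)\cong\ZZ$. Therefore $\iota_*$ is surjective on $H_3$ with any coefficients. Both $H_3(\mathcal Q;\ZZ/2)$ ($\mathcal Q$ is a closed oriented $3$-manifold) and $H_3(Q_8;\ZZ/2)$ are $\ZZ/2$, so a surjection between them is an isomorphism. As a cross-check, integrally $\iota_*$ is the familiar surjection $\ZZ\to\ZZ/8$ sending the fundamental class to a generator, and its mod-two reduction is nonzero.

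The main obstacle is (ii). The order count depends on the precise form of Teichner's identification, namely that $\mathrm{sec}$ is the $E^\infty_{3,1}$ coordinate and is nonzero on the torsion. If that citation is judged insufficient, I would compute $d_2:E^2_{4,1}\to E^2_{2,2}$ and $d_2:E^2_{4,0}\to E^2_{2,1}$ directly from $\mathrm{Sq}^2$ on $H^*(Q_8;\ZZ/2)$. I would also need to handle the possible $d_3:E^3_{5,0}\to E^3_{2,2}$, which may be where the remaining part of $E^2_{2,2}$ is killed.
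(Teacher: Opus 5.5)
Your proposal is correct and follows the paper's proof essentially step by step: you use the periodic resolution plus universal coefficients for (i), the order count against Teichner's isomorphism (with $\mathrm{sec}$ as the $E^\infty_{3,1}$-coordinate) for (ii), and the $3$-connectivity of $\iota$ for (iii). Your observation that $E^\infty_{2,2}=0$ cannot come from $d_2$ alone, and must involve $d_3:E^3_{5,0}\to E^3_{2,2}$, matches the paper's remark on $E^\infty_{2,2}$, which attributes that step to Teichner's Proposition~4.2.1.
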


\begin{proof}
The free action of $Q_8$ on $S^3$ gives a periodic resolution, so
$H_p(Q_8;\ZZ)$ is $\ZZ,(\ZZ/2)^2,0,\ZZ/8,0$ for $p=0,\dots,4$; compare
\cite[Lemma~4.1.1(b)]{Teichner1992}. Together
with $\Omega^{\Spin}_3=0$ and the universal coefficient theorem this
gives~(i).

In Teichner's spectral-sequence framework the secondary invariant in
\eqref{quat:eq:spin-bordism-Q8} is the component of a bordism class in
$E^\infty_{3,1}$ \cite{Teichner1992}. The isomorphism
\eqref{quat:eq:spin-bordism-Q8} therefore gives
$E^\infty_{3,1}\cong\ZZ/2=E^2_{3,1}$. Since $E^\infty_{4,0}$ and
$E^\infty_{1,3}$ vanish by~(i), the only remaining filtration quotient of
the reduced group, $E^\infty_{2,2}$, is zero. The reduced group is the
torsion subgroup because $\Omega^{\Spin}_4\cong\ZZ$ is detected by the
signature.

The equality $E^\infty_{3,1}=E^2_{3,1}$ can also be checked directly.
Differentials leaving $E^r_{3,1}$ land in $E^2_{1,2}$ or in
$E^3_{0,3}=0$; the first is dual to $\operatorname{Sq}^2$ on
$H^1(Q_8;\FF_2)$, which vanishes for degree reasons. The only differential
entering is $d_2:E^2_{5,0}\to E^2_{3,1}$, which is reduction modulo two
followed by the dual of
$\operatorname{Sq}^2:H^3(Q_8;\FF_2)\to H^5(Q_8;\FF_2)$; this is the
standard description of the first differentials
\cite[Lemma~2.3.2 and Theorem~3.1.3]{Teichner1992}, \cite{Teichner1993}. In the classical presentation
\[
 H^*(Q_8;\FF_2)=\FF_2[x,y,e]/(x^2+xy+y^2,\ x^2y+xy^2),
 \;\;\; |x|=|y|=1,\;\; |e|=4
\]
\cite{AdemMilgram} (equivalently
$\FF_2[x,y,e]/(x^2+xy+y^2,\ x^3)$, the form used in
\cite[Lemma~4.1.1(b)]{Teichner1992}, since $x^3=x^2y+xy^2$ modulo the
first relation), the group $H^3$ is spanned by $x^2y$, and
$x^3=x(xy+y^2)=0$. The Cartan formula gives
$\operatorname{Sq}^2(x^2y)=x^4y=0$.
This agrees with the tabulated action of $\operatorname{Sq}^2$ on
$H^*(Q_8;\FF_2)$ in \cite[Lemma~4.1.2(b)]{Teichner1992}, where all products
of degree-one classes vanish in degrees at least four.

Finally, $\iota$ is $3$-connected, so $\iota_*$ is onto in degree three
for every coefficient group. Both groups in~(iii) have order two.
\end{proof}

\begin{remark}[The term $E^\infty_{2,2}$]\label{eval:rem:e22}
In Lemma~\ref{eval:lem:e31}(ii) the vanishing of $E^\infty_{2,2}$ is deduced
from the isomorphism \eqref{quat:eq:spin-bordism-Q8}; it is not a
consequence of the $d_2$-computations alone. Indeed
$E^2_{2,2}=H_2(Q_8;\FF_2)\cong\FF_2^{\,2}$; the differential
$d_2:E^2_{4,1}\to E^2_{2,2}$ is dual to
$\operatorname{Sq}^2:H^2(Q_8;\FF_2)\to H^4(Q_8;\FF_2)$, which vanishes because
$\operatorname{Sq}^2(xy)=x^2y^2=0$ and $\operatorname{Sq}^2(y^2)=y^4=0$ in the
presentation used in the proof (there $y^3=0$); and the differential leaving
$E^2_{2,2}$ has target $E^2_{0,3}=0$. Hence, $E^3_{2,2}\cong\FF_2^{\,2}$, and
$E^\infty_{2,2}=0$ is equivalent to
\[
   d_3:E^3_{5,0}=H_5(Q_8;\ZZ)\cong(\ZZ/2)^2\to E^3_{2,2}
\]
being an isomorphism. This is precisely Teichner's
Proposition~4.2.1 \cite[Proposition~4.2.1, p.~51]{Teichner1992}. The value
$\widetilde\Omega^{\Spin}_4(BQ_8)\cong\ZZ/2$ is also listed in
\cite[Appendix~A, Table~6]{DavighiGripaiosLohitsiri}.
\end{remark}

\begin{lemma}[Compression into the space form]\label{eval:lem:compress}
Let $W$ be a closed connected oriented four-manifold and let
$u:W\to BQ_8$ be a map. Then $u$ is homotopic to $\iota\circ g$ for a
smooth map $g:W\to\mathcal Q$.
\end{lemma}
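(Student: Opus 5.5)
The plan is obstruction theory along the skeleta of $W$, using that $\mathcal Q=S^3/Q_8$ is a $3$-skeleton of a model of $BQ_8$. Since $Q_8$ acts freely on $S^3$, the inclusion $\iota:\mathcal Q\to BQ_8$ can be taken to be a cellular inclusion: build $BQ_8$ from $\mathcal Q$ by attaching cells of dimension $\geq 5$ to kill $\pi_3(\mathcal Q)=\pi_3(S^3)\cong\ZZ$ and the higher homotopy groups. The first cells attached have dimension four, which kill $\pi_3$. So, after a suitable choice of model, $\iota$ is $3$-connected and the pair $(BQ_8,\mathcal Q)$ has no relative cells below dimension four.

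First triangulate $W$ (it is smooth) and make $u$ cellular by cellular approximation. Then $u$ carries the $3$-skeleton $W^{(3)}$ into $BQ_8^{(3)}\subset\mathcal Q$. So $u|_{W^{(3)}}$ already factors through $\mathcal Q$, and the task is to extend this factorization over the $4$-cells of $W$ compatibly, up to homotopy of $u$.

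The obstruction to extending $u|_{W^{(3)}}$, viewed as a map into $\mathcal Q$, over the $4$-cells is a cocycle in $C^4(W;\pi_3(\mathcal Q))$, twisted by the action of $\pi_1$ on $\pi_3(\mathcal Q)\cong\pi_3(S^3)=\ZZ$. That action is trivial, because $Q_8$ acts on $S^3$ by orientation-preserving maps. We are free to alter the map on the $3$-skeleton by homotopies rel $W^{(2)}$, which changes the obstruction by coboundaries. The relevant class is therefore the primary obstruction $\mathfrak o\in H^4(W;\ZZ)\cong\ZZ$, the latter isomorphism because $W$ is closed, connected and oriented.

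The main point is the relation between this obstruction and $u$. Because $u$ itself extends over $W$ into $BQ_8$, the obstruction to compressing into $\mathcal Q$ is the pullback $u^*k$ of the first $k$-invariant class. This is the relative obstruction in $H^4(BQ_8,\mathcal Q;\pi_3)$ pulled back to $H^4(BQ_8;\ZZ)$, which is the class generating the periodicity of $Q_8$. Concretely, it is the Euler class of the flat bundle $S^3\times_{Q_8}\mathbb R^4$, and it lies in $H^4(Q_8;\ZZ)\cong\ZZ/8$.

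Hence $\mathfrak o=u^*k$ is a torsion class, since $8\,u^*k=u^*(8k)=0$. But $H^4(W;\ZZ)\cong\ZZ$ is torsion-free, so $\mathfrak o=0$. The compression therefore exists on $W^{(4)}=W$, producing a map $g_0:W\to\mathcal Q$ with $\iota\circ g_0\simeq u$.

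I expect the hardest step to be this identification of the obstruction as a pulled-back class. The cleanest route I would try first is the standard Moore–Postnikov argument: $\mathcal Q\to BQ_8$ is $3$-connected with homotopy fibre $\widetilde{\mathcal Q}=S^3$ up to the relevant range. Lifting $u$ through the first Moore–Postnikov stage is then obstructed exactly by $u^*$ of the $k$-invariant in $H^4(BQ_8;\pi_3 S^3)$. Dimension four is exactly the top dimension of $W$, so no higher obstructions arise.

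Finally, approximate $g_0$ by a smooth map $g$ homotopic to it (Whitney approximation). This gives $u\simeq\iota\circ g$ with $g$ smooth.
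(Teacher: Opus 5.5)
Your proposal is correct and is essentially the paper's argument: you lift $u$ through $\iota$, whose homotopy fibre is the $2$-connected $S^3$; you observe that the single obstruction is $u^*$ of a universal class in $H^4(BQ_8;\mathbb{Z})\cong\mathbb{Z}/8$, with untwisted coefficients because $Q_8$ acts orientation-preservingly on $S^3$; that torsion class must vanish in the torsion-free group $H^4(W;\mathbb{Z})\cong\mathbb{Z}$, and you finish by smooth approximation. Your Euler-class interpretation of the universal class is a harmless extra, and there is one small slip: the cells attached to $\mathcal Q$ to build $BQ_8$ begin in dimension four, not five, as you yourself correct in the following sentence.
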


\begin{proof}
Replace the classifying map
$\iota:\mathcal Q=S^3/Q_8\to BQ_8$ by a fibration in its homotopy class.
Its homotopy fibre is homotopy equivalent to the universal cover $S^3$.
Thus, the fibre is $2$-connected, and for the four-dimensional CW complex $W$
the only possible obstruction to lifting $u:W\to BQ_8$ through $\iota$ lies,
by obstruction theory for sections of fibrations with local coefficients
\cite[Chapter~VI]{WhiteheadGW}, in
\[
     H^4\bigl(W;\pi_3(S^3)\bigr)=H^4(W;\ZZ).
\]
The $Q_8$-action on $\pi_3(S^3)\cong\ZZ$ is trivial because the deck
transformations preserve orientation. The universal obstruction is therefore
a class $\kappa\in H^4(BQ_8;\ZZ)$. Teichner's integral homology calculation
\cite[Lemma~4.1.1(b)]{Teichner1992}, together with the universal coefficient
theorem, gives
\[
                 H^4(BQ_8;\ZZ)\cong\ZZ/8.
\]
Hence, $\kappa$ is torsion. Since $W$ is closed, connected and oriented,
$H^4(W;\ZZ)\cong\ZZ$ is torsion-free, so $u^*\kappa=0$. There are no higher
obstructions because $\dim W=4$. Therefore, $u$ is homotopic to
$\iota\circ g$ for a continuous map $g:W\to\mathcal Q$, and smooth
approximation makes $g$ smooth.
\end{proof}

\begin{definition}\label{eval:def:point}
Let $(W,s)$ be a closed spin four-manifold, $g:W\to\mathcal Q$ a smooth
map, and $p\in\mathcal Q$ a regular value. Frame the normal bundle of the
closed one-manifold $g^{-1}(p)$ by pulling back a positive frame of
$T_p\mathcal Q$, and restrict $s$ to it. The \emph{point-preimage bit}
$\mathfrak p(W,s,g)\in\ZZ/2$ is the class of this framed spin
one-manifold in $\Omega^{\Spin}_1\cong\ZZ/2$, that is, the number of its
components with closing sign $+1$, modulo two. When the spin structure
and the map are clear we write $\mathfrak p(W)$.
\end{definition}

\begin{lemma}[Point-preimage evaluation]\label{eval:lem:point}
In Definition~\ref{eval:def:point}, the image of $[W,s,\iota\circ g]$ in
$\widetilde\Omega^{\Spin}_4(BQ_8)\cong\ZZ/2$ is $\mathfrak p(W,s,g)$.
Consequently $\mathfrak p(W,s,g)$ depends only on $(W,s)$ and on the
homotopy class of $\iota\circ g$. If $W$ has signature zero, then
$[W,s,\iota\circ g]=0$ in $\Omega^{\Spin}_4(BQ_8)$ if and only if
$\mathfrak p(W,s,g)=0$.
\end{lemma}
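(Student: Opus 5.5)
The plan is to realize the Atiyah--Hirzebruch edge map onto $E^\infty_{3,1}$ geometrically, through the Pontryagin--Thom construction relative to the three-skeleton. Since $\mathcal Q$ is a closed oriented three-manifold, a point $p\in\mathcal Q$ has a framed normal disc $\nu p$. The collapse $\mathcal Q\to\mathcal Q/(\mathcal Q\setminus\Int\nu p)\cong S^3$ has degree one, and it induces on $H_3(-;\ZZ/2)$ the map sending the fundamental class of $\mathcal Q$ to the generator. By Lemma~\ref{eval:lem:e31}(iii), this fundamental class maps to the generator of $H_3(Q_8;\ZZ/2)=E^2_{3,1}$.

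The first step is the transversality description. For a transverse map $g$, the preimage $F=g^{-1}(p)$ is a closed one-manifold with a framed normal bundle, so $F$ carries an induced spin structure. The composite $W\to\mathcal Q\to S^3$ is then the Pontryagin--Thom map of this framed spin one-manifold. Its class in $\widetilde\Omega^{\Spin}_4(S^3)\cong\Omega^{\Spin}_1$ is exactly $\mathfrak p(W,s,g)$, by the standard suspension isomorphism for spin bordism.

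The second step is naturality of the Atiyah--Hirzebruch spectral sequence under the collapse $c:\mathcal Q\to S^3$, together with $\iota:\mathcal Q\to BQ_8$. On the $(3,1)$-entry, $c$ gives $E^2_{3,1}(\mathcal Q)=H_3(\mathcal Q;\ZZ/2)\to H_3(S^3;\ZZ/2)$. This map is an isomorphism, and for $S^3$ the class in $E^\infty_{3,1}(S^3)=\Omega^{\Spin}_1$ is the full reduced bordism class. The class $[W,s,g]\in\Omega^{\Spin}_4(\mathcal Q)$ has vanishing $E_{4,0}$-component, because $H_4(\mathcal Q)=0$. It therefore lies in filtration at most $3$, and its $E^\infty_{3,1}$-coordinate is detected after applying $c$: the lower filtration pieces $E_{2,2},E_{1,3},E_{0,4}$ map to $\widetilde\Omega(S^3)$ trivially, since $S^3$ has no homology in degrees $0<p<3$. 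Pushing forward by $\iota$ and using Lemma~\ref{eval:lem:e31}(iii) identifies the $E^\infty_{3,1}$-coordinate of $[W,s,\iota g]$ with $\mathfrak p$. Lemma~\ref{eval:lem:e31}(ii) then says this coordinate is the whole reduced class.

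The consequences follow directly. Independence of $\mathfrak p$ from the choices holds because the reduced class depends only on $(W,s)$ and the homotopy class of $\iota\circ g$. If the signature vanishes, the class equals its reduced part, since $\Omega^{\Spin}_4\cong\ZZ$ is detected by the signature; the iff statement follows.

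The main obstacle is the filtration bookkeeping: one must check that the projection to $E^\infty_{3,1}$ really commutes with $c_*$ and $\iota_*$ on the level of classes and not just on $E^2$-terms. I would handle this by working with the skeletal filtration of $\mathcal Q$, whose top cell is the three-cell collapsed by $c$, so that the comparison happens on the cellular chain level.
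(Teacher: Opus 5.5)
Your proposal is correct and follows essentially the paper's argument. You read off the $E^\infty_{3,1}$-edge coordinate of $[W,s,g]\in\Omega^{\Spin}_4(\mathcal Q)$ by the Pontryagin--Thom collapse onto the top cell containing $p$, which gives the framed spin preimage of $p$. You then transport it to $BQ_8$ by naturality together with Lemma~\ref{eval:lem:e31}(ii),(iii), and the constant-map summand sits in filtration zero, which accounts for the reduced part. The only difference is cosmetic: the paper checks directly that no differential touches $E_{3,1}(\mathcal Q)$, whereas your explicit collapse $c:\mathcal Q\to S^3$ yields the needed injectivity on $E^\infty_{3,1}(\mathcal Q)$ for free, because no differential can enter that spot and $c_*$ is an isomorphism on $E^2_{3,1}$.
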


\begin{proof}
Use a CW structure on $\mathcal Q$ with a single three-cell containing
$p$. Since $\dim\mathcal Q=3$, the Atiyah--Hirzebruch filtration satisfies
$F_3\Omega^{\Spin}_4(\mathcal Q)=\Omega^{\Spin}_4(\mathcal Q)$. Its top
quotient is
\[
   F_3/F_2=E^\infty_{3,1}(\mathcal Q).
\]
No differential affects $E_{3,1}(\mathcal Q)$: the possible $d_2$ source
$E^2_{5,0}$ is zero for dimensional reasons, while the possible $d_2$ target
$E^2_{1,2}$ is governed dually by
$\operatorname{Sq}^2:H^1(\mathcal Q;\ZZ/2)\to H^3(\mathcal Q;\ZZ/2)$,
which is zero; the possible $d_3$ target has coefficient
$\Omega^{\Spin}_3=0$, and there are no higher possibilities. Hence
\[
   E^\infty_{3,1}(\mathcal Q)=H_3(\mathcal Q;\ZZ/2)\cong\ZZ/2.
\]
The edge map to this top-filtration quotient is the Pontryagin--Thom collapse
onto the top cell \cite[Section~3.2]{Teichner1992}. By transversality \cite[Chapter~3]{Hirsch}, it sends
$[W,s,g]$ to the framed spin preimage of $p$, namely
$\mathfrak p(W,s,g)$ times the generator.

The map $\iota$ induces a morphism of spectral sequences, which on
$E^\infty_{3,1}$ is the isomorphism supplied by
Lemma~\ref{eval:lem:e31}(ii) and~(iii).
The class $\iota_*[W,s,g]$ and its reduced part differ by the image of
$[W,s]\in\Omega^{\Spin}_4$, which has filtration zero. Hence, the reduced
part has image $\mathfrak p(W,s,g)$ in $E^\infty_{3,1}(BQ_8)$, and
Lemma~\ref{eval:lem:e31}(ii) identifies the reduced group with this
quotient. The final assertion holds because a signature-zero class is
torsion.
\end{proof}

\begin{lemma}[Products with a circle]\label{eval:lem:product}
Every spin structure on $\mathcal Q\times S^1$ is a product
$s_{\mathcal Q}\times\sigma$. For the projection
$\mathrm{pr}:\mathcal Q\times S^1\to\mathcal Q$,
\[
 \mathfrak p(\mathcal Q\times S^1,s_{\mathcal Q}\times\sigma,\mathrm{pr})
 =\begin{cases}1,&\sigma\ \text{is the Lie structure},\\
               0,&\sigma\ \text{bounds}.\end{cases}
\]
Equivalently, the bit is $1$ exactly when a circle $\{x\}\times S^1$ with
a constant normal framing has closing sign $+1$. In particular
$[S^1_{\mathrm{Lie}}]\times[\mathcal Q,s_{\mathcal Q},\iota]$ is the
nonzero element of $\widetilde\Omega^{\Spin}_4(BQ_8)$.
\end{lemma}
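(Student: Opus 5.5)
The plan is to prove the three assertions in order: the splitting of spin structures, the point-preimage computation, and the identification of the product class. Each reduces to routine facts about $\mathcal Q$ and the circle.

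\emph{Splitting of spin structures.} First I count both sides. Since $\mathcal Q$ is orientable and $H^2(\mathcal Q;\ZZ/2)\cong(\ZZ/2)^2$ has $w_2=0$ (every orientable three-manifold is parallelizable), $\mathcal Q$ carries spin structures. They form a torsor over $H^1(\mathcal Q;\ZZ/2)\cong(\ZZ/2)^2$. By Künneth,
\[
H^1(\mathcal Q\times S^1;\ZZ/2)\cong H^1(\mathcal Q;\ZZ/2)\oplus H^1(S^1;\ZZ/2).
\]
The product structures $s_{\mathcal Q}\times\sigma$ give an equivariant map of torsors over this group. Changing $s_{\mathcal Q}$ by $a$ and $\sigma$ by $b$ changes the product by $\mathrm{pr}_1^*a+\mathrm{pr}_2^*b$, so the map is a bijection. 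Hence every spin structure is a product.

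\emph{Point-preimage computation.} Take any $p\in\mathcal Q$. It is a regular value of $\mathrm{pr}$, with preimage the single circle $\{p\}\times S^1$. The pulled-back normal frame is the constant frame of $T_p\mathcal Q$. The restriction of $s_{\mathcal Q}\times\sigma$ to this framed circle is therefore the product of the trivial structure on $T_p\mathcal Q$ with $\sigma$. The induced spin structure on the circle is thus $\sigma$ itself. By Definition~\ref{eval:def:point}, the bit counts components with closing sign $+1$, which by the convention recalled before Lemma~\ref{eval:lem:e31} means Lie components. This gives the displayed formula, and it also gives the ``equivalently'' reformulation directly.

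\emph{The product class.} The class $[S^1_{\mathrm{Lie}}]\times[\mathcal Q,s_{\mathcal Q},\iota]$ is represented by $(\mathcal Q\times S^1,s_{\mathcal Q}\times\sigma_{\mathrm{Lie}},\iota\circ\mathrm{pr})$. Its signature is zero, since $b_2=0$ because $\mathcal Q$ is a rational homology sphere. Lemma~\ref{eval:lem:point} then says this class is nonzero in $\widetilde\Omega^{\Spin}_4(BQ_8)$ exactly when $\mathfrak p=1$. The computation above gives $\mathfrak p=1$, which completes the proof.

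\emph{Main obstacle.} The only delicate point is orientation and framing bookkeeping. I have to check that the normal framing pulled back from $T_p\mathcal Q$, combined with the circle tangent, restricts $s_{\mathcal Q}\times\sigma$ to precisely $\sigma$, with no extra twist. A twist cannot occur because the framing is constant along the circle in the product trivialization, and any ordering or orientation reversal of the frame is a constant change that does not affect closing signs.
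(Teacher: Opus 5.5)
Your proposal is correct and follows essentially the same route as the paper. The paper's proof uses the torsor count over $H^1(\mathcal Q;\ZZ/2)\oplus H^1(S^1;\ZZ/2)$, the observation that the preimage of $p$ is $\{p\}\times S^1$ with a constant framing, and Lemma~\ref{eval:lem:point} for the final sentence; you add the equivariance argument for distinctness of the product structures and the signature-zero check, both of which the paper leaves implicit.
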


\begin{proof}
Spin structures on $\mathcal Q\times S^1$ form a torsor over
$H^1(\mathcal Q;\ZZ/2)\oplus H^1(S^1;\ZZ/2)$, and the eight product
structures are distinct. The preimage of $p$ under $\mathrm{pr}$ is
$\{p\}\times S^1$ with a constant framing. The last sentence follows from
Lemma~\ref{eval:lem:point}.
\end{proof}

\subsubsection*{An odd dihedral transfer}
Let $D_{14}=\langle a,\rho\mid a^7=\rho^2=1,\ \rho a\rho=a^{-1}\rangle$.

\begin{proposition}[Odd dihedral transfer]\label{eval:prop:transfer}
Let $(W,s)$ be a closed connected spin four-manifold, $g:W\to\mathcal Q$
a smooth map, and $\psi:\pi_1(W)\to D_{14}$ an epimorphism. Let
$W^{\mathrm{irr}}\to W$ and $W^{\mathrm{reg}}\to W$ be the connected
coverings corresponding to $\psi^{-1}\langle\rho\rangle$ and $\ker\psi$,
of degrees $7$ and $14$, with the pulled-back spin structures and maps to
$\mathcal Q$. Then
\begin{equation}\label{eval:eq:transfer}
 \mathfrak p(W)=\mathfrak p(W^{\mathrm{irr}})+\mathfrak p(W^{\mathrm{reg}})
 \in\ZZ/2.
\end{equation}
\end{proposition}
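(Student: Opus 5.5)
The plan is to compute all three point-preimage bits from one transverse preimage, so that \eqref{eval:eq:transfer} becomes a local count over the components of $C:=g^{-1}(p)$.

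\emph{Step 1: preimages in the covers.} Fix a regular value $p$ of $g$ and set $C:=g^{-1}(p)$, a closed framed spin one-manifold in $W$. Let $\pi:\widehat W\to W$ be either covering, $W^{\mathrm{irr}}$ or $W^{\mathrm{reg}}$. Since $\pi$ is a local diffeomorphism, $p$ is also a regular value of $g\circ\pi$, and
\[
(g\circ\pi)^{-1}(p)=\pi^{-1}(C).
\]
The pulled-back positive frame of $T_p\mathcal Q$ on $\pi^{-1}(C)$ is the lift of the normal framing of $C$. The spin structure on $\widehat W$ is $\pi^*s$. Hence $\pi^{-1}(C)$, with its framing and spin structure, is the covering of the framed spin one-manifold $C$ induced by $\pi$.

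\emph{Step 2: the closing sign of a lift.} Let $c$ be a component of $C$ with closing sign $\sigma_c=\pm1$. Let $\tilde c$ be a component of $\pi^{-1}(c)$ mapping with degree $d$ onto $c$. Traversing $\tilde c$ once traverses the framed loop $c$ exactly $d$ times, and the spin-frame lift is pulled back. As in the proof of Lemma~\ref{nf:lem:calibration}, the closing sign of $\tilde c$ is therefore $\sigma_c^{\,d}$.

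So $\tilde c$ contributes $1$ to the bit exactly when $\sigma_c=+1$, or when $\sigma_c=-1$ and $d$ is even. The degrees $d$ are the cycle lengths of $\psi([c])\in D_{14}$ acting on the fibre of $\pi$: on the $7$ cosets of $\langle\rho\rangle$ for $W^{\mathrm{irr}}$, and on the $14$ elements of $D_{14}$ (regular action) for $W^{\mathrm{reg}}$. The element $\psi([c])$ is defined up to conjugacy, which does not affect cycle types.

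\emph{Step 3: case count.} Since all three bits are sums over the components $c$, it suffices to verify \eqref{eval:eq:transfer} for each $c$ separately. There are three conjugacy types of $h:=\psi([c])$.
\begin{enumerate}[label=(\alph*),leftmargin=2.2em]
\item $h=1$: there are $7$ lifts of degree one in $W^{\mathrm{irr}}$ and $14$ in $W^{\mathrm{reg}}$. If $\sigma_c=+1$, the right-hand side is $7+14\equiv1$. If $\sigma_c=-1$, every lift has odd degree, so both sides are $0$.
\item $h$ of order seven: $h$ acts as a single $7$-cycle on the cosets, giving one lift of degree $7$ in $W^{\mathrm{irr}}$, and as two $7$-cycles in the regular action, giving two lifts in $W^{\mathrm{reg}}$. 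If $\sigma_c=+1$, the count is $1+2\equiv1$. If $\sigma_c=-1$, all degrees are odd and the count is $0$.
\item $h$ a reflection: on the cosets it has one fixed point and three $2$-cycles, and in the regular action it has seven $2$-cycles. If $\sigma_c=+1$, the count is $1+3+7\equiv1$. If $\sigma_c=-1$, only the ten degree-two lifts contribute, and $10\equiv0$.
\end{enumerate}
In every case the contribution of $c$ on the right-hand side equals its contribution $1$ (if $\sigma_c=+1$) or $0$ (if $\sigma_c=-1$) to $\mathfrak p(W)$. Summing over components proves \eqref{eval:eq:transfer}.

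The main point needing care is Step 2. One must justify that the framed closing sign of a degree-$d$ lift is $\sigma_c^{\,d}$, with the framing pulled back rather than newly chosen. One must also check that the bit computed from this particular preimage is the well-defined invariant on each cover, which holds because Lemma~\ref{eval:lem:point} makes $\mathfrak p$ independent of the choice of regular value. Connectedness of the covers is not needed for the count; it is only used to regard them as connected spin four-manifolds.
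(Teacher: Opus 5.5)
Your proposal is correct and follows the paper's proof: you use one regular value, identify the lifts of each component with the orbits of $\langle h\rangle$ on $D_{14}/\langle\rho\rangle$ and on $D_{14}$, raise the closing sign to the $d$th power, and run the same three-case count by the conjugacy type of $h$. Your tallies match the paper's contributions $7[C]+14[C]$, $[C]+2[C]$ and $([C]+3)+7$, each congruent to $[C]$ modulo two.
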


\begin{proof}
Fix a regular value $p$. Its preimage in each covering is the preimage of
$g^{-1}(p)$, with pulled-back framings and spin structures. Let $C$ be a
component of $g^{-1}(p)$, write $[C]\in\ZZ/2$ for its class, and let
$h\in D_{14}$ be the image of its homotopy class, defined up to
conjugacy. The components of its preimage in $W^{\mathrm{irr}}$ and
$W^{\mathrm{reg}}$ correspond, respectively, to the orbits of
$\langle h\rangle$ on $D_{14}/\langle\rho\rangle$ and on $D_{14}$. A
component over an orbit of length $d$ covers $C$ with
degree $d$. Its closing sign is the $d$th power of that of $C$, as in the
proof of Lemma~\ref{nf:lem:calibration}. Thus, a lift of odd degree has
class $[C]$, and a lift of even degree is a Lie circle.

If $h=1$, the two contributions are $7[C]$ and $14[C]$. If $h$ is a
reflection, it fixes exactly one coset of $\langle\rho\rangle$ and
permutes the other six in three transpositions, while it acts on
$D_{14}$ by seven transpositions; the contributions are $[C]+3$ and $7$.
If $h$ is a nontrivial rotation, it has one orbit of length seven on the
cosets and two such orbits on $D_{14}$; the contributions are $[C]$ and
$2[C]$. In each case the two contributions add up to $[C]$ modulo two.
Summing over the components of $g^{-1}(p)$ proves \eqref{eval:eq:transfer}.
\end{proof}

\subsubsection*{Two covers of the degree-three mapping torus}
In the model \eqref{nf:eq:mm}, $\pi_1(M_3)=G^{(3)}=\Gamma\rtimes\langle t_3\rangle$
acts on $S^3\times\RR$ through the maps $\gamma_{q,a}$ and
\begin{equation}\label{eval:eq:D3}
 D_3(v,s)=(f^{-3}(v),s+1)=(vj,s+1),
\end{equation}
because $f^{-3}(v)=\omega^3vj^{-3}$, $\omega^3=1$ and $j^{-3}=j$.
Since $\theta^3$ is the identity on $Q_8$ and inverts $c$, the formula
\begin{equation}\label{eval:eq:varpi3}
 \varpi_3:G^{(3)}\to D_{14},\;\;\;
 \varpi_3(qc^kt_3^{\,n})=a^k\rho^n,
\end{equation}
defines an epimorphism; it kills $Q_8$ and sends $c$ to $a$ and $t_3$ to
$\rho$. Since $\theta^3|_{Q_8}=\id$, one has
$\varpi_3^{-1}\langle\rho\rangle=Q_8\times\langle t_3\rangle$ and
$\ker\varpi_3=Q_8\times\langle t_3^2\rangle$. Put
\begin{equation}\label{eval:eq:covers}
 M_3^{\mathrm{irr}}=(S^3\times\RR)/\langle\gamma_{q,0},D_3\rangle,
 \;\;\;
 M_3^{\mathrm{reg}}=(S^3\times\RR)/\langle\gamma_{q,0},D_3^2\rangle,
 \;\;\; q\in Q_8.
\end{equation}
These are the coverings of $M_3$ attached to $\varpi_3$ in
Proposition~\ref{eval:prop:transfer}.

\begin{lemma}[Both covers are products]\label{eval:lem:trivialize}
Put $\jmath(s)=\exp(\tfrac{\pi}{2}js)\in S^3$ and
$\widetilde\Upsilon(v,s)=(v\,\jmath(s),s)$ on $S^3\times\RR$. Then
$\widetilde\Upsilon$ commutes with every $\gamma_{q,0}$, $q\in Q_8$, and
\[
 \widetilde\Upsilon(v,s+1)=D_3\,\widetilde\Upsilon(v,s).
\]
Hence, $\widetilde\Upsilon$ induces diffeomorphisms
\[
 \Upsilon_1:\mathcal Q\times\RR/\ZZ\to M_3^{\mathrm{irr}},
 \;\;\;
 \Upsilon_2:\mathcal Q\times\RR/2\ZZ\to M_3^{\mathrm{reg}}.
\]
The circle factors in these two products represent $t_3$ and $t_3^2$, respectively,
and the induced map on $\pi_1(\mathcal Q)=Q_8$ is the identity. The pullback of the map
$\bar u$ of \eqref{quat:eq:M3-to-Q8} to either cover, composed with
$\Upsilon_k$, is homotopic to $\iota\circ\mathrm{pr}$.
\end{lemma}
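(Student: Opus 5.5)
The plan is to verify the two identities for $\widetilde\Upsilon$ directly from the quaternionic formulas. The maps are $\gamma_{q,0}(v,s)=(qv,s)$ and $D_3(v,s)=(vj,s+1)$. Left multiplication by $q$ commutes with right multiplication by $\jmath(s)$, so $\widetilde\Upsilon\gamma_{q,0}=\gamma_{q,0}\widetilde\Upsilon$. For the shift, use $\jmath(s+1)=\jmath(s)\jmath(1)=\jmath(s)j$, since $\exp(\tfrac{\pi}{2}j)=j$ and all $\jmath(s)$ commute. This gives
\[
 \widetilde\Upsilon(v,s+1)=(v\jmath(s)j,s+1)=D_3\widetilde\Upsilon(v,s).
\]
Thus $\widetilde\Upsilon$ conjugates the group $\langle\gamma_{q,0},T_1\rangle$, with $T_1(v,s)=(v,s+1)$, onto $\langle\gamma_{q,0},D_3\rangle$. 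It likewise conjugates $\langle\gamma_{q,0},T_1^2\rangle$ onto $\langle\gamma_{q,0},D_3^2\rangle$. Because $\widetilde\Upsilon$ is a diffeomorphism of $S^3\times\RR$ (its inverse is $(v,s)\mapsto(v\jmath(s)^{-1},s)$), equivariance makes it descend to diffeomorphisms of the quotients. These quotients are $(S^3/Q_8)\times\RR/\ZZ$ and $(S^3/Q_8)\times\RR/2\ZZ$ on the source side, and $M_3^{\mathrm{irr}}$, $M_3^{\mathrm{reg}}$ on the target side, as in \eqref{eval:eq:covers}.

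Next I identify the fundamental-group maps. The circle factor $\{[v_0]\}\times\RR/\ZZ$ lifts to the path $s\mapsto(v_0,s)$, $s\in[0,1]$. Its image under $\widetilde\Upsilon$ runs from $(v_0,0)$ to $D_3(v_0,0)$, so it represents $t_3$. Similarly the circle of length two represents $D_3^2=t_3^2$. On the $\mathcal Q$ factor the map is the identity at $s=0$, so $\gamma_{q,0}\mapsto\gamma_{q,0}$, and the induced map on $Q_8$ is the identity.

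For the last assertion, compose the pulled-back homomorphism $\chi$ of \eqref{quat:eq:M3-to-Q8} with $(\Upsilon_k)_*$. The result sends $q\mapsto q$ and sends the circle generator ($t_3$ or $t_3^2$) to $\chi(t_3)^k=1$. This is exactly $(\iota\circ\mathrm{pr})_*$ on $\pi_1(\mathcal Q\times S^1)=Q_8\times\ZZ$. Both maps land in the aspherical space $BQ_8$, and maps from a CW complex into a $K(\pi,1)$ are determined up to (free) homotopy by the induced homomorphism up to conjugacy. Hence the two maps are homotopic.

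The step needing most care is the basepoint bookkeeping: the homomorphisms must agree exactly rather than only up to an outer automorphism. Since the identification at $s=0$ is literally the identity of $S^3$, no conjugation arises, and the only real point to check is $\jmath(1)=j$.
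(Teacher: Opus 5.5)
Your proposal is correct and follows the paper's proof step for step. Both arguments use the commutation of left multiplication by $Q_8$ with right multiplication by $\jmath(s)$, and the identity $\jmath(s+1)=\jmath(s)j$ combined with $D_3(v,s)=(vj,s+1)$. Both then note that the lifted circle runs from $(v_0,0)$ to $D_3^k(v_0,0)$, and use asphericity of $BQ_8$ once the induced map $Q_8\times\ZZ\to Q_8$ is seen to be the projection. Your extra remarks (the explicit inverse of $\widetilde\Upsilon$ and the basepoint bookkeeping) only spell out details the paper leaves implicit.
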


\begin{proof}
Right multiplication commutes with the left action of $Q_8$, and
$\jmath(s+1)=\jmath(s)j$ gives the displayed identity by
\eqref{eval:eq:D3}. The loop $s\mapsto\widetilde\Upsilon(v_0,s)$,
$0\le s\le k$, runs from $(v_0,0)$ to $D_3^k(v_0,0)$, so it represents
$t_3^k$. Since $\chi(q)=q$ and $\chi(t_3)=1$ in \eqref{quat:eq:M3-to-Q8},
the induced homomorphism $Q_8\times\ZZ\to Q_8$ is the projection. As
$BQ_8$ is aspherical, this determines the map up to homotopy.
\end{proof}

\begin{lemma}[Closing signs of the product circles]\label{eval:lem:signs}
Let $C_1\subset M_3^{\mathrm{irr}}$ and $C_2\subset M_3^{\mathrm{reg}}$ be
the images under $\Upsilon_1$ and $\Upsilon_2$ of
$\{x_0\}\times\RR/\ZZ$ and $\{x_0\}\times\RR/2\ZZ$, each framed by the
image of a constant normal frame. For the pulled-back structure $s_3$,
\[
 \spn_{s_3}(C_1)=\spn_{s_3}(C_2)=+1.
\]
Consequently, for the spin structures \eqref{quat:eq:extending-spin},
\begin{equation}\label{eval:eq:product-signs}
 \spn_{\widehat s_{3,\epsilon}}(C_1)=(-1)^{1-\eta_\epsilon},\;\;\;
 \spn_{\widehat s_{3,\epsilon}}(C_2)=+1.
\end{equation}
\end{lemma}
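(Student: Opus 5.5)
We compute the closing sign directly in the trivialization \eqref{nf:eq:trivialization}, using the explicit spin lifts \eqref{nf:eq:deck-spin}; the structure $s_3$ pulls back to both covers and is given there by the same lifts $\widetilde\gamma_{q,0}=(q,1)$ and $d_3=(\omega^3,j^3)=(1,-j)$. First we parametrize the upstairs path of $C_k$ through $(v_0,0)$ as $s\mapsto(v_0\jmath(s),s)$, $0\le s\le k$, where $\jmath(s)=\exp(\tfrac\pi2 js)$. Its tangent is $(v_0\jmath(s)\tfrac\pi2 j,1)$. As in the proof of Lemma~\ref{nf:lem:calibration}, we shear the tangent to the base direction; this does not change the framing class. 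A constant normal frame in the product $\mathcal Q\times S^1$ is the push-forward under $\widetilde\Upsilon$ of a fixed frame $v_0a$ ($a=i,j,k$) of $T_{v_0}S^3$. Under $d\widetilde\Upsilon$ this frame becomes $v_0 a\jmath(s)=(v_0\jmath(s))\,\jmath(s)^{-1}a\jmath(s)$.

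In the quaternion trivialization $J$ the full frame at time $s$ is therefore the rotation $x\mapsto v_0\jmath(s)\,x\,\jmath(s)^{-1}\cdot\jmath(s)\ldots$. More carefully, we write the frame map as $x\mapsto v_0\, x\,\jmath(s)$ on $\HH$ (base direction $1\mapsto v_0\jmath(s)$, and $a\mapsto v_0a\jmath(s)$). This is the element $(v_0,\jmath(s)^{-1})$ of $S^3_L\times S^3_R$ in the convention $(A,B):x\mapsto AxB^{-1}$. The lifted path $s\mapsto(v_0,\jmath(s)^{-1})$ is then a continuous lift to $\Spin(4)$ starting at $(v_0,1)$ and ending at $(v_0,\jmath(k)^{-1})$.

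For $k=1$ the endpoint is $(v_0,-j)$, since $\jmath(1)^{-1}=j^{-1}=-j$. The deck lift $d_3=(1,-j)$ applied to the start gives $d_3(v_0,1)=(v_0,-j)$, so the lift closes and the sign is $+1$. For $k=2$ the endpoint is $(v_0,\jmath(2)^{-1})=(v_0,-1)$, and $d_3^2=(1,(-j)^2)=(1,-1)$ gives $d_3^2(v_0,1)=(v_0,-1)$. Again the lift closes and the sign is $+1$. The main obstacle is getting the ordering and inverse conventions exactly consistent with those used in Lemma~\ref{nf:lem:calibration}, where the $+$ frame closed via $d_1$. An error here would introduce a spurious $\kappa$, so we check against that computation: the same bookkeeping there gave endpoint $(j,j)=d_1(f(1),1)$. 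If the convention mismatch produces $(v_0,j)$ instead, it differs from the $k=1$ deck image by $\kappa$. In that case we re-derive the orientation of the right factor by comparing with the proof of Lemma~\ref{nf:lem:calibration}, where the right coordinate $u(s)$ enters undecorated.

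Finally we apply \eqref{nf:eq:spin-rules}. The class $a_3$ is the positive-base degree mod two, so it evaluates on $C_1$ (class $t_3$) to $1$ and on $C_2$ (class $t_3^2$) to $0$. Hence
\[
\spn_{\widehat s_{3,\epsilon}}(C_1)=(-1)^{1-\eta_\epsilon}\cdot(+1),\qquad \spn_{\widehat s_{3,\epsilon}}(C_2)=(+1).
\]
This is \eqref{eval:eq:product-signs}.
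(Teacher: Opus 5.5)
Your proposal is correct and follows the paper's proof essentially step for step. You lift the transported frame $x\mapsto v_0x\,\jmath(s)$ to $(v_0,\jmath(s)^{-1})$, observe that it closes against $d_3=(1,-j)$ and $d_3^2=(1,-1)$, and then obtain the second assertion from $\langle a_3,[C_k]\rangle\equiv k \pmod 2$ and the spin-change rule. The contingency paragraph about a possible $\kappa$ mismatch is unnecessary: in the trivialization $dD_3$ acts as right multiplication by $j$, which under the convention $(A,B):x\mapsto AxB^{-1}$ is exactly $(1,-j)=d_3$, so your bookkeeping is already consistent.
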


\begin{proof}
Work in the trivialization \eqref{nf:eq:trivialization}. At $(v_0,s)$, the
constant frame whose first vector is the base direction is
$x\mapsto v_0x$, with spin lift $(v_0,1)$. The differential of
$\widetilde\Upsilon$ is right multiplication by $\jmath(s)$ together with
a triangular shear coming from the $s$-derivative of $v_0\jmath(s)$; the
shear can be removed through invertible orientation-preserving fibrewise
maps and does not affect sign comparisons. The transported frame $x\mapsto v_0x\,\jmath(s)$ therefore
has the lift
\[
 G(s)=\bigl(v_0,\jmath(s)^{-1}\bigr),\;\;\;
 G(1)=(v_0,-j),\;\;\; G(2)=(v_0,-1).
\]
The circle $C_1$ ends at the $D_3$-translate of its initial point, and the
prescribed lift of $D_3$ in \eqref{nf:eq:deck-spin} is
$d_3=(\omega^3,j^3)=(1,-j)$. Since $d_3G(0)=(v_0,-j)=G(1)$, the lift
closes. For $C_2$ the deck element is $D_3^2$, with lift $d_3^2=(1,-1)$,
and again $d_3^2G(0)=G(2)$. This proves the first assertion. The second
follows from $\widehat s_{3,\epsilon}=s_3+(1-\eta_\epsilon)a_3$,
$\langle a_3,[C_k]\rangle=k$, and \eqref{nf:eq:spin-rules}.
\end{proof}

\subsubsection*{The value of the bit}
\begin{theorem}[The quaternionic secondary bit]\label{eval:thm:bit}
Let $\mathbf g$ denote the nonzero element of
$\widetilde\Omega^{\Spin}_4(BQ_8)\cong\ZZ/2$. For each specified framing,
\begin{equation}\label{eval:eq:bit}
 [X_{\epsilon,3},s^{\rm out}_\epsilon,u_\epsilon]
 =[M_3,\widehat s_{3,\epsilon},\bar u]
 =(1+\eta_\epsilon)\,\mathbf g.
\end{equation}
Equivalently,
\[
 {\ \delta_+=\mathrm{sec}(X_{+,3})=1,\;\;\;
          \delta_-=\mathrm{sec}(X_{-,3})=0.\ }
\]
More precisely, for a compression $\bar u\simeq\iota\circ g$,
\[
 \mathfrak p(M_3,\widehat s_{3,\epsilon},g)
 =\mathfrak p(M_3^{\mathrm{irr}})+\mathfrak p(M_3^{\mathrm{reg}})
 =\eta_\epsilon+1.
\]
\end{theorem}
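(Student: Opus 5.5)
The plan is to chain the earlier reductions and then evaluate a single point-preimage bit on two product covers. By Proposition~\ref{quat:prop:M3-bordism}, $[X_{\epsilon,3},s^{\rm out}_\epsilon,u_\epsilon]=[M_3,\widehat s_{3,\epsilon},\bar u]$ in $\Omega^{\Spin}_4(BQ_8)$. This class has signature zero, so it lies in $\widetilde\Omega^{\Spin}_4(BQ_8)\cong\ZZ/2$. By Corollary~\ref{quat:cor:bordism-bit}, its coordinate there is $\delta_\epsilon$. Lemma~\ref{eval:lem:compress} lets us write $\bar u\simeq\iota\circ g$ with $g:M_3\to\mathcal Q$ smooth. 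Lemma~\ref{eval:lem:point} then identifies the class with $\mathfrak p(M_3,\widehat s_{3,\epsilon},g)\,\mathbf g$. So everything reduces to computing this point-preimage bit modulo two.

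To compute it, apply the odd dihedral transfer, Proposition~\ref{eval:prop:transfer}, to the epimorphism $\varpi_3:\pi_1(M_3)\to D_{14}$ of \eqref{eval:eq:varpi3}. The associated coverings are $M_3^{\mathrm{irr}}$ and $M_3^{\mathrm{reg}}$ of \eqref{eval:eq:covers}, and the proposition gives $\mathfrak p(M_3)=\mathfrak p(M_3^{\mathrm{irr}})+\mathfrak p(M_3^{\mathrm{reg}})$. Here the spin structures and the maps to $\mathcal Q$ are pulled back. By Lemma~\ref{eval:lem:point}, each bit depends only on the spin manifold and the homotopy class of $\iota\circ g$. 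Lemma~\ref{eval:lem:trivialize} identifies both covers with products $\mathcal Q\times S^1$ and shows that the pulled-back map is homotopic to $\iota\circ\mathrm{pr}$. We may therefore compute each bit with the projection. Lemma~\ref{eval:lem:product} says that spin structures on these products are product structures, and that the bit is $1$ exactly when a constant-framed circle $\{x\}\times S^1$ has closing sign $+1$. Those circles are $C_1$ and $C_2$ of Lemma~\ref{eval:lem:signs}.

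By \eqref{eval:eq:product-signs}, the sign of $C_1$ is $(-1)^{1-\eta_\epsilon}$, so $\mathfrak p(M_3^{\mathrm{irr}})=\eta_\epsilon$. The sign of $C_2$ is $+1$, so $\mathfrak p(M_3^{\mathrm{reg}})=1$. Adding gives $\eta_\epsilon+1$, which is \eqref{eval:eq:bit}. Hence $\delta_+=1$ and $\delta_-=0$, and the equality $\delta_\epsilon=\mathrm{sec}(X_{\epsilon,3})$ is just the definition \eqref{quat:eq:delta-def}.

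The main point to check is the compatibility of orientations and framings along this chain. The positive frame of $T_p\mathcal Q$ pulled back by $\mathrm{pr}\circ\Upsilon_k^{-1}$ must agree with the constant normal frame used in Lemma~\ref{eval:lem:signs}, and the diffeomorphism $\Upsilon_k$ must carry the pulled-back $\widehat s_{3,\epsilon}$ to the product structure whose circle factor has the sign just computed. I would verify this directly in the trivialization \eqref{nf:eq:trivialization}. Two facts make this work. First, an overall reversal of the normal orientation changes no closing sign, because the sign is defined by whether a loop of frames lifts. Second, the shear in $d\widetilde\Upsilon$ was already shown not to affect sign comparisons. A second point is that the transfer needs $M_3$ connected with $\varpi_3$ surjective; both are immediate from \eqref{eval:eq:varpi3}.
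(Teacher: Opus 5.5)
Your proposal is correct and is essentially the paper's proof, step for step. Both arguments use the trace bordism of Proposition~\ref{quat:prop:M3-bordism}, then compression via Lemma~\ref{eval:lem:compress}, then the odd dihedral transfer for $\varpi_3$. Both then replace the pulled-back maps by $\mathrm{pr}\circ\Upsilon_k^{-1}$ using Lemmas~\ref{eval:lem:point} and~\ref{eval:lem:trivialize}, and read off $\mathfrak p(M_3^{\mathrm{irr}})=\eta_\epsilon$ and $\mathfrak p(M_3^{\mathrm{reg}})=1$ from \eqref{eval:eq:product-signs}. The orientation and framing compatibility you flag is already covered in the paper: by the shear removal in the proof of Lemma~\ref{eval:lem:signs}, and by the convention remark following \eqref{nf:eq:trivialization}.
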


\begin{proof}
The first equality in \eqref{eval:eq:bit} is
Proposition~\ref{quat:prop:M3-bordism}; both manifolds have signature zero,
so both classes lie in the reduced group. Compress $\bar u$ to
$\iota\circ g$ by Lemma~\ref{eval:lem:compress}, and apply
Proposition~\ref{eval:prop:transfer} with $\psi=\varpi_3$.

By Lemma~\ref{eval:lem:point}, the bit of each covering may be computed
with any map in the homotopy class of the composite of $\iota\circ g$ with
the covering. By Lemma~\ref{eval:lem:trivialize} we may use
$\mathrm{pr}\circ\Upsilon_k^{-1}$. Lemma~\ref{eval:lem:product} and
\eqref{eval:eq:product-signs} then give
\[
 \mathfrak p(M_3^{\mathrm{irr}},\widehat s_{3,\epsilon})=\eta_\epsilon,
 \;\;\;
 \mathfrak p(M_3^{\mathrm{reg}},\widehat s_{3,\epsilon})=1.
\]
Hence, $\mathfrak p(M_3,\widehat s_{3,\epsilon},g)=1+\eta_\epsilon$, and
Lemma~\ref{eval:lem:point} gives \eqref{eval:eq:bit}. The values of
$\delta_\epsilon$ follow from Corollary~\ref{quat:cor:bordism-bit}.
\end{proof}

\subsection{The Gluck twins are inequivalent}
\label{eval:sec:consequences}

\begin{corollary}[The two third cyclic covers]\label{eval:cor:third-covers}
The manifolds $X_{+,3}$ and $X_{-,3}$ are not homotopy equivalent and are
not stably homeomorphic. In particular, they are not diffeomorphic, either as
unmarked manifolds or as manifolds with their ramification spheres, regardless
of orientation.
\end{corollary}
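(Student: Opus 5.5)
The argument reads off Theorem~\ref{eval:thm:bit}, which gives $\delta_+=\mathrm{sec}(X_{+,3})=1$ and $\delta_-=\mathrm{sec}(X_{-,3})=0$, so $\delta_+\neq\delta_-$.

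First, non-homotopy-equivalence. Corollary~\ref{quat:cor:unmarked-delta} says that $\delta_+\ne\delta_-$ prevents $X_{+,3}$ and $X_{-,3}$ from being homotopy equivalent. The underlying fact is that Teichner's $\mathrm{sec}$ is a homotopy invariant. It is also independent of orientation and of the choice of spin structure, since it depends only on whether the equivariant intersection form on $\pi_2$ is an equivariant symmetrization, and changing the sign of that form does not affect this. Hence no homotopy equivalence exists in either orientation, and the claim holds ``regardless of orientation''.

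Second, stable homeomorphism. The same corollary gives the criterion: stably homeomorphic if and only if $\delta_+=\delta_-$. Its inputs are already in place. Both manifolds are spin with signature zero and $\pi_1\cong Q_8$ (Proposition~\ref{quat:prop:X3-spin}). The fundamental-class coordinate vanishes because $H_4(Q_8;\ZZ)=0$. Teichner's stable classification then says the two are stably homeomorphic exactly when their $\mathrm{sec}$ values agree. Since the values differ, they are not stably homeomorphic. Reversing orientation leaves both signature zero and $\mathrm{sec}$ unchanged, so orientation does not affect this either.

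Third, the diffeomorphism statements. A diffeomorphism is in particular a homotopy equivalence, so the first step rules out any unmarked diffeomorphism, with or without orientation. A diffeomorphism of pairs $(X_{+,3},R_{+,3})\to(X_{-,3},R_{-,3})$ would be an unmarked diffeomorphism after forgetting the ramification spheres, so it is ruled out too.

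The step needing the most care is confirming that the criterion of Corollary~\ref{quat:cor:unmarked-delta} applies in the orientation-free form claimed. Concretely, one has to check that $\mathrm{sec}$ does not depend on the identification $\pi_1\cong Q_8$. This holds because $\mathrm{sec}$ is intrinsic to the intersection form on $\pi_2$, so no marking enters. The rest is direct citation.
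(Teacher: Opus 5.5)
Your proposal is correct and follows the paper's route: the paper's proof is just Theorem~\ref{eval:thm:bit} (giving $\delta_+=1\neq0=\delta_-$) combined with Corollary~\ref{quat:cor:unmarked-delta}, and the diffeomorphism statements follow because any diffeomorphism, of pairs or not and of either orientation, is in particular a homotopy equivalence. Your remarks on why $\mathrm{sec}$ does not depend on orientation, spin structure or marking restate what the paper records when it introduces $\mathrm{sec}$; for homotopy equivalence itself, orientation plays no role at all.
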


\begin{proof}
This follows from Theorem~\ref{eval:thm:bit} and
Corollary~\ref{quat:cor:unmarked-delta}.
\end{proof}

\begin{corollary}[The Gluck twins are inequivalent]\label{eval:cor:pairs}
\begin{enumerate}[label=(\roman*),leftmargin=2.2em]
\item No diffeomorphism $\Sig\to\Sigma_-(7)$, whether orientation-preserving
      or orientation-reversing, carries $K_+$ onto $K_-$, regardless of the
      chosen orientations of the spheres. Equivalently, no diffeomorphism of $\E$ carries $\phi_+$ to
      $\phi_-$ up to a diffeomorphism of $S^2\times S^1$ that extends over
      the cap.
\item Every smooth oriented fibred two-knot with quandle $P_7$ in a smooth
      homotopy four-sphere is diffeomorphic as a pair, possibly after reversing
      the knot orientation, to exactly one of
      $(\Sig,K_+)$ and $(\Sigma_-(7),K_-)$. Thus
      the classification in Theorem~\ref{thm:rigidity} consists of exactly
      two diffeomorphism types.
\item Neither $K_+$ nor $K_-$ is smoothly reflexive: neither is
      determined, as a pair, by its exterior.
\end{enumerate}
\end{corollary}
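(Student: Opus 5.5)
The plan is to reduce all three parts to Corollary~\ref{eval:cor:third-covers}, using the naturality of cyclic branched covers under diffeomorphisms of pairs.

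For (i), suppose $F:\Sig\to\Sigma_-(7)$ is a diffeomorphism, of either orientation behaviour, with $F(K_+)=K_-$ as sets. Then $F$ restricts to a diffeomorphism of the knot exteriors. This restriction carries the infinite cyclic cover classified by the abelianization $H_1\cong\ZZ$ to the corresponding cover, possibly composed with $-1$ on $\ZZ$. The index-three subgroup of $\ZZ$ is preserved by $\pm1$, so $F$ lifts to a diffeomorphism of the threefold unbranched covers of the exteriors. That lift commutes with the deck groups up to the automorphism $\pm1$ of $C_3$. It extends across the branched tubular neighbourhood via the local model $(x,z)\mapsto(x,z^3)$ on $S^2\times D^2$, as in \eqref{nf:eq:lifted-surgery}. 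The result is a diffeomorphism $X_{+,3}\to X_{-,3}$ carrying $R_{+,3}$ to $R_{-,3}$, and this contradicts Corollary~\ref{eval:cor:third-covers}. Since that corollary is stated regardless of orientations, no case analysis over orientations of the ambient manifold or the sphere is needed.

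For the equivalent gluing formulation, argue as follows. A diffeomorphism $\Phi$ of $\E$ with $\Phi\circ\phi_+=\phi_-\circ\beta$, where $\beta$ extends over $S^2\times D^2$, assembles into a diffeomorphism of pairs $(\Sig,K_+)\to(\Sigma_-(7),K_-)$. The converse is obtained by restricting such a diffeomorphism to the exterior after an isotopy that preserves the tubular neighbourhood, using uniqueness of tubular neighbourhoods.

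For (ii), combine Theorem~\ref{thm:rigidity}, which gives "at most two", with (i), which gives that the two pairs are distinct. The pairs remain distinct even after reversing the knot orientation, because (i) allowed arbitrary sphere orientations. For (iii), note that both pairs share the exterior $\E$ by Proposition~\ref{prop:exists}, yet by (i) they are inequivalent, so neither pair is determined by its exterior.

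The main obstacle is the extension of the lifted map across the branch sphere. I would handle it by isotoping $F$ so that it preserves the tubular neighbourhoods $\nu K_\pm$ fibrewise, which is possible by uniqueness of tubular neighbourhoods. The lift is then the standard fibrewise map $z\mapsto z^{1/3}$ on discs, made well defined by choosing the lift that matches on the boundary.
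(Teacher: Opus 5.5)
Your proposal is correct and follows essentially the same route as the paper: restrict to the exterior, lift to the threefold cyclic covers via the characteristic index-three subgroup, extend across the branched caps, and contradict Corollary~\ref{eval:cor:third-covers}. Parts (ii) and (iii) then follow from Theorem~\ref{thm:rigidity} and the common exterior (Gluck relation) exactly as in the paper.

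The one detail the paper makes explicit that you leave implicit is isotoping $F$ to be orthogonal on the normal discs, not merely fibre-preserving; this uses polar decomposition and makes the lifted map smooth along the branch sphere. This is not a gap, because the two third covers are not even homotopy equivalent, so the homeomorphism your construction produces already gives the contradiction.
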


\begin{proof}
(i) Suppose that $\Phi$ is such a diffeomorphism. After an isotopy, it
preserves tubular neighborhoods of the spheres and is fibre-linear and
orthogonal on their normal disc bundles: tubular-neighborhood
straightening gives the fibre-linear form and polar decomposition of the
normal derivative gives the orthogonal form. Its restriction to $\E$
induces an automorphism of $G_7$ preserving
$G_7'=\Gamma$, hence preserving $G^{(3)}$, the unique subgroup of index
three containing $\Gamma$. It therefore lifts to the threefold cyclic
covers of the exteriors. On each normal disc it is complex linear or
antilinear, with coefficient a map $S^2\to S^1$; such a map has a
continuous cube root. Indeed, $[S^2,S^1]=0$, so every coefficient map
$S^2\to S^1$ is null-homotopic and therefore lifts through the degree-three
covering $S^1\to S^1$, $z\mapsto z^3$. Thus, a suitable lift extends over
the lifted caps through $z\mapsto z^3$. This gives a diffeomorphism
$X_{+,3}\to X_{-,3}$,
contradicting Corollary~\ref{eval:cor:third-covers}. The equivalent
formulation holds because a diffeomorphism of $\E$ with the stated
boundary behaviour extends over the caps, while a diffeomorphism of pairs
restricts to one after the isotopy above.

(ii) Combine (i) with Theorem~\ref{thm:rigidity}.

(iii) By \eqref{eq:gluck}, the Gluck reconstruction of $(\Sig,K_+)$ is
$(\Sigma_-(7),K_-)$, and conversely; apply (i).
\end{proof}

\begin{remark}[Relation with known reflexivity results]
\label{eval:rem:reflexivity}
Plotnick proved, using equivariant intersection forms of cyclic branched
covers, that no nontrivial fibred two-knot in $S^4$ whose exterior fibration has
monodromy of finite odd order is determined by its complement
\cite[Theorem~6.2]{Plotnick1986}. That result
does not apply here: the boundary-preserving geometric monodromy of the
punctured fibre is nonperiodic, while its unpointed closed isotopy class is
represented by the order-six map $\bar f$; see
Remark~\ref{rem:closed-vs-punctured-monodromy}.
The method of Corollary~\ref{eval:cor:pairs} follows Plotnick's strategy of
separating the Gluck twins by an invariant of a cyclic branched cover, with
Teichner's secondary invariant evaluated through the point-preimage bordism
coordinate. Likewise, the reflexivity
criteria for fibred two-knots whose monodromy has finite order
\cite{HillmanPlotnick1990}, \cite[Section~18.1]{Hillman} do not apply here,
since the boundary-preserving monodromy of the punctured fibre is not
periodic.
\end{remark}

\section{Consequences}\label{std:sec:consequences}

\begin{corollary}[Fibred realizations of $P_7$ in $S^4$]
\label{std:cor:knots}
\begin{enumerate}[label=(\roman*),leftmargin=2.2em]
\item $P_7$ has smooth fibred realizations in the standard $S^4$; both
      disjuncts in \eqref{eq:fibred} hold.
\item After identifying $\Sigma_\pm(7)$ with $S^4$, the knots $K_+$ and $K_-$
      are smooth fibred two-knots in $S^4$ with group $G_7$ and fibre
      $\N\setminus\Int D^3$. Their unpointed closed monodromy class is
      represented by the fixed-point-free order-six isometry $\bar f$, whereas
      the boundary-preserving punctured-fibre monodromy itself is nonperiodic
      (Remark~\ref{rem:closed-vs-punctured-monodromy}). Their exteriors are
      diffeomorphic, but the knots are not equivalent for any choice of
      orientations (Corollary~\ref{eval:cor:pairs}). Thus, each of them
      is a non-reflexive knot in $S^4$. Their third cyclic branched covers
      are not homotopy equivalent (Corollary~\ref{eval:cor:third-covers}).
\item Every smooth fibred two-knot with quandle $P_7$ in a smooth homotopy
      four-sphere lies in the standard $S^4$ and is equivalent to $K_+$ or to
      $K_-$ (Theorem~\ref{thm:rigidity}).
\end{enumerate}
\end{corollary}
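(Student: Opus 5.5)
The corollary assembles results already proved; the plan is to take its three parts in order and cite the relevant earlier statement at each step.

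For (i), I invoke Theorem~\ref{std:thm:main}, which gives $\Sigma_+(7)\diff S^4$ and $\Sigma_-(7)\diff S^4$. Both disjuncts on the right of \eqref{eq:fibred} therefore hold. By Proposition~\ref{prop:exists}, each belt sphere $K_\pm$ has fundamental quandle $P_7$ and a fibred exterior $\E$. Transporting $K_\pm$ along the diffeomorphisms to $S^4$ gives smooth fibred realizations of $P_7$ in the standard $S^4$.

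For (ii), I collect the remaining facts about the transported knots.
\begin{enumerate}[label=(\alph*),leftmargin=2.2em]
\item Group and fibre come from Proposition~\ref{prop:exists} and Lemma~\ref{lem:group-data}. The knot group is $\pi_1(\E)\cong G_7$, since deleting the circle $\gamma$ does not change $\pi_1$ of the four-manifold $\M$. The fibre is $\N\setminus\Int D^3$.
\item The monodromy statement is Remark~\ref{rem:closed-vs-punctured-monodromy}, together with Theorem~\ref{per:thm:periodic}(ii) and Proposition~\ref{per:prop:fixed}. These show that $\bar f$ is fixed-point free of order exactly six, and that the punctured-fibre monodromy is nonperiodic.
\item The exteriors are diffeomorphic because both are the same manifold $\E$ by construction \eqref{eq:candidates}. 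A diffeomorphism of $S^4$ does not change this.
\item Inequivalence for every choice of orientations is Corollary~\ref{eval:cor:pairs}(i). This needs the small observation that an equivalence of the knots in $S^4$ pulls back, via the identifications of Theorem~\ref{std:thm:main}, to a diffeomorphism of pairs $(\Sigma_+(7),K_+)\to(\Sigma_-(7),K_-)$. It may reverse orientation, which that corollary already allows.
\item Non-reflexivity is Corollary~\ref{eval:cor:pairs}(iii). By \eqref{eq:gluck}, $K_+$ and $K_-$ are Gluck twins sharing the exterior $\E$, and they are inequivalent.
\item The statement about third cyclic branched covers is Corollary~\ref{eval:cor:third-covers}. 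The branched covers are intrinsic to the pairs, so the identification with $S^4$ does not affect it.
\end{enumerate}

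For (iii), I use Theorem~\ref{thm:rigidity}. Any smooth oriented fibred two-knot with quandle $P_7$ in a smooth homotopy four-sphere is diffeomorphic as a pair, possibly after reversing the knot orientation, to $(\Sigma_\pm(7),K_\pm)$. Theorem~\ref{std:thm:main} then places the ambient manifold in the standard $S^4$, and the knot is equivalent to $K_+$ or $K_-$. Corollary~\ref{eval:cor:pairs}(ii) adds that it is equivalent to exactly one of them.

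The only delicate step is bookkeeping: ``equivalent in $S^4$'' must be read as diffeomorphism of pairs, with ambient orientation handled as in Corollary~\ref{eval:cor:pairs}(i). Since $S^4$ has orientation-reversing diffeomorphisms, the possible orientation reversal in Theorem~\ref{std:thm:main} causes no loss. No new computation is required.
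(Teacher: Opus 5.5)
Your proposal is correct and takes the same route as the paper, whose proof just combines Theorem~\ref{std:thm:main} with the earlier results and cites Theorem~\ref{per:thm:periodic}, Remark~\ref{rem:closed-vs-punctured-monodromy} and Proposition~\ref{per:prop:fixed} for the monodromy claims. Your extra bookkeeping is accurate and only makes explicit what the paper leaves implicit: the codimension-three argument for $\pi_1(\E)\cong G_7$, and transporting the pairs along the identifications with $S^4$.
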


\begin{proof}
The result follows from Theorem~\ref{std:thm:main}, together with the cited
results. The monodromy statements follow from Theorem~\ref{per:thm:periodic},
Remark~\ref{rem:closed-vs-punctured-monodromy}, and
Proposition~\ref{per:prop:fixed}.
\end{proof}

\begin{remark}[Consequences for the classical prism problem and for the quandle program]\label{std:rem:QextC}
The theorem resolves two formulations of the same residual case. First, it
fills the parameter $n=d=7$ omitted from the standard-sphere lists of
Teragaito \cite{Teragaito1989,Teragaito1990} and gives an affirmative answer at $n=7$ to
Hillman's question whether groups with
$\pi'\cong Q(8)\times(\ZZ/n\ZZ)$ can be realized by smooth fibred two-knots
in the standard $S^4$ beyond the values recorded from Kanenobu
\cite{Kanenobu1988} and Teragaito \cite{Teragaito1990}
\cite[Section~15.4, pp.~221--223]{Hillman}. Thus, $G_7$ is the
group of a smooth fibred two-knot in the standard four-sphere.

Second, the smooth-standard realization question for $P_7$ left open in
\cite{QextC} is resolved positively. In the terminology of that monograph,
the smooth-standard realization index has value $k(P_7)=0$ by the criterion
of \cite[Corollary~12.34]{QextC}, and $P_7$ is not a minimum-order witness to
failure of smooth-standard realization in the sense of
\cite[Corollary~23.124(i)]{QextC}. Since
$P_7\cong P_1\times R_7$, the product itself has a smooth fibred realization in
$S^4$; thus, at this first residual prism parameter, the separate realizability
of the factors does combine to give realizability of the product. Any argument whose conclusion
depended on the non-realizability of $P_7$ cannot apply to this parameter;
no exotic smooth homotopy four-sphere follows from that route.
\end{remark}

\subsection*{Acknowledgements}

The author used OpenAI's ChatGPT 5.5-6 and Anthropic's Claude 4.8-5.5 during the
preparation of this manuscript for editorial assistance, discussion, consistency checks, and proof auditing.

\end{document}